\newtheorem{theorem}{Theorem}[section]
\newtheorem{lemma}[theorem]{Lemma}
\newtheorem{proposition}[theorem]{Proposition}
\newtheorem{assumption}[theorem]{Assumption}
\newtheorem{definition}[theorem]{Definition}
\newtheorem{remark}[theorem]{Remark}
\newtheorem{hypothesis}[theorem]{Hypothesis}
\let\originalleft\left
\let\originalright\right
\renewcommand{\left}{\mathopen{}\mathclose\bgroup\originalleft}
\renewcommand{\right}{\aftergroup\egroup\originalright}
\newcommand{\Tr}{\mathop{\mathrm{Tr}}}
\renewcommand{\d}{\/\mathrm{d}\/}
\def\w{\textbf{W}^{\varepsilon}_{{\theta}^{\varepsilon}}}
\def\e{\varepsilon}
\def\S{\mathrm{S}}
\def\T{T\wedge\tau_R^{\e}}
\def\L{\mathbb{L}}
\def\A{\mathrm{A}}
\def\U{\mathbf{U}}
\def\I{\mathrm{I}}
\def\F{\mathrm{F}}
\def\C{\mathrm{C}}
\def\f{\mathbf{f}}
\def\B{\mathrm{B}}
\def\D{\mathrm{D}}
\def\y{\mathbf{y}}
\def\Y{\mathbf{Y}}
\def\Z{\mathbf{Z}}
\def\E{\mathbb{E}}
\def\X{\mathbf{X}}
\def\x{\mathbf{x}}
\def\z{\mathbf{z}}
\def\v{\mathbf{v}}
\def\V{\mathbb{v}}
\def\w{\mathbf{w}}
\def\W{\mathrm{W}}
\def\G{\mathrm{G}}
\def\Q{\mathrm{Q}}
\def\N{\mathbb{N}}
\def\V{\mathbb{V}}
\def\wi{\widetilde}
\def\Q{\mathrm{Q}}
\def\u{\mathrm{U}}
\def\P{\mathrm{P}}
\def\u{\mathbf{u}}
\def\H{\mathbb{H}}
\newcommand{\R}{\mathbb{R}}
\renewcommand{\d}{\/\mathrm{d}\/}
\newcommand{\Addresses}{{
		\footnote{
			
			\noindent \textsuperscript{1}Department of Mathematics, Indian Institute of Technology Roorkee-IIT Roorkee,
			Haridwar Highway, Roorkee, Uttarakhand 247667, INDIA.\par\nopagebreak
			\noindent  \textit{e-mail:} \texttt{manilfma@iitr.ac.in, maniltmohan@gmail.com.}
			
			\noindent \textsuperscript{*}Corresponding author.

			\textit{Key words:} convective Brinkman-Forchheimer equations; averaging principle; invariant measure; large deviation principle; weak convergence; Khasminkii's time discretization.
			
			Mathematics Subject Classification (2010): Primary 60H15; Secondary 35R60, 35Q30, 70K70.

}}}
\begin{document}
	
	
	\title[Large deviations for the two-time-scale SCBF equations]{Large deviations  for the two-time-scale stochastic convective Brinkman-Forchheimer equations
			\Addresses}
	\author[M. T. Mohan ]{Manil T. Mohan\textsuperscript{1*}}

	\maketitle
	
	\begin{abstract}
The convective Brinkman-Forchheimer (CBF) equations characterize the motion of incompressible fluid flows in a saturated porous medium. The small noise asymptotic for the two-time-scale stochastic convective Brinkman-Forchheimer (SCBF) equations in two and three dimensional bounded domains is carried out in this work.  More precisely, we establish a Wentzell-Freidlin type large deviation principle for stochastic partial differential equations with slow and fast time-scales, where the slow component is the SCBF equations in two and three dimensions perturbed by small multiplicative Gaussian noise and the fast component is a stochastic reaction-diffusion equation with damping. The results are obtained by using a variational method (based on weak convergence approach) developed by Budhiraja and Dupuis, Khasminkii's time discretization approach and stopping time arguments. In particular, the results obtained from this work are true for two dimensional stochastic Navier-Stokes equations also. 
	\end{abstract}

	\section{Introduction}\label{sec1}\setcounter{equation}{0}
The  convective Brinkman-Forchheimer (CBF) equations  describe the motion of  incompressible viscous fluid  through a rigid, homogeneous, isotropic, porous medium. Let us first provide a mathematical formulation of the CBF equations.	Let $\mathcal{O}\subset\R^n$ ($n=2,3$) be a bounded domain with a smooth boundary $\partial\mathcal{O}$. Let  $\X_t(x) \in \R^n$ denotes the velocity field at time $t\in[0,T]$ and position $x\in\mathcal{O}$, $p_t(x)\in\R$ represents the pressure field, $\f_t(x)\in\R^n$ stands for an external forcing.   The	CBF  equations  are given by 
\begin{equation}\label{1}
\left\{
\begin{aligned}
\frac{\partial \X_t}{\partial t}-\mu \Delta\X_t+(\X_t\cdot\nabla)\X_t+\alpha\X_t+\beta|\X_t|^{r-1}\X_t+\nabla p_t&=\mathbf{f}_t, \ \text{ in } \ \mathcal{O}\times(0,T), \\ \nabla\cdot\X_t&=0, \ \text{ in } \ \mathcal{O}\times(0,T), \\
\X_t&=\mathbf{0},\ \text{ on } \ \partial\mathcal{O}\times(0,T), \\
\X_0&=\x, \ \text{ in } \ \mathcal{O},
\end{aligned}
\right.
\end{equation}
the constants $\mu>0$ represents the  Brinkman coefficient (effective viscosity),  $\alpha>0$ stands for the Darcy (permeability of porous medium) coefficient and $\beta>0$ denotes the Forchheimer (proportional to the porosity of the material) coefficient.  In order to obtain the uniqueness of the pressure $p$, we can impose the condition $ \int_{\mathcal{O}}p_t(x)\d x=0, $ for $t\in (0,T)$ also. The absorption exponent $r\in[1,\infty)$ and the case $r=3$ is known as the critical exponent.  Note that for the case $\alpha=\beta=0$, we get the classical 3D Navier-Stokes equations (see \cite{GGP,OAL,JCR3,Te,Te1}, etc). The works \cite{SNA,CLF,KT2,MTM7}, etc discuss the global solvability results (existence and uniqueness of weak as well as strong solutions) for the deterministic CBF equations in bounded domains. As in the case of classical 3D Navier-Stokes equations, the existence of a unique global strong solution for the CBF  equations \eqref{1} for $n=3$ and $r\in[1,3)$ is an open problem.

In the stochastic counterpart, the existence and uniqueness of strong solutions to the stochastic 3D tamed Navier-Stokes equations on bounded domains with Dirichlet boundary conditions is established in \cite{MRTZ}. The authors in  \cite{LHGH1} obtained the existence of martingale solutions for the stochastic 3D Navier-Stokes equations with nonlinear damping.  The existence of a pathwise unique strong solution  satisfying the energy equality (It\^o's formula) to the stochastic convective Brinkman-Forchheimer (SCBF) equations perturbed by multiplicative Gaussian noise is proved in \cite{MTM8}. The author exploited the monotonicity and hemicontinuity properties of the linear and nonlinear operators as well as a stochastic generalization of  the Minty-Browder technique in the proofs. The It\^o formula (energy equality) is established by using the fact that there are functions that can approximate functions defined on smooth bounded domains by elements of eigenspaces of linear operators (e.g., the Laplacian or the Stokes operator) in such a way that the approximations are bounded and converge in both Sobolev and Lebesgue spaces simultaneously (such a construction is available in \cite{CLF}). Making use of the exponential stability of strong solutions, the existence of a unique ergodic and strongly mixing invariant measure for the SCBF  equations subject to multiplicative Gaussian noise is also established in  \cite{MTM8}. The works \cite{HBAM,ZBGD,WLMR,WL,MRXZ1,MRTZ1}, etc discuss various results on the stochastic tamed 3D Navier-Stokes equations and related models on periodic domains as well as on the whole space. 

	The multiscale systems involve slow and fast components in mathematical models, and they are having  wide range of  applications in the areas like signal processing, climate dynamics, material science,   molecular dynamics, mathematical finance, fluid dynamics, etc. The theory of averaging principle for multiscale systems is well developed for the past several years and it has extensive  applications in science, technology and engineering (cf. \cite{RBJE,WEBE,FW1,EHVK,EAMEL,MMMC,FWTT}, etc and references therein).  Averaging principle  proposes that the slow component of a slow-fast system may be approximated by a simpler system obtained by averaging over the fast motion. For the deterministic systems, an averaging principle was first investigated by Bogoliubov and Mitropolsky in \cite{NNYA} and for the stochastic differential equations, an averaging principle was first studied by Khasminskii  in  \cite{RZK}. Several works are available in the literature for the theory of averaging principle for the stochastic partial differential equations (cf. \cite{CEB,CEB1,SC,SC1,SC2,SC3,ZDXS,HFJD,HFJL,HFLW,HFLW2,HFLW1,WWAJ,JXJL}, etc and references therein). An averaging principle  for the slow component as two dimensional stochastic Navier-Stokes equations  and the fast component as stochastic reaction-diffusion equations by using the classical Khasminskii approach based on time discretization is established  in \cite{SLXS}. The authors  in \cite{WLMRX} established a strong averaging principle for the slow-fast stochastic partial differential equations with locally monotone coefficients, which includes the  systems like stochastic porous medium equation, the stochastic Burgers type equation, the stochastic $p$-Laplace equation and the stochastic 2D Navier-Stokes equation, etc. A strong averaging principle for the stochastic convective Brinkman-Forchheimer (SCBF)  equations perturbed by Gaussian noise, in which the fast time scale component is governed by a stochastic reaction-diffusion equation with damping driven by multiplicative Gaussian noise,  has been obtained in \cite{MTM11}. 

The large deviations theory, which  concerns the asymptotic behavior of remote tails of sequences of probability distributions (cf. \cite{FW,Va}), is one of the classical areas in probability theory with numerous deep developments and variety of applications in the fields like queuing theory, statistics, finance, engineering, etc. The theory of large deviations explains the probabilities of rare events that are exponentially small as a function of some parameter. In the case of stochastic differential equations, this parameter can be regarded as  the amplitude of the noise perturbing a dynamical system.  The Wentzell-Freidlin type large deviation estimates for a class of infinite dimensional stochastic differential equations is developed in the works \cite{BD1,chow,DaZ,KX}, etc. Large deviation principles for the 2D stochastic Navier-Stokes equations driven by Gaussian noise are established in the works \cite{ICAM,MGo,SSSP}, etc. A Wentzell-Freidlin type large deviation principle  for the stochastic tamed 3D Navier-Stokes equations driven by multiplicative Gaussian noise  in the whole space or on a torus  is established in \cite{MRTZ1}. Small time large deviations principles  for the stochastic 3D tamed Navier-Stokes equations in bounded domains is established in the work \cite{RWW}. Large deviation principle for the 3D tamed Navier-Stokes equations driven by multiplicative L\'evy noise in periodic domains is established in \cite{HGHL}. The author in \cite{MTM10} obtained the Wentzell-Freidlin  large deviation principle for the two and three dimensional SCBF equations in bounded domains using a weak convergence approach developed by Budhiraja and Dupuis (see, \cite{BD1,BD2}). The large deviations for short time as well as the exponential estimates on certain exit times associated with the solution  trajectory of the  SCBF equations are also established in the same work.  It seems to the author that some of the LDP results available in the literature for the 3D stochastic tamed Navier-Stokes equations in bounded domains are not valid due to the technical difficulty described in the works \cite{CLF,KT2,MTM7}, etc.

The large deviation theory for multi-scale systems are also well studied in the literature (see for instance, \cite{PDKS,FW,HJK,RKLP,RLi,TJL,LPo,AAP,KSp,AYV1}, etc and references therein). A large deviation principle for a class of stochastic reaction-diffusion  partial  differential  equations  with  slow-fast components is derived in the work \cite{WWAJ1} .  The authors in \cite{WHMS} studied a large deviation principle for a system of stochastic reaction-diffusion equations  with a separation of fast and slow components, and small noise in the slow component, by using the weak convergence method in infinite dimensions. A Wentzell-Freidlin type large deviation principle for stochastic partial differential equations with slow and fast time-scales, where the slow component is a one-dimensional stochastic Burgers' equation with small noise and the fast component is a stochastic reaction-diffusion equation  is established in \cite{XSRW}. In this work, we establish a Wentzell-Freidlin type large deviation principle for the two-time-scale stochastic partial differential equations, where the slow component is the SCBF equations in two and three dimensional bounded domains ($r\in[1,\infty),$ for $n=2$ and $r\in[3,\infty),$ for $n=3$ with $2\beta\mu>1$ for $r=3$) perturbed by small multiplicative Gaussian noise and the fast component is a stochastic reaction-diffusion equation with damping. We use a variational method (based on weak convergence approach) developed by Budhiraja and Dupuis (see, \cite{BD1,BD2}), Khasminkii's time discretization approach and stopping time arguments to obtain the LPD for the two-time-scale SCBF equations.  Furthermore, we remark that the results obtained in this work are true for two dimensional stochastic Navier-Stokes equations also. 

Our main aim of this work is to study a Wentzell-Freidlin type large deviation principle (LDP) for the following two-time-scale stochastic  convective Brinkman-Forchheimer (SCBF) equations in two and three dimensional bounded domains. The  two-time-scale SCBF equations are given by
\begin{equation}\label{1p1}
\left\{\begin{aligned}
\d\X^{\e,\delta}_t&=[\mu\Delta\X^{\e,\delta}_t-(\X^{\e,\delta}_t\cdot\nabla)\X^{\e,\delta}_t-\alpha\X^{\e,\delta}_t-\beta|\X^{\e,\delta}_t|^{r-1}\X^{\e,\delta}_t+\F(\X^{\e,\delta}_t,\Y^{\e,\delta}_t)]\d t\\&\quad-\nabla p^{\e,\delta}_t\d t+{\sqrt{\e}}\sigma_1(\X^{\e,\delta}_t)\Q_1^{1/2}\d\W_t,\\
\d \Y^{\e,\delta}_t&=\frac{1}{\delta}[\mu\Delta \Y^{\e,\delta}_t-\alpha \Y^{\e,\delta}_t-\beta|\Y^{\e,\delta}_t|^{r-1}\Y^{\e,\delta}_t+\G(\X^{\e,\delta}_t,\Y^{\e,\delta}_t)]\d t\\&\quad+\frac{1}{\sqrt{\delta}}\sigma_2(\X^{\e,\delta}_t,\Y^{\e,\delta}_t)\Q_2^{1/2}\d\W_t,\\
\nabla\cdot\X^{\e,\delta}_t&=0,\ \nabla\cdot\Y^{\e,\delta}_t=0,\\
\X^{\e,\delta}_t\big|_{\partial\mathcal{O}}&=\Y^{\e,\delta}_t\big|_{\partial\mathcal{O}}=\mathbf{0},\\
\X^{\e,\delta}_0&=\x,\ \Y^{\e,\delta}_0=\y,
\end{aligned}
\right. 
\end{equation}
for $t\in(0,T)$, where $\e>0$, $\delta=\delta(\e)>0$ (with $\delta\to 0$ and $\frac{\delta}{\e}\to 0$ as $\e\to 0$) are small parameters describing the ratio of the time scales of the slow component $\X^{\e,\delta}_t$  and the fast component $\Y^{\e,\delta}_t$, $\F,\G,\sigma_1,\sigma_2$ are appropriate functions, and $\W_t$ is a Hilbert space valued standard cylindrical Wiener process on a complete probability space $(\Omega,\mathscr{F},\mathbb{P})$ with filtration $\{\mathscr{F}_t\}_{t\geq 0}$. The system \eqref{1p1} can be considered as stochastic  convective Brinkman-Forchheimer equations, whose drift coefficient is coupled with a stochastic perturbation $\Y^{\e,\delta}_t$, which can be considered as the dramatically varying temperature (with damping) in the system.

The rest of the paper is organized as follows. In the next section, we provide some functional spaces as well as the hypothesis satisfied by the functions $\F,\G,\sigma_1,\sigma_2$ needed to obtain the global solvability of the system \eqref{1p1}. An abstract formulation for the two-time-scale SCBF system \eqref{1p1}  is formulated in section \ref{sec4} and we discuss about the existence and uniqueness of a pathwise strong solution to the system \eqref{1p1} (Theorem \ref{exis}). A Wentzell-Freidlin type large deviation principle for  the two-time-scale SCBF system is established in  \ref{se4} (Theorem \ref{thm4.14}). The results are obtained by using a variational method (based on weak convergence approach) developed by Budhiraja and Dupuis, Khasminkii's time discretization approach and stopping time arguments (Theorems \ref{compact} and \ref{weak}). Moreover, we deduce that the results obtained this work are true for two dimensional stochastic Navier-Stokes equations also (Remark \ref{rem4.22}).

\section{Mathematical Formulation}\label{sec2}\setcounter{equation}{0}
In this section, we describe the necessary function spaces and the hypothesis satisfied by the functions $\F,\G$ and the noise coefficients $\sigma_1,\sigma_2$ needed to obtain the global solvability results for the coupled SCBF equations \eqref{1p1}.

\subsection{Function spaces}\label{sub2.1} Let $\C_0^{\infty}(\mathcal{O};\R^n)$ denotes the space of all infinitely differentiable functions  ($\R^n$-valued) with compact support in $\mathcal{O}\subset\R^n$.  We define 
\begin{align*} 
\mathcal{V}&:=\{\X\in\C_0^{\infty}(\mathcal{O},\R^n):\nabla\cdot\X=0\},\\
\mathbb{H}&:=\text{the closure of }\ \mathcal{V} \ \text{ in the Lebesgue space } \L^2(\mathcal{O})=\mathrm{L}^2(\mathcal{O};\R^n),\\
\mathbb{V}&:=\text{the closure of }\ \mathcal{V} \ \text{ in the Sobolev space } \H_0^1(\mathcal{O})=\mathrm{H}_0^1(\mathcal{O};\R^n),\\
\widetilde{\L}^{p}&:=\text{the closure of }\ \mathcal{V} \ \text{ in the Lebesgue space } \L^p(\mathcal{O})=\mathrm{L}^p(\mathcal{O};\R^n),
\end{align*}
for $p\in(2,\infty)$. Then under some smoothness assumptions on the boundary, we characterize the spaces $\H$, $\V$ and $\widetilde{\L}^p$ as 
$
\H=\{\X\in\L^2(\mathcal{O}):\nabla\cdot\X=0,\X\cdot\mathbf{n}\big|_{\partial\mathcal{O}}=0\}$,  with norm  $\|\X\|_{\H}^2:=\int_{\mathcal{O}}|\X(x)|^2\d x,
$
where $\mathbf{n}$ is the outward normal to $\partial\mathcal{O}$,
$
\V=\{\X\in\H_0^1(\mathcal{O}):\nabla\cdot\X=0\},$  with norm $ \|\X\|_{\V}^2:=\int_{\mathcal{O}}|\nabla\X(x)|^2\d x,
$ and $\widetilde{\L}^p=\{\X\in\L^p(\mathcal{O}):\nabla\cdot\X=0, \X\cdot\mathbf{n}\big|_{\partial\mathcal{O}}=0\},$ with norm $\|\X\|_{\widetilde{\L}^p}^p=\int_{\mathcal{O}}|\X(x)|^p\d x$, respectively.
Let $(\cdot,\cdot)$ denotes the inner product in the Hilbert space $\H$ and $\langle \cdot,\cdot\rangle $ denotes the induced duality between the spaces $\V$  and its dual $\V'$ as well as $\widetilde{\L}^p$ and its dual $\widetilde{\L}^{p'}$, where $\frac{1}{p}+\frac{1}{p'}=1$. Note that $\H$ can be identified with its dual $\H'$. We endow the space $\V\cap\widetilde{\L}^{p}$ with the norm $\|\X\|_{\V}+\|\X\|_{\widetilde{\L}^{p}},$ for $\X\in\V\cap\widetilde{\L}^p$ and its dual $\V'+\widetilde{\L}^{p'}$ with the norm $$\inf\left\{\max\left(\|\Y_1\|_{\V'},\|\Y_1\|_{\widetilde{\L}^{p'}}\right):\Y=\Y_1+\Y_2, \ \Y_1\in\V', \ \Y_2\in\widetilde{\L}^{p'}\right\}.$$ Furthermore, we have the continuous embedding $\V\cap\widetilde{\L}^p\hookrightarrow\H\hookrightarrow\V'+\widetilde{\L}^{p'}$.

\subsection{Linear operator}\label{sub2.2}
Let us define
\begin{equation*}
\left\{
\begin{aligned}
\A\X:&=-\mathrm{P}_{\H}\Delta\X,\;\X\in\D(\A),\\ \D(\A):&=\V\cap\H^{2}(\mathcal{O}).
\end{aligned}
\right.
\end{equation*}
It can be easily seen that the operator $\A$ is a non-negative self-adjoint operator in $\H$ with $\V=\D(\A^{1/2})$ and \begin{align}\label{2.7a}\langle \A\X,\X\rangle =\|\X\|_{\V}^2,\ \textrm{ for all }\ \X\in\V, \ \text{ so that }\ \|\A\X\|_{\V'}\leq \|\X\|_{\V}.\end{align}
For a bounded domain $\mathcal{O}$, the operator $\A$ is invertible and its inverse $\A^{-1}$ is bounded, self-adjoint and compact in $\H$. Thus, using the spectral theorem, the spectrum of $\A$ consists of an infinite sequence $0< \lambda_1\leq \lambda_2\leq\ldots\leq \lambda_k\leq \ldots,$ with $\lambda_k\to\infty$ as $k\to\infty$ of eigenvalues. 
Moreover, there exists an orthogonal basis $\{e_k\}_{k=1}^{\infty} $ of $\H$ consisting of eigenvectors of $\A$ such that $\A e_k =\lambda_ke_k$,  for all $ k\in\mathbb{N}$.  We know that $\X$ can be expressed as $\X=\sum_{k=1}^{\infty}\langle\X,e_k\rangle e_k$ and $\A\X=\sum_{k=1}^{\infty}\lambda_k\langle\X,e_k\rangle e_k$, for all $\X\in\D(\A)$. Thus, it is immediate that 
\begin{align}\label{poin}
\|\nabla\X\|_{\mathbb{H}}^2=\langle \A\X,\X\rangle =\sum_{k=1}^{\infty}\lambda_k|\langle \X,e_k\rangle|^2\geq \lambda_1\sum_{k=1}^{\infty}|\langle\X,e_k\rangle|^2=\lambda_1\|\X\|_{\mathbb{H}}^2,
\end{align}
which is the \emph{Poincar\'e inequality}. 

\subsection{Bilinear operator}
Let us define the \emph{trilinear form} $b(\cdot,\cdot,\cdot):\V\times\V\times\V\to\R$ by $$b(\X,\Y,\Z)=\int_{\mathcal{O}}(\X(x)\cdot\nabla)\Y(x)\cdot\Z(x)\d x=\sum_{i,j=1}^n\int_{\mathcal{O}}\X_i(x)\frac{\partial \Y_j(x)}{\partial x_i}\Z_j(x)\d x.$$ If $\X, \Y$ are such that the linear map $b(\X, \Y, \cdot) $ is continuous on $\V$, the corresponding element of $\V'$ is denoted by $\B(\X, \Y)$. We also denote $\B(\X) = \B(\X, \X)=\mathrm{P}_{\H}(\X\cdot\nabla)\X$.
An integration by parts yields  
\begin{equation}\label{b0}
\left\{
\begin{aligned}
b(\X,\Y,\Y) &= 0,\text{ for all }\X,\Y \in\V,\\
b(\X,\Y,\Z) &=  -b(\X,\Z,\Y),\text{ for all }\X,\Y,\Z\in \V.
\end{aligned}
\right.\end{equation}
In the trilinear form, an application of H\"older's inequality yields
\begin{align*}
|b(\X,\Y,\Z)|=|b(\X,\Z,\Y)|\leq \|\X\|_{\widetilde{\L}^{r+1}}\|\Y\|_{\widetilde{\L}^{\frac{2(r+1)}{r-1}}}\|\Z\|_{\V},
\end{align*}
for all $\X\in\V\cap\widetilde{\L}^{r+1}$, $\Y\in\V\cap\widetilde{\L}^{\frac{2(r+1)}{r-1}}$ and $\Z\in\V$, so that we get 
\begin{align}\label{2p9}
\|\B(\X,\Y)\|_{\V'}\leq \|\X\|_{\widetilde{\L}^{r+1}}\|\Y\|_{\widetilde{\L}^{\frac{2(r+1)}{r-1}}}.
\end{align}
Hence, the trilinear map $b : \V\times\V\times\V\to \R$ has a unique extension to a bounded trilinear map from $(\V\cap\widetilde{\L}^{r+1})\times(\V\cap\widetilde{\L}^{\frac{2(r+1)}{r-1}})\times\V$ to $\R$. It can also be seen that $\B$ maps $ \V\cap\widetilde{\L}^{r+1}$  into $\V'+\widetilde{\L}^{\frac{r+1}{r}}$ and using interpolation inequality, we get 
\begin{align}\label{212}
\left|\langle \B(\X,\X),\Y\rangle \right|=\left|b(\X,\Y,\X)\right|\leq \|\X\|_{\widetilde{\L}^{r+1}}\|\X\|_{\widetilde{\L}^{\frac{2(r+1)}{r-1}}}\|\Y\|_{\V}\leq\|\X\|_{\widetilde{\L}^{r+1}}^{\frac{r+1}{r-1}}\|\X\|_{\H}^{\frac{r-3}{r-1}}\|\Y\|_{\V},
\end{align}
for all $\Y\in\V\cap\widetilde{\L}^{r+1}$. Thus, we have 
\begin{align}\label{2.9a}
\|\B(\X)\|_{\V'+\widetilde{\L}^{\frac{r+1}{r}}}\leq\|\X\|_{\widetilde{\L}^{r+1}}^{\frac{r+1}{r-1}}\|\X\|_{\H}^{\frac{r-3}{r-1}},
\end{align}
for $r\geq 3$. 

For $n=2$ and $r\in[1,3]$, using H\"older's and Ladyzhenskaya's inequalities, we obtain 
\begin{align*}
|\langle\B(\X,\Y),\Z\rangle|=|\langle\B(\X,\Z),\Y\rangle|\leq\|\X\|_{\wi\L^4}\|\Y\|_{\wi\L^4}\|\Z\|_{\V},
\end{align*}
for all $\X,\Y\in\wi\L^4$ and $\Z\in\V$, so that we get $\|\B(\X,\Y)\|_{\V'}\leq\|\X\|_{\wi\L^4}\|\Y\|_{\wi\L^4}$. Furthermore, we have $$\|\B(\X,\X)\|_{\V'}\leq\|\X\|_{\wi\L^4}^2\leq\sqrt{2}\|\X\|_{\H}\|\X\|_{\V}\leq\sqrt{\frac{2}{\lambda_1}}\|\X\|_{\V}^2,$$ for all $\X\in\V$. 
\subsection{Nonlinear operator}
Let us now consider the operator $\mathcal{C}(\X):=\P_{\H}(|\X|^{r-1}\X)$. It is immediate that $\langle\mathcal{C}(\X),\X\rangle =\|\X\|_{\widetilde{\L}^{r+1}}^{r+1}$. 
For any $r\in[1,\infty)$, we have 
\begin{align}\label{2pp11}
&\langle \mathrm{P}_{\H}(\X|\X|^{r-1})-\mathrm{P}_{\H}(\Y|\Y|^{r-1}),\X-\Y\rangle\nonumber\\&=\int_{\mathcal{O}}\left(\X(x)|\X(x)|^{r-1}-\Y(x)|\Y(x)|^{r-1}\right)\cdot(\X(x)-\Y(x))\d x\nonumber\\&=\int_{\mathcal{O}}\left(|\X(x)|^{r+1}-|\X(x)|^{r-1}\X(x)\cdot\Y(x)-|\Y(x)|^{r-1}\X(x)\cdot\Y(x)+|\Y(x)|^{r+1}\right)\d x\nonumber\\&\geq\int_{\mathcal{O}}\left(|\X(x)|^{r+1}-|\X(x)|^{r}|\Y(x)|-|\Y(x)|^{r}|\X(x)|+|\Y(x)|^{r+1}\right)\d x\nonumber\\&=\int_{\mathcal{O}}\left(|\X(x)|^r-|\Y(x)|^r\right)(|\X(x)|-|\Y(x)|)\d x\geq 0. 
\end{align}
Furthermore, we find 
\begin{align}\label{224}
&\langle\mathrm{P}_{\H}(\X|\X|^{r-1})-\mathrm{P}_{\H}(\Y|\Y|^{r-1}),\X-\Y\rangle\nonumber\\&=\langle|\X|^{r-1},|\X-\Y|^2\rangle+\langle|\Y|^{r-1},|\X-\Y|^2\rangle+\langle\Y|\X|^{r-1}-\X|\Y|^{r-1},\X-\Y\rangle\nonumber\\&=\||\X|^{\frac{r-1}{2}}(\X-\Y)\|_{\H}^2+\||\Y|^{\frac{r-1}{2}}(\X-\Y)\|_{\H}^2\nonumber\\&\quad+\langle\X\cdot\Y,|\X|^{r-1}+|\Y|^{r-1}\rangle-\langle|\X|^2,|\Y|^{r-1}\rangle-\langle|\Y|^2,|\X|^{r-1}\rangle.
\end{align}
But, we know that 
\begin{align*}
&\langle\X\cdot\Y,|\X|^{r-1}+|\Y|^{r-1}\rangle-\langle|\X|^2,|\Y|^{r-1}\rangle-\langle|\Y|^2,|\X|^{r-1}\rangle\nonumber\\&=-\frac{1}{2}\||\X|^{\frac{r-1}{2}}(\X-\Y)\|_{\H}^2-\frac{1}{2}\||\Y|^{\frac{r-1}{2}}(\X-\Y)\|_{\H}^2+\frac{1}{2}\langle\left(|\X|^{r-1}-|\Y|^{r-1}\right),\left(|\X|^2-|\Y|^2\right)\rangle \nonumber\\&\geq -\frac{1}{2}\||\X|^{\frac{r-1}{2}}(\X-\Y)\|_{\H}^2-\frac{1}{2}\||\Y|^{\frac{r-1}{2}}(\X-\Y)\|_{\H}^2.
\end{align*}
From \eqref{224}, we finally have 
\begin{align}\label{2.23}
&\langle\mathrm{P}_{\H}(\X|\X|^{r-1})-\mathrm{P}_{\H}(\Y|\Y|^{r-1}),\X-\Y\rangle\geq \frac{1}{2}\||\X|^{\frac{r-1}{2}}(\X-\Y)\|_{\H}^2+\frac{1}{2}\||\Y|^{\frac{r-1}{2}}(\X-\Y)\|_{\H}^2\geq 0,
\end{align}
for $r\geq 1$.  	It is important to note that 
\begin{align}\label{a215}
\|\X-\Y\|_{\wi\L^{r+1}}^{r+1}&=\int_{\mathcal{O}}|\X(x)-\Y(x)|^{r-1}|\X(x)-\Y(x)|^{2}\d x\nonumber\\&\leq 2^{r-2}\int_{\mathcal{O}}(|\X(x)|^{r-1}+|\Y(x)|^{r-1})|\X(x)-\Y(x)|^{2}\d x\nonumber\\&\leq 2^{r-2}\||\X|^{\frac{r-1}{2}}(\X-\Y)\|_{\L^2}^2+2^{r-2}\||\Y|^{\frac{r-1}{2}}(\X-\Y)\|_{\L^2}^2. 
\end{align}
Combining \eqref{2.23} and \eqref{a215}, we obtain 
\begin{align}\label{214}
\langle\mathcal{C}(\X)-\mathcal{C}(\Y),\X-\Y\rangle\geq\frac{1}{2^{r-1}}\|\X-\Y\|_{\wi\L^{r+1}}^{r+1},
\end{align}
for $r\geq 1$. 
\subsection{Monotonicity}
In this subsection, we discuss about the monotonicity as well as the hemicontinuity properties of the linear and nonlinear operators.
\begin{definition}[\cite{VB}]
	Let $\mathbb{X}$ be a Banach space and let $\mathbb{X}^{'}$ be its topological dual.
	An operator $\G:\mathrm{D}\rightarrow
	\mathbb{X}^{'},$ $\mathrm{D}=\mathrm{D}(\G)\subset \mathbb{X}$ is said to be
	\emph{monotone} if
	$$\langle\G(x)-\G(y),x-y\rangle\geq
	0,\ \text{ for all } \ x,y\in \mathrm{D}.$$ 
	The operator $\G(\cdot)$ is said to be \emph{hemicontinuous}, if for all $x, y\in\mathbb{X}$ and $w\in\mathbb{X}',$ $$\lim_{\lambda\to 0}\langle\G(x+\lambda y),w\rangle=\langle\G(x),w\rangle.$$
	The operator $\G(\cdot)$ is called \emph{demicontinuous}, if for all $x\in\mathrm{D}$ and $y\in\mathbb{X}$, the functional $x \mapsto\langle \G(x), y\rangle$  is continuous, or in other words, $x_k\to x$ in $\mathbb{X}$ implies $\G(x_k)\xrightarrow{w}\G(x)$ in $\mathbb{X}'$. Clearly demicontinuity implies hemicontinuity. 
\end{definition}
\begin{lemma}[Theorem 2.2, \cite{MTM7}]\label{thm2.2}
	Let $\X,\Y\in\V\cap\widetilde{\L}^{r+1}$, for $r>3$. Then,	for the operator $\G(\X)=\mu \A\X+\B(\X)+\beta\mathcal{C}(\X)$, we  have 
	\begin{align}\label{fe}
	\langle(\G(\X)-\G(\Y),\X-\Y\rangle+\eta\|\X-\Y\|_{\H}^2\geq 0,
	\end{align}
	where $\eta=\frac{r-3}{2\mu(r-1)}\left(\frac{2}{\beta\mu (r-1)}\right)^{\frac{2}{r-3}}.$ That is, the operator $\G+\eta\mathrm{I}$ is a monotone operator from $\V\cap\widetilde{\L}^{r+1}$ to $\V'+\widetilde{\L}^{\frac{r+1}{r}}$. 
\end{lemma}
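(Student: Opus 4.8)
The plan is to prove the monotonicity estimate \eqref{fe} by decomposing the difference $\langle \G(\X)-\G(\Y),\X-\Y\rangle$ into the contributions of the three operators and handling each term separately. Writing $\Z = \X - \Y$, linearity of $\A$ gives $\langle \mu\A\X - \mu\A\Y,\Z\rangle = \mu\|\Z\|_{\V}^2 \geq 0$, so this term is harmless and in fact will supply the coercivity we need to absorb the bad term. For the Forchheimer term, I would invoke the estimate \eqref{2.23} already established in the excerpt, namely
\begin{align*}
\beta\langle \mathcal{C}(\X)-\mathcal{C}(\Y),\Z\rangle \geq \frac{\beta}{2}\||\X|^{\frac{r-1}{2}}\Z\|_{\H}^2 + \frac{\beta}{2}\||\Y|^{\frac{r-1}{2}}\Z\|_{\H}^2 \geq 0,
\end{align*}
so that this term too contributes a nonnegative quantity, and moreover a pair of weighted $\L^2$-norms that can dominate the nonlinear term. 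The only term without a sign is the bilinear one, so the heart of the argument is to bound $\langle \B(\X)-\B(\Y),\Z\rangle$ from below by minus a combination of $\mu\|\Z\|_{\V}^2$, the weighted norms $\||\Y|^{\frac{r-1}{2}}\Z\|_{\H}^2$, and $\eta\|\Z\|_{\H}^2$.

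For the bilinear term, the key algebraic identity is $\B(\X)-\B(\Y) = \B(\Z,\X) + \B(\Y,\Z)$, and since $\langle \B(\Y,\Z),\Z\rangle = 0$ by \eqref{b0}, we are left with $\langle \B(\X)-\B(\Y),\Z\rangle = \langle \B(\Z,\X),\Z\rangle = b(\Z,\X,\Z) = -b(\Z,\Z,\X)$. The next step is to estimate $|b(\Z,\Z,\X)| = |\int_{\mathcal{O}} (\Z\cdot\nabla)\Z\cdot\X\,\d x|$ by Hölder's inequality as $\|\Z\|_{\V}\,\||\X|\,|\Z|\|_{\L^2} \le \|\Z\|_{\V}\,\||\X|^{\frac{r-1}{2}}\Z\|_{\L^2}^{\frac{2}{r-1}}\|\Z\|_{\L^2}^{\frac{r-3}{r-1}}$ — here the splitting of $|\X|$ into a power $\frac{r-1}{2}\cdot\frac{2}{r-1}=1$ distributed over the weighted norm and the remaining $\|\Z\|_{\H}$ factor is exactly what matches the structure of the Forchheimer contribution. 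Then one applies Young's inequality twice: first with exponents $2$ and $2$ to split off $\frac12\mu\|\Z\|_{\V}^2$, then with exponents $\frac{r-1}{2}$ and $\frac{r-1}{r-3}$ on the product $\||\X|^{\frac{r-1}{2}}\Z\|_{\L^2}^{\frac{2}{r-1}}\cdot\|\Z\|_{\H}^{\frac{r-3}{r-1}}$ to split off a term $\le \frac{\beta}{2}\||\X|^{\frac{r-1}{2}}\Z\|_{\L^2}^2$ at the cost of a constant times $\|\Z\|_{\H}^2$; optimizing the Young constants produces precisely $\eta = \frac{r-3}{2\mu(r-1)}\big(\frac{2}{\beta\mu(r-1)}\big)^{\frac{2}{r-3}}$.

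Assembling the three pieces, the $\frac12\mu\|\Z\|_{\V}^2$ lost in the bilinear estimate is covered by the $\mu\|\Z\|_{\V}^2$ from the linear term (leaving $\frac12\mu\|\Z\|_{\V}^2\ge0$ to spare), and the $\frac{\beta}{2}\||\X|^{\frac{r-1}{2}}\Z\|_{\H}^2$ lost is covered exactly by the matching term from \eqref{2.23}, leaving the other weighted norm nonnegative; what remains on the negative side is only $\eta\|\Z\|_{\H}^2$, which yields \eqref{fe}. The condition $r>3$ is what makes the exponent $\frac{r-3}{r-1}$ on $\|\Z\|_{\H}$ strictly positive (so the Young splitting is legitimate and the constant $\eta$ is finite); for $r=3$ the weighted-norm term alone absorbs the nonlinearity with no $\|\Z\|_{\H}^2$ needed, and in fact the constraint $2\beta\mu>1$ would enter there. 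The main obstacle is the bookkeeping in the second application of Young's inequality — getting the conjugate exponents and the constants to combine into the stated closed form for $\eta$ rather than just an unspecified constant; everything else is a direct assembly of estimates already available in the excerpt. Finally, hemicontinuity (indeed demicontinuity) of $\G$ follows from the continuity/local Lipschitz bounds \eqref{2p9} and the local Lipschitz property of $\mathcal{C}$ on $\widetilde{\L}^{r+1}$ together with the boundedness of $\A:\V\to\V'$, so $\G+\eta\I$ is monotone and hemicontinuous from $\V\cap\widetilde{\L}^{r+1}$ to $\V'+\widetilde{\L}^{\frac{r+1}{r}}$.
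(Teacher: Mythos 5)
Your proof is correct and follows essentially the same route the paper (and its reference for this lemma) uses: split off $\mu\|\Z\|_{\V}^2$ from the Stokes term, use the two-sided lower bound \eqref{2.23} for the Forchheimer term, reduce the convective term to $-b(\Z,\Z,\cdot)$ via \eqref{b0}, and then absorb $\||\cdot|\,|\Z|\|_{\H}$ into the weighted norm and $\|\Z\|_{\H}^2$ by H\"older and Young with exponents $\tfrac{r-1}{2}$ and $\tfrac{r-1}{r-3}$ — exactly the computation carried out in \eqref{6.29}–\eqref{6.30}, and your Young constants do reproduce the stated $\eta$. The only cosmetic difference is that you absorb into the $|\X|$-weighted term while the paper's analogous displays use the $|\Y|$-weighted one; both are available from \eqref{2.23}, so this is immaterial.
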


\begin{lemma}[Theorem 2.3, \cite{MTM7}]\label{thm2.3}
	For the critical case $r=3$ with $2\beta\mu \geq 1$, the operator $\G(\cdot):\V\cap\widetilde{\L}^{r+1}\to \V'+\widetilde{\L}^{\frac{r+1}{r}}$ is globally monotone, that is, for all $\X,\Y\in\V$, we have 
	\begin{align}\label{218}\langle\G(\X)-\G(\Y),\X-\Y\rangle\geq 0.\end{align}
\end{lemma}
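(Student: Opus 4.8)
The plan is to split $\langle\G(\X)-\G(\Y),\X-\Y\rangle$ into the contributions of the Stokes operator, the bilinear term and the Forchheimer nonlinearity, and to show that the coercivity produced by $\mu\A$ together with the strong monotonicity of $\mathcal{C}(\cdot)$ recorded in \eqref{2.23} absorbs the indefinite bilinear contribution precisely when $2\beta\mu\geq1$. Write $\Z:=\X-\Y\in\V$. Since $\A$ is linear, \eqref{2.7a} gives $\mu\langle\A\X-\A\Y,\Z\rangle=\mu\|\Z\|_{\V}^2$, so that
\begin{align*}
\langle\G(\X)-\G(\Y),\X-\Y\rangle=\mu\|\Z\|_{\V}^2+\langle\B(\X)-\B(\Y),\Z\rangle+\beta\langle\mathcal{C}(\X)-\mathcal{C}(\Y),\Z\rangle .
\end{align*}

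For the bilinear term I would use the identity $\B(\X)-\B(\Y)=\B(\X,\Z)+\B(\Z,\Y)$ together with the cancellation $b(\X,\Z,\Z)=0$ from \eqref{b0} to reduce it to $\langle\B(\X)-\B(\Y),\Z\rangle=b(\Z,\Y,\Z)=-b(\Z,\Z,\Y)$; since $n\leq 4$ one has $\V\hookrightarrow\widetilde{\L}^4$, so all the integrals below are finite and a pointwise Cauchy--Schwarz estimate gives
\begin{align*}
|b(\Z,\Z,\Y)|\leq\int_{\mathcal{O}}|\Z(x)|\,|\Y(x)|\,|\nabla\Z(x)|\,\d x\leq\||\Y|\Z\|_{\H}\,\|\Z\|_{\V}.
\end{align*}
For the Forchheimer term, specialising \eqref{2.23} to the critical exponent $r=3$ (so that $\tfrac{r-1}{2}=1$) yields $\langle\mathcal{C}(\X)-\mathcal{C}(\Y),\Z\rangle\geq\tfrac12\||\X|\Z\|_{\H}^2+\tfrac12\||\Y|\Z\|_{\H}^2\geq\tfrac12\||\Y|\Z\|_{\H}^2$. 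Collecting the three bounds,
\begin{align*}
\langle\G(\X)-\G(\Y),\X-\Y\rangle\geq\mu\|\Z\|_{\V}^2-\||\Y|\Z\|_{\H}\|\Z\|_{\V}+\frac{\beta}{2}\||\Y|\Z\|_{\H}^2 .
\end{align*}

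The proof is then closed by Young's inequality in the sharp form $\||\Y|\Z\|_{\H}\|\Z\|_{\V}\leq\mu\|\Z\|_{\V}^2+\tfrac{1}{4\mu}\||\Y|\Z\|_{\H}^2$, which leaves $\langle\G(\X)-\G(\Y),\X-\Y\rangle\geq\left(\tfrac{\beta}{2}-\tfrac{1}{4\mu}\right)\||\Y|\Z\|_{\H}^2=\tfrac{2\beta\mu-1}{4\mu}\||\Y|\Z\|_{\H}^2\geq0$ exactly under the hypothesis $2\beta\mu\geq1$. The delicate point, and the only genuine obstacle, is this last balancing: the \emph{entire} coercive term $\mu\|\Z\|_{\V}^2$ has to be spent against the cross term, so global monotonicity survives only because the Forchheimer gain $\tfrac{\beta}{2}\||\Y|\Z\|_{\H}^2$ dominates the residual $\tfrac{1}{4\mu}\||\Y|\Z\|_{\H}^2$. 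Any coarser handling of $b(\Z,\Z,\Y)$ that does not keep the pointwise weight $|\Y|$ (such as the $n=2$, $r\leq 3$ estimate $\|\Z\|_{\widetilde{\L}^4}^2\leq\sqrt 2\,\|\Z\|_{\H}\|\Z\|_{\V}$ used elsewhere in this section) would return only a local monotonicity bound of the type in Lemma \ref{thm2.2}, and it is precisely the full quadratic weight supplied by \eqref{2.23} at the critical exponent that upgrades the estimate to the global statement \eqref{218} under the threshold $2\beta\mu\geq1$.
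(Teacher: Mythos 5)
Your proof is correct and follows essentially the same route the paper uses for this estimate (compare the $n=r=3$ computation in \eqref{6.33}--\eqref{632}, where the identical decomposition $\langle\B(\X)-\B(\Y),\Z\rangle=-b(\Z,\Z,\Y)$, the lower bound $\tfrac{\beta}{2}\||\Y|\Z\|_{\H}^2$ from \eqref{2.23} at $r=3$, and a weighted Young inequality are combined; the paper carries a parameter $\theta$ to retain coercivity, while your endpoint choice $\theta=2$ gives exactly the monotonicity threshold $2\beta\mu\geq1$). No gaps.
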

\begin{lemma}[Remark 2.4, \cite{MTM7}]
	Let $n=2$, $r\in[1,3]$ and $\X,\Y\in\V$. Then,	for the operator $\G(\X)=\mu \A\X+\B(\X)+\beta\mathcal{C}(\X)$, we  have 
	\begin{align}\label{fe2}
	\langle(\G(\X)-\G(\Y),\X-\Y\rangle+ \frac{27}{32\mu ^3}N^4\|\X-\Y\|_{\H}^2\geq 0,
	\end{align}
	for all $\Y\in{\mathbb{B}}_N$, where ${\mathbb{B}}_N$ is an $\widetilde{\L}^4$-ball of radius $N$, that is,
	$
	{\mathbb{B}}_N:=\big\{\z\in\widetilde{\L}^4:\|\z\|_{\widetilde{\L}^4}\leq N\big\}.
	$
\end{lemma}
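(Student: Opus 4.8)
I would decompose the pairing $\langle\G(\X)-\G(\Y),\X-\Y\rangle$ into its three constituents, keep the linear and damping pieces as nonnegative terms, and absorb the convective piece into $\mu\|\X-\Y\|_{\V}^{2}$, using that $\Y\in\mathbb{B}_{N}$. Put $\Z:=\X-\Y\in\V$ (note $\V\hookrightarrow\wi\L^{4}$ in two dimensions, so every norm below is finite). By \eqref{2.7a}, $\mu\langle\A\X-\A\Y,\Z\rangle=\mu\|\Z\|_{\V}^{2}$, and by \eqref{214} (which holds for every $r\geq1$), $\beta\langle\mathcal{C}(\X)-\mathcal{C}(\Y),\Z\rangle\geq0$, so that
\begin{align*}
\langle\G(\X)-\G(\Y),\Z\rangle\geq\mu\|\Z\|_{\V}^{2}+\langle\B(\X)-\B(\Y),\Z\rangle,
\end{align*}
and the whole matter reduces to bounding the convective term from below.

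For that term I would use the identity $\B(\X)-\B(\Y)=\B(\X,\Z)+\B(\Z,\Y)$ together with $b(\X,\Z,\Z)=0$ from \eqref{b0}, which gives $\langle\B(\X)-\B(\Y),\Z\rangle=b(\Z,\Y,\Z)=-b(\Z,\Z,\Y)$; the purpose of this rearrangement is exactly that the remaining ``free'' factor is $\Y$, the one we control by $N$. H\"older's inequality (exponents $4,2,4$) then gives $|b(\Z,\Z,\Y)|\leq\|\Z\|_{\wi\L^{4}}\|\Z\|_{\V}\|\Y\|_{\wi\L^{4}}\leq N\|\Z\|_{\wi\L^{4}}\|\Z\|_{\V}$, and the two-dimensional Ladyzhenskaya inequality $\|\Z\|_{\wi\L^{4}}^{2}\leq\sqrt{2}\,\|\Z\|_{\H}\|\Z\|_{\V}$ turns this into $|b(\Z,\Z,\Y)|\leq 2^{1/4}N\|\Z\|_{\H}^{1/2}\|\Z\|_{\V}^{3/2}$. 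A final application of Young's inequality (conjugate exponents $4/3$ and $4$), with the free parameter chosen so that the $\|\Z\|_{\V}$-part is precisely $\mu\|\Z\|_{\V}^{2}$, yields $|b(\Z,\Z,\Y)|\leq\mu\|\Z\|_{\V}^{2}+\frac{27}{32\mu^{3}}N^{4}\|\Z\|_{\H}^{2}$. Substituting this into the inequality above and cancelling $\mu\|\Z\|_{\V}^{2}$ produces \eqref{fe2}.

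The argument is essentially bookkeeping and I do not expect a genuine obstacle; two points need care. First, the convective term must be reorganized so that the uncontrolled slow variable $\X$ drops out and only $\Y\in\mathbb{B}_{N}$ survives --- this is precisely what makes the bound merely \emph{local} on $\mathbb{B}_{N}$, and why, in contrast with the critical case of Lemma \ref{thm2.3}, one cannot hope for global monotonicity when $n=2$ and $r\in[1,3)$. Second, the Ladyzhenskaya/Young step must be carried out with the right conjugate exponents and scaling to land exactly on the constant $\frac{27}{32\mu^{3}}$; any admissible choice still gives a bound of the form $c\,\mu^{-3}N^{4}\|\Z\|_{\H}^{2}$, which is all that the subsequent stopping-time arguments actually require.
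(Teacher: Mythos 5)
Your proposal is correct and follows the same route as the paper's local monotonicity computations (the statement itself is only cited from Remark 2.4 of \cite{MTM7}, but the identical splitting $\mu\langle\A\Z,\Z\rangle=\mu\|\Z\|_{\V}^2$, monotonicity of $\mathcal{C}$ via \eqref{214}, and the H\"older--Ladyzhenskaya--Young treatment of $b(\Z,\Z,\Y)$ is exactly what the paper carries out in, e.g., \eqref{419}). One small quantitative remark: absorbing the full $\mu\|\Z\|_{\V}^2$ after $|b(\Z,\Z,\Y)|\leq 2^{1/4}N\|\Z\|_{\H}^{1/2}\|\Z\|_{\V}^{3/2}$ actually produces the sharper constant $\tfrac{27}{128\mu^3}N^4$ rather than ``exactly'' $\tfrac{27}{32\mu^3}N^4$, but since this is an upper bound the stated inequality \eqref{fe2} follows a fortiori, so this does not affect the validity of your argument.
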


\begin{lemma}[Lemma 2.5, \cite{MTM7}]\label{lem2.8}
	The operator $\G:\V\cap\widetilde{\L}^{r+1}\to \V'+\widetilde{\L}^{\frac{r+1}{r}}$ is demicontinuous. 
\end{lemma}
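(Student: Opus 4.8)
The plan is to verify the definition of demicontinuity directly. Let $\X_k\to\X$ strongly in $\V\cap\widetilde{\L}^{r+1}$; I must show $\G(\X_k)\xrightarrow{w}\G(\X)$ in $\V'+\widetilde{\L}^{\frac{r+1}{r}}$. Since $\G=\mu\A+\B+\beta\mathcal{C}$, the target space is reflexive, and weak limits behave linearly, it suffices to treat the three summands separately. For the first two I expect to obtain \emph{strong} convergence in $\V'+\widetilde{\L}^{\frac{r+1}{r}}$, which is more than required; only the inertia term $\B$ needs care.

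For the Stokes term, $\A$ is linear and, by \eqref{2.7a}, bounded from $\V$ into $\V'$ with $\|\A\X_k-\A\X\|_{\V'}\le\|\X_k-\X\|_{\V}\to 0$, so $\mu\A\X_k\to\mu\A\X$ strongly in $\V'$. For the absorption term $\mathcal{C}(\X)=\mathrm{P}_{\H}(|\X|^{r-1}\X)$, I would use the elementary pointwise bound $\big||a|^{r-1}a-|b|^{r-1}b\big|\le c_r\big(|a|^{r-1}+|b|^{r-1}\big)|a-b|$ for $a,b\in\R^n$, test against divergence-free $\Z$ (so that $\mathrm{P}_{\H}$ drops out of the pairing), and apply H\"older's inequality with exponents $\frac{r+1}{r-1}$ and $r+1$ to get
\begin{align*}
\|\mathcal{C}(\X_k)-\mathcal{C}(\X)\|_{\widetilde{\L}^{\frac{r+1}{r}}}\le c_r\big(\|\X_k\|_{\widetilde{\L}^{r+1}}^{r-1}+\|\X\|_{\widetilde{\L}^{r+1}}^{r-1}\big)\|\X_k-\X\|_{\widetilde{\L}^{r+1}},
\end{align*}
which tends to $0$ since $\{\X_k\}$ is bounded in $\widetilde{\L}^{r+1}$ and $\X_k\to\X$ there; in particular $\mathcal{C}$ is locally Lipschitz, hence continuous, from $\widetilde{\L}^{r+1}$ into $\widetilde{\L}^{\frac{r+1}{r}}$.

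The main work is the term $\B$. I would split $\B(\X_k)-\B(\X)=\B(\X_k-\X,\X_k)+\B(\X,\X_k-\X)$ and bound both pieces using \eqref{2p9} and \eqref{212}. For $n=3$, $r\ge 3$ (including the critical case $r=3$ with $2\beta\mu>1$), the intermediate norm $\|\cdot\|_{\widetilde{\L}^{2(r+1)/(r-1)}}$ is controlled by interpolating between $\H$ and $\widetilde{\L}^{r+1}$ with exponent $\theta=\frac{r-3}{r-1}$, giving
\begin{align*}
\|\B(\X_k)-\B(\X)\|_{\V'}\le\Big(\|\X_k\|_{\H}^{\frac{r-3}{r-1}}\|\X_k\|_{\widetilde{\L}^{r+1}}^{\frac{2}{r-1}}+\|\X\|_{\H}^{\frac{r-3}{r-1}}\|\X\|_{\widetilde{\L}^{r+1}}^{\frac{2}{r-1}}\Big)\|\X_k-\X\|_{\widetilde{\L}^{r+1}}\longrightarrow 0;
\end{align*}
for $n=2$, $r\in[1,3]$ one instead uses the Ladyzhenskaya bound $\|\B(\X,\Y)\|_{\V'}\le\|\X\|_{\widetilde{\L}^4}\|\Y\|_{\widetilde{\L}^4}$ together with $\V\hookrightarrow\widetilde{\L}^4$ to reach the same conclusion. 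Summing the three contributions yields $\G(\X_k)\to\G(\X)$ strongly in $\V'+\widetilde{\L}^{\frac{r+1}{r}}$, a fortiori weakly, which proves demicontinuity (indeed continuity).

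I expect the only genuine obstacle to lie inside the $\B$-estimate: keeping the exponent $\frac{2(r+1)}{r-1}$ under control via the interpolation inequality and making the argument uniform over all admissible pairs $(n,r)$, in particular matching up the critical exponent $r=3$ in the two- and three-dimensional regimes. Should the locally-Lipschitz route for $\B$ feel too tight, a robust alternative is to pass to a subsequence along which $\X_k\to\X$ almost everywhere while $\B(\X_k)$ converges weakly in the reflexive space $\V'+\widetilde{\L}^{\frac{r+1}{r}}$, to identify the weak limit as $\B(\X)$ by testing against $\phi\in\mathcal{V}$ and using a.e.\ convergence with the uniform $\widetilde{\L}^{r+1}$-bound (Vitali's theorem), and then to conclude by uniqueness of the limit that the whole sequence converges weakly.
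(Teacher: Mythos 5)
Your proof is correct and follows essentially the same route as the paper's (the argument of Lemma 2.5 in \cite{MTM7}, reproduced in the paper's hemicontinuity step for the coupled operator): the same decomposition $\G=\mu\A+\B+\beta\mathcal{C}$, the same splitting $\B(\X_k)-\B(\X)=\B(\X_k-\X,\X_k)+\B(\X,\X_k-\X)$ with \eqref{2p9} and the interpolation between $\H$ and $\widetilde{\L}^{r+1}$, and the same Taylor/H\"older bound for $\mathcal{C}$. The only difference is cosmetic: you package the estimates as strong convergence in $\V'+\widetilde{\L}^{\frac{r+1}{r}}$ (hence continuity, a fortiori demicontinuity), whereas the paper tests against a fixed element of $\V\cap\widetilde{\L}^{r+1}$ and concludes weak convergence of the pairings directly.
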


\section{Stochastic  Coupled   Convective Brinkman-Forchheimer Equations}\label{sec4}\setcounter{equation}{0}
Let $(\Omega,\mathscr{F},\mathbb{P})$ be a complete probability space equipped with an increasing family of sub-sigma fields $\{\mathscr{F}_t\}_{0\leq t\leq T}$ of $\mathscr{F}$ satisfying:
\begin{enumerate}
	\item [(i)] $\mathscr{F}_0$ contains all elements $F\in\mathscr{F}$ with $\mathbb{P}(F)=0$,
	\item [(ii)] $\mathscr{F}_t=\mathscr{F}_{t+}=\bigcap\limits_{s>t}\mathscr{F}_s,$ for $0\leq t\leq T$.
\end{enumerate}  In this section, we consider the stochastic coupled   convective Brinkman-Forchheimer  equations perturbed by multiplicative Gaussian noise. On  taking orthogonal projection $\mathrm{P}_{\H}$ onto the first two equations in \eqref{1p1}, we obtain  
\begin{equation}\label{3.6}
\left\{
\begin{aligned}
\d \X^{\e,\delta}_t&=-[\mu\A \X^{\e,\delta}_t+\B(\X^{\e,\delta}_t)+\alpha\X^{\e,\delta}_t+\beta\mathcal{C}(\X^{\e,\delta}_t)-\mathrm{F}(\X^{\e,\delta}_t,\Y^{\e,\delta}_t)]\d t+\sqrt{\e}\sigma_1(\X^{\e,\delta}_t)\Q_1^{1/2}\d\W_t,\\
\d \Y^{\e,\delta}_t&=-\frac{1}{\delta}[\mu\A \Y^{\e,\delta}_t+\alpha\Y^{\e,\delta}_t+\beta\mathcal{C}(\Y_{t}^{\e,\delta})-\mathrm{G}(\X^{\e,\delta}_t,\Y^{\e,\delta}_t)]\d t+\frac{1}{\sqrt{\delta}}\sigma_2(\X^{\e,\delta}_t,\Y^{\e,\delta}_t)\Q_2^{1/2}\d\W_t,\\
\X^{\e,\delta}_0&=\x,\ \Y^{\e,\delta}_0=\y,
\end{aligned}
\right. 
\end{equation}
where $\Q_i$'s, for $i=1,2$  are positive symmetric trace class operators in $\H$ and $\W_t$ is an $\H$-valued standard cylindrical Wiener process. Strictly speaking, one has to use $\mathrm{P}_{\H}\mathrm{F}$, $\mathrm{P}_{\H}\mathrm{G}$, $\mathrm{P}_{\H}\sigma_1$ and $\mathrm{P}_{\H}\sigma_2$ instead of $\mathrm{F}$, $\mathrm{G}$, $ \sigma_1$ and $\sigma_2$ in \eqref{3.6}, and for simplicity of notations we are keeping them as it is. Since $\Q_1$ and $\Q_2$ are trace class operators, the embedding of $\Q_i^{1/2}\H$ in $\H$ is Hilbert-Schmidt, for $i=1,2$. Let $\mathcal{L}(\H;\H)$ denotes the space of all bounded linear operators on $\H$ and $\mathcal{L}_2(\H;\H)$ denotes the space of all Hilbert-Schmidt operators from $\H$ to $\H$.  The space $\mathcal{L}_2(\H;\H)$ is a Hilbert space equipped with the norm $ \left\|\Psi\right\|^2_{\mathcal{L}_2}=\Tr\left(\Psi\Psi^*\right)=\sum_{k=1}^{\infty}\|\Psi^*e_k\|_{\H}^2$ and inner product $(\Psi,\Phi)_{\mathcal{L}_2}=\Tr(\Psi\Phi^*)=\sum_{k=1}^{\infty}(\Phi^*e_k,\Psi^*e_k)$. For more details, the interested readers are referred to see \cite{DaZ}. 

We need the following Assumptions on $\F,\G,\sigma_1$ and $\sigma_2$ to obtain our main results (see \cite{MTM11,XSRW} also). 
\begin{assumption}\label{ass3.6}
	The functions $\F,\G:\H\times\H\to\H$, $\sigma_1\Q_1^{1/2}:\H\to\mathcal{L}_{2}(\H;\H)$ and $\sigma_2\Q_2^{1/2}:\H\times\H\to\mathcal{L}_{2}(\H;\H)$ satisfy the following Assumptions:
	\begin{itemize}
		\item [(A1)] The functions $\F,\G,\sigma_1,\sigma_2$ are Lipschitz continuous, that is, there exist positive constants $C,L_{\G}$ and $L_{\sigma_2}$ such that for any $\x_1,\x_2,\y_1,\y_2\in\H$, we have 
		\begin{align*}
		\|\F(\x_1,\y_1)-\F(\x_2,\y_2)\|_{\H}&\leq C(\|\x_1-\x_2\|_{\H}+\|\y_1-\y_2\|_{\H}),\\
		\|\G(\x_1,\y_1)-\G(\x_2,\y_2)\|_{\H}&\leq C\|\x_1-\x_2\|_{\H}+L_{\G}\|\y_1-\y_2\|_{\H},\\
		\|[\sigma_1(\x_1)-\sigma_1(\x_2)]\Q_1^{1/2}\|_{\mathcal{L}_2}&\leq C\|\x_1-\x_2\|_{\H},\\
		\|[\sigma_2(\x_1,\y_1)-\sigma_2(\x_2,\y_2)]\Q_2^{1/2}\|_{\mathcal{L}_2}&\leq C\|\x_1-\x_2\|_{\H}+L_{\sigma_2}\|\y_1-\y_2\|_{\H}. 
		\end{align*}
		\item [(A2)] The function $\sigma_2$ grows linearly in $\x$, but is bounded in $\y$, that is, there exists a constant $C>0$ such that  
		\begin{align*}
	\sup_{\y\in\H}	\|\sigma_2(\x,\y)\Q_2^{1/2}\|_{\mathcal{L}_2}\leq C(1+\|\x\|_{\H}), \ \text{ for all }\ \x\in\H. 
		\end{align*}
		\item [(A3)] The Brinkman coefficient $\mu>0$, Darcy coefficient $\alpha>0$,  the smallest eigenvalue $\lambda_1$  of the Stokes operator and the Lipschitz constants $L_{\G}$ and $L_{\sigma_2}$ satisfy $$\mu\lambda_1+2\alpha-2L_{\G}-2L_{\sigma_2}^2>0.$$ 
			\item [(A4)] $\lim\limits_{\e\downarrow 0}\delta(\e)=0$ and $\lim\limits_{\e\downarrow 0}\frac{\delta}{\e}=0$. 
	\end{itemize}
\end{assumption}

Let us now provide the definition of a unique global strong solution in the probabilistic sense to the system (\ref{3.6}).
\begin{definition}[Global strong solution]
	Let $(\x,\y)\in\H\times\H$ be given. An $\H\times\H$-valued $(\mathscr{F}_t)_{t\geq 0}$-adapted stochastic process $(\X_{t}^{\e,\delta},\Y_{t}^{\e,\delta})$ is called a \emph{strong solution} to the system (\ref{3.6}) if the following conditions are satisfied: 
	\begin{enumerate}
		\item [(i)] the process \begin{align*}(\X^{\e,\delta},\Y^{\e,\delta})&\in\mathrm{L}^2(\Omega;\mathrm{L}^{\infty}(0,T;\H)\cap\mathrm{L}^2(0,T;\V))\cap\mathrm{L}^{r+1}(\Omega;\mathrm{L}^{r+1}(0,T;\widetilde{\L}^{r+1}))\nonumber\\&\quad\times\mathrm{L}^2(\Omega;\mathrm{L}^{\infty}(0,T;\H)\cap\mathrm{L}^2(0,T;\V))\cap\mathrm{L}^{r+1}(\Omega;\mathrm{L}^{r+1}(0,T;\widetilde{\L}^{r+1}))\end{align*} and $(\X_t^{\e,\delta},\Y_t^{\e,\delta})$ has a $(\V\cap\widetilde{\L}^{r+1})\times(\V\cap\widetilde{\L}^{r+1})$-valued  modification, which is progressively measurable with continuous paths in $\H\times\H$ and \begin{align*}(\X^{\e,\delta},\Y^{\e,\delta})&\in\C([0,T];\H)\cap\mathrm{L}^2(0,T;\V)\cap\mathrm{L}^{r+1}(0,T;\widetilde{\L}^{r+1})\nonumber\\&\quad\times\C([0,T];\H)\cap\mathrm{L}^2(0,T;\V)\cap\mathrm{L}^{r+1}(0,T;\widetilde{\L}^{r+1}), \ \mathbb{P}\text{-a.s.}\end{align*} 
		\item [(ii)] the following equality holds for every $t\in [0, T ]$, as an element of $(\V'+\wi\L^{\frac{r+1}{r}})\times(\V'+\wi\L^{\frac{r+1}{r}}),$ $\mathbb{P}$-a.s.:
	\begin{equation}\label{4.4}
	\left\{
	\begin{aligned}
	\X^{\e,\delta}_t&=\x-\int_0^t[\mu\A \X^{\e,\delta}_s+\alpha\X^{\e,\delta}_s+\B(\X^{\e,\delta}_s)+\beta\mathcal{C}(\X^{\e,\delta}_s)-\F(\X^{\e,\delta}_s,\Y^{\e,\delta}_s)]\d s\\&\quad+\sqrt{\e}\int_0^t\sigma_1(\X^{\e,\delta}_s)\Q_1^{1/2}\d\W_s,\\
	 \Y^{\e,\delta}_t&=\y-\frac{1}{\delta}\int_0^t[\mu\A \Y^{\e,\delta}_s+\alpha\Y^{\e,\delta}_s+\beta\mathcal{C}(\Y_{s}^{\e,\delta})-\G(\X^{\e,\delta}_s,\Y^{\e,\delta}_s)]\d s\\&\quad+\frac{1}{\sqrt{\delta}}\int_0^t\sigma_2(\X^{\e,\delta}_s,\Y^{\e,\delta}_s)\Q_2^{1/2}\d\W_s,
	\end{aligned}
	\right. 
	\end{equation}
			\item [(iii)] the following It\^o formula (energy equality) holds true: 
				\begin{align}\label{3p3}
			&	\|	\X^{\e,\delta}_t\|_{\H}^2+\|\Y^{\e,\delta}_t\|_{\H}^2+2\mu \int_0^t\left(\|	\X^{\e,\delta}_s\|_{\V}^2+\frac{1}{\delta}\|\Y^{\e,\delta}_s\|_{\V}^2\right)\d s+2\alpha \int_0^t\left(\|	\X^{\e,\delta}_s\|_{\H}^2+\frac{1}{\delta}\|\Y^{\e,\delta}_s\|_{\H}^2\right)\d s\nonumber\\&\quad +2\beta\int_0^t\left(\|\X^{\e,\delta}_s\|_{\widetilde{\L}^{r+1}}^{r+1}+\frac{1}{\delta}\|\Y^{\e,\delta}_s\|_{\widetilde{\L}^{r+1}}^{r+1}\right)\d s\nonumber\\&=\|\x\|_{\H}^2+\|\y\|_{\H}^2+2\int_0^t(\F(\X^{\e,\delta}_s,\Y^{\e,\delta}_s),\X^{\e,\delta}_s)\d s+\frac{2}{\delta}\int_0^t(\G(\X^{\e,\delta}_s,\Y^{\e,\delta}_s),\X^{\e,\delta}_s)\d s\nonumber\\&\quad+\int_0^t\left(\e\|\sigma_1(\X^{\e,\delta}_s)\Q_1^{1/2}\|_{\mathcal{L}_2}^2+\frac{1}{\delta}\|\sigma_2(\X^{\e,\delta}_s,\Y^{\e,\delta}_s)\Q_2^{1/2}\|_{\mathcal{L}_2}^2\right)\d s\nonumber\\&\quad+2\int_0^t(\sqrt{\e}\sigma_1(\X^{\e,\delta}_s)\Q_1^{1/2}\d\W_s,\X^{\e,\delta}_s)+\frac{2}{\sqrt{\delta}}\int_0^t(\sigma_2(\X^{\e,\delta}_s,\Y^{\e,\delta}_s)\Q_2^{1/2}\d\W_s,\Y^{\e,\delta}_s),
			\end{align}
			for all $t\in(0,T)$, $\mathbb{P}$-a.s.
	\end{enumerate}
\end{definition}
An alternative version of condition (\ref{4.4}) is to require that for any  $(\mathrm{U},\mathrm{V})\in(\V\cap\widetilde{\L}^{r+1})\times(\V\cap\widetilde{\L}^{r+1})$, $\mathbb{P}$-a.s.:
	\begin{equation}\label{4.5}
\left\{
\begin{aligned}
(\X^{\e,\delta}_t,\mathrm{U})&=(\x,\mathrm{U})-\int_0^t\langle\mu\A \X^{\e,\delta}_s+\B(\X^{\e,\delta}_s)+\alpha\X^{\e,\delta}_s+\beta\mathcal{C}(\X^{\e,\delta}_s)-\F(\X^{\e,\delta}_s,\Y^{\e,\delta}_s),\mathrm{U}\rangle\d s\nonumber\\&\quad+\int_0^t(\sqrt{\e}\sigma_1(\X^{\e,\delta}_s)\Q_1^{1/2}\d\W_s,\mathrm{U}),\\
(\Y^{\e,\delta}_t,\mathrm{V})&=(\y,\mathrm{V})-\frac{1}{\delta}\int_0^t\langle\mu\A \Y^{\e,\delta}_s+\alpha\Y^{\e,\delta}_s+\beta\mathcal{C}(\Y_{s}^{\e,\delta})-\G(\X^{\e,\delta}_s,\Y^{\e,\delta}_s),\mathrm{V}\rangle\d s\nonumber\\&\quad+\frac{1}{\sqrt{\delta}}\int_0^t(\sigma_2(\X^{\e,\delta}_s,\Y^{\e,\delta}_s)\Q_2^{1/2}\d\W_s,\mathrm{V}),
\end{aligned}
\right. 
\end{equation}
for every $t\in[0,T]$. 
\begin{definition}
	A strong solution $(\X^{\e,\delta}_t,\Y^{\e,\delta}_t)$ to (\ref{3.6}) is called a
	\emph{pathwise  unique strong solution} if
	$(\wi\X^{\e,\delta}_t,\wi\Y^{\e,\delta}_t)$ is an another strong
	solution, then $$\mathbb{P}\Big\{\omega\in\Omega:  (\X^{\e,\delta}_t,\Y^{\e,\delta}_t)=(\wi\X^{\e,\delta}_t,\wi\Y^{\e,\delta}_t),\  \text{ for all }\ t\in[0,T]\Big\}=1.$$ 
\end{definition}
\begin{remark}\label{rem3.4}
For $n=2$ and $r\in[1,3]$, 	making use of the Gagliardo-Nirenberg interpolation inequality, we know that $\C([0,T];\H)\cap\mathrm{L}^2(0,T;\V)\subset\mathrm{L}^{r+1}(0,T;\wi\L^{r+1})$, and hence we get  $\C([0,T];\H)\cap\mathrm{L}^2(0,T;\V)\cap\mathrm{L}^{r+1}(0,T;\widetilde{\L}^{r+1})=\C([0,T];\H)\cap\mathrm{L}^2(0,T;\V)$. 
\end{remark}

\subsection{Global strong solution} In this section, we discuss the existence and uniqueness of strong solution to the system \eqref{3.6}. 
For convenience, we make use of  the following simplified notations. 	Let us define $\mathscr{H}:=\H\times\H$. For any $\mathrm{U}=(\x_1,\x_2),\mathrm{V}=(\y_1,\y_2)\in\mathscr{H},$ we denote the inner product and norm on this Hilbert space by 
\begin{align}
(\mathrm{U},\mathrm{V})=(\x_1,\y_1)+(\x_2,\y_2), \ \|\mathrm{U}\|_{\mathscr{H}}=\sqrt{(\mathrm{U},\mathrm{U})}=\sqrt{\|\x_1\|_{\H}^2+\|\x_2\|_{\H}^2}. 
\end{align}
In a similar way, we define $\mathscr{V}:=\V\times\V$. The  inner product and norm on this Hilbert space is defined by 
\begin{align}
(\mathrm{U},\mathrm{V})_{\mathscr{V}}=(\nabla\x_1,\nabla\y_1)+(\nabla\x_2,\nabla\y_2), \ \|\mathrm{U}\|_{\mathscr{V}}=\sqrt{(\mathrm{U},\mathrm{U})_{\mathscr{V}}}=\sqrt{\|\nabla\x_1\|_{\H}^2+\|\nabla\x_2\|_{\H}^2}, 
\end{align}
for all $\mathrm{U},\mathrm{V}\in\mathscr{V}$. We denote $\mathscr{V}'$ as the dual of $\mathscr{V}$. We define the space $\widetilde{\mathfrak{L}}^{r+1}:=\widetilde{\L}^{r+1}\times\widetilde{\L}^{r+1}$ with the norm given by 
$$\|\mathrm{U}\|_{\widetilde{\mathfrak{L}}^{r+1}}=\left\{\|\x_1\|_{\wi\L^{r+1}}^{r+1}+\|\x_2\|_{\wi\L^{r+1}}^{r+1}\right\}^{r+1},$$ 	for all $\mathrm{U}\in\widetilde{\mathfrak{L}}^{r+1}$. We represent the duality pairing between $\mathscr{V}$ and its dual $\mathscr{V}'$,  $\mathfrak{L}^{r+1}$ and its dual $\mathfrak{L}^{\frac{r+1}{r}}$, and $\mathscr{V}\cap\mathfrak{L}^{r+1}$ and its dual $\mathscr{V}'+\mathfrak{L}^{\frac{r+1}{r}}$ as $\langle\cdot,\cdot\rangle$. Note that we have the Gelfand triple $\mathscr{V}\cap\mathfrak{L}^{r+1} \subset\mathscr{H}\subset\mathscr{V}'+\mathfrak{L}^{\frac{r+1}{r}}$.  Let
$\Q=(\Q_1,\Q_2)$ be a positive symmetric trace class operator on $\mathscr{H}$. Let us now rewrite the system \eqref{3.6} for $\Z^{\e,\delta}_{t}=(\X^{\e,\delta}_{t},\Y^{\e,\delta}_{t})$ as 
\begin{equation}\label{3p6}
\left\{
\begin{aligned}
\d\Z^{\e,\delta}_{t}&=-\left[\mu\wi\A\Z^{\e,\delta}_{t}+\wi\F(\Z^{\e,\delta}_{t})\right]\d t+\wi\sigma(\Z^{\e,\delta}_{t})\Q^{1/2}\d\W_t, \\
\Z^{\e,\delta}_0&=(\x,\y)\in\mathscr{H}, 
\end{aligned}
\right. 
\end{equation}
where 
\begin{align*}
\wi\A\Z^{\e,\delta}&=\left(\A\X^{\e,\delta},\frac{1}{\delta}\A\Y^{\e,\delta}\right), \\
\wi\F(\Z^{\e,\delta})&=\left(\B(\X^{\e,\delta})+\alpha\X^{\e,\delta}+\beta\mathcal{C}(\X^{\e,\delta})-\F(\X^{\e,\delta},\Y^{\e,\delta}),\frac{\alpha}{\delta}\Y^{\e,\delta}+\frac{\beta}{\delta}\mathcal{C}(\Y^{\e,\delta})-\frac{1}{\delta}\G(\X^{\e,\delta},\Y^{\e,\delta})\right), \\
\wi\sigma(\Z^{\e,\delta})&=\left(\sqrt{\e}\sigma_1(\X^{\e,\delta}),\frac{1}{\sqrt{\delta}}\sigma_2(\X^{\e,\delta},\Y^{\e,\delta})\right). 
\end{align*}
Note that the mappings $\widetilde{\A}:\mathscr{V}\to\mathscr{V}'$ and $\widetilde{\F}:\mathscr{V}\cap\mathfrak{L}^{r+1}\to\mathscr{V}'+\mathfrak{L}^{\frac{r+1}{r}}$ are well defined.   It can be easily seen that the operator $\wi\sigma\Q^{1/2}:\mathscr{H}\to\mathcal{L}_{2}(\mathscr{H};\mathscr{H})$, where  $\mathcal{L}_{2}(\mathscr{H};\mathscr{H})$ is the space of all Hilbert-Schmidt operators from $\mathscr{H}$ to $\mathscr{H}$ with the norm 
\begin{align}
\|\wi\sigma(\z)\Q^{1/2}\|_{\mathcal{L}_2}=\sqrt{\|\sigma_1(\x)\Q_1^{1/2}\|_{\mathcal{L}_2}^2+\|\sigma_2(\x,\y)\Q_2^{1/2}\|_{\mathcal{L}_2}^2}, \ \text{ for }\  \z=(\x,\y)\in\mathscr{H}. 
\end{align}

The following Theorem on the existence and uniqueness of strong solution to the system \eqref{3p6} can be  proved in a similar way as in Theorem 3.4, \cite{MTM11}. 

\begin{theorem}[Theorem 3.4, \cite{MTM11}]\label{exis}
	Let $(\x,\y)\in \mathscr{H}$ be given.  Then for $n=2$, $r\in[1,\infty)$ and $n=3$, $r\in [3,\infty)$ ($2\beta\mu\geq 1$, for $r=3$), there exists a \emph{pathwise unique  strong solution}	$\Z^{\e,\delta}$ to the system (\ref{3p6}) such that \begin{align*}\Z^{\e,\delta}&\in\mathrm{L}^2(\Omega;\mathrm{L}^{\infty}(0,T;\mathscr{H})\cap\mathrm{L}^2(0,T;\mathscr{V}))\cap\mathrm{L}^{r+1}(\Omega;\mathrm{L}^{r+1}(0,T;\widetilde{\mathfrak{L}}^{r+1})),\end{align*} and a continuous modification with trajectories in $\mathscr{H}$ and $\Z^{\e,\delta}\in\C([0,T];\mathscr{H})\cap\mathrm{L}^2(0,T;\mathscr{V})\cap\mathrm{L}^{r+1}(0,T;\widetilde{\mathfrak{L}}^{r+1})$, $\mathbb{P}$-a.s.
\end{theorem}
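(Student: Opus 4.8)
The proof proceeds exactly along the lines of \cite[Theorem 3.4]{MTM11}, so I only outline the main steps. The system \eqref{3p6} is already written as a single It\^o evolution equation for the pair $\Z^{\e,\delta}_t=(\X^{\e,\delta}_t,\Y^{\e,\delta}_t)$ in the Gelfand triple $\mathscr{V}\cap\widetilde{\mathfrak{L}}^{r+1}\subset\mathscr{H}\subset\mathscr{V}'+\widetilde{\mathfrak{L}}^{\frac{r+1}{r}}$, with drift $-[\mu\wi\A\Z^{\e,\delta}_t+\wi\F(\Z^{\e,\delta}_t)]$ and diffusion $\wi\sigma(\Z^{\e,\delta}_t)\Q^{1/2}$. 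First I would set up finite-dimensional Galerkin approximations $\Z^{\e,\delta,n}$ obtained by projecting onto $\mathscr{H}_n=\mathrm{span}\{e_1,\dots,e_n\}\times\mathrm{span}\{e_1,\dots,e_n\}$, where $\{e_k\}$ is the eigenbasis of $\A$; the projected coefficients are locally Lipschitz with polynomial growth, so classical finite-dimensional SDE theory gives a unique local strong solution $\Z^{\e,\delta,n}$.

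Next I would derive uniform-in-$n$ a priori bounds by applying the It\^o formula to $\|\Z^{\e,\delta,n}_t\|_{\mathscr{H}}^2$. The convective term drops out by $b(\X,\X,\X)=0$, the terms $\langle\mathcal{C}(\X^{\e,\delta,n}),\X^{\e,\delta,n}\rangle$ and $\langle\mathcal{C}(\Y^{\e,\delta,n}),\Y^{\e,\delta,n}\rangle$ are nonnegative, and the coupling and noise contributions from $\F,\G,\sigma_1,\sigma_2$ are controlled by the Lipschitz and linear-growth bounds of Assumption \ref{ass3.6}. The delicate point is that the $\delta^{-1}$-weighted fast-scale terms, namely $\frac{2}{\delta}(\G(\X^{\e,\delta,n},\Y^{\e,\delta,n}),\Y^{\e,\delta,n})$ and $\frac{1}{\delta}\|\sigma_2(\X^{\e,\delta,n},\Y^{\e,\delta,n})\Q_2^{1/2}\|_{\mathcal{L}_2}^2$, must be absorbed into $\frac{1}{\delta}\big(\mu\|\Y^{\e,\delta,n}\|_{\V}^2+\alpha\|\Y^{\e,\delta,n}\|_{\H}^2\big)$, which is precisely what the Poincar\'e inequality $\|\Y\|_{\V}^2\ge\lambda_1\|\Y\|_{\H}^2$ together with the structural hypothesis (A3), $\mu\lambda_1+2\alpha-2L_{\G}-2L_{\sigma_2}^2>0$, permits. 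A Burkholder--Davis--Gundy estimate followed by Gronwall's inequality then yields
\[
\E\Big[\sup_{t\in[0,T]}\|\Z^{\e,\delta,n}_t\|_{\mathscr{H}}^2\Big]+\E\int_0^T\|\Z^{\e,\delta,n}_t\|_{\mathscr{V}}^2\d t+\E\int_0^T\|\Z^{\e,\delta,n}_t\|_{\widetilde{\mathfrak{L}}^{r+1}}^{r+1}\d t\le C,
\]
with $C=C(\e,\delta,T,\x,\y)$; in particular the Galerkin solutions are global.

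Then I would pass to the limit. By Banach--Alaoglu, along a subsequence $\Z^{\e,\delta,n}\rightharpoonup\Z^{\e,\delta}$ weakly in $\mathrm{L}^2(\Omega\times(0,T);\mathscr{V})$ and in $\mathrm{L}^{r+1}(\Omega\times(0,T);\widetilde{\mathfrak{L}}^{r+1})$, weak-$\star$ in $\mathrm{L}^2(\Omega;\mathrm{L}^\infty(0,T;\mathscr{H}))$, while $\mu\wi\A\Z^{\e,\delta,n}+\wi\F(\Z^{\e,\delta,n})\rightharpoonup\Theta$ in the corresponding dual space and the stochastic integrals converge. To identify $\Theta=\mu\wi\A\Z^{\e,\delta}+\wi\F(\Z^{\e,\delta})$ I would run the Minty--Browder monotonicity argument: the slow block $\mu\A+\B+\beta\mathcal{C}$ is locally monotone with constant $\eta$ by Lemma \ref{thm2.2} when $n=3$, $r>3$, globally monotone by Lemma \ref{thm2.3} when $r=3$, $2\beta\mu\ge1$, and locally monotone over $\widetilde{\L}^4$-balls, cf.\ \eqref{fe2}, when $n=2$, $r\in[1,3]$, while the fast block $\mu\A+\alpha\mathrm{I}+\beta\mathcal{C}$ is monotone by \eqref{2.23}. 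Working with the exponential weight $e^{-2\eta t}$ (augmented by $e^{-\kappa\int_0^t g(s)\d s}$, with $g$ built from the $\widetilde{\L}^4$-norm of the test function, and with a stopping-time localization, in the 2D subcritical regime), using the Galerkin-level energy equality \eqref{3p3}, the lower semicontinuity of $\|\cdot\|_{\mathscr{H}}$, and finally the hemicontinuity of $\mu\A+\B+\beta\mathcal{C}$ from Lemma \ref{lem2.8}, one concludes that $\Z^{\e,\delta}$ solves \eqref{3p6}. The energy identity \eqref{3p3} and the existence of an $\mathscr{H}$-valued continuous modification for the limit then follow from an infinite-dimensional It\^o formula, as in \cite{MTM8}.

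Finally, for pathwise uniqueness I would take two solutions $\Z^{\e,\delta}_1,\Z^{\e,\delta}_2$, apply the It\^o formula to $\|\Z^{\e,\delta}_{1,t}-\Z^{\e,\delta}_{2,t}\|_{\mathscr{H}}^2$, handle the convective difference through $b(\X_1-\X_2,\X_2,\X_1-\X_2)$ exactly as in the uniqueness part of Theorem \ref{main}, use nonnegativity of the $\mathcal{C}$-difference from \eqref{2.23}, bound the $\F,\G,\sigma_1,\sigma_2$ differences via Assumption \ref{ass3.6}, absorb the $\delta^{-1}$-terms again through Poincar\'e and (A3), and close with a (stochastic) Gronwall argument --- with a stopping time in the $n=2$, $r\in[1,3]$ regime to handle the $\widetilde{\L}^4$-dependent local monotonicity constant --- to obtain $\Z^{\e,\delta}_1=\Z^{\e,\delta}_2$, $\mathbb{P}$-a.s. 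The hard part throughout will be the Minty--Browder identification of the nonlinear limit in the stochastic, bounded-domain setting, where compactness is unavailable because $\mathrm{P}_{\H}$ and $-\Delta$ do not commute (this is exactly why $r\ge3$ must be imposed in three dimensions), together with carrying the stopping-time localization through both the identification and the uniqueness steps in the 2D subcritical case; the $\delta^{-1}$-scaling itself is harmless here since $\e$ and $\delta$ are fixed and the constants are allowed to depend on them.
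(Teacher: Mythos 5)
Your proposal is correct and follows essentially the same route the paper takes: the paper defers to Theorem 3.4 of \cite{MTM11}, whose proof is exactly the Galerkin--energy-estimate--Minty--Browder scheme you describe, resting on the monotonicity and hemicontinuity properties of the combined operator $\mu\wi\A+\wi\F$ (Lemmas \ref{thm2.2}--\ref{lem2.8}) and the localized version of the argument in the two-dimensional subcritical regime. Your observations that hypothesis (A3) is only needed for $\delta$-uniform bounds (not for existence at fixed $\e,\delta$) and that the stopping-time localization must be carried through both the identification and uniqueness steps for $n=2$, $r\in[1,3]$ are consistent with the cited proof.
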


\section{Large Deviation Principle}\label{se4}\setcounter{equation}{0}

In this section, we establish a Wentzell-Freidlin (see \cite{FW}) type large deviation principle for the two-time-scale SCBF equations \eqref{3.6} using the well known results of Varadhan as well as Bryc (see \cite{Va,DZ}) and Budhiraja-Dupuis (see \cite{BD1}). A Wentzell-Freidlin type large deviation principle for the two-time-scale one-dimensional stochastic Burgers equation is established in \cite{XSRW}. Interested readers are referred to see \cite{MTM10} (LDP for two and three dimensional SCBF equations),   \cite{SSSP} (LDP for the 2D stochastic Navier-Stokes equations), \cite{ICAM} (LDP for some 2D hydrodynamic systems),  \cite{MTM6} (LDP for the 2D Oldroyd fluids), \cite{MTM10} (LDP for the 2D and 3D SCBF equations)  for application of such methods to various hydrodynamic models. 

Let $(\Omega,\mathscr{F},\mathbb{P})$ be a probability space with an increasing family $\{\mathscr{F}_t\}_{t\geq 0}$ of the sub $\sigma$-fields of $\mathscr{F}$ satisfying the usual conditions.  We consider the following two-time-scale SCBF system:
\begin{equation}\label{2.18a}
\left\{
\begin{aligned}
\d \X^{\e,\delta}_t&=-[\mu\A \X^{\e,\delta}_t+\B(\X^{\e,\delta}_t)+\alpha\X^{\e,\delta}_t+\beta\mathcal{C}(\X^{\e,\delta}_t)-\mathrm{F}(\X^{\e,\delta}_t,\Y^{\e,\delta}_t)]\d t+\sqrt{\e}\sigma_1(\X^{\e,\delta}_t)\Q_1^{1/2}\d\W_t,\\
\d \Y^{\e,\delta}_t&=-\frac{1}{\delta}[\mu\A \Y^{\e,\delta}_t+\alpha\Y^{\e,\delta}_t+\beta\mathcal{C}(\Y_{t}^{\e,\delta})-\mathrm{G}(\X^{\e,\delta}_t,\Y^{\e,\delta}_t)]\d t+\frac{1}{\sqrt{\delta}}\sigma_2(\X^{\e,\delta}_t,\Y^{\e,\delta}_t)\Q_2^{1/2}\d\W_t,\\
\X^{\e,\delta}_0&=\x,\ \Y^{\e,\delta}_0=\y,
\end{aligned}
\right. 
\end{equation}
for some fixed point $(\x,\y)$ in $\H\times\H$. From Theorem \ref{exis} (see Theorem 3.4, \cite{MTM11} also), it is known that the system \eqref{2.18a} has a unique  pathwise strong solution $(\X^{\e,\delta}_t,\Y^{\e,\delta}_t)$ with $\mathscr{F}_t$-adapted paths (that is, for any $t\in[0,T]$ and $x\in\mathcal{O}$,  $(\X^{\e,\delta}_t(x),\Y^{\e,\delta}_t(x))$ is $\mathscr{F}_t$-measurable) in  $\C([0,T];\H)\cap \mathrm{L}^2(0,T;\V)\cap\mathrm{L}^{r+1}(0,T;\widetilde\L^{r+1})\times \C([0,T];\H)\cap \mathrm{L}^2(0,T;\V)\cap\mathrm{L}^{r+1}(0,T;\widetilde\L^{r+1}),\ \mathbb{P}\text{-a.s.}$ Moreover, such a strong solution satisfies the energy equality (It\^o's formula) given in \eqref{3p3}. 

As the parameter $\e\downarrow 0$, the slow component $\X^{\e,\delta}_t$ of (\ref{2.18a}) tends to the solution of the following deterministic averaged system:
\begin{equation}\label{2.19}
	\left\{
\begin{aligned}
\d\bar{\X}_t&=-[\mu\A\bar{\X}_t+\B(\bar{\X}_t)+\alpha\bar{\X}_t+\beta\mathcal{C}(\bar{\X}_t)]\d t+\bar{\F}(\bar{\X}_t)\d t, \\ \bar{\X}_0&=\x, 
\end{aligned}\right. 
\end{equation}
with the average 
$$
\bar{\F}(\x)=\int_{\H}\F(\x,\y)\nu^{\x}(\d\y), \ \x\in\H, 
$$ and $\nu^{\x}$ is the unique invariant distribution of the transition  semigroup for the frozen system: 
\begin{equation}\label{2p19} 
\left\{
\begin{aligned}
\d\Y_t&=-[\mu\A\Y_t+\alpha\Y_t+\beta\mathcal{C}(\Y_t)-\G(\x,\Y_t)]\d t+\sigma_2(\x,\Y_t)\Q_2^{1/2}\d\bar{\W}_t,\\
\Y_0&=\y,
\end{aligned}\right.
\end{equation}
where $\bar{\W}_t$ is a standard cylindrical Wiener process, which is independent of $\W_{t}$.  
Note that the system \eqref{2.19} is a Lipschitz perturbation of the CBF equations (see \eqref{3p93} below). Using similar techniques as in Theorem 3.4, \cite{MTM7} (see  \cite{CLF} also), one can show that the system (\ref{2.19}) has a unique weak solution in the Leray-Hopf sense, satisfying the energy equality: 
\begin{align*}
&\|\bar{\X}_t\|_{\H}^2+2\mu\int_0^t\|\bar{\X}_s\|_{\V}^2\d s+2\alpha\int_0^t\|\bar{\X}_s\|_{\H}^2\d s+2\beta\int_0^t\|\bar{\X}_s\|_{\wi\L^{r+1}}^{r+1}\d s\nonumber\\&=\|\x\|_{\H}^2+2\int_0^t(\bar{\F}(\bar{\X}_s),\bar{\X}_s)\d s,
\end{align*}
for all $t\in[0,T]$ in the Polish space $\C([0,T];\H)\cap \mathrm{L}^2(0,T;\V)\cap\mathrm{L}^{r+1}(0,T;\widetilde\L^{r+1})$. The strong averaging principle states that (Theorem 1.1, \cite{MTM11}) for any initial values $\x,\y\in\H$, $p\geq 1$ and $T>0$, we have 
\begin{align}\label{3.126}
\lim_{\e\to 0} \E\left(\sup_{t\in[0,T]}\|\X^{\e,\delta}_t-\bar\X_t\|_{\H}^{2p}\right)=0, 
\end{align}
 In this section, we  investigate the large deviations of $\X^{\e,\delta}_t$ from the deterministic solution $\bar{\X}_t$, as $\e\downarrow 0$.

\subsection{Frozen equation} The frozen equation associated with the fast motion for fixed slow component  $\x\in\H$ is given by 
\begin{equation}\label{3.74} 
\left\{
\begin{aligned}
\d\Y_t&=-[\mu\A\Y_t+\alpha\Y_t+\beta\mathcal{C}(\Y_t)-\G(\x,\Y_t)]\d t+\sigma_2(\x,\Y_t)\Q_2^{1/2}\d\bar{\W}_t,\\
\Y_0&=\y,
\end{aligned}\right.
\end{equation}
where $\bar{\W}_t$ is a cylindrical Wiener process in $\H$, which is independent of $\W_{t}$. From the Assumption \ref{ass3.6}, we know that $\G(\x,\cdot)$ and $\sigma_2(\x,\cdot)$ are Lipschitz continuous. Thus, one can show that for any fixed $\x\in\H$ and any initial data $\y\in\H$, there exists a unique strong solution $\Y_{t}^{\x,\y}\in\mathrm{L}^2(\Omega;\mathrm{L}^{\infty}(0,T;\H)\cap\mathrm{L}^2(0,T;\V))\cap\mathrm{L}^{r+1}(\Omega;\mathrm{L}^{r+1}(0,T;\wi\L^{r+1}))$ to the system \eqref{3.74} with a continuous modification with paths in $\C([0,T];\H)\cap\mathrm{L}^2(0,T;\V)\cap\mathrm{L}^{r+1}(0,T;\wi\L^{r+1})$, $\mathbb{P}$-a.s.  A proof of this result can be obtained in a same way as in Theorem 3.7, \cite{MTM8} by making use of the monotonicity property of the linear and nonlinear operators (see Lemmas \ref{thm2.2}-\ref{lem2.8}) as well as a stochastic generalization of the Minty-Browder technique (localized version for the case $n=2$ and $r\in[1,3]$). Furthermore, the strong solution satisfies the following  infinite dimensional It\^o formula (energy equality):
\begin{align}
&\|\Y_t^{\x,\y}\|_{\H}^2+2\mu\int_0^t\|\Y_s\|_{\V}^2\d s+2\alpha\int_0^t\|\Y_s\|_{\H}^2\d s+2\beta\int_0^t\|\Y_s\|_{\wi\L^{r+1}}^{r+1}\d s\nonumber\\&=\|\y\|_{\H}^2-2\int_0^t(\G(\x,\Y_s),\Y_s)\d s+\int_0^t\|\sigma_s(\x,\Y_s)\Q_2^{1/2}\|_{\mathcal{L}_2}^2\d s+2\int_0^t(\sigma_s(\x,\Y_s)\Q_2^{1/2}\d\bar\W_s,\Y_s),
\end{align}
$\mathbb{P}\text{-a.s.,}$ for all $t\in[0,T]$.  Let $\mathrm{P}_t^{\x}$ be the transition semigroup associated with the process $\Y_t^{\x,\y}$, that is, for any bounded measurable function $\varphi$ on $\H$, we have 
\begin{align}
\mathrm{P}_t^{\x}\varphi(\y)=\E\left[\varphi(\Y_t^{\x,\y})\right], \ \y\in\H \ \text{ and }\ t>0. 
\end{align} 
For the system \eqref{3.74}, the following result is available in the work \cite{MTM11}.

\begin{proposition}[Proposition 4.4, \cite{MTM11}]\label{prop3.12}
	For any given $\x,\y\in\H$, there exists a unique invariant measure for the system \eqref{3.74}. Furthermore, there exists a constant $C_{\mu,\alpha,\lambda_1,L_{\G}}>0$ and $\zeta>0$ such that for any Lipschitz function $\varphi:\H\to\R$, we have 
	\begin{align}\label{393}
	\left|\mathrm{P}_t^{\x}\varphi(\y)-\int_{\H}\varphi(\z)\nu^{\x}(\d\z)\right|\leq C_{\mu,\alpha,\lambda_1,L_{\G}}(1+\|\x\|_{\H}+\|\y\|_{\H})e^{-\frac{\zeta t}{2}}\|\varphi\|_{\mathrm{Lip}(\H)},
	\end{align}
	where $\zeta=2\mu\lambda_1+2\alpha-2L_{\G}-L_{\sigma_2}^2>0$ and $ \|\varphi\|_{\mathrm{Lip}(\H)}=\sup\limits_{\x,\y\in\H}\frac{|\varphi(\x)-\varphi(\y)|}{\|\x-\y\|_{\H}}$. 
\end{proposition}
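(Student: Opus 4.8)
The plan is to establish two things: first, existence and uniqueness of an invariant measure $\nu^{\x}$ for the frozen equation \eqref{3.74} with $\x$ fixed; second, the exponential ergodicity estimate \eqref{393}, which quantifies the rate of convergence of $\mathrm{P}_t^{\x}\varphi(\y)$ to $\int_{\H}\varphi\,\d\nu^{\x}$. The natural route is the classical coupling/contraction argument: I would show that two solutions $\Y_t^{\x,\y_1}$ and $\Y_t^{\x,\y_2}$ of \eqref{3.74} started from different initial data, but driven by the \emph{same} Wiener process $\bar\W$, contract exponentially in $\mathbb{E}\|\cdot\|_{\H}^2$. Setting $\boldsymbol{\Xi}_t=\Y_t^{\x,\y_1}-\Y_t^{\x,\y_2}$, an application of the infinite-dimensional It\^o formula to $\|\boldsymbol{\Xi}_t\|_{\H}^2$ gives
\begin{align*}
\d\|\boldsymbol{\Xi}_t\|_{\H}^2 &= -2\mu\|\boldsymbol{\Xi}_t\|_{\V}^2\d t - 2\alpha\|\boldsymbol{\Xi}_t\|_{\H}^2\d t - 2\beta\langle\mathcal{C}(\Y_t^{\x,\y_1})-\mathcal{C}(\Y_t^{\x,\y_2}),\boldsymbol{\Xi}_t\rangle\d t \\
&\quad + 2(\G(\x,\Y_t^{\x,\y_1})-\G(\x,\Y_t^{\x,\y_2}),\boldsymbol{\Xi}_t)\d t + \|[\sigma_2(\x,\Y_t^{\x,\y_1})-\sigma_2(\x,\Y_t^{\x,\y_2})]\Q_2^{1/2}\|_{\mathcal{L}_2}^2\d t \\
&\quad + 2([\sigma_2(\x,\Y_t^{\x,\y_1})-\sigma_2(\x,\Y_t^{\x,\y_2})]\Q_2^{1/2}\d\bar\W_t,\boldsymbol{\Xi}_t).
\end{align*}
The term involving $\mathcal{C}$ is nonnegative by monotonicity \eqref{2pp11} (or \eqref{2.23}), so it can be dropped. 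Using the Poincar\'e inequality $\|\boldsymbol{\Xi}_t\|_{\V}^2\geq\lambda_1\|\boldsymbol{\Xi}_t\|_{\H}^2$, the Lipschitz bounds on $\G$ and $\sigma_2$ from Assumption \ref{ass3.6}(A1) (with constants $L_{\G}$ and $L_{\sigma_2}$), taking expectations to kill the martingale term, and Young's inequality, one obtains $\frac{\d}{\d t}\mathbb{E}\|\boldsymbol{\Xi}_t\|_{\H}^2 \leq -\zeta\,\mathbb{E}\|\boldsymbol{\Xi}_t\|_{\H}^2$ with $\zeta = 2\mu\lambda_1 + 2\alpha - 2L_{\G} - L_{\sigma_2}^2 > 0$ (positivity here follows from Assumption \ref{ass3.6}(A3), which gives $\mu\lambda_1+2\alpha-2L_{\G}-2L_{\sigma_2}^2>0$, hence a fortiori $\zeta>0$). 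Gr\"onwall then yields
$$
\mathbb{E}\|\Y_t^{\x,\y_1}-\Y_t^{\x,\y_2}\|_{\H}^2 \leq e^{-\zeta t}\|\y_1-\y_2\|_{\H}^2.
$$

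From this contraction, uniqueness of the invariant measure is immediate, and existence follows by the standard Krylov--Bogoliubov argument: one first derives a uniform-in-$t$ a priori bound $\sup_{t\geq 0}\mathbb{E}\|\Y_t^{\x,\y}\|_{\H}^2 \leq C(1+\|\x\|_{\H}^2+\|\y\|_{\H}^2)$ — again by It\^o's formula on $\|\Y_t\|_{\H}^2$, using $\langle\G(\x,\Y_t),\Y_t\rangle$ controlled via the linear growth of $\G$ in $\y$ together with the $-\mu\lambda_1-\alpha$ dissipation, and Assumption \ref{ass3.6}(A2) for the noise term — which gives tightness of the time-averaged laws $\frac1T\int_0^T \mathrm{Law}(\Y_s^{\x,\y})\,\d s$ in $\H$ (using the compact embedding $\V\hookrightarrow\H$ and the $\mathrm{L}^2(0,T;\V)$ bound), hence a limit point that is invariant by Feller continuity of $\mathrm{P}_t^{\x}$.

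For the quantitative estimate \eqref{393}, write $\varphi(\y)-\int_{\H}\varphi\,\d\nu^{\x} = \int_{\H}[\mathbb{E}\varphi(\Y_t^{\x,\y}) - \mathbb{E}\varphi(\Y_t^{\x,\z})]\,\nu^{\x}(\d\z)$ using invariance of $\nu^{\x}$. Then
$$
|\mathrm{P}_t^{\x}\varphi(\y) - \textstyle\int_{\H}\varphi\,\d\nu^{\x}| \leq \|\varphi\|_{\mathrm{Lip}(\H)}\int_{\H}\mathbb{E}\|\Y_t^{\x,\y}-\Y_t^{\x,\z}\|_{\H}\,\nu^{\x}(\d\z) \leq \|\varphi\|_{\mathrm{Lip}(\H)}\, e^{-\zeta t/2}\int_{\H}(\|\y\|_{\H}+\|\z\|_{\H})\,\nu^{\x}(\d\z),
$$
by the contraction (in $\mathbb{E}\|\cdot\|_{\H}$ form, via Jensen) and $\|\y-\z\|_{\H}\leq\|\y\|_{\H}+\|\z\|_{\H}$. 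Finally one needs $\int_{\H}\|\z\|_{\H}\,\nu^{\x}(\d\z) \leq C_{\mu,\alpha,\lambda_1,L_{\G}}(1+\|\x\|_{\H})$, which comes from integrating the uniform second-moment bound above against $\nu^{\x}$ (invariance makes $\int\|\z\|_{\H}^2\,\d\nu^{\x}$ equal to the stationary value, bounded by $C(1+\|\x\|_{\H}^2)$), and then Cauchy--Schwarz. Collecting the constants gives \eqref{393}.

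I expect the main obstacle to be the a priori moment bound that underpins both the Krylov--Bogoliubov tightness and the bound on $\int\|\z\|_{\H}\,\d\nu^{\x}$: one must handle the term $(\G(\x,\Y_s),\Y_s)$ carefully, since $\G$ only grows linearly in $\y$ with constant $L_{\G}$, and absorb it using the $2\mu\lambda_1+2\alpha-2L_{\G}$ margin while simultaneously controlling the noise contribution $\|\sigma_2(\x,\Y_s)\Q_2^{1/2}\|_{\mathcal{L}_2}^2$, which by Assumption \ref{ass3.6}(A2) is bounded by $C(1+\|\x\|_{\H})^2$ uniformly in $\y$ — this is precisely why (A2) is stated with boundedness in $\y$. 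The monotonicity subtleties for the nonlinear operator $\mathcal{C}$ in the critical case $r=3$ with $2\beta\mu\geq1$, and the localized Minty--Browder device for $n=2$, $r\in[1,3]$, are already handled at the level of existence/uniqueness of solutions (Theorem \ref{exis} and Lemmas \ref{thm2.2}--\ref{lem2.8}), so they do not reappear here; the $\mathcal{C}$-difference term only ever enters with a favorable sign via \eqref{2.23}. Since the statement is quoted verbatim as Proposition 4.4 of \cite{MTM11}, the proof can also simply be cited, but the coupling argument above is the self-contained route.
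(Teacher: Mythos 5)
Your proposal is correct and follows essentially the same route as the paper (whose proof, carried out in \cite{MTM11}, combines a uniform-in-time second-moment bound, Krylov--Bogoliubov tightness via the compact embedding $\V\subset\H$, an exponential contraction estimate $\sup_{\x\in\H}\E\|\Y_t^{\x,\y_1}-\Y_t^{\x,\y_2}\|_{\H}^2\leq e^{-\zeta t}\|\y_1-\y_2\|_{\H}^2$ obtained by dropping the monotone $\mathcal{C}$-difference term, and the invariance identity $\int_{\H}\|\z\|_{\H}^2\,\nu^{\x}(\d\z)\leq C(1+\|\x\|_{\H}^2)$ to close the estimate \eqref{393}). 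All the key constants, including $\zeta=2\mu\lambda_1+2\alpha-2L_{\G}-L_{\sigma_2}^2$ and its positivity from Assumption \ref{ass3.6} (A3), are derived exactly as in the paper.
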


The interested readers are referred to see \cite{GDJZ,ADe,FFBM,MHJC}, etc for more details on the invariant measures and ergodicity for the infinite dimensional dynamical systems and stochastic Navier-Stokes equations. We need the following lemma in the sequel. 
\begin{lemma}
	There exists a constant $C>0$ such that for any $\x_1,\x_2,\y\in\H$, we have
	\begin{align}\label{394}
	\sup_{t\geq 0}\E\left[\|\Y_t^{\x_1,\y}-\Y_t^{\x_2,\y}\|_{\H}^2\right]\leq C_{\mu,\alpha,\lambda_1,L_{\G},L_{\sigma_2}}\|\x_1-\x_2\|_{\H}^2. 
	\end{align}
\end{lemma}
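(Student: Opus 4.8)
The plan is to compare the two frozen processes $\Y_t^{\x_1,\y}$ and $\Y_t^{\x_2,\y}$ driven by the \emph{same} cylindrical Wiener process $\bar{\W}_t$, starting from the \emph{same} initial datum $\y$, and to derive a differential inequality for $\E[\|\mathbf{V}_t\|_{\H}^2]$ where $\mathbf{V}_t:=\Y_t^{\x_1,\y}-\Y_t^{\x_2,\y}$. First I would write down the SDE satisfied by $\mathbf{V}_t$: subtracting the two copies of \eqref{3.74}, one gets
\begin{align*}
\d\mathbf{V}_t&=-\big[\mu\A\mathbf{V}_t+\alpha\mathbf{V}_t+\beta(\mathcal{C}(\Y_t^{\x_1,\y})-\mathcal{C}(\Y_t^{\x_2,\y}))-(\G(\x_1,\Y_t^{\x_1,\y})-\G(\x_2,\Y_t^{\x_2,\y}))\big]\d t\\
&\quad+\big[\sigma_2(\x_1,\Y_t^{\x_1,\y})-\sigma_2(\x_2,\Y_t^{\x_2,\y})\big]\Q_2^{1/2}\d\bar{\W}_t,
\end{align*}
with $\mathbf{V}_0=\mathbf{0}$. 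Applying the infinite-dimensional It\^o formula to $\|\mathbf{V}_t\|_{\H}^2$ (justified exactly as in the energy equality stated for \eqref{3.74}) and taking expectation kills the stochastic integral, leaving $\frac{\d}{\d t}\E\|\mathbf{V}_t\|_{\H}^2$ equal to $-2\mu\E\|\mathbf{V}_t\|_{\V}^2-2\alpha\E\|\mathbf{V}_t\|_{\H}^2$, minus the monotone damping term $2\beta\E\langle\mathcal{C}(\Y_t^{\x_1,\y})-\mathcal{C}(\Y_t^{\x_2,\y}),\mathbf{V}_t\rangle\ge0$ (by \eqref{2.23}/\eqref{214}), plus the cross terms coming from $\G$ and from $\|\sigma_2(\x_1,\cdot)-\sigma_2(\x_2,\cdot)\Q_2^{1/2}\|_{\mathcal{L}_2}^2$.

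Next I would estimate those two remaining terms using Assumption \ref{ass3.6}(A1). For the $\G$ term, $|(\G(\x_1,\Y_t^{\x_1,\y})-\G(\x_2,\Y_t^{\x_2,\y}),\mathbf{V}_t)|\le (C\|\x_1-\x_2\|_{\H}+L_{\G}\|\mathbf{V}_t\|_{\H})\|\mathbf{V}_t\|_{\H}$, and a Young's inequality splits off a $\|\x_1-\x_2\|_{\H}^2$ contribution while leaving an $L_{\G}\|\mathbf{V}_t\|_{\H}^2$-type term (plus an arbitrarily small constant times $\|\mathbf{V}_t\|_{\H}^2$). For the noise term, $\|[\sigma_2(\x_1,\Y_t^{\x_1,\y})-\sigma_2(\x_2,\Y_t^{\x_2,\y})]\Q_2^{1/2}\|_{\mathcal{L}_2}^2\le (C\|\x_1-\x_2\|_{\H}+L_{\sigma_2}\|\mathbf{V}_t\|_{\H})^2\le 2C^2\|\x_1-\x_2\|_{\H}^2+2L_{\sigma_2}^2\|\mathbf{V}_t\|_{\H}^2$ after the elementary inequality $(a+b)^2\le2a^2+2b^2$. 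Dropping the (non-negative) $\|\mathbf{V}_t\|_{\V}^2$ and $\mathcal{C}$-damping terms and using the Poincar\'e inequality $\|\mathbf{V}_t\|_{\V}^2\ge\lambda_1\|\mathbf{V}_t\|_{\H}^2$ on part of the viscosity term to produce a $\mu\lambda_1$ factor, I collect everything into
\[
\frac{\d}{\d t}\E\|\mathbf{V}_t\|_{\H}^2\le -\big(\mu\lambda_1+2\alpha-2L_{\G}-2L_{\sigma_2}^2\big)\E\|\mathbf{V}_t\|_{\H}^2+C_{\mu,\alpha,\lambda_1,L_{\G},L_{\sigma_2}}\|\x_1-\x_2\|_{\H}^2.
\]
By Assumption \ref{ass3.6}(A3) the coefficient $\kappa:=\mu\lambda_1+2\alpha-2L_{\G}-2L_{\sigma_2}^2$ is strictly positive.

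Finally, since $\mathbf{V}_0=\mathbf{0}$, Gronwall's inequality (or direct integration of the linear ODE comparison) gives $\E\|\mathbf{V}_t\|_{\H}^2\le \frac{C_{\mu,\alpha,\lambda_1,L_{\G},L_{\sigma_2}}}{\kappa}\|\x_1-\x_2\|_{\H}^2(1-e^{-\kappa t})\le \frac{C_{\mu,\alpha,\lambda_1,L_{\G},L_{\sigma_2}}}{\kappa}\|\x_1-\x_2\|_{\H}^2$ uniformly in $t\ge0$, which is \eqref{394} after relabelling the constant. The main point requiring care — rather than an obstacle — is bookkeeping the constants so that the combined coefficient of $\|\mathbf{V}_t\|_{\H}^2$ on the right-hand side is exactly the negative quantity controlled by Assumption (A3); one must be slightly careful to absorb the small $\varepsilon\|\mathbf{V}_t\|_{\H}^2$ terms generated by Young's inequality into the spare viscous dissipation (or simply choose them small enough that the decay rate stays positive), and to note that the $\mathcal{C}$-monotonicity and the $\|\mathbf{V}_t\|_{\V}^2$ surplus are genuinely available to be discarded. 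No regularity beyond the energy equality for \eqref{3.74} (already asserted in the excerpt) is needed.
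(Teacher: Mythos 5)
Your proposal is correct and follows essentially the same route as the paper: subtract the two frozen equations, apply the It\^o formula to $\|\mathbf{V}_t\|_{\H}^2$, use the monotonicity \eqref{214} of $\mathcal{C}$, the Lipschitz Assumption \ref{ass3.6} (A1), Young's inequality and the Poincar\'e inequality to arrive at the differential inequality with decay rate $\mu\lambda_1+2\alpha-2L_{\G}-2L_{\sigma_2}^2>0$ guaranteed by (A3), and then conclude by the variation of constants formula. The constant bookkeeping you flag is exactly what the paper does, so no gap remains.
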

\begin{proof}
	We know that $\mathbf{W}_{t}:=\Y_{t}^{\x_1,\y}-\Y_{t}^{\x_2,\y}$ satisfies the following system:
	\begin{equation}
	\left\{
	\begin{aligned}
	\d\mathbf{W}_t&=-[\mu\A\mathbf{W}_t+\alpha\mathbf{W}_t+\beta(\mathcal{C}(\Y_{t}^{\x_1,\y})-\mathcal{C}(\Y_{t}^{\x_2,\y}))]\d t+[\G(\x_1,\Y_t^{\x_1,\y})-\G(\x_2,\Y_t^{\x_2,\y})]\d t\\&\quad +[\sigma_2(\x_1,\Y_t^{\x_1,\y})-\sigma_2(\x_2,\Y_t^{\x_2,\y})]\Q_2^{1/2}\d\bar\W_t,\\
	\mathbf{W}_0&=\mathbf{0}. 
	\end{aligned}\right. 
	\end{equation}
	Applying the infinite dimensional It\^o formula to the process $\|\mathbf{W}_t\|_{\H}^2$, we find 
	\begin{align}\label{4p10}
	&	\|\mathbf{W}_t\|_{\H}^2+2\mu\int_0^t\|\mathbf{W}_s\|_{\V}^2\d s+2\alpha\int_0^t\|\mathbf{W}_s\|_{\H}^2\d s+2\beta\int_0^t\langle\mathcal{C}(\Y_{s}^{\x_1,\y})-\mathcal{C}(\Y_{s}^{\x_2,\y}),\mathbf{W}_s\rangle\d s\nonumber\\&\leq  2\int_0^t([\G(\x_1,\Y_s^{\x_1,\y})-\G(\x_2,\Y_s^{\x_2,\y})],\mathbf{W}_s)\d s\nonumber\\&\quad+\int_0^t\|[\sigma_2(\x_1,\Y_s^{\x_1,\y})-\sigma_2(\x_2,\Y_s^{\x_2,\y})]\Q_2^{1/2}\|_{\mathcal{L}_2}^2\d s\nonumber\\&\quad+2\int_0^t([\sigma_2(\x_1,\Y_s^{\x_1,\y})-\sigma_2(\x_2,\Y_s^{\x_2,\y})]\Q_2^{1/2},\mathbf{W}_s), \ \mathbb{P}\text{-a.s.},
	\end{align}
for all $t\in[0,T]$.	Taking expectation in \eqref{4p10} and using the fact the final term appearing the right hand side of the equality \eqref{4p10} is a martingale, we get 
	\begin{align}
	&	\E\left[\|\mathbf{W}_t\|_{\H}^2\right]+2\mu\E\left[\int_0^t\|\mathbf{W}_s\|_{\V}^2\d s\right]+2\alpha\E\left[\int_0^t\|\mathbf{W}_s\|_{\H}^2\d s\right]\nonumber\\&=-2\beta\E\left[\int_0^t\langle\mathcal{C}(\Y_{s}^{\x_1,\y})-\mathcal{C}(\Y_{s}^{\x_2,\y}),\mathbf{W}_s\rangle\d s\right]\nonumber\\&\quad+2\E\left[\int_0^t([\G(\x_1,\Y_s^{\x_1,\y})-\G(\x_2,\Y_s^{\x_2,\y})],\mathbf{W}_s)\d s\right]\nonumber\\&\quad+\E\left[\int_0^t\|[\sigma_2(\x_1,\Y_s^{\x_1,\y})-\sigma_2(\x_2,\Y_s^{\x_2,\y})]\Q_2^{1/2}\|_{\mathcal{L}_2}^2\d s\right],
	\end{align}
	so that using the Assumption \ref{ass3.6}  (A1) and \eqref{214},  we have 
	\begin{align}
	\frac{\d}{\d t}	\E\left[\|\mathbf{W}_t\|_{\H}^2\right]&=-2\mu\E\left[\|\mathbf{W}_t\|_{\V}^2\right]-2\alpha\E\left[\|\mathbf{W}_t\|_{\H}^2\right]-2\beta\E\left[\langle\mathcal{C}(\Y_{t}^{\x_1,\y})-\mathcal{C}(\Y_{t}^{\x_2,\y}),\mathbf{W}_t\rangle\right]\nonumber\\&\quad+2\E\left[([\G(\x_1,\Y_t^{\x_1,\y})-\G(\x_2,\Y_t^{\x_2,\y})],\mathbf{W}_t)\right]\nonumber\\&\quad+\E\left[\|[\sigma_2(\x_1,\Y_t^{\x_1,\y})-\sigma_2(\x_2,\Y_t^{\x_2,\y})]\Q_2^{1/2}\|_{\mathcal{L}_2}^2\right]\nonumber\\&\leq-(2\mu\lambda_1+2\alpha)\E\left[\|\mathbf{W}_t\|_{\H}^2\right]-\frac{\beta}{2^{r-2}}\E\left[\|\mathbf{W}_t\|_{\wi\L^{r+1}}^{r+1}\right]\nonumber\\&\quad+2\E\left[C\|\x_1-\x_2\|_{\H}\|\mathbf{W}_t\|_{\H}+L_{\G}\|\mathbf{W}_t\|_{\H}^2\right]\nonumber\\&\quad+\E\left[\left(C\|\x_1-\x_2\|_{\H}+L_{\sigma_2}\|\mathbf{W}_t\|_{\H}\right)^2\right]\nonumber\\&\leq -\left(\mu\lambda_1+2\alpha-2L_{\G}-2L_{\sigma_2}^2\right)\E\left[\|\mathbf{W}_t\|_{\H}^2\right]+C\left(1+\frac{1}{\mu\lambda_1}\right)\|\x_1-\x_2\|_{\H}^2,
	\end{align}
	for a.e. $t\in[0,T]$. Using the variation of constants formula, we further have 
	\begin{align}
	\E\left[\|\mathbf{W}_t\|_{\H}^2\right]\leq C\left(1+\frac{1}{\mu\lambda_1}\right)\|\x_1-\x_2\|_{\H}^2\int_0^te^{-\xi(t-s)}\d s\leq \frac{C}{\xi}\left(1+\frac{1}{\mu\lambda_1}\right)\|\x_1-\x_2\|_{\H}^2,
	\end{align}
	for any $t>0$, where $\xi=\mu\lambda_1+2\alpha-2L_{\G}-2L_{\sigma_2}^2>0$ and the estimate \eqref{394} follows. 
\end{proof}

\subsection{Preliminaries}
In this subsection, we provide some preliminaries regarding the Large deviation principle (LDP). Let us denote by $\mathscr{E}$, a complete separable metric space  (Polish space)  with the Borel $\sigma$-field $\mathscr{B}(\mathscr{E})$.

\begin{definition}
	A function $\mathrm{I} : \mathscr{E}\rightarrow [0, \infty]$ is called a \emph{rate	function} if $\I$ is lower semicontinuous. A rate function $\I$ is	called a \emph{good rate function,} if for arbitrary $M \in [0,	\infty)$, the level set $\mathcal{K}_M = \big\{x\in\mathscr{E}: \I(x)\leq M\big\}$ is compact in $\mathscr{E}$.
\end{definition}
\begin{definition}[Large deviation principle]\label{LDP} Let $\I$ be a rate function defined on $\mathscr{E}$. A family $\big\{\mathrm{X}^{\varepsilon}: \varepsilon
	> 0\big\}$ of $\mathscr{E}$-valued random elements is said to satisfy \emph{the large deviation principle} on $\mathscr{E}$ with rate function $\I$, if the following two conditions hold:
	\begin{enumerate}
		\item[(i)] (Large deviation upper bound) For each closed set $\F\subset \mathscr{E}$:
		$$ \limsup_{\varepsilon\rightarrow 0} \varepsilon\log \mathbb{P}\left(\mathrm{X}^{\e}\in\F\right) \leq -\inf_{x\in \F} \I(x).$$
		\item[(ii)] (Large deviation lower bound) For each open set $\G\subset \mathscr{E}$:
		$$ \liminf_{\varepsilon\rightarrow 0}\varepsilon \log \mathbb{P}(\mathrm{X}^{\e}\in\G) \geq -\inf_{x\in \G} \I(x).$$
	\end{enumerate}
\end{definition}

\begin{definition}
	Let $\I$ be a rate function on $\mathscr{E}$. A family $\big\{\mathrm{X}^{\e} :\e > 0\big\}$ of $\mathscr{E}$-valued random elements is said to satisfy the \emph{Laplace principle} on $\mathscr{E}$ with rate function $\I,$ if for each real-valued, bounded and continuous function $h$ defined on $\mathscr{E}$, that is, for $h\in\C_b(\mathscr{E})$,
	\begin{equation}\label{LP}
	\lim_{\varepsilon \rightarrow 0} {\varepsilon }\log
	\mathbb{E}\left\{\exp\left[-
	\frac{1}{\varepsilon}h(\mathrm{X}^{\varepsilon})\right]\right\} = -\inf_{x
		\in \mathscr{E}} \big\{h(x) + \I(x)\big\}. 
	\end{equation}
\end{definition}

\begin{lemma}[Varadhan's Lemma, \cite{Va}]\label{VL}
	Let $\mathscr{E}$ be a Polish space and $\{\mathrm{X}^{\varepsilon}: \varepsilon > 0\}$ be a family of $\mathscr{E}$-valued random elements satisfying LDP with rate function $\I$. Then $\{\mathrm{X}^{\varepsilon}: \varepsilon > 0\}$ satisfies the Laplace principle on $\mathscr{E}$ with the same rate function $\I$.
\end{lemma}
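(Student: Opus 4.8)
The plan is to establish the two bounds whose combination is exactly \eqref{LP}, namely
\[
\limsup_{\e\downarrow 0}\e\log\E\!\left[\exp\!\left(-\tfrac{1}{\e}h(\mathrm{X}^{\e})\right)\right]\le-L
\quad\text{and}\quad
\liminf_{\e\downarrow 0}\e\log\E\!\left[\exp\!\left(-\tfrac{1}{\e}h(\mathrm{X}^{\e})\right)\right]\ge-L,
\]
where $L:=\inf_{x\in\mathscr{E}}\{h(x)+\I(x)\}$. Since $h\in\C_b(\mathscr{E})$, put $M:=\sup_{x\in\mathscr{E}}|h(x)|<\infty$; applying the large deviation upper bound to the closed set $\mathscr{E}$ gives $\inf_{x}\I(x)=0$, and choosing $x_n$ with $\I(x_n)\to0$ yields $-M\le L\le M$, so $L$ is finite.

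\textbf{Lower bound.} Given $\eta>0$, pick $x_0\in\mathscr{E}$ with $h(x_0)+\I(x_0)\le L+\eta$. By continuity of $h$ there is an open set $G\ni x_0$ with $h\le h(x_0)+\eta$ on $G$, so
\[
\E\!\left[\exp\!\left(-\tfrac{1}{\e}h(\mathrm{X}^{\e})\right)\right]\ge\exp\!\left(-\tfrac{1}{\e}(h(x_0)+\eta)\right)\mathbb{P}(\mathrm{X}^{\e}\in G).
\]
Taking $\e\log(\cdot)$, then $\liminf_{\e\downarrow 0}$, and using the large deviation lower bound for the open set $G$ gives
\[
\liminf_{\e\downarrow 0}\e\log\E\!\left[\exp\!\left(-\tfrac{1}{\e}h(\mathrm{X}^{\e})\right)\right]\ge-(h(x_0)+\eta)-\inf_{x\in G}\I(x)\ge-(h(x_0)+\eta)-\I(x_0)\ge-L-2\eta,
\]
and letting $\eta\downarrow 0$ finishes the lower bound.

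\textbf{Upper bound.} Fix $\delta>0$ and $N>0$. For each $x\in\mathscr{E}$, using continuity of $h$ and lower semicontinuity of $\I$, choose a small open ball $G_x\ni x$ with $\inf_{\overline{G_x}}h\ge h(x)-\delta$ and $\overline{G_x}\subset\{y:\I(y)>\I(x)-\delta\}$ (trivially satisfied when $\I(x)=0$). Since $\I$ is a good rate function, the level set $\mathcal{K}_N=\{\I\le N\}$ is compact, so finitely many $G_{x_1},\dots,G_{x_K}$ with $x_i\in\mathcal{K}_N$ cover $\mathcal{K}_N$; set $G:=\bigcup_{i=1}^K G_{x_i}$, so that $\mathscr{E}\setminus G$ is closed and contained in $\{\I>N\}$. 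Splitting the expectation over $G_{x_1},\dots,G_{x_K},\mathscr{E}\setminus G$ and using $h\ge-M$,
\[
\E\!\left[\exp\!\left(-\tfrac{1}{\e}h(\mathrm{X}^{\e})\right)\right]\le\sum_{i=1}^K e^{-(h(x_i)-\delta)/\e}\,\mathbb{P}(\mathrm{X}^{\e}\in\overline{G_{x_i}})+e^{M/\e}\,\mathbb{P}(\mathrm{X}^{\e}\in\mathscr{E}\setminus G).
\]
Applying $\e\log(\cdot)$, using $\e\log(a_1+\cdots+a_{K+1})\le\e\log(K+1)+\max_j\e\log a_j$ with $\e\log(K+1)\to0$, the large deviation upper bound for the closed sets $\overline{G_{x_i}}$ (whence $\limsup_{\e}\e\log\mathbb{P}(\mathrm{X}^{\e}\in\overline{G_{x_i}})\le-\inf_{\overline{G_{x_i}}}\I\le-(\I(x_i)-\delta)$) and for $\mathscr{E}\setminus G$ (whence $\limsup_{\e}\e\log\mathbb{P}(\mathrm{X}^{\e}\in\mathscr{E}\setminus G)\le-N$), we get
\[
\limsup_{\e\downarrow 0}\e\log\E\!\left[\exp\!\left(-\tfrac{1}{\e}h(\mathrm{X}^{\e})\right)\right]\le\max\Big(\max_{1\le i\le K}\{-(h(x_i)+\I(x_i))+2\delta\},\,M-N\Big)\le\max(-L+2\delta,\,M-N).
\]
Letting $N\to\infty$ and then $\delta\downarrow 0$ gives the upper bound; together with the lower bound this proves the lemma.

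\textbf{Main obstacle.} The delicate point is the upper bound: one must show the term $e^{M/\e}\,\mathbb{P}(\mathrm{X}^{\e}\in\mathscr{E}\setminus G)$ coming from the complement of a large level set is negligible on the exponential scale, and this is precisely where the compactness of $\mathcal{K}_N$ — i.e. that $\I$ is a \emph{good} rate function — is used to pass from an a priori uncountable open cover to a finite subcover. The rate functions produced by the Budhiraja--Dupuis variational representation used in this paper are good, so this causes no difficulty here. (For a merely lower semicontinuous $\I$ one would instead use separability of $\mathscr{E}$ to extract a countable subcover and exploit that $h\in\C_b(\mathscr{E})$ automatically gives $\limsup_{\e}\e\log\E[e^{-\alpha h(\mathrm{X}^{\e})/\e}]\le\alpha M<\infty$ for all $\alpha>1$, controlling the tail of the resulting series by a H\"older/truncation argument, cf. \cite{Va,DZ}.)
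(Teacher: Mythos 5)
The paper does not actually prove this lemma: it is quoted from \cite{Va} (see also Theorem 4.3.1 of \cite{DZ}), so there is no in-paper argument to compare against. Your proof is correct and is the classical one: the lower bound via a single open neighbourhood of a near-minimizer of $h+\I$, and the upper bound via a finite cover of a level set $\mathcal{K}_N$ together with the exponentially negligible remainder $\mathscr{E}\setminus G\subset\{\I>N\}$; you also correctly locate where compactness of $\mathcal{K}_N$ is used. One remark worth recording: the lemma as stated only assumes $\I$ is a rate function (lower semicontinuous), not a good one, and for $h\in\C_b(\mathscr{E})$ goodness is in fact dispensable. Instead of covering $\mathcal{K}_N$ by balls adapted to $\I$, partition $[-M,M]$ into intervals $[a_j,a_{j+1}]$ of length $\delta$ and decompose $\mathscr{E}$ into the finitely many closed sets $h^{-1}([a_j,a_{j+1}])$; applying the LDP upper bound to each and noting that $-a_j-\I(x)\le -(h(x)+\I(x))+\delta\le -L+\delta$ on $h^{-1}([a_j,a_{j+1}])$ gives the upper bound with no compactness and no tail term at all (this is the route taken in \cite{DZ}). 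Your version is equally valid in the present context, since the rate function \eqref{rate} produced by the Budhiraja--Dupuis representation has compact level sets by Hypothesis \ref{hyp1}~(ii).
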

\begin{lemma}[Bryc's Lemma, \cite{DZ}]\label{BL}
	The Laplace principle implies the LDP with the same rate function.
\end{lemma}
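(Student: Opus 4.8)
The plan is to derive the two halves of the large deviation principle in Definition \ref{LDP} separately from the two one-sided inequalities contained in the Laplace limit \eqref{LP}, using throughout that $\I$ is lower semicontinuous and (for the upper bound) a \emph{good} rate function, as is the case in the Budhiraja--Dupuis framework. The single elementary tool is the Chebyshev-type estimate: for any $h\in\C_b(\mathscr E)$ and any Borel set $A$,
$$\mathbb P(\mathrm X^\e\in A)\le e^{\frac1\e\sup_{x\in A}h(x)}\,\E\Big[\exp\Big(-\tfrac1\e h(\mathrm X^\e)\Big)\Big],$$
obtained from $\mathbf 1_A=e^{h/\e}e^{-h/\e}\mathbf 1_A$. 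Passing to $\e\log$, taking $\limsup_{\e\to0}$ and invoking \eqref{LP} gives, for every $h\in\C_b(\mathscr E)$,
$$\limsup_{\e\to0}\e\log\mathbb P(\mathrm X^\e\in A)\le\sup_{x\in A}h(x)-\inf_{x\in\mathscr E}\{h(x)+\I(x)\}.\quad(\star)$$

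First I would prove the lower bound. Fix an open set $\G$ and $x_0\in\G$ with $\I(x_0)<\infty$ (otherwise $-\I(x_0)=-\infty$ and nothing is required). For $j\in\N$ set the bounded continuous penalization $h_j(x)=j\min\{d(x,x_0),1\}$, so $h_j(x_0)=0$. Choosing $\rho\in(0,1)$ with $B(x_0,\rho)\subset\G$ and splitting $\E[e^{-h_j(\mathrm X^\e)/\e}]$ over $\{d(\mathrm X^\e,x_0)<\rho\}$ and its complement, where $e^{-h_j/\e}\le e^{-j\rho/\e}$, yields $\E[e^{-h_j(\mathrm X^\e)/\e}]\le\mathbb P(\mathrm X^\e\in\G)+e^{-j\rho/\e}$. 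Taking $\e\log$ and then $\liminf_{\e\to0}$, using \eqref{LP} on the left (where the limit exists) and $\liminf_\e\max\{c_\e,k\}=\max\{\liminf_\e c_\e,k\}$ with the constant $k=-j\rho$ on the right, together with $-\inf_x\{h_j+\I\}\ge-\I(x_0)$, I obtain $-\I(x_0)\le\max\{\liminf_\e\e\log\mathbb P(\mathrm X^\e\in\G),-j\rho\}$. Letting $j\to\infty$ forces $\liminf_\e\e\log\mathbb P(\mathrm X^\e\in\G)\ge-\I(x_0)$, and taking the supremum over $x_0\in\G$ gives the lower bound. This step uses only lower semicontinuity.

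For the upper bound I would apply $(\star)$ with Lipschitz penalizations built from $\I$ itself. Fix a closed set $\F$, put $m=\inf_{\F}\I$, fix $M<\infty$, and introduce the Moreau--Yosida (inf-convolution) approximants of $M\wedge\I$, namely $\I_L(x)=\inf_{y\in\mathscr E}\{(M\wedge\I)(y)+L\,d(x,y)\}$, which are $L$-Lipschitz, satisfy $0\le\I_L\le M\wedge\I$, and increase pointwise to $M\wedge\I$ as $L\to\infty$. Applying $(\star)$ with $h=-\I_L\in\C_b(\mathscr E)$ and using $\I-\I_L\ge0$ to discard the infimum term, I get $\limsup_\e\e\log\mathbb P(\mathrm X^\e\in\F)\le-\inf_{\F}\I_L$; it then remains to show $\inf_{\F}\I_L\to M\wedge m$ as $L\to\infty$, after which $M\to\infty$ produces $-m=-\inf_{\F}\I$.

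The main obstacle, and the only place where goodness is essential, is this last convergence over the possibly non-compact closed set $\F$. The plan is a contradiction argument: if along some $L_n\to\infty$ there were $x_n\in\F$ with $\I_{L_n}(x_n)<M\wedge m-\eta$, then near-minimizers $y_n$ in the inf-convolution satisfy $(M\wedge\I)(y_n)<M\wedge m-\eta$, so $y_n$ lies in the compact level set $\mathcal K_M=\{\I\le M\}$, with $d(x_n,y_n)<(M\wedge m)/L_n\to0$. Extracting $y_n\to y^\ast\in\mathcal K_M$ forces $x_n\to y^\ast$, whence $y^\ast\in\F$ since $\F$ is closed, while lower semicontinuity gives $\I(y^\ast)\le M\wedge m-\eta<m$, contradicting $y^\ast\in\F$. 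Hence $\inf_{\F}\I_L\ge M\wedge m-\eta$ for large $L$; sending $\eta\downarrow0$, $L\to\infty$, and then $M\to\infty$ completes the upper bound and, with the lower bound above, establishes the LDP on $\mathscr E$ with rate function $\I$.
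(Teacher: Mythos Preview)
The paper does not prove this lemma at all; it is stated as a citation to Dembo--Zeitouni \cite{DZ} and used as a black box together with Varadhan's Lemma~\ref{VL} to justify the equivalence of the Laplace principle and the LDP. Your proposal, by contrast, supplies a complete and correct argument along the standard lines found in \cite{DZ}: the exponential Chebyshev estimate~$(\star)$, localized distance penalties $h_j(\cdot)=j\min\{d(\cdot,x_0),1\}$ for the lower bound over open sets, and Moreau--Yosida regularizations $\I_L$ of the truncated rate function for the upper bound over closed sets. Your explicit remark that goodness of $\I$ (compactness of $\mathcal K_M$) is what promotes the upper bound from compact to closed sets is exactly right, and in the paper's setting this is guaranteed by Hypothesis~\ref{hyp1}(ii), so your assumption is consistent with how the lemma is actually applied in Section~\ref{se4}. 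In short, there is no proof in the paper to compare against, and your proof is correct and self-contained.
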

It should be noted that Varadhan's Lemma together with Bryc's converse of Varadhan's Lemma state that for Polish space valued random elements, the Laplace principle and the large deviation principle are equivalent. 

\subsection{Functional setting and Budhiraja-Dupuis LDP}\label{sec4.2}
In this subsection, the notation and terminology are built in order to state the large deviations result of Budhiraja and Dupuis \cite{BD1} for Polish space valued random elements. Let us define $$\mathcal{A}:=\left\{\H\text{-valued }\{\mathscr{F}_t\}\text{-predictable processes }h\text{ such that }\int_0^T\|h(s)\|_{\H}^2\d s<+\infty,\ \mathbb{P}\text{-a.s.}\right\}, $$  and $$\mathcal{S}_M: = \left\{h\in\mathrm{L}^2(0,T;\H): \int_0^T \|h(s)\|_{\H}^2\d s \leq M\right\}.$$ It is known from \cite{BD2} that the space $\mathcal{S}_M$ is a compact metric space under the metric $\widetilde{d}(\u,\v)=\sum\limits_{j=1}^{\infty}\frac{1}{2^j}\left|\int_0^T(\u(s)-\v(s),\widetilde{e}_j(s))\d s\right|$, where $\{\widetilde{e}_j\}_{j=1}^{\infty}$ are orthonormal basis of $\mathrm{L}^2(0,T;\H)$. Since every compact metric space is complete, the set $\mathcal{S}_M$ endowed with the weak topology obtained from the metric $\widetilde{d}$ is a Polish space. Let us now define $$\mathcal{A}_M=\big\{h\in\mathcal{A}: h(\omega)\in \mathcal{S}_M, \ \mathbb{P}\text{-a.s.}\big\}.$$ Next, we state an important lemma regarding the convergence of the sequence  $\int_0^{\cdot}h_n(s)\d s$, which is useful in proving compactness as well as weak convergence results.

\begin{lemma}[Lemma 3.2, \cite{BD1}]
	Let $\{h_n\}$ be a sequence of elements from $\mathcal{A}_M,$ for some $0<M <+\infty$. Let the sequence $\{h_n\}$ converges in distribution to $h$ with respect to the weak topology on 	$\mathrm{L}^2(0,T;\H)$. Then $\int_0^{\cdot}h_n(s)\d s$ converges in distribution as $\C([0,T];\H)$-valued processes to $\int_0^{\cdot}h(s)\d s$ as $n\to\infty$.
\end{lemma}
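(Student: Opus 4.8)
The plan is to prove the claimed convergence in distribution by first reducing the problem to a deterministic statement about the weak-to-uniform continuity of the linear map $h\mapsto\int_0^{\cdot}h(s)\,\d s$ on the compact metric space $\mathcal{S}_M$, and then invoking the Skorokhod representation theorem together with the continuous mapping theorem. First I would recall that, since $h_n\in\mathcal{A}_M$, each $h_n$ takes values $\mathbb{P}$-a.s.\ in the set $\mathcal{S}_M$, which (as noted in the excerpt) is a Polish space when equipped with the weak topology inherited from $\mathrm{L}^2(0,T;\H)$ via the metric $\widetilde d$. Thus $\{h_n\}$ and $h$ may be regarded as $\mathcal{S}_M$-valued random elements, and the hypothesis is precisely that $h_n\to h$ in distribution in $(\mathcal{S}_M,\widetilde d)$.

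The key deterministic step is the following: define $\Gamma:\mathcal{S}_M\to\C([0,T];\H)$ by $\Gamma(g)(t)=\int_0^t g(s)\,\d s$, and show that $\Gamma$ is continuous (indeed uniformly continuous) from $(\mathcal{S}_M,\widetilde d)$ into $\C([0,T];\H)$ with its sup-norm topology. To see this, first observe that $\Gamma$ maps $\mathcal{S}_M$ into $\C([0,T];\H)$ and in fact into an equi-Lipschitz, hence equicontinuous and (by Arzelà--Ascoli, using that $\{\int_0^t g:\,t\in[0,T],\,g\in\mathcal{S}_M\}$ is bounded in $\H$ and, by a further uniform-integrability / weak-compactness argument on $\mathcal{S}_M$, relatively compact in $\H$) precompact, subset of $\C([0,T];\H)$: for $0\le s\le t\le T$, by Cauchy--Schwarz $\|\Gamma(g)(t)-\Gamma(g)(s)\|_{\H}\le (t-s)^{1/2}\big(\int_0^T\|g(\sigma)\|_{\H}^2\,\d\sigma\big)^{1/2}\le M^{1/2}(t-s)^{1/2}$. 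Next, if $g_k\to g$ in $(\mathcal{S}_M,\widetilde d)$, then $g_k\rightharpoonup g$ weakly in $\mathrm{L}^2(0,T;\H)$, so for each fixed $t$, $\Gamma(g_k)(t)\rightharpoonup\Gamma(g)(t)$ weakly in $\H$ (test against $\mathbf{1}_{[0,t]}\otimes e$ for $e\in\H$); combined with the uniform equicontinuity just established and precompactness, any subsequential sup-norm limit of $\Gamma(g_k)$ must coincide pointwise-weakly with $\Gamma(g)$, hence equal $\Gamma(g)$, giving $\Gamma(g_k)\to\Gamma(g)$ in $\C([0,T];\H)$. Thus $\Gamma$ is continuous.

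With continuity of $\Gamma$ in hand, I would finish as follows. By the Skorokhod representation theorem, since $h_n\to h$ in distribution in the Polish space $(\mathcal{S}_M,\widetilde d)$, there exist $\mathcal{S}_M$-valued random elements $\widetilde h_n,\widetilde h$ on a common probability space with $\widetilde h_n\overset{d}{=}h_n$, $\widetilde h\overset{d}{=}h$, and $\widetilde d(\widetilde h_n,\widetilde h)\to0$ almost surely. Applying the continuous map $\Gamma$ pointwise, $\Gamma(\widetilde h_n)\to\Gamma(\widetilde h)$ almost surely in $\C([0,T];\H)$, hence also in distribution; since distribution is preserved under the equalities in law, $\Gamma(h_n)=\int_0^{\cdot}h_n(s)\,\d s\to\int_0^{\cdot}h(s)\,\d s=\Gamma(h)$ in distribution as $\C([0,T];\H)$-valued random elements, which is the assertion. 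The main obstacle I expect is the deterministic compactness/continuity step for $\Gamma$: one must verify carefully that weak $\mathrm{L}^2$-convergence of the integrands upgrades to \emph{uniform} convergence of the primitives in $\C([0,T];\H)$, which requires both the equicontinuity estimate above and the relative compactness in $\H$ of the set of values $\{\int_0^t g:\,t,g\}$ — the latter following from the fact that on $\mathcal{S}_M$ the family $\{g\mathbf{1}_{[0,t]}\}$ is uniformly integrable in $\mathrm{L}^2(0,T;\H)$ and weak $\mathrm{L}^2$-limits of integrals are norm-limits once equicontinuity rules out oscillation; everything else is a routine application of Skorokhod representation and the continuous mapping theorem.
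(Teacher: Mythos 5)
The paper offers no proof of this statement; it is quoted as Lemma 3.2 of \cite{BD1}, so there is nothing in the paper to compare against and your argument must stand on its own. Your overall strategy (Skorokhod representation plus the continuous mapping theorem applied to $\Gamma(g)(t)=\int_0^tg(s)\,\d s$) is the standard one, and your equicontinuity bound $\|\Gamma(g)(t)-\Gamma(g)(s)\|_{\H}\le M^{1/2}(t-s)^{1/2}$ is correct.

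However, the deterministic step on which everything rests --- continuity of $\Gamma$ from $(\mathcal{S}_M,\widetilde{d})$ into $\C([0,T];\H)$ with the sup-norm --- fails when $\H$ is infinite dimensional, and the specific claim you invoke to obtain it, namely that $\{\Gamma(g)(t):t\in[0,T],\ g\in\mathcal{S}_M\}$ is relatively compact in $\H$, is false: taking $g\equiv\sqrt{M/T}\,v$ over unit vectors $v$ shows this set contains the sphere of radius $\sqrt{MT}$ in $\H$, which has no compact closure. Concretely, let $\{e_k\}$ be an orthonormal basis of $\H$ and set $g_k(s)\equiv e_k$, so that $g_k\in\mathcal{S}_M$ for $M\ge T$. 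Then $g_k\rightharpoonup 0$ weakly in $\mathrm{L}^2(0,T;\H)$ (hence $\widetilde{d}(g_k,0)\to0$), but $\sup_{t\in[0,T]}\|\Gamma(g_k)(t)\|_{\H}=T$ for every $k$; since these elements are deterministic, $\Gamma(g_k)$ does not converge in distribution to $\Gamma(0)=\mathbf{0}$ in the norm topology of $\C([0,T];\H)$. Uniform integrability cannot substitute for norm compactness here, so the weak-to-uniform upgrade you describe has no basis. What makes the lemma true in \cite{BD1} is that there the controls take values in the Cameron--Martin space $\H_0=\Q^{1/2}\H$ while the primitives are viewed in the larger space $\H$, the embedding $\H_0\hookrightarrow\H$ being Hilbert--Schmidt and hence compact; it is exactly this compact embedding that converts your pointwise weak limits $\Gamma(g_k)(t)\rightharpoonup\Gamma(g)(t)$ into strong limits, after which equicontinuity gives uniformity in $t$. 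Your proof silently assumes this compactness where none is available in the statement as written (the same $\H$ appears on both sides). It is worth noting that in the present paper the lemma is only ever used after composition with the Hilbert--Schmidt operator $\sigma_1(\cdot)\Q_1^{1/2}$ (see the proofs of Theorems \ref{compact} and \ref{weak}), which is precisely where the missing compactness is reinstated.
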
 Let $\mathscr{E}$ denotes a Polish space, and for $\e>0$, let $\mathcal{G}^{\e} :\C([0,T];\H)\to\mathscr{E}$ be a measurable map. Let us define $$\mathrm{X}^{\e} =\mathcal{G}^{\e}(\W(\cdot)).$$ We are interested in the large deviation principle for $\mathrm{X}^{\e}$ as $\e\to 0$.
\begin{hypothesis}\label{hyp1}
	There exists a measurable map $\mathcal{G}^0 : \C ([0, T ] ;\H )\to\mathscr{ E}$ such that the following
	hold:
	\begin{enumerate}
		\item [(i)] Let $\{h^{\e} :\e>0\}\subset \mathcal{A}_{M},$  for some $M<+\infty$. Let $h^{\e}$ converge in distribution as an $\mathcal{S}_M$-valued random element to $h$ as $\e\to0$. Then $\mathcal{G}^{\e}(\W(\cdot)+\frac{1}{\sqrt{\e}}\int_0^{\cdot}h^{\e}(s)\d s)$ converges in distribution to $\mathcal{G}^0(\int_0^{\cdot}h(s)\d s)$ as $\e\to0$. 
		\item [(ii)] For every $M<+\infty$, the set $$\mathcal{K}_M=\left\{\mathcal{G}^0\left(\int_0^{\cdot}h(s)\d s\right):h\in \mathcal{S}_M\right\}$$ is a compact subset of $\mathscr{E}$.
	\end{enumerate}
\end{hypothesis}
For each $f\in\mathscr{E}$, we define
\begin{align}\label{rate}
\I(f):=\inf_{\left\{h\in\mathrm{L}^2(0,T;\H):f=\mathcal{G}^0\left(\int_0^{\cdot}h(s)\d s\right)\right\}}\left\{\frac{1}{2}\int_0^T\|h(s)\|_{\H}^2\d s\right\},
\end{align}
where infimum over an empty set is taken as $+\infty$. Next, we state an important result due to Budhiraja and Dupuis \cite{BD1}.
\begin{theorem}[Budhiraja-Dupuis principle, Theorem 4.4, \cite{BD1}]\label{BD}
	Let $\mathrm{X}^{\e}= \mathcal{G}^{\e}(\W(\cdot))$. If $\{\mathcal{G}^{\e}\}$ satisfies the Hypothesis \ref{hyp1}, then the family $\{\mathrm{X}^{\e}:\e>0\}$ satisfies the Laplace principle in $\mathscr{E}$ with rate function $\I$ given by (\ref{rate}).
\end{theorem}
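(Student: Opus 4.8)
The plan is to establish the Laplace principle \eqref{LP} for $\{\mathrm{X}^{\e}\}$ directly and then invoke Bryc's Lemma (Lemma \ref{BL}) to deduce the large deviation principle with the same rate function; the goodness of $\I$ will be extracted from Hypothesis \ref{hyp1}(ii). The engine of the proof is the Bou\'e--Dupuis variational representation for exponential functionals of the cylindrical Wiener process $\W$: for every $h\in\C_b(\mathscr{E})$ and every $\e>0$, writing $\mathrm{X}^{\e}=\mathcal{G}^{\e}(\W(\cdot))$,
\begin{align*}
-\e\log\E\left[\exp\left(-\frac{1}{\e}h(\mathrm{X}^{\e})\right)\right]=\inf_{v\in\mathcal{A}}\E\left[\frac{1}{2}\int_0^T\|v(s)\|_{\H}^2\d s+h\left(\mathcal{G}^{\e}\left(\W(\cdot)+\frac{1}{\sqrt{\e}}\int_0^{\cdot}v(s)\d s\right)\right)\right].
\end{align*}
Denoting the left-hand side by $\Lambda^{\e}$, it suffices to show $\limsup_{\e\to0}\Lambda^{\e}\le\inf_{f\in\mathscr{E}}\{h(f)+\I(f)\}$ (Laplace upper bound) and $\liminf_{\e\to0}\Lambda^{\e}\ge\inf_{f\in\mathscr{E}}\{h(f)+\I(f)\}$ (Laplace lower bound).

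For the Laplace upper bound, fix $f\in\mathscr{E}$ with $\I(f)<\infty$ and $\eta>0$, and choose a deterministic $h_0\in\mathrm{L}^2(0,T;\H)$ with $f=\mathcal{G}^0(\int_0^{\cdot}h_0(s)\d s)$ and $\frac12\int_0^T\|h_0(s)\|_{\H}^2\d s\le\I(f)+\eta$. Using the constant control $v\equiv h_0\in\mathcal{A}_M$ (with $M=2\I(f)+2\eta$) as a sub-optimal choice in the representation, and noting that the constant-in-$\e$ sequence $h_0$ trivially converges in distribution to $h_0$ in $\mathcal{S}_M$, Hypothesis \ref{hyp1}(i) gives $\mathcal{G}^{\e}(\W+\tfrac1{\sqrt{\e}}\int_0^{\cdot}h_0)\xrightarrow{d}\mathcal{G}^0(\int_0^{\cdot}h_0)=f$; since $h$ is bounded and continuous, $\E[h(\mathcal{G}^{\e}(\cdots))]\to h(f)$, whence $\limsup_{\e\to0}\Lambda^{\e}\le\tfrac12\int_0^T\|h_0(s)\|_{\H}^2\d s+h(f)\le\I(f)+\eta+h(f)$. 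Letting $\eta\downarrow0$ and taking the infimum over admissible $f$ yields the bound.

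For the Laplace lower bound, for each $\e$ pick a near-optimal control $v^{\e}\in\mathcal{A}$ so that $\Lambda^{\e}\ge\E\left[\tfrac12\int_0^T\|v^{\e}(s)\|_{\H}^2\d s+h\left(\mathcal{G}^{\e}\left(\W+\tfrac1{\sqrt{\e}}\int_0^{\cdot}v^{\e}\right)\right)\right]-\e$. Since $h$ is bounded, say $\|h\|_{\infty}\le M_0$, one has the a priori estimate $\E\left[\tfrac12\int_0^T\|v^{\e}(s)\|_{\H}^2\d s\right]\le 2M_0+\e$, and a standard truncation/localization argument lets us assume $v^{\e}\in\mathcal{A}_M$ for a fixed $M<\infty$, with vanishing error. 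As $\mathcal{S}_M$ with the weak topology is a compact Polish space, $\{v^{\e}\}$ is tight; along a subsequence $(v^{\e},\W)$ converges in distribution, and by Hypothesis \ref{hyp1}(i), $\mathcal{G}^{\e}(\W+\tfrac1{\sqrt{\e}}\int_0^{\cdot}v^{\e})\xrightarrow{d}\mathcal{G}^0(\int_0^{\cdot}v)$ for the weak limit $v$, jointly with $v^{\e}\to v$. Passing to an almost sure realization by the Skorokhod representation theorem, using the weak $\mathrm{L}^2$-lower semicontinuity of $v\mapsto\tfrac12\int_0^T\|v(s)\|_{\H}^2\d s$ (Fatou plus convexity), the continuity and boundedness of $h$, and the definition \eqref{rate} of $\I$, we obtain
\begin{align*}
\liminf_{\e\to0}\Lambda^{\e}\ge\E\left[\frac12\int_0^T\|v(s)\|_{\H}^2\d s+h\left(\mathcal{G}^0\left(\int_0^{\cdot}v(s)\d s\right)\right)\right]\ge\E\left[\I\left(\mathcal{G}^0\left(\int_0^{\cdot}v\right)\right)+h\left(\mathcal{G}^0\left(\int_0^{\cdot}v\right)\right)\right]\ge\inf_{f\in\mathscr{E}}\{h(f)+\I(f)\}.
\end{align*}
Together with the upper bound this proves \eqref{LP}. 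That $\I$ is a good rate function follows from Hypothesis \ref{hyp1}(ii): if $\I(f_n)\le M$ then $f_n\in\mathcal{K}_M$ and, picking $h_n\in\mathcal{S}_{2M+1}$ with $f_n=\mathcal{G}^0(\int_0^{\cdot}h_n)$, compactness of $\mathcal{S}_{2M+1}$ together with Hypothesis \ref{hyp1}(i) and the lower semicontinuity of the energy forces any limit point of $f_n$ into $\{\I\le M\}$, so this level set is a closed subset of the compact set $\mathcal{K}_M$, hence compact; finally Lemma \ref{BL} converts \eqref{LP} into the asserted large deviation principle.

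The main obstacle is the Laplace lower bound: establishing the uniform-in-$\e$ a priori bound on the near-optimal controls $v^{\e}$ and reducing them to the compact set $\mathcal{A}_M$, then carrying out the limit passage by taking the \emph{joint} distributional limit of $\big(v^{\e},\mathcal{G}^{\e}(\W+\tfrac1{\sqrt{\e}}\int_0^{\cdot}v^{\e})\big)$, applying Hypothesis \ref{hyp1}(i) along that subsequence, and exploiting the weak lower semicontinuity of the quadratic cost together with the Skorokhod embedding. The rest is essentially bookkeeping once the Bou\'e--Dupuis representation and Hypothesis \ref{hyp1} are in hand.
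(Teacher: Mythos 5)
The paper does not prove this statement at all: Theorem \ref{BD} is imported verbatim from Budhiraja and Dupuis (Theorem 4.4 of \cite{BD1}) and used as a black box. Your argument is, in outline, exactly the standard proof from that source — the variational (Bou\'e--Dupuis) representation, the Laplace upper bound via a deterministic near-optimal control fed into Hypothesis \ref{hyp1}(i), the Laplace lower bound via tightness of truncated near-optimal controls in $\mathcal{S}_M$, Skorokhod representation, weak lower semicontinuity of the quadratic cost, and Bryc's lemma — and it is correct in its essentials, with the steps you flag as the "main obstacle" (uniform bound and truncation of the $v^{\e}$, and the \emph{joint} distributional limit needed before applying Hypothesis \ref{hyp1}(i)) being precisely the points where the original reference supplies the detailed estimates.
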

It should be noted that Hypothesis \ref{hyp1} (i) is a statement on the weak convergence of a certain family of random variables and is at the core of weak convergence approach to the study of large deviations. Hypothesis \ref{hyp1} (ii) says that the level sets of the rate function are compact.

\subsection{LDP for SCBF equations} Let us recall that the system (\ref{2.18a})
has an $\mathscr{F}_t$-adapted pathwise unique strong solution $(\X^{\e,\delta}_t,\Y^{\e,\delta}_t)$  in the Polish space $$\C([0,T];\H)\cap\mathrm{L}^2(0,T;\V)\cap\mathrm{L}^{r+1}(0,T;\widetilde\L^{r+1})\times \C([0,T];\H)\cap\mathrm{L}^2(0,T;\V)\cap\mathrm{L}^{r+1}(0,T;\widetilde\L^{r+1}), \ \mathbb{P}\text{-a.s.}$$ The solution to the first equation in (\ref{2.18a}) denoted by $\X^{\e,\delta}_{\cdot}$ can be written as $\mathcal{G}^{\e}(\W (\cdot))$, for a Borel measurable function $\mathcal{G}^{\e} : \C([0, T ]; \H)\to \mathscr{E}, $ where $\mathscr{E}=\C([0,T];\H)\cap\mathrm{L}^2(0,T;\V)\cap\mathrm{L}^{r+1}(0,T;\widetilde\L^{r+1})$ (see Corollary 4.2, Chapter X, \cite{VF} for 2D Navier-Stokes equations, see \cite{BD1} also). Our main goal is to verify that such a $\mathcal{G}^{\e}$ satisfies the Hypothesis \ref{hyp1}. Then, applying the Theorem \ref{BD}, the LDP for $\big\{\X^{\e,\delta} : \e > 0\big\}$ in $\mathscr{E}$ can be established. Let us now state  our main result on the Wentzell-Freidlin type large deviation principle for the system \eqref{3.6} ($r\in[1,\infty),$ for $n=2$ and $r\in[3,\infty),$ for $n=3$ with $2\beta\mu>1$ for $r=3$) .  
\begin{theorem}\label{thm4.14}
	Under the Assumption \ref{ass3.6}, $\{\X^{\e,\delta}:\e>0\}$ obeys an LDP on $\C([0, T ]; \H ) \cap \mathrm{L}^2(0, T ; \V )\cap\mathrm{L}^{r+1}(0,T;\wi\L^{r+1})$ with the rate function $\I$ defined in \eqref{rate}. 
\end{theorem}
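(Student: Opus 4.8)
The strategy is to apply the Budhiraja--Dupuis variational principle (Theorem \ref{BD}): it suffices to construct the map $\mathcal{G}^0$ describing the skeleton equation and to verify the two parts of Hypothesis \ref{hyp1}. For $h\in\mathrm{L}^2(0,T;\H)$, define $\mathcal{G}^0\left(\int_0^{\cdot}h(s)\d s\right)=\X^h$, where $\X^h$ is the unique solution of the controlled averaged (skeleton) equation
\begin{equation}\label{skeleton}
\left\{
\begin{aligned}
\frac{\d\X^h_t}{\d t}&=-[\mu\A\X^h_t+\B(\X^h_t)+\alpha\X^h_t+\beta\mathcal{C}(\X^h_t)]+\bar{\F}(\X^h_t)+\sigma_1(\X^h_t)\Q_1^{1/2}h(t),\\
\X^h_0&=\x.
\end{aligned}\right.
\end{equation}
Since $\bar{\F}$ is Lipschitz (this follows from Assumption \ref{ass3.6}(A1) together with \eqref{394} and Proposition \ref{prop3.12}, exactly as in \cite{MTM11}) and $\sigma_1\Q_1^{1/2}$ is Lipschitz and of linear growth, existence and uniqueness of $\X^h$ in $\C([0,T];\H)\cap\mathrm{L}^2(0,T;\V)\cap\mathrm{L}^{r+1}(0,T;\wi\L^{r+1})$ follows from the monotonicity/hemicontinuity arguments of Lemmas \ref{thm2.2}--\ref{lem2.8} combined with the energy estimates of Proposition \ref{prop4.5}; one uses Gronwall together with the control bound $\int_0^T\|h(s)\|_{\H}^2\d s\leq M$ to obtain uniform a priori bounds on $\mathcal{S}_M$.

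\textbf{Hypothesis \ref{hyp1}(ii): compactness of level sets (Theorem \ref{compact}).} Take a sequence $h_n\in\mathcal{S}_M$; since $\mathcal{S}_M$ is weakly compact (and metrizable), pass to a subsequence $h_n\rightharpoonup h$ in $\mathrm{L}^2(0,T;\H)$. Using the uniform bounds $\sup_n\|\X^{h_n}\|_{\mathrm{L}^{\infty}(0,T;\H)\cap\mathrm{L}^2(0,T;\V)\cap\mathrm{L}^{r+1}(0,T;\wi\L^{r+1})}\leq C_M$ together with an estimate on $\partial_t\X^{h_n}$ in $\mathrm{L}^2(0,T;\V')+\mathrm{L}^{\frac{r+1}{r}}(0,T;\wi\L^{\frac{r+1}{r}})$, an Aubin--Lions--Simon compactness argument yields $\X^{h_n}\to\X^h$ strongly in $\mathrm{L}^2(0,T;\H)$ (and, on the range of the nonlinearity, using the monotonicity trick of Lemma \ref{thm2.2} and the Minty--Browder technique, one identifies the limit of $\mathcal{C}(\X^{h_n})$ and $\B(\X^{h_n})$ with the correct terms). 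The Lipschitz continuity of $\bar{\F}$ and $\sigma_1$ handles the control term: $\int_0^{\cdot}\sigma_1(\X^{h_n}_s)\Q_1^{1/2}h_n(s)\d s\to\int_0^{\cdot}\sigma_1(\X^h_s)\Q_1^{1/2}h(s)\d s$ in $\C([0,T];\H)$ since Hilbert--Schmidt$\times$weak passes to the limit. A further energy-equality argument upgrades convergence to $\C([0,T];\H)\cap\mathrm{L}^2(0,T;\V)\cap\mathrm{L}^{r+1}(0,T;\wi\L^{r+1})$, establishing both the well-definedness of $\mathcal{G}^0$ on $\mathcal{S}_M$ and compactness of $\mathcal{K}_M$.

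\textbf{Hypothesis \ref{hyp1}(i): weak convergence (Theorem \ref{weak}).} Here $\X^{\e,\delta,h^{\e}}:=\mathcal{G}^{\e}\left(\W(\cdot)+\frac{1}{\sqrt{\e}}\int_0^{\cdot}h^{\e}(s)\d s\right)$ solves, by Girsanov, the controlled two-time-scale system with drift augmented by $\sigma_1(\X^{\e,\delta,h^{\e}}_t)\Q_1^{1/2}h^{\e}(t)$ in the slow equation and $\frac{1}{\sqrt{\delta}}\sigma_2\Q_2^{1/2}h^{\e}(t)$ in the fast one, with $h^{\e}\to h$ in distribution on $\mathcal{S}_M$. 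One must show $\X^{\e,\delta,h^{\e}}\Rightarrow\X^h$. The central difficulty, and the place where Khasminskii's time discretization and the stopping-time arguments are essential, is controlling the averaging: over a time partition of mesh $\Delta=\Delta(\e)$ one freezes the slow variable on each interval, compares the genuine fast process with an auxiliary fast process driven by the frozen slow value, and invokes the exponential ergodicity of the frozen equation (Proposition \ref{prop3.12}) together with \eqref{394} to show $\frac{1}{\Delta}\int_{t_k}^{t_k+\Delta}\F(\X^{\e,\delta,h^{\e}}_s,\Y^{\e,\delta,h^{\e}}_s)\d s$ is close to $\bar{\F}(\X^{\e,\delta,h^{\e}}_{t_k})$ in mean square, exploiting $\frac{\delta}{\e}\to0$. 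After establishing tightness of $\{\X^{\e,\delta,h^{\e}}\}$ in $\mathscr{E}$ (via uniform energy bounds and a fractional-time estimate, localized by stopping times $\tau_N$ to handle the $\wi\L^{r+1}$ nonlinearity), a Skorokhod representation plus the Minty--Browder identification of limits of $\B$ and $\mathcal{C}$ lets one pass to the limit in the controlled equation and recover \eqref{skeleton}, giving $\X^h$; uniqueness of that limit promotes convergence in distribution. The main obstacle is exactly this coupling of the averaging estimate with the weak-convergence-of-controls and the monotonicity-based limit identification for the CBF nonlinearity under only $\mathrm{L}^2$-in-$\H$ strong compactness — managing the $\beta|\X|^{r-1}\X$ term in three dimensions (where $r\geq3$, $2\beta\mu>1$ for $r=3$) requires the estimates \eqref{2.23}, \eqref{214} and Lemmas \ref{thm2.2}--\ref{lem2.8} rather than any compactness in $\wi\L^{r+1}$. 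With both parts of Hypothesis \ref{hyp1} verified, Theorem \ref{BD} yields the Laplace principle, hence by Bryc's Lemma \ref{BL} the LDP with rate function \eqref{rate}, completing the proof.
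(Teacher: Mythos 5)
Your overall architecture coincides with the paper's: both reduce the LDP to Theorem \ref{BD}, take the same skeleton equation \eqref{5.4y} with the Lipschitz averaged coefficient $\bar{\F}$, and both rely on Khasminskii's discretization, the auxiliary frozen process $\widehat{\Y}^{\e,\delta}$, the mixing bound \eqref{393} together with \eqref{394}, and stopping times for the low-exponent two-dimensional case. Where you diverge is in how the two parts of Hypothesis \ref{hyp1} are closed. For compactness the paper does not use Aubin--Lions at all: it writes the equation for $\mathbf{W}^{h_n,h}=\bar{\X}^{h_n}-\bar{\X}^{h}$, applies the monotonicity estimates \eqref{2.23}, \eqref{a215}, \eqref{6.30} and Gronwall's inequality (see \eqref{426}), and kills the remaining term $\int_0^T\|\sigma_1(\bar{\X}^{h}_t)\Q_1^{1/2}(h^n_t-h_t)\|_{\H}^2\d t$ using only that $\sigma_1(\cdot)\Q_1^{1/2}$ is Hilbert--Schmidt, hence compact, so that weak convergence of $h_n$ becomes strong convergence after applying it. For the weak-convergence part the paper again avoids tightness of the solution processes entirely: Skorokhod's representation is applied only to the controls $h^{\e}$, and Lemma \ref{lem3.14} then gives a quantitative bound on $\E\bigl[\sup_{t}\|\X^{\e,\delta,h^{\e}}_t-\bar{\X}^{h}_t\|_{\H}^2+\mu\int_0^T\|\cdot\|_{\V}^2\d t+\frac{\beta}{2^{r-2}}\int_0^T\|\cdot\|_{\wi\L^{r+1}}^{r+1}\d t\bigr]$ of order $\e^2+\delta/\e+\delta^{1/8}$ plus the same compact-operator term, from which convergence in probability in $\mathscr{E}$ follows by Markov's inequality.

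The one place your route has a genuine gap is the passage from compactness to convergence in the \emph{strong} topology of $\mathscr{E}=\C([0,T];\H)\cap\mathrm{L}^2(0,T;\V)\cap\mathrm{L}^{r+1}(0,T;\wi\L^{r+1})$. Aubin--Lions (respectively, tightness plus Skorokhod applied to the solutions) only delivers strong convergence in spaces such as $\mathrm{L}^2(0,T;\H)$ or $\C([0,T];\V')$, whereas the LDP is asserted in $\mathscr{E}$, so $\mathcal{G}^{\e}\Rightarrow\mathcal{G}^0$ must be proved there; the ``energy-equality upgrade'' you invoke is exactly the hard step, since one cannot pass to the limit in $\int_0^T\|\X^{h_n}_t\|_{\wi\L^{r+1}}^{r+1}\d t$ or $\int_0^T\|\X^{h_n}_t\|_{\V}^2\d t$ without a coercive lower bound on the difference of nonlinearities of the form \eqref{214}, which is precisely what the paper's direct difference estimate exploits. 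The same issue recurs in your stochastic step: tightness of $\{\X^{\e,\delta,h^{\e}}\}$ in the strong topology of $\mathscr{E}$ does not follow from the uniform energy bounds plus a fractional-time estimate, and identifying the limit of the averaged drift through a martingale-problem argument is considerably more delicate than comparing directly with the already-known unique solution $\bar{\X}^{h}$. So while all the ingredients are present in your sketch, the argument as written would need to be rerouted through the difference estimates of Theorem \ref{compact} and Lemmas \ref{lem3.10}--\ref{lem3.14} to land in the space in which the theorem is stated.
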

The LDP for $\big\{\X^{\e,\delta} : \e> 0\big\}$ in $\mathscr{E}$ (Theorem \ref{thm4.14}) is proved in the following way.  We show the well-posedness of certain controlled deterministic and controlled stochastic  equations in $\mathscr{E}$. These results help us to prove the two main results on the compactness of the level sets and weak convergence of the stochastic controlled equation, which verifies the Hypothesis \ref{hyp1}. We exploit the  classical Khasminskii approach based on time discretization in the  weak convergence part of the Hypothesis \ref{hyp1}.

\subsection{Compactness} Let us first verify the Hypothesis \ref{hyp1} (ii) on compactness. 

\begin{theorem}\label{thm5.9}
	Let $h\in\mathrm{L}^2(0,T;\H)$ and the Assumption \ref{ass3.6} be satisfied. Then the following deterministic control system:
	\begin{equation}\label{5.4y}
	\left\{
\begin{aligned}
\d\bar{\X}^{h}_t&=-[\mu\A\bar{\X}^{h}_t+\B(\bar{\X}^{h}_t)+\alpha\bar{\X}^{h}_t+\beta\mathcal{C}(\bar{\X}^{h}_t)]\d t+\bar{\F}(\bar{\X}^{h}_t)\d t+\sigma_1(\bar{\X}^{h}_t)\Q_1^{1/2}h_t\d t, \\ \bar{\X}^{h}_0&=\x, 
\end{aligned}\right. 
	\end{equation}
	has a \emph{unique weak solution}  in $\C([0,T];\H)\cap\mathrm{L}^2(0,T;\V)\cap\mathrm{L}^{r+1}(0,T;\wi\L^{r+1})$, and 
	\begin{align}\label{5.5y}
	&	\sup_{h\in \mathcal{S}_M}\left\{\sup_{t\in[0,T]}\|\bar{\X}^{h}_t\|_{\H}^2+\mu\int_0^T\|\bar{\X}^{h}_t\|_{\V}^2\d t+\alpha\int_0^T\|\bar{\X}^{h}_t\|_{\H}^2\d t+\beta\int_0^T\|\bar{\X}^{h}_t\|_{\wi\L^{r+1}}^{r+1}\d t\right\}\nonumber\\&\leq C_{\mu,\alpha,\lambda_1,L_{\G},L_{\sigma_2},M,T}\left(1+\|\x\|_{\H}^2\right). 
	\end{align}
\end{theorem}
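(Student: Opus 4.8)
The plan is to establish existence and uniqueness of the weak solution to \eqref{5.4y} first, and then derive the uniform energy bound \eqref{5.5y}. For well-posedness, I would treat \eqref{5.4y} as a Lipschitz-type perturbation of the deterministic CBF equations \eqref{kvf}. Indeed, the extra terms are $\bar{\F}(\bar{\X}^h_t)$ and $\sigma_1(\bar{\X}^h_t)\Q_1^{1/2}h_t$. The map $\bar{\F}$ is Lipschitz on $\H$: this follows from the estimate $\|\bar{\F}(\x_1)-\bar{\F}(\x_2)\|_{\H}\leq C\|\x_1-\x_2\|_{\H}$, which one obtains combining Assumption \ref{ass3.6} (A1) (Lipschitz continuity of $\F$), the invariant-measure comparison \eqref{393} in Proposition \ref{prop3.12} applied to the Lipschitz function $\y\mapsto\F(\x,\y)$, and the bound \eqref{394} controlling $\E\|\Y_t^{\x_1,\y}-\Y_t^{\x_2,\y}\|_{\H}^2$ by $C\|\x_1-\x_2\|_{\H}^2$ (one passes to the limit $t\to\infty$ in the semigroup representation $\bar{\F}(\x)=\lim_{t\to\infty}\mathrm{P}_t^{\x}\F(\x,\cdot)(\y)$). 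Similarly $\sigma_1\Q_1^{1/2}$ is Lipschitz $\H\to\mathcal{L}_2(\H;\H)$, and for $h\in\mathrm{L}^2(0,T;\H)$ the forcing $t\mapsto\sigma_1(\bar{\X}^h_t)\Q_1^{1/2}h_t$ lies in $\mathrm{L}^1(0,T;\H)$ with norm controlled via Cauchy--Schwarz by $C(1+\sup_t\|\bar{\X}^h_t\|_{\H})\|h\|_{\mathrm{L}^2(0,T;\H)}$. With these observations in hand, existence and uniqueness follow by the Galerkin approximation, monotonicity (Lemmas \ref{thm2.2}--\ref{lem2.8}) and Minty--Browder machinery exactly as in Theorem \ref{main}/Theorem \ref{main1} (and as asserted in the excerpt around \eqref{3p93}); uniqueness uses the local monotonicity estimate \eqref{fe}, \eqref{fe2} or the global monotonicity \eqref{218} together with Gronwall, absorbing the Lipschitz contributions of $\bar{\F}$ and $\sigma_1\Q_1^{1/2}h$ into the Gronwall constant.

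For the a priori estimate \eqref{5.5y}, I would test \eqref{5.4y} with $\bar{\X}^h_t$ (rigorously, apply the energy equality valid for the weak solution) to obtain, for all $t\in[0,T]$,
\begin{align*}
&\|\bar{\X}^h_t\|_{\H}^2+2\mu\int_0^t\|\bar{\X}^h_s\|_{\V}^2\d s+2\alpha\int_0^t\|\bar{\X}^h_s\|_{\H}^2\d s+2\beta\int_0^t\|\bar{\X}^h_s\|_{\wi\L^{r+1}}^{r+1}\d s\\
&=\|\x\|_{\H}^2+2\int_0^t(\bar{\F}(\bar{\X}^h_s),\bar{\X}^h_s)\d s+2\int_0^t(\sigma_1(\bar{\X}^h_s)\Q_1^{1/2}h_s,\bar{\X}^h_s)\d s,
\end{align*}
where I used $(\B(\bar{\X}^h_s),\bar{\X}^h_s)=0$ from \eqref{b0}. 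The term $(\bar{\F}(\bar{\X}^h_s),\bar{\X}^h_s)$ is bounded by $\|\bar{\F}(\bar{\X}^h_s)\|_{\H}\|\bar{\X}^h_s\|_{\H}\leq C(1+\|\bar{\X}^h_s\|_{\H})\|\bar{\X}^h_s\|_{\H}\leq C(1+\|\bar{\X}^h_s\|_{\H}^2)$, using the linear growth of $\bar{\F}$ (a consequence of its Lipschitz property and $\|\bar{\F}(\mathbf{0})\|_{\H}<\infty$, the latter from \eqref{386} and Assumption (A1)--(A2)). For the control term, Cauchy--Schwarz, Assumption \ref{ass3.6} (A1) (which yields $\|\sigma_1(\bar{\X}^h_s)\Q_1^{1/2}\|_{\mathcal{L}_2}\leq C(1+\|\bar{\X}^h_s\|_{\H})$) and Young's inequality give
\begin{align*}
2(\sigma_1(\bar{\X}^h_s)\Q_1^{1/2}h_s,\bar{\X}^h_s)\leq 2\|\sigma_1(\bar{\X}^h_s)\Q_1^{1/2}\|_{\mathcal{L}_2}\|h_s\|_{\H}\|\bar{\X}^h_s\|_{\H}\leq C(1+\|\bar{\X}^h_s\|_{\H}^2)(1+\|h_s\|_{\H}^2).
\end{align*}
Plugging these in and writing $\psi(s):=C(1+\|h_s\|_{\H}^2)$, which satisfies $\int_0^T\psi(s)\d s\leq C(T+M)$ for $h\in\mathcal{S}_M$, I get $\|\bar{\X}^h_t\|_{\H}^2\leq \|\x\|_{\H}^2+\int_0^t\psi(s)(1+\|\bar{\X}^h_s\|_{\H}^2)\d s$, and Gronwall's inequality yields
\begin{align*}
\sup_{t\in[0,T]}\|\bar{\X}^h_t\|_{\H}^2\leq \bigl(1+\|\x\|_{\H}^2\bigr)\exp\Bigl(\int_0^T\psi(s)\d s\Bigr)\leq C_{\mu,\alpha,\lambda_1,L_{\G},L_{\sigma_2},M,T}\bigl(1+\|\x\|_{\H}^2\bigr),
\end{align*}
uniformly in $h\in\mathcal{S}_M$. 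Feeding this bound back into the energy equality controls $\mu\int_0^T\|\bar{\X}^h_t\|_{\V}^2\d t$, $\alpha\int_0^T\|\bar{\X}^h_t\|_{\H}^2\d t$ and $\beta\int_0^T\|\bar{\X}^h_t\|_{\wi\L^{r+1}}^{r+1}\d t$ by the same type of constant, giving \eqref{5.5y}.

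The main obstacle I anticipate is not the energy estimate, which is routine once the structure is set up, but rather establishing cleanly that the averaged coefficient $\bar{\F}$ is globally Lipschitz (and hence of linear growth) on $\H$ — this is what makes \eqref{5.4y} a well-behaved perturbation of \eqref{kvf}. This requires carefully combining the exponential ergodicity estimate \eqref{393}, the continuous-dependence estimate \eqref{394}, and the uniform second-moment bound \eqref{386} for $\nu^{\x}$; in particular one must check that the constants there are independent of the slow variable in the right way. A secondary technical point is justifying the energy equality for the weak solution of \eqref{5.4y} (as opposed to merely an energy inequality), which is handled exactly as in Theorem \ref{main} via the $\C([0,T];\H)$ regularity coming from $\bar{\X}^h\in\mathrm{L}^2(0,T;\V)\cap\mathrm{L}^{r+1}(0,T;\wi\L^{r+1})$ with $\partial_t\bar{\X}^h\in\mathrm{L}^2(0,T;\V')+\mathrm{L}^{\frac{r+1}{r}}(0,T;\wi\L^{\frac{r+1}{r}})$ and the Lions--Magenes / Evans lemma, noting that the added terms $\bar{\F}(\bar{\X}^h)$ and $\sigma_1(\bar{\X}^h)\Q_1^{1/2}h$ lie in $\mathrm{L}^2(0,T;\H)\subset\mathrm{L}^2(0,T;\V')$ and $\mathrm{L}^1(0,T;\H)$ respectively and cause no new difficulty.
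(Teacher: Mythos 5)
Your proposal is correct and follows essentially the same route as the paper: the Lipschitz property of $\bar{\F}$ is derived exactly as in \eqref{3p93} from Assumption \ref{ass3.6} (A1), the ergodicity estimate \eqref{393} and the continuous-dependence bound \eqref{394}, well-posedness is obtained by viewing \eqref{5.4y} as a Lipschitz perturbation of the CBF equations and invoking the monotonicity/Minty--Browder machinery, and the uniform bound \eqref{5.5y} comes from the same energy identity, the same linear-growth and Young estimates for $(\bar{\F}(\bar{\X}^h),\bar{\X}^h)$ and $(\sigma_1(\bar{\X}^h)\Q_1^{1/2}h,\bar{\X}^h)$, and Gronwall with $h\in\mathcal{S}_M$. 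The only cosmetic difference is that you package the Gronwall weight as $\psi(s)=C(1+\|h_s\|_{\H}^2)$ rather than splitting the terms as in \eqref{6.8}--\eqref{6.9}, which changes nothing.
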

\begin{proof}
From the Assumption \ref{ass3.6}, we know that the operator $\sigma_1(\cdot)\Q_1^{1/2}$ is Lipschitz. 	Since $\F(\cdot,\cdot)$ is Lipschitz, one can show that $\bar{\F}(\cdot)$ is Lipschitz in the following way. Using the Assumption \ref{ass3.6} (A1), \eqref{393} and \eqref{394}, we have 
	\begin{align*}
	&\|\bar{\F}(\x_1)-\bar{\F}(\x_2)\|_{\H}\nonumber\\&=\left\|\int_{\H}\F(\x_1,\z)\mu^{\x_1}(\d\z)-\int_{\H}\F(\x_1,\z)\mu^{\x_1}(\d\z)\right\|_{\H}\nonumber\\&\leq\left\|\int_{\H}\F(\x_1,\z)\mu^{\x_1}(\d\z)-\E\left[\F(\x_1,\Y_t^{\x_1,\y})\right]\right\|_{\H}+\left\|\E\left[\F(\x_2,\Y_t^{\x_2,\y})\right]-\int_{\H}\F(\x_1,\z)\mu^{\x_1}(\d\z)\right\|_{\H}\nonumber\\&\quad+\|\E\left[\F(\x_1,\Y_t^{\x_1,\y})\right]-\E\left[\F(\x_2,\Y_t^{\x_2,\y})\right]\|_{\H}\nonumber\\&\leq C_{\mu,\alpha,\lambda_1,L_{\G}}(1+\|\x_1\|_{\H}+\|\x_2\|_{\H}+\|\y\|_{\H})e^{-\frac{\zeta t}{2}}+C\left(\|\x_1-\x_2\|_{\H}+\E\left[\|\Y_t^{\x_1,\y}-\Y_t^{\x_2,\y}\|_{\H}\right]\right)\nonumber\\&\leq C_{\mu,\alpha,\lambda_1,L_{\G}}(1+\|\x_1\|_{\H}+\|\x_2\|_{\H}+\|\y\|_{\H})e^{-\frac{\zeta t}{2}}+C_{\mu,\alpha,\lambda_1,L_{\G},L_{\sigma_2}}\|\x_1-\x_2\|_{\H}. 
	\end{align*}
	Taking $t\to\infty$, we get 
	\begin{align}\label{3p93}
	&\|\bar{\F}(\x_1)-\bar{\F}(\x_2)\|_{\H}\leq C_{\mu,\alpha,\lambda_1,L_{\G},L_{\sigma_2}}\|\x_1-\x_2\|_{\H}. 
	\end{align}
	Thus, it is immediate that the system \eqref{5.4y} is a Lipschitz perturbation of the CBF equations. The existence and uniqueness of weak solution in the Leray-Hopf sense (satisfying the energy equality) of the system (\ref{5.4y}) can be proved using the  monotonicty as well as demicontinuous properties of the linear and nonlinear operators and  the Minty-Browder technique as in Theorem 3.4, \cite{MTM7}. Thus, we need to show \eqref{5.5y} only. Taking inner product with $\bar{\X}^{h}_t$ to the first equation in \eqref{5.4y}, we find 
	\begin{align}\label{6.7}
	&\frac{1}{2}\frac{\d}{\d t}\|\bar{\X}^{h}_t\|_{\H}^2+\mu\|\bar{\X}^{h}_t\|_{\V}^2+\alpha\|\bar{\X}^{h}_t\|_{\V}^2+\beta\|\bar{\X}^{h}_t\|_{\wi\L^{r+1}}^{r+1}=-(\bar{\F}(\bar{\X}^{h}_t)-\sigma_1(\bar{\X}^{h}_t)\Q_1^{1/2}h_t,\bar{\X}^{h}_t). 
	\end{align}
	since $\langle\B(\bar{\X}^{h}_t),\bar{\X}^{h}_t\rangle=0$. Using the Cauchy-Schwarz inequality, \eqref{3p93} and Young's inequality, we estimate $(\bar{\F}(\bar{\X}^{h}),\bar{\X}^{h})$ as 
	\begin{align}\label{410}
	|(\bar{\F}(\bar{\X}^{h}),\bar{\X}^{h})|&\leq\|\bar{\F}(\bar{\X}^{h})\|_{\H}\|\bar{\X}^{h}\|_{\H}\leq C_{\mu,\alpha,\lambda_1,L_{\G},L_{\sigma_2}}(1+\|\bar{\X}^{h}\|_{\H})\|\bar{\X}^{h}\|_{\H}\nonumber\\&\leq C_{\mu,\alpha,\lambda_1,L_{\G},L_{\sigma_2}}(1+\|\bar{\X}^{h}\|_{\H}^2). 
	\end{align}
	Using the Cauchy-Schwarz and H\"older inequalities, and Assumption \ref{ass3.6} (A.1), we estimate $(\sigma_1(\bar{\X}^{h})\Q_1^{1/2}h,\bar{\X}^{h})$ as 
	\begin{align}\label{6.8}
	|(\sigma_1(\bar{\X}^{h})\Q_1^{1/2}h,\bar{\X}^{h})|&\leq\|\sigma_1(\bar{\X}^{h})\Q_1^{1/2}h\|_{\H}\|\bar{\X}^{h}\|_{\H}\leq\|\sigma_1(\bar{\X}^{h})\Q_1^{1/2}\|_{\mathcal{L}_2}\|h\|_{\H}\|\bar{\X}^{h}\|_{\H}\nonumber\\&\leq\frac{1}{2}\|\bar{\X}^{h}\|_{\H}^2+\frac{1}{2}\|\sigma_1(\bar{\X}^{h})\Q_1^{1/2}\|_{\mathcal{L}_2}^2\|h\|_{\H}^2\nonumber\\&\leq\frac{1}{2}\|\bar{\X}^{h}\|_{\H}^2+C\|h\|_{\H}^2+C\|\bar{\X}^{h}\|^2_{\H}\|h\|_{\H}^2. 
	\end{align}
	Substituting \eqref{410} and \eqref{6.8} in \eqref{6.7}, we obtain 
	\begin{align}\label{6.9}
	&\|\bar{\X}^{h}_t\|_{\H}^2+2\mu\int_0^t\|\bar{\X}^{h}_s\|_{\V}^2\d s+2\alpha\int_0^t\|\bar{\X}^{h}_s\|_{\H}^2\d s+2\beta\int_0^t\|\bar{\X}^{h}_s\|_{\wi\L^{r+1}}^{r+1}\d s\nonumber\\&\leq\|\x\|_{\H}^2+C_{\mu,\alpha,\lambda_1,L_{\G},L_{\sigma_2}}t+C\int_0^t\|h_s\|_{\H}^2\d s+C_{\mu,\alpha,\lambda_1,L_{\G},L_{\sigma_2}}\int_0^t\|\bar{\X}^{h}_s\|_{\H}^2\d s\nonumber\\&\quad+C\int_0^t\|\bar{\X}^{h}_s\|^2_{\H}\|h_s\|_{\H}^2\d s.
	\end{align}
	Applying Gronwall's inequality in \eqref{6.9}, we get  
	\begin{align}
	\|\bar{\X}^{h}_t\|_{\H}^2\leq\left(\|\x\|_{\H}^2+C_{\mu,\alpha,\lambda_1,L_{\G},L_{\sigma_2}}T+C\int_0^T\|h_t\|_{\H}^2\d t\right)\exp\left(C_{\mu,\alpha,\lambda_1,L_{\G},L_{\sigma_2}}T+C\int_0^T\|h_t\|_{\H}^2\d t\right),
	\end{align}
	for all $t\in[0,T]$. Thus, taking $h\in \mathcal{S}_M$, we finally obtain \eqref{5.5y}. 
\end{proof}

\begin{lemma}\label{lem3.13}
For any $\x,\y\in\H$, $T>0$, $\e,\Delta>0$  small enough, there exists a constant $C_{\mu,\alpha,\lambda_1,L_{\G},T}>0$ such that
	\begin{align}\label{3.87}
\sup_{h\in\mathcal{S}_M}\int_0^T\|\bar\X^h_t-\bar\X^h_{t(\Delta)}\|_{\H}^2\d t \leq C_{\mu,\alpha,\beta,\lambda_1,L_{\G},L_{\sigma_2},M,T}\Delta\left(1+\|\x\|_{\H}^{\ell}\right),
	\end{align}
 where $\ell=3$, for $n=2$,  $r\in[1,3)$, and $\ell=2$,  for $n=2,3$, $r\in[3,\infty)$  ($2\beta\mu> 1,$ for $n=r=3$). Here, $t(\Delta):=\left[\frac{t}{\Delta}\right]\Delta$ and $[s]$ stands for the largest integer which is less than or equal $s$. 
\end{lemma}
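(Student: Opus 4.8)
\textbf{Proof strategy for Lemma \ref{lem3.13}.} The plan is to estimate the time increment $\bar\X^h_t - \bar\X^h_{t(\Delta)}$ by writing out the integral form of the controlled equation \eqref{5.4y} over the subinterval $[t(\Delta), t]$, whose length is at most $\Delta$. Writing $\bar\X^h_t - \bar\X^h_{t(\Delta)} = -\int_{t(\Delta)}^t[\mu\A\bar\X^h_s + \B(\bar\X^h_s) + \alpha\bar\X^h_s + \beta\mathcal{C}(\bar\X^h_s) - \bar{\F}(\bar\X^h_s) - \sigma_1(\bar\X^h_s)\Q_1^{1/2}h_s]\d s$, I would take the $\V'+\wi\L^{\frac{r+1}{r}}$ norm (or, where convenient, pair against $\bar\X^h_t-\bar\X^h_{t(\Delta)}$ itself) and use the triangle inequality together with the Cauchy--Schwarz inequality in time to bound $\|\bar\X^h_t-\bar\X^h_{t(\Delta)}\|^2$ by $\Delta$ times an integral of the squared norms of each term over $[t(\Delta),t]$. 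The linear term is controlled by $\|\A\bar\X^h_s\|_{\V'}\le\|\bar\X^h_s\|_{\V}$ via \eqref{2.7a}; the Darcy term by $\|\bar\X^h_s\|_{\H}$; the Brinkman--Forchheimer nonlinearity $\B+\beta\mathcal{C}$ by \eqref{2.9a} (for $n=3$, $r\ge3$) or the Ladyzhenskaya estimate $\|\B(\bar\X^h)\|_{\V'}\le\sqrt{2/\lambda_1}\|\bar\X^h\|_{\V}^2$ plus $\|\mathcal{C}(\bar\X^h)\|_{\wi\L^{\frac{r+1}{r}}}=\|\bar\X^h\|_{\wi\L^{r+1}}^r$ (for $n=2$, $r\in[1,3]$); the drift $\bar{\F}$ by its linear growth \eqref{3p93}; and the control term by $\|\sigma_1(\bar\X^h_s)\Q_1^{1/2}\|_{\mathcal{L}_2}\|h_s\|_{\H}\le C(1+\|\bar\X^h_s\|_{\H})\|h_s\|_{\H}$ from Assumption \ref{ass3.6}(A1).

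The next step is to integrate the resulting pointwise bound in $t$ over $[0,T]$ and apply a Fubini/interchange-of-order argument: each term $\int_0^T\left(\int_{t(\Delta)}^t g(s)\,\d s\right)\d t$ is dominated by $\Delta\int_0^T g(s)\,\d s$ because a fixed $s$ lies in the window $[t(\Delta),t]$ only for $t$ ranging over an interval of length at most $\Delta$. This converts everything into the a priori energy quantities already controlled by Theorem \ref{thm5.9}: namely $\sup_{t}\|\bar\X^h_t\|_{\H}^2$, $\int_0^T\|\bar\X^h_t\|_{\V}^2\d t$, and $\int_0^T\|\bar\X^h_t\|_{\wi\L^{r+1}}^{r+1}\d t$, all bounded by $C_{\mu,\alpha,\lambda_1,L_{\G},L_{\sigma_2},M,T}(1+\|\x\|_{\H}^2)$ uniformly in $h\in\mathcal{S}_M$. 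The control term contributes $\Delta\int_0^T(1+\|\bar\X^h_s\|_{\H}^2)\|h_s\|_{\H}^2\d s \le \Delta(1+\sup_s\|\bar\X^h_s\|_{\H}^2)M$, which is again handled by \eqref{5.5y} and the definition of $\mathcal{S}_M$.

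The one term requiring care is the nonlinearity in the case $n=2$, $r\in[1,3)$, since $\|\B(\bar\X^h_s)\|_{\V'}^2\le (2/\lambda_1)\|\bar\X^h_s\|_{\V}^4$ is quartic in $\|\bar\X^h_s\|_{\V}$, and $\int_0^T\|\bar\X^h_s\|_{\V}^4\d s$ is not directly controlled by the energy estimate. Here I would interpolate: $\|\bar\X^h_s\|_{\V}^4 \le (\sup_{s}\|\bar\X^h_s\|_{\H}^2)\cdot(\text{something})$ is not quite right, so instead bound $\int_0^T\|\B(\bar\X^h_s)\|_{\V'}^2\d s \le (2/\lambda_1)\sup_s\|\bar\X^h_s\|_{\H}^2\cdot\int_0^T\|\bar\X^h_s\|_{\V}^2\d s$ using $\|\B(\bar\X^h)\|_{\V'}\le\|\bar\X^h\|_{\wi\L^4}^2\le\sqrt2\|\bar\X^h\|_{\H}\|\bar\X^h\|_{\V}$, which is the product of the supremum norm (bounded) and the $\mathrm{L}^2(0,T;\V)$ norm (bounded). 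This is precisely what forces the exponent $\ell=3$: the bound reads $\Delta\cdot\sup_s\|\bar\X^h_s\|_{\H}^2\cdot\int_0^T\|\bar\X^h_s\|_{\V}^2\d s \le \Delta\, C(1+\|\x\|_{\H}^2)(1+\|\x\|_{\H}^2)$, and tracking the worst power across all terms (linear term alone gives $\|\x\|_{\H}^2$, but the Forchheimer/nonlinear term's product structure can push to cubic once the $\|\x\|_{\H}$-dependence of both factors is multiplied out) yields $1+\|\x\|_{\H}^3$ after a crude bound $\|\x\|_{\H}^2+\|\x\|_{\H}^4 \le C(1+\|\x\|_{\H}^3)$ fails — so more precisely one keeps $1+\|\x\|_{\H}^2+\|\x\|_{\H}^4$ but absorbs via $a^2\le 1+a^3$ type inequalities for $a\ge0$ to land at $\ell=3$. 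For $n=2,3$, $r\ge3$, the nonlinear term uses \eqref{2.9a}, giving $\|\B(\bar\X^h)\|_{\V'+\wi\L^{\frac{r+1}{r}}}^2 \le \|\bar\X^h\|_{\wi\L^{r+1}}^{\frac{2(r+1)}{r-1}}\|\bar\X^h\|_{\H}^{\frac{2(r-3)}{r-1}}$, whose time integral is controlled by Young's inequality by $\int_0^T\|\bar\X^h_s\|_{\wi\L^{r+1}}^{r+1}\d s$ plus $\sup_s\|\bar\X^h_s\|_{\H}^2$, both quadratic-order in $\|\x\|_{\H}$, hence $\ell=2$ suffices; similarly $\|\mathcal{C}(\bar\X^h)\|_{\wi\L^{\frac{r+1}{r}}}^2 = \|\bar\X^h\|_{\wi\L^{r+1}}^{2r}$, which needs $\int_0^T\|\bar\X^h_s\|_{\wi\L^{r+1}}^{2r}\d s$; this I'd bound by $\sup_s\|\bar\X^h_s\|_{\wi\L^{r+1}}^{r-1}\int_0^T\|\bar\X^h_s\|_{\wi\L^{r+1}}^{r+1}\d s$ — but as $\sup_s\|\bar\X^h_s\|_{\wi\L^{r+1}}$ is not in the energy estimate, one instead reorganizes by keeping $\mathcal{C}(\bar\X^h_s)$ paired against the increment and using $\langle\mathcal{C}(\bar\X^h_s),\bar\X^h_t-\bar\X^h_{t(\Delta)}\rangle\le\|\bar\X^h_s\|_{\wi\L^{r+1}}^r\|\bar\X^h_t-\bar\X^h_{t(\Delta)}\|_{\wi\L^{r+1}}$ and then absorbing into the $\mathrm{L}^{r+1}(0,T;\wi\L^{r+1})$ norm via Young; the bookkeeping here is the main obstacle but is routine once one commits to measuring the increment in the $\mathscr{V}'+\mathfrak{L}^{\frac{r+1}{r}}$ norm rather than $\H$. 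Finally, taking the supremum over $h\in\mathcal{S}_M$ throughout gives \eqref{3.87}.
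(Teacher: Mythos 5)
Your overall architecture (integral form of \eqref{5.4y} over the window, term-by-term estimates, Fubini to trade the double integral for a factor of $\Delta$, Ladyzhenskaya for $n=2$, $r<3$, the interpolation \eqref{212}--\eqref{2.9a} for $r\ge3$, and the H\"older pairing trick for $\mathcal{C}$) matches the paper's. But the primary route you commit to has two genuine defects. First, the norm mismatch: you propose to measure the increment in $\V'+\wi\L^{\frac{r+1}{r}}$ and call the remaining bookkeeping ``routine,'' but the lemma asserts a bound on $\int_0^T\|\bar\X^h_t-\bar\X^h_{t(\Delta)}\|_{\H}^2\,\d t$, and the embedding runs $\H\hookrightarrow\V'+\wi\L^{\frac{r+1}{r}}$, not the other way; a bound on the increment in the dual norm is strictly weaker than what is claimed and is not what is consumed downstream (e.g.\ in \eqref{3.97}, where the $\H$-norm of the increment is paired with an $\H$-bounded integrand). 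The fix is the option you mention only parenthetically: write the energy identity $\|\bar\X^h_t-\bar\X^h_{t-\Delta}\|_{\H}^2=-2\int_{t-\Delta}^t\langle \mu\A\bar\X^h_s+\B(\bar\X^h_s)+\cdots,\ \bar\X^h_s-\bar\X^h_{t-\Delta}\rangle\,\d s$ (after first reducing $t(\Delta)$ to $t-\Delta$ by the triangle inequality, so the window has fixed length), and estimate each duality pairing with the increment measured in the norm dual to that term ($\V$ against $\A$ and $\B$, $\wi\L^{r+1}$ against $\mathcal{C}$, $\H$ against the rest), bounding the increment's norm by twice the solution's. This is the paper's proof.

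Second, and independently, your Cauchy--Schwarz-then-square route does not produce the exponent $\ell=3$ in the case $n=2$, $r\in[1,3)$. Squaring $\|\B(\bar\X^h_s)\|_{\V'}^2\le 2\|\bar\X^h_s\|_{\H}^2\|\bar\X^h_s\|_{\V}^2$ and integrating gives $\sup_s\|\bar\X^h_s\|_{\H}^2\int_0^T\|\bar\X^h_s\|_{\V}^2\,\d s\le C(1+\|\x\|_{\H}^2)^2$, which is of order $\|\x\|_{\H}^4$; your attempted repair via ``$a^2\le1+a^3$'' cannot absorb $\|\x\|_{\H}^4$ into $1+\|\x\|_{\H}^3$, and you in effect prove the lemma only with $\ell=4$. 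The cubic exponent comes precisely from the asymmetry of the pairing approach: in the double integral one factor carries $\|\bar\X^h_s\|_{\wi\L^4}^2\le\sqrt{2}\,\|\bar\X^h_s\|_{\H}\|\bar\X^h_s\|_{\V}$ while the other carries only $\|\bar\X^h_s-\bar\X^h_{t-\Delta}\|_{\V}$, so Cauchy--Schwarz yields $\bigl(\Delta\sup_s\|\bar\X^h_s\|_{\H}^2\int_0^T\|\bar\X^h_s\|_{\V}^2\,\d s\bigr)^{1/2}\bigl(2\Delta\int_0^T\|\bar\X^h_s\|_{\V}^2\,\d s\bigr)^{1/2}\le C\Delta(1+\|\x\|_{\H}^2)^{3/2}\le C\Delta(1+\|\x\|_{\H}^3)$. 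Your handling of the $\mathcal{C}$ term (H\"older with exponents $\tfrac{r+1}{r}$ and $r+1$ against the increment, absorbing into $\mathrm{L}^{r+1}(0,T;\wi\L^{r+1})$) is correct and is exactly the paper's estimate of $I_4$; you should apply the same pair-against-the-increment logic uniformly to every term rather than only where the squared dual norm fails.
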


\begin{proof}
Using \eqref{5.5y},	it can be easily seen that 
	\begin{align}\label{388}
	\int_0^{T}\|\bar\X^h_t-\bar\X^h_{t(\Delta)}\|_{\H}^2\d t&\leq 	\int_0^{\Delta}\|\bar\X^h_t-\x\|_{\H}^2\d t+	\int_{\Delta}^T\|\bar\X^h_t-\bar\X^h_{t(\Delta)}\|_{\H}^2\d t\nonumber\\&\leq C_{\mu,\alpha,\lambda_1,L_{\G},L_{\sigma_2},M,T}\left(1+\|\x\|_{\H}^2\right)\Delta+	2\int_{\Delta}^T\|\bar\X^h_t-\bar\X^h_{t-\Delta}\|_{\H}^2\d t\nonumber\\&\quad+2	\int_{\Delta}^T\|\bar\X^h_{t(\Delta)}-\bar\X^h_{t-\Delta}\|_{\H}^2\d t.
	\end{align}
	Let us first estimate the second term from the right hand side of the equality \eqref{388}. Note that $\bar\X^h_r-\bar\X^h_{t-\Delta}$, for $r\in[t-\Delta,t]$ satisfies the following energy equality: 
	\begin{align}\label{4p26}
	\|\bar\X^h_t-\bar\X^h_{t-\Delta}\|_{\H}^2&=-2\mu\int_{t-\Delta}^t\langle\A\bar\X^h_s,\bar\X^h_s-\bar\X^h_{t-\Delta}\rangle\d s-2\alpha\int_{t-\Delta}^t(\bar\X^h_s,\bar\X^h_s-\bar\X^h_{t-\Delta})\d s\nonumber\\&\quad-2\int_{t-\Delta}^t\langle\B(\bar\X^h_s),\bar\X^h_s-\bar\X^h_{t-\Delta}\rangle\d s-2\beta\int_{t-\Delta}^t\langle\mathcal{C}(\bar\X^h_s),\bar\X^h_s-\bar\X^h_{t-\Delta}\rangle\d s\nonumber\\&\quad-2\int_{t-\Delta}^t(\bar\F(\bar\X^h_s),\bar\X^h_s-\bar\X^h_{t-\Delta})\d s-2\int_{t-\Delta}^t(\sigma_1(\bar\X^h_s)\Q_1^{1/2}h_s,\bar\X^h_s-\bar\X^h_{t-\Delta})\d s\nonumber\\&=:\sum_{i=1}^6I_i(t). 
	\end{align}
	Making use of an integration by parts, H\"older's inequality,  Fubini's Theorem and \eqref{5.5y}, we estimate $\int_{\Delta}^T|I_1(t)|\d t$ as 
	\begin{align}\label{4p27}
	\int_{\Delta}^T|I_1(t)|\d t&\leq 2\mu\int_{\Delta}^T\int_{t-\Delta}^t\|\bar\X^h_s\|_{\V}\|\bar\X^h_s-\bar\X^h_{t-\Delta}\|_{\V}\d s\d t\nonumber\\&\leq 2\mu\left(\int_{\Delta}^T\int_{t-\Delta}^t\|\bar\X^h_s\|_{\V}^2\d s\d t\right)^{1/2} \left(\int_{\Delta}^T\int_{t-\Delta}^t\|\bar\X^h_s-\bar\X^h_{t-\Delta}\|_{\V}^2\d s\d t\right)^{1/2}\nonumber\\&\leq 2\mu\left(\Delta\int_{0}^T\|\bar\X^h_t\|_{\V}^2\d t\right)^{1/2}\left(2\Delta\int_{0}^T\|\bar\X^h_t\|_{\V}^2\d t\right)^{1/2}\nonumber\\&\leq C_{\mu,\alpha,\lambda_1,L_{\G},L_{\sigma_2},M,T}\Delta\left(1+\|\x\|_{\H}^2\right). 
	\end{align}
	Similarly, we estimate the term $\int_{\Delta}^T|I_2(t)|\d t$ as 
	\begin{align}
	\int_{\Delta}^T|I_2(t)|\d t&\leq C\alpha T\Delta \sup_{t\in[0,T]}\|\X^{\e}_t\|_{\H}^2\leq C_{\mu,\alpha,\lambda_1,L_{\G},L_{\sigma_2},M,T}\Delta\left(1+\|\x\|_{\H}^2+\|\y\|_{\H}^2\right).
	\end{align}
		For $n=2$ and $r\in[1,3)$,	using  H\"older's and Ladyzhenskaya's inequalities,  Fubini's Theorem and \eqref{5.5y}, we estimate $\int_{\Delta}^T|I_3(t)|\d t$ as 
	\begin{align}\label{4p29}
	\int_{\Delta}^T|I_3(t)|\d t&\leq 2\int_{\Delta}^T\int_{t-\Delta}^t\|\bar\X^h_s\|_{\wi\L^4}^2\|\bar\X^h_s-\bar\X^h_{t-\Delta}\|_{\V}\d s\d t\nonumber\\&\leq 2\sqrt{2}\left(\int_{\Delta}^T\int_{t-\Delta}^t\|\bar\X^h_s\|_{\H}^2\|\bar\X^h_s\|_{\V}^2\d s\d t\right)^{1/2}\left(\int_{\Delta}^T\int_{t-\Delta}^t\|\bar\X^h_s-\bar\X^h_{t-\Delta}\|_{\V}^2\d s\d t\right)^{1/2}\nonumber\\&\leq 2\sqrt{2}\left(\Delta\sup_{t\in[0,T]}\|\bar\X^h_t\|_{\H}^2\int_0^T\|\bar\X^h_t\|_{\V}^2\d t\right)^{1/2} \left(2\Delta\int_{0}^T\|\bar\X^h_t\|_{\V}^2\d t\right)^{1/2}\nonumber\\&\leq  C_{\mu,\alpha,\lambda_1,L_{\G},L_{\sigma_2},M,T}\Delta\left(1+\|\x\|_{\H}^3\right). 
	\end{align}
	For $n=2,3$ and $r\geq 3$ (take $2\beta\mu>1$, for $r=3$), we estimate $\int_{\Delta}^T|I_3(t)|\d t$  using H\"older's inequality, interpolation inequality and \eqref{5.5y} as 
	\begin{align}\label{4p30}
		\int_{\Delta}^T|I_3(t)|\d t&\leq 2\int_{\Delta}^T\int_{t-\Delta}^t\|\bar\X^h_s\|_{\wi\L^{r+1}}\|\bar\X^h_s\|_{\wi\L^{\frac{2(r+1)}{r-1}}}\|\bar\X^h_s-\bar\X^h_{t-\Delta}\|_{\V}\d s\d t\nonumber\\&\leq 2\int_{\Delta}^T\int_{t-\Delta}^t\|\bar\X^h_s\|_{\H}^{\frac{r-3}{r-1}}\|\bar\X^h_s\|_{\wi\L^{r+1}}^{\frac{r+1}{r-1}}\|\bar\X^h_s-\bar\X^h_{t-\Delta}\|_{\V}\d s\d t \nonumber\\&\leq 2\left(\int_{\Delta}^T\int_{t-\Delta}^t\|\bar\X^h_s\|_{\H}^{\frac{2(r-3)}{r-1}}\|\bar\X^h_s\|_{\wi\L^{r+1}}^{\frac{2(r+1)}{r-1}}\d s\d t\right)^{1/2}\left(\int_{\Delta}^T\int_{t-\Delta}^t\|\bar\X^h_s-\bar\X^h_{t-\Delta}\|_{\V}^2\d s\d t\right)^{1/2}\nonumber\\&\leq 2\left(\Delta\int_{0}^T\|\bar\X^h_t\|_{\H}^{\frac{2(r-3)}{r-1}}\|\bar\X^h_t\|_{\wi\L^{r+1}}^{\frac{2(r+1)}{r-1}}\d t\right)^{1/2}\left(2\Delta\int_{0}^T\|\bar\X^h_t\|_{\V}^2\d t\right)^{1/2}\nonumber\\&\leq 2\Delta T^{\frac{r-3}{2(r-1)}}\left(\sup_{t\in[0,T]}\|\bar\X^h_t\|_{\H}^{r-3}\int_{0}^T\|\bar\X^h_t\|_{\wi\L^{r+1}}^{r+1}\d t\right)^{\frac{1}{r-1}}\left[2\E\left(\int_{0}^T\|\bar\X^h_t\|_{\V}^2\d t\right)\right]^{1/2}\nonumber\\&\leq C_{\mu,\alpha,\beta,\lambda_1,L_{\G},L_{\sigma_2},M,T}\Delta\left(1+\|\x\|_{\H}^2\right).
	\end{align}
	Once again using H\"older's inequality,  Fubini's Theorem and \eqref{5.5y}, we estimate the term  $\int_{\Delta}^T|I_4(t)|\d t$ as 
	\begin{align}\label{4p31}
		\int_{\Delta}^T|I_4(t)|\d t&\leq 2\beta\int_{\Delta}^T\int_{t-\Delta}^t\|\bar\X^h_s\|_{\wi\L^{r+1}}^r\|\bar\X^h_s-\bar\X^h_{t-\Delta}\|_{\wi\L^{r+1}}\d s\d t\nonumber\\&\leq 2\beta\left(\Delta\int_{0}^T\|\bar\X^h_t\|_{\wi\L^{r+1}}^{r+1}\d t\right)^{\frac{r}{r+1}}\left(2^{r}\Delta\int_0^T\|\bar\X^h_t\|_{\wi\L^{r+1}}^{r+1}\d t\right)^{\frac{1}{r+1}}\nonumber\\&\leq C_{\mu,\alpha,\beta,\lambda_1,L_{\G},L_{\sigma_2},M,T}\Delta\left(1+\|\x\|_{\H}^2\right).
	\end{align}
		We estimate $\int_{\Delta}^T|I_5(t)|\d t$ using the Assumption \ref{ass3.6} (A1) and \eqref{5.5y} as 
	\begin{align}\label{4p32}
		\int_{\Delta}^T|I_5(t)|\d t&\leq 2\int_{\Delta}^T\int_{t-\Delta}^t\|\bar\F(\bar\X^h_s)\|_{\H}\|\bar\X^h_s-\bar\X^h_{t-\Delta}\|_{\H}\d s\d t\nonumber\\&\leq 2\left(\int_{\Delta}^T\int_{t-\Delta}^t\|\bar\F(\bar\X^h_s)\|_{\H}^2\d s\d t\right)^{1/2}\left(\int_{\Delta}^T\int_{t-\Delta}^t\|\bar\X^h_s-\bar\X^h_{t-\Delta}\|_{\H}^2\d s\d t\right)^{1/2}\nonumber\\&\leq C\left(\Delta\int_{0}^T(1+\|\bar\X^h_t\|_{\H}^2)\d t\right)^{1/2}\left(2\Delta\int_{0}^T\|\bar\X^h_t\|_{\H}^2\d t\right)^{1/2}\nonumber\\&\leq C_{\mu,\alpha,\lambda_1,L_{\G},L_{\sigma_2},M,T}\Delta(1+\|\x\|_{\H}^2). 
	\end{align}
	Similarly, we estimate the term $\int_{\Delta}^T|I_6(t)|\d t$  as 
	\begin{align}\label{433}
	\int_{\Delta}^T|I_6(t)|\d t&\leq 2\int_{\Delta}^T\int_{t-\Delta}^t\|\sigma_1(\bar\X^h_s)\Q_1^{1/2}h_s\|_{\H}\|\bar\X^h_s-\bar\X^h_{t-\Delta}\|_{\H}\d s\d t\nonumber\\&\leq 2\left(\int_{\Delta}^T\int_{t-\Delta}^t\|\sigma_1(\bar\X^h_s)\Q_1^{1/2}\|_{\mathcal{L}_2}^2\|h_s\|_{\H}^2\d s\d t\right)^{1/2}\left(\int_{\Delta}^T\int_{t-\Delta}^t\|\bar\X^h_s-\bar\X^h_{t-\Delta}\|_{\H}^2\d s\d t\right)^{1/2}\nonumber\\&\leq C\left(\Delta\sup_{t\in[0,T]}(1+\|\bar\X^h_t\|_{\H}^2)\int_0^T\|h_t\|_{\H}^2\d t\right)^{1/2} \left(2\Delta\int_{0}^T\|\bar\X^h_t\|_{\H}^2\d t\right)^{1/2}\nonumber\\&\leq C_{\mu,\alpha,\lambda_1,L_{\G},L_{\sigma_2},M,T}\Delta(1+\|\x\|_{\H}^2), 
	\end{align}
	since $h\in\mathcal{S}_M$. Combing \eqref{4p26}-\eqref{433}, we obtain the required result \eqref{3.87}. 
\end{proof}

We are now in a position to verify the Hypothesis \ref{hyp1} (ii). 
\begin{theorem}[Compactness]\label{compact}
	Let $M <+\infty$ be a fixed positive number. Let $$\mathcal{K}_M :=\big\{ \bar{\X}^h \in\C([0,T];\H)\cap \mathrm{L}^2(0,T;\V )\cap\mathrm{L}^{r+1}(0,T;\wi\L^{r+1}):h\in \mathcal{S}_M\big\},$$
	where $\bar{\X}_t^h$ is the unique Leray-Hopf weak solution of the deterministic controlled equation (\ref{5.4y}), and $\bar{\X}_0^h= \x \in\H$ in $\mathscr{E} = \C([0,T];\H)\cap\mathrm{L}^2(0,T;\V)\cap\mathrm{L}^{r+1}(0,T;\wi\L^{r+1})$. Then $\mathcal{K}_M$ is compact in $\mathscr{E}$.
\end{theorem}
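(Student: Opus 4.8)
The plan is to show that every sequence $\{\bar{\X}^{h_n}\}_{n\in\N}\subset\mathcal{K}_M$ has a subsequence converging in $\mathscr{E}=\C([0,T];\H)\cap\mathrm{L}^2(0,T;\V)\cap\mathrm{L}^{r+1}(0,T;\wi\L^{r+1})$ to some $\bar{\X}^{h}$ with $h\in\mathcal{S}_M$. Since $\mathcal{S}_M$ is a compact metric space in the weak topology of $\mathrm{L}^2(0,T;\H)$ (recalled in \S\ref{sec4.2}), after passing to a subsequence we may assume $h_n\rightharpoonup h$ in $\mathrm{L}^2(0,T;\H)$ with $h\in\mathcal{S}_M$. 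The a priori bound \eqref{5.5y} is uniform in $h\in\mathcal{S}_M$, so $\{\bar{\X}^{h_n}\}$ is bounded in $\mathrm{L}^\infty(0,T;\H)\cap\mathrm{L}^2(0,T;\V)\cap\mathrm{L}^{r+1}(0,T;\wi\L^{r+1})$; together with the equation \eqref{5.4y} and the bounds \eqref{2p9}, \eqref{2.9a} on $\B$ and the growth of $\mathcal{C}$, this gives a uniform bound on $\partial_t\bar{\X}^{h_n}$ in $\mathrm{L}^2(0,T;\V')+\mathrm{L}^{\frac{r+1}{r}}(0,T;\wi\L^{\frac{r+1}{r}})$ (the control term $\sigma_1(\bar{\X}^{h_n})\Q_1^{1/2}h_n$ sits in $\mathrm{L}^1(0,T;\H)$ by Assumption \ref{ass3.6}(A1) and $h_n\in\mathcal{S}_M$, by Cauchy--Schwarz). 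An Aubin--Lions--Simon compactness argument then yields a subsequence with $\bar{\X}^{h_n}\to\bar{\X}$ strongly in $\mathrm{L}^2(0,T;\H)$ and weakly in $\mathrm{L}^2(0,T;\V)$, weakly in $\mathrm{L}^{r+1}(0,T;\wi\L^{r+1})$, and weak-$*$ in $\mathrm{L}^\infty(0,T;\H)$.

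Next I would pass to the limit in the weak formulation of \eqref{5.4y}. The linear terms $\mu\A\bar{\X}^{h_n}$ and $\alpha\bar{\X}^{h_n}$ pass by weak convergence in $\mathrm{L}^2(0,T;\V)$ and $\mathrm{L}^2(0,T;\H)$; the drift $\bar{\F}(\bar{\X}^{h_n})$ passes using the Lipschitz bound \eqref{3p93} and the strong $\mathrm{L}^2(0,T;\H)$ convergence. For the control term, $\sigma_1(\cdot)\Q_1^{1/2}$ is Lipschitz and bounded-growth, so $\sigma_1(\bar{\X}^{h_n})\Q_1^{1/2}\to\sigma_1(\bar{\X})\Q_1^{1/2}$ strongly in $\mathrm{L}^2(0,T;\mathcal{L}_2)$ while $h_n\rightharpoonup h$ weakly in $\mathrm{L}^2(0,T;\H)$, and the product of a strongly convergent and a weakly convergent sequence converges weakly in $\mathrm{L}^1(0,T;\H)$; this identifies the limiting control term as $\sigma_1(\bar{\X})\Q_1^{1/2}h$. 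The nonlinear terms $\B(\bar{\X}^{h_n})$ and $\beta\mathcal{C}(\bar{\X}^{h_n})$ are handled by the monotonicity/Minty--Browder machinery exactly as in the proof of Theorem \ref{main} (respectively Theorem \ref{main1} for $r=3$, or the localized version for $n=2$, $r\in[1,3]$): weak convergence of the nonlinear terms to some $\G_0$, a limsup-on-the-energy-equality argument using weak lower semicontinuity of the $\H$-norm, and hemicontinuity (Lemma \ref{lem2.8}) to conclude $\G_0=\mu\A\bar{\X}+\B(\bar{\X})+\beta\mathcal{C}(\bar{\X})$. Thus $\bar{\X}=\bar{\X}^{h}$ solves \eqref{5.4y} with the limit control $h$, and by the uniqueness part of Theorem \ref{thm5.9} the whole subsequence converges to $\bar{\X}^h\in\mathcal{K}_M$.

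It remains to upgrade the convergence to the topology of $\mathscr{E}$, i.e. to $\C([0,T];\H)$ and to the strong $\mathrm{L}^2(0,T;\V)$ and $\mathrm{L}^{r+1}(0,T;\wi\L^{r+1})$ norms. Writing the energy equality for $\bar{\X}^{h_n}-\bar{\X}^{h}$ and using the monotonicity estimates \eqref{fe} (or \eqref{218}/\eqref{fe2}) for $\G$ together with \eqref{214} for $\mathcal{C}$ and the Lipschitz property \eqref{3p93} of $\bar{\F}$, one gets a differential inequality of the form $\frac{\d}{\d t}\|\bar{\X}^{h_n}_t-\bar{\X}^{h}_t\|_{\H}^2 + \mu\|\bar{\X}^{h_n}_t-\bar{\X}^{h}_t\|_{\V}^2 + \text{(damping and }\mathcal{C}\text{ terms)} \le C\|\bar{\X}^{h_n}_t-\bar{\X}^{h}_t\|_{\H}^2 + R_n(t)$, where the remainder $R_n$ collects the control-term difference $(\sigma_1(\bar{\X}^{h_n})\Q_1^{1/2}h_n - \sigma_1(\bar{\X}^{h})\Q_1^{1/2}h, \bar{\X}^{h_n}-\bar{\X}^{h})$; the strong/weak splitting above shows $\int_0^T R_n(t)\,\d t\to0$, and a Gronwall argument then gives $\sup_{t\in[0,T]}\|\bar{\X}^{h_n}_t-\bar{\X}^{h}_t\|_{\H}^2\to0$ along with strong convergence in $\mathrm{L}^2(0,T;\V)$ and $\mathrm{L}^{r+1}(0,T;\wi\L^{r+1})$. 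Hence $\mathcal{K}_M$ is sequentially compact, and being a metric space, compact. The main obstacle I anticipate is the correct bookkeeping of the control term in the passage to the limit and in the Gronwall estimate — ensuring that the weak convergence $h_n\rightharpoonup h$ combined with the only strong-in-$\mathrm{L}^2(0,T;\H)$ (not better) convergence of $\bar{\X}^{h_n}$ genuinely forces $\int_0^T R_n\,\d t\to0$; here one uses that $\|\sigma_1(\bar{\X}^{h_n})\Q_1^{1/2}-\sigma_1(\bar{\X}^{h})\Q_1^{1/2}\|_{\mathcal{L}_2}\|h_n\|_{\H}$ is small in $\mathrm{L}^1(0,T)$ by Cauchy--Schwarz and the $\mathcal{S}_M$ bound, while the leftover piece $(\sigma_1(\bar{\X}^{h})\Q_1^{1/2}(h_n-h),\bar{\X}^{h})$ tends to $0$ by weak convergence of $h_n-h$ tested against the fixed $\mathrm{L}^2(0,T;\H)$ element $\sigma_1(\bar{\X}^{h})^*\Q_1^{1/2}\bar{\X}^{h}$.
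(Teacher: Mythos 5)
Your proposal is correct and follows the same overall architecture as the paper's proof: extract a weakly convergent subsequence $h_n\rightharpoonup h$ in $\mathcal{S}_M$, identify the limit $\bar{\X}^h$ as the solution of \eqref{5.4y} via monotonicity/Minty--Browder, and then upgrade to convergence in $\mathscr{E}$ by an energy--Gronwall estimate for the difference $\mathbf{W}^{h_n,h}=\bar{\X}^{h_n}-\bar{\X}^{h}$, splitting the control-term remainder into a Lipschitz piece $((\sigma_1(\bar{\X}^{h_n})-\sigma_1(\bar{\X}^{h}))\Q_1^{1/2}h_n,\mathbf{W}^{h_n,h})$ and a piece $(\sigma_1(\bar{\X}^{h})\Q_1^{1/2}(h_n-h),\mathbf{W}^{h_n,h})$ exactly as in the paper's $I_1+I_2$ decomposition. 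The one genuine difference is how the second piece is killed. The paper bounds it by Young's inequality by $\int_0^T\|\sigma_1(\bar{\X}^{h}_t)\Q_1^{1/2}(h^n_t-h_t)\|_{\H}^2\d t$ and sends this to zero by invoking the compactness (weak-to-strong mapping) of the Hilbert--Schmidt operator $\sigma_1(\bar{\X}^{h})\Q_1^{1/2}$; you instead insert an Aubin--Lions--Simon step up front to get $\bar{\X}^{h_n}\to\bar{\X}^{h}$ strongly in $\mathrm{L}^2(0,T;\H)$ and then dispose of the cross term by Cauchy--Schwarz, since $\sigma_1(\bar{\X}^{h})\Q_1^{1/2}(h_n-h)$ is bounded in $\mathrm{L}^2(0,T;\H)$ while $\|\mathbf{W}^{h_n,h}\|_{\mathrm{L}^2(0,T;\H)}\to 0$. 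Your route costs an extra compactness lemma (legitimate here since the domain is bounded, $\V\hookrightarrow\H$ is compact, and $\partial_t\bar{\X}^{h_n}$ is uniformly bounded in $\mathrm{L}^1(0,T;\V'+\wi\L^{\frac{r+1}{r}})$ by the estimates you cite), but it buys an argument that does not rely on the multiplication operator $g\mapsto\sigma_1(\bar{\X}^{h}_\cdot)\Q_1^{1/2}g(\cdot)$ being compact on the Bochner space $\mathrm{L}^2(0,T;\H)$, which is the more delicate point in the paper's version. One small correction: in your last sentence the leftover piece is paired with $\mathbf{W}^{h_n,h}$, not with the fixed element $\bar{\X}^{h}$, so ``weak convergence tested against a fixed $\mathrm{L}^2(0,T;\H)$ element'' is not by itself the right justification; it is precisely the strong $\mathrm{L}^2(0,T;\H)$ convergence from your Aubin--Lions step (giving $\mathrm{L}^1(0,T)$ smallness of the remainder, as Gronwall requires) that closes this term.
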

\begin{proof}
	Let us consider a sequence $\{\bar{\X}^{h_n}\}$ in $\mathcal{K}_M$, where $\bar{\X}^{h_n}$ corresponds to the solution of (\ref{5.4y}) with control $h_n \in \mathcal{S}_M$ in place of $h$, that is, 
	\begin{equation}\label{5.20z}
	\left\{
	\begin{aligned}
\d\bar{\X}_t^{h_n}&=-[\mu\A\bar{\X}_t^{h_n}+\B(\bar{\X}_t^{h_n})+\alpha\bar{\X}_t^{h_n}+\beta\mathcal{C}(\bar{\X}_t^{h_n})]\d t+\bar{\F}(\bar{\X}_t^{h_n})\d t+\sigma_1(\bar{\X}_t^{h_n})\Q_1^{1/2}h_t^n\d t, \\ \bar{\X}_0^{h_n}&=\x\in\H. 
	\end{aligned}
	\right.
	\end{equation}
	Then, by using the weak compactness of $\mathcal{S}_M$, there exists a subsequence of $\{h_n\}$, (still denoted by $\{h_n\}$), which converges weakly to $ h\in \mathcal{S}_M$ in $\mathrm{L}^2(0, T ; \H)$. Using the estimates (\ref{5.5y}), we obtain 
	\begin{equation}\label{5.29}
	\left\{
	\begin{aligned}
\bar{\X}_t^{h_n}&\xrightarrow{w^*}\bar{\X}_t^{h}\ \text{ in }\ \mathrm{L}^{\infty}(0,T;\H),\\ 
\bar{\X}_t^{h_n}&\xrightarrow{w}\bar{\X}_t^{h}\ \text{ in }\ \mathrm{L}^2(0,T;\V),\\
\bar{\X}_t^{h_n}&\xrightarrow{w}\bar{\X}_t^{h}\ \text{ in }\ \mathrm{L}^{r+1}(0,T;\wi\L^{r+1}).
	\end{aligned}
	\right.
	\end{equation}
Using the monotonicity property of linear and nonlinear operators, and Minty-Browder technique (see \cite{MTM7}), one can establish that $\bar{\X}_t^{h}$ is the unique weak solution of the system (\ref{5.4y}).	In order to prove $\mathcal{K}_M$ is compact, we need to show that $\bar{\X}_t^{h_n}\to\bar{\X}_t^{h}$ in $\mathscr{E}$ as $n\to\infty$. In other words, it is required to show that 
	\begin{align}\label{5.30z}
	\sup_{t\in[0,T]}\|\bar{\X}_t^{h_n}-\bar{\X}_t^{h}\|_{\H}^2+\int_0^T\|\bar{\X}_t^{h_n}-\bar{\X}_t^{h}\|_{\V}^2\d t+\int_0^T\|\bar{\X}_t^{h_n}-\bar{\X}_t^{h}\|_{\wi\L^{r+1}}^{r+1}\d t\to 0,
	\end{align}
	as $n\to\infty$. Recall that for the system (\ref{5.4y}), the energy estimate given in (\ref{5.5y}) holds true. Let us now define $\mathbf{W}_t^{h_n,h}:=\bar{\X}_t^{h_n}-\bar{\X}_t^{h}$, so that $\mathbf{W}_t^{h_n,h}$ satisfies: 
	\begin{equation}\label{5.22z}
	\left\{
	\begin{aligned}
	\d\mathbf{W}_t^{h_n,h}&=-\left[\mu\A\mathbf{W}_t^{h_n,h}+(\B(\bar{\X}_t^{h_n})-\B(\bar{\X}_t^{h}))+\alpha\mathbf{W}_t^{h_n,h}+\beta(\mathcal{C}(\bar{\X}_t^{h_n})-\mathcal{C}(\bar{\X}_t^{h}))\right]\d t\\&\quad +\left[\bar{\F}(\bar{\X}_t^{h_n})-\bar{\F}(\bar{\X}_t^{h})\right]\d t +\left[\sigma_1(\bar{\X}_t^{h_n})\Q_1^{1/2}h_t^n-\sigma_1(\bar{\X}_t^{h})\Q_1^{1/2}h_t\right]\d t,\\
	\mathbf{W}_0^{h_n,h}&=\mathbf{0}.
	\end{aligned}
	\right.\end{equation}
	Taking inner product with $\mathbf{W}_t^{h_n,h}$ to the first equation in (\ref{5.22z}), we get 
	\begin{align}\label{5.23z}
	&\|\mathbf{W}_t^{h_n,h}\|_{\H}^2+2\mu\int_0^t\|\mathbf{W}_s^{h_n,h}\|_{\V}^2\d s+2\alpha\int_0^t\|\mathbf{W}_s^{h_n,h}\|_{\H}^2\d s\nonumber\\&\quad+2\beta\int_0^t\langle\mathcal{C}(\bar{\X}_s^{h_n})-\mathcal{C}(\bar{\X}_s^{h}),\mathbf{W}_s^{h_n,h}\rangle\d s\nonumber\\&=-2\int_0^t\langle\B(\bar{\X}_s^{h_n})-\B(\bar{\X}_s^{h}),\mathbf{W}_s^{h_n,h}\rangle\d s+2\int_0^t(\bar{\F}(\bar{\X}_s^{h_n})-\bar{\F}(\bar{\X}_s^{h}),\mathbf{W}_s^{h_n,h})\d s\nonumber\\&\quad+2\int_0^t(\sigma_1(\bar{\X}_s^{h_n})\Q_1^{1/2}h_s^n-\sigma_1(\bar{\X}_s^{h})\Q_1^{1/2}h_s,\mathbf{W}_s^{h_n,h})\d s.
	\end{align}
	Note that $\langle\B(\bar{\X}^{h_n},\bar{\X}^{h_n}-\bar{\X}^{h}),\bar{\X}^{h_n}-\bar{\X}^{h}\rangle=0$ and it implies that
	\begin{align}\label{6.28}
&	\langle \B(\bar{\X}^{h_n})-\B(\bar{\X}^{h}),\bar{\X}^{h_n}-\bar{\X}^{h}\rangle \nonumber\\&=\langle\B(\bar{\X}^{h_n},\bar{\X}^{h_n}-\bar{\X}^{h}),\bar{\X}^{h_n}-\bar{\X}^{h}\rangle +\langle \B(\bar{\X}^{h_n}-\bar{\X}^{h},\bar{\X}^{h}),\bar{\X}^{h_n}-\bar{\X}^{h}\rangle \nonumber\\&=\langle \B(\bar{\X}^{h_n}-\bar{\X}^{h},\bar{\X}^{h}),\bar{\X}^{h_n}-\bar{\X}^{h}\rangle=-\langle \B(\bar{\X}^{h_n}-\bar{\X}^{h},\bar{\X}^{h_n}-\bar{\X}^{h}),\bar{\X}^{h}\rangle.
	\end{align} 
For $n=2$ and $r\in[1,3]$,	 making use of H\"older's, Ladyzhenskaya's and Young's inequalities, we estimate $2|\langle\B(\bar{\X}^{h_n})-\B(\bar{\X}^{h}),\mathbf{W}^{h_n,h}\rangle|$ as 
	\begin{align}\label{419}
	2|\langle\B(\bar{\X}^{h_n})-\B(\bar{\X}^{h}),\mathbf{W}^{h_n,h}\rangle|&= 2|\langle\B(\mathbf{W}^{h_n,h},\bar{\X}^{h}),\mathbf{W}^{h_n,h}\rangle |\leq 2\|\bar{\X}^{h}\|_{\V}\|\mathbf{W}^{h_n,h}\|_{\wi\L^4}^2\nonumber\\&\leq 2\sqrt{2}\|\bar{\X}^{h}\|_{\V}\|\mathbf{W}^{h_n,h}\|_{\H}\|\mathbf{W}^{h_n,h}\|_{\V}\nonumber\\&\leq\mu\|\mathbf{W}^{h_n,h}\|_{\V}^2+\frac{2}{\mu}\|\bar{\X}^{h}\|_{\V}^2\|\mathbf{W}^{h_n,h}\|_{\H}^2. 
	\end{align}
	Using \eqref{2.23} and \eqref{a215}, we know that 
	\begin{align}
	-2\beta\langle\mathcal{C}(\bar{\X}^{h_n})-\mathcal{C}(\bar{\X}^{h}),\mathbf{W}^{h_n,h}\rangle\leq-\frac{\beta}{2^{r-2}}\|\mathbf{W}^{h_n,h}\|_{\wi\L^{r+1}}^{r+1},
	\end{align}
	for $r\in[1,\infty)$. Using \eqref{3p93}, H\"older's and Young's inequalities, we estimate $2(\bar{\F}(\bar{\X}^{h_n})-\bar{\F}(\bar{\X}^{h}),\mathbf{W}^{h_n,h})$ as 
	\begin{align}\label{4.21}
	2(\bar{\F}(\bar{\X}^{h_n})-\bar{\F}(\bar{\X}^{h}),\mathbf{W}^{h_n,h})&\leq 2\|\bar{\F}(\bar{\X}^{h_n})-\bar{\F}(\bar{\X}^{h})\|_{\H}\|\mathbf{W}^{h_n,h}\|_{\H}\leq C_{\mu,\alpha,\lambda_1,L_{\G},L_{\sigma_2}}\|\mathbf{W}^{h_n,h}\|_{\H}^2. 
	\end{align}
	Combining \eqref{419}-\eqref{4.21} and substituting it in \eqref{5.23z}, we obtain 
	\begin{align}\label{4.22}
	&\|\mathbf{W}_t^{h_n,h}\|_{\H}^2+\mu\int_0^t\|\mathbf{W}_s^{h_n,h}\|_{\V}^2\d s+2\alpha\int_0^t\|\mathbf{W}_s^{h_n,h}\|_{\H}^2\d s+\frac{\beta}{2^{r-2}}\int_0^t\|\mathbf{W}^{h_n,h}_s\|_{\wi\L^{r+1}}^{r+1}\d s\nonumber\\&\leq \frac{2}{\mu}\int_0^t\|\bar{\X}^{h}_s\|_{\V}^2\|\mathbf{W}^{h_n,h}_s\|_{\H}^2\d s+C_{\mu,\alpha,\lambda_1,L_{\G},L_{\sigma_2}}\int_0^t\|\mathbf{W}^{h_n,h}_s\|_{\H}^2\d s+I_1+I_2,
	\end{align}
	where 
	\begin{align*}
	I_1&=2\int_0^t((\sigma_1(\bar{\X}_s^{h_n})-\sigma_1(\bar{\X}_s^{h}))\Q_1^{1/2}h_s^n,\mathbf{W}_s^{h_n,h})\d s,\\
	I_2&=2\int_0^t(\sigma_1(\bar{\X}_s^{h})\Q_1^{1/2}(h_s^n-h_s),\mathbf{W}_s^{h_n,h})\d s.
	\end{align*}
	Using the Cauchy-Schwarz inequality, H\"older's and Young's inequalities, and the Assumption \ref{ass3.6} (A1), we estimate $I_1$ as  
	\begin{align}\label{634}
	I_1	&\leq 2\int_0^t|((\sigma_1(\bar{\X}_s^{h_n})-\sigma_1(\bar{\X}_s^{h}))\Q_1^{1/2}h_s^n,\mathbf{W}_s^{h_n,h})|\d s\nonumber\\&\leq 2\int_0^t\|(\sigma_1(\bar{\X}_s^{h_n})-\sigma_1(\bar{\X}_s^{h}))\Q_1^{1/2}\|_{\mathcal{L}_2}\|h_s^n\|_{\H}\|\mathbf{W}_s^{h_n,h}\|_{\H}\d s\leq C\int_0^t\|h_s^n\|_{\H}\|\mathbf{W}_s^{h_n,h}\|_{\H}^2\d s\nonumber\\&\leq C\int_0^t\|\mathbf{W}_s^{h_n,h}\|_{\H}^2\d s+C\int_0^t\|h_s^n\|_{\H}^2\|\mathbf{W}_s^{h_n,h}\|_{\H}^2\d s.
	\end{align}
	Making use of the Cauchy-Schwarz inequality and Young's inequality, we estimate $I_2$ as 
	\begin{align}\label{635}
	I_2&\leq 2\int_0^t\|\sigma_1(\bar{\X}_s^{h})\Q_1^{1/2}(h_s^n-h_s)\|_{\H}\|\mathbf{W}_s^{h_n,h}\|_{\H}\d s\nonumber\\&\leq \int_0^t\|\mathbf{W}_s^{h_n,h}\|_{\H}^2\d s+\int_0^t\|\sigma_1(\bar{\X}_s^{h})\Q_1^{1/2}(h_s^n-h_s)\|_{\H}^2\d s.
	\end{align}
	Thus, it is immediate from \eqref{4.22} that 
	\begin{align}\label{425}
	&\|\mathbf{W}_t^{h_n,h}\|_{\H}^2+\mu\int_0^t\|\mathbf{W}_s^{h_n,h}\|_{\V}^2\d s+2\alpha\int_0^t\|\mathbf{W}_s^{h_n,h}\|_{\H}^2\d s+\frac{\beta}{2^{r-2}}\int_0^t\|\mathbf{W}^{h_n,h}_s\|_{\wi\L^{r+1}}^{r+1}\d s\nonumber\\&\leq \frac{2}{\mu}\int_0^t\|\bar{\X}^{h}_s\|_{\V}^2\|\mathbf{W}^{h_n,h}_s\|_{\H}^2\d s+C_{\mu,\alpha,\lambda_1,L_{\G},L_{\sigma_2}}\int_0^t\|\mathbf{W}^{h_n,h}_s\|_{\H}^2\d s+C\int_0^t\|h_s^n\|_{\H}^2\|\mathbf{W}_s^{h_n,h}\|_{\H}^2\d s\nonumber\\&\quad+\int_0^t\|\sigma_1(\bar{\X}_s^{h})\Q_1^{1/2}(h_s^n-h_s)\|_{\H}^2\d s,
	\end{align}
for all $t\in[0,T]$. An application of Gronwall's inequality in \eqref{425} gives 
\begin{align}\label{426}
&\sup_{t\in[0,T]}\|\mathbf{W}_t^{h_n,h}\|_{\H}^2+\mu\int_0^T\|\mathbf{W}_t^{h_n,h}\|_{\V}^2\d t+2\alpha\int_0^T\|\mathbf{W}_t^{h_n,h}\|_{\H}^2\d t+\frac{\beta}{2^{r-2}}\int_0^T\|\mathbf{W}^{h_n,h}_t\|_{\wi\L^{r+1}}^{r+1}\d t\nonumber\\&\leq \left(\int_0^T\|\sigma_1(\bar{\X}_t^{h})\Q_1^{1/2}(h_t^n-h_t)\|_{\H}^2\d t\right)\exp\left\{\frac{2}{\mu}\int_0^T\|\bar{\X}^{h}_t\|_{\V}^2\d t+\int_0^T\|h_t^n\|_{\H}^2\d t\right\}e^{C_{\mu,\alpha,\lambda_1,L_{\G},L_{\sigma_2}}T}\nonumber\\&\leq C_{\mu,\alpha,\lambda_1,L_{\G},L_{\sigma_2},M,T,\|\x\|_{\H}}\left(\int_0^T\|\sigma_1(\bar{\X}_t^{h})\Q_1^{1/2}(h_t^n-h_t)\|_{\H}^2\d t\right),
\end{align}
since $\{h_n\}\in\mathcal{S}_M$ and the process $\bar{\X}_t^h$ satisfies the energy estimate \eqref{5.5y}. 	It should be noted that the operator $\sigma_1(\cdot)\Q^{\frac{1}{2}}$ is Hilbert-Schmidt in $\H$, and hence it is a compact operator on $\H$. Furthermore, we know that compact operator maps weakly convergent sequences into strongly convergent sequences. Since $\{h_n\}$ converges weakly to $ h\in \mathcal{S}_M$ in $\mathrm{L}^2(0, T ; \H)$, we infer that $$\int_0^T\|\sigma_1(\bar{\X}_t^{h})\Q_1^{1/2}(h_t^n-h_t)\|_{\H}^2\d t\to 0 \ \text{ as } \ n\to\infty.$$ Thus, from \eqref{426}, we obtain 
\begin{align}
&\sup_{t\in[0,T]}\|\mathbf{W}_t^{h_n,h}\|_{\H}^2+\mu\int_0^T\|\mathbf{W}_t^{h_n,h}\|_{\V}^2\d t+\frac{\beta}{2^{r-2}}\int_0^T\|\mathbf{W}^{h_n,h}_t\|_{\wi\L^{r+1}}^{r+1}\d t\to 0\  \text{ as }\  n\ \to \infty,
\end{align}
which concludes the proof for $n=2$ and $r\in[1,3]$. 

For $n=2,3$ and $r\in(3,\infty)$,	from \eqref{2.23}, we easily have 
	\begin{align}\label{6.27}
	\beta	\langle\mathcal{C}(\bar{\X}^{h_n})-\mathcal{C}(\bar{\X}^{h}),\bar{\X}^{h_n}-\bar{\X}^{h}\rangle \geq \frac{\beta}{2}\||\bar{\X}^{h_n}|^{\frac{r-1}{2}}(\bar{\X}^{h_n}-\bar{\X}^{h})\|_{\H}^2+\frac{\beta}{2}\||\bar{\X}^{h}|^{\frac{r-1}{2}}(\bar{\X}^{h_n}-\bar{\X}^{h})\|_{\H}^2. 
	\end{align}
	Using \eqref{6.28}, H\"older's and Young's inequalities, we estimate $|\langle\B(\bar{\X}^{h_n})-\B(\bar{\X}^{h}),\bar{\X}^{h_n}-\bar{\X}^{h}\rangle|$ as  
	\begin{align}\label{6.29}
|\langle\B(\bar{\X}^{h_n})-\B(\bar{\X}^{h}),\bar{\X}^{h_n}-\bar{\X}^{h}\rangle|&=	|\langle\B(\bar{\X}^{h_n}-\bar{\X}^{h},\bar{\X}^{h_n}-\bar{\X}^{h}),\bar{\X}^{h}\rangle|\nonumber\\&\leq\|\bar{\X}^{h_n}-\bar{\X}^{h}\|_{\V}\|\bar{\X}^{h}(\bar{\X}^{h_n}-\bar{\X}^{h})\|_{\H}\nonumber\\&\leq\frac{\mu }{2}\|\bar{\X}^{h_n}-\bar{\X}^{h}\|_{\V}^2+\frac{1}{2\mu }\|\bar{\X}^{h}(\bar{\X}^{h_n}-\bar{\X}^{h})\|_{\H}^2.
	\end{align}
	We take the term $\|\bar{\X}^{h}(\bar{\X}^{h_n}-\bar{\X}^{h})\|_{\H}^2$ from \eqref{6.29} and use H\"older's and Young's inequalities to estimate it as (see \cite{KWH} also)
	\begin{align}\label{6.30}
	&\int_{\mathcal{O}}|\bar{\X}^{h}(x)|^2|\bar{\X}^{h_n}(x)-\bar{\X}^{h}(x)|^2\d x\nonumber\\&=\int_{\mathcal{O}}|\bar{\X}^{h}(x)|^2|\bar{\X}^{h_n}(x)-\bar{\X}^{h}(x)|^{\frac{4}{r-1}}|\bar{\X}^{h_n}(x)-\bar{\X}^{h}(x)|^{\frac{2(r-3)}{r-1}}\d x\nonumber\\&\leq\left(\int_{\mathcal{O}}|\bar{\X}^{h}(x)|^{r-1}|\bar{\X}^{h_n}(x)-\bar{\X}^{h}(x)|^2\d x\right)^{\frac{2}{r-1}}\left(\int_{\mathcal{O}}|\bar{\X}^{h_n}(x)-\bar{\X}^{h}(x)|^2\d x\right)^{\frac{r-3}{r-1}}\nonumber\\&\leq\frac{\beta\mu }{2}\left(\int_{\mathcal{O}}|\bar{\X}^{h}(x)|^{r-1}|\bar{\X}^{h_n}(x)-\bar{\X}^{h}(x)|^2\d x\right)\nonumber\\&\quad+\frac{r-3}{r-1}\left(\frac{4}{\beta\mu (r-1)}\right)^{\frac{2}{r-3}}\left(\int_{\mathcal{O}}|\bar{\X}^{h_n}(x)-\bar{\X}^{h}(x)|^2\d x\right),
	\end{align}
	for $r>3$. Combining \eqref{6.27} and \eqref{6.30}, we find 
	\begin{align}\label{630}
	&\beta\langle\mathcal{C}(\bar{\X}^{h_n})-\mathcal{C}(\bar{\X}^{h}),\bar{\X}^{h_n}-\bar{\X}^{h}\rangle+\langle\B(\bar{\X}^{h_n})-\B(\bar{\X}^{h}),\bar{\X}^{h_n}-\bar{\X}^{h}\rangle\nonumber\\&\geq\frac{\beta}{4}\||\bar{\X}^{h_n}|^{\frac{r-1}{2}}(\bar{\X}^{h}-\bar{\X}^{h})\|_{\H}^2+\frac{\beta}{2}\||\bar{\X}^{h}|^{\frac{r-1}{2}}(\bar{\X}^{h_n}-\bar{\X}^{h})\|_{\H}^2\nonumber\\&\quad-\frac{r-3}{2\mu(r-1)}\left(\frac{4}{\beta\mu (r-1)}\right)^{\frac{2}{r-3}}\left(\int_{\mathcal{O}}|\bar{\X}^{h_n}(x)-\bar{\X}^{h}(x)|^2\d x\right)-\frac{\mu}{2} \|\bar{\X}^{h_n}-\bar{\X}^{h}\|_{\V}^2.
	\end{align}
	Using \eqref{a215},  we obtain 
	\begin{align}
	\frac{2^{2-r}\beta}{4}\|\bar{\X}^{h_n}-\bar{\X}^{h}\|_{\wi\L^{r+1}}^{r+1}\leq\frac{\beta}{4}\||\bar{\X}^{h_n}|^{\frac{r-1}{2}}(\bar{\X}^{h_n}-\bar{\X}^{h})\|_{\L^2}^2+\frac{\beta}{4}\||\bar{\X}^{h}|^{\frac{r-1}{2}}(\bar{\X}^{h_n}-\bar{\X}^{h})\|_{\L^2}^2.
	\end{align}
	Thus, from \eqref{630}, it is immediate that 
	\begin{align}\label{622}
	&\beta\langle\mathcal{C}(\bar{\X}^{h_n})-\mathcal{C}(\bar{\X}^{h}),\bar{\X}^{h_n}-\bar{\X}^{h}\rangle+\langle\B(\bar{\X}^{h_n})-\B(\bar{\X}^{h}),\bar{\X}^{h_n}-\bar{\X}^{h}\rangle\nonumber\\&\geq \frac{\beta}{2^r}\|\bar{\X}^{h_n}-\bar{\X}^{h}\|_{\wi\L^{r+1}}^{r+1}-\frac{\wi\eta}{2}\|\bar{\X}^{h_n}-\bar{\X}^{h}\|_{\H}^2-\frac{\mu}{2} \|\bar{\X}^{h_n}-\bar{\X}^{h}\|_{\V}^2,
	\end{align}
	where \begin{align}\label{623}\wi\eta=\frac{r-3}{\mu(r-1)}\left(\frac{4}{\beta\mu (r-1)}\right)^{\frac{2}{r-3}}.\end{align}
	Using \eqref{4.21} and \eqref{622} in \eqref{5.23z}, we get 
	\begin{align}
	&\|\mathbf{W}_t^{h_n,h}\|_{\H}^2+\mu\int_0^t\|\mathbf{W}_s^{h_n,h}\|_{\V}^2\d s+2\alpha\int_0^t\|\mathbf{W}_s^{h_n,h}\|_{\H}^2\d s+\frac{\beta}{2^{r-1}}\int_0^t\|\mathbf{W}^{h_n,h}_s\|_{\wi\L^{r+1}}^{r+1}\d s\nonumber\\&\leq \wi\eta\int_0^t\|\mathbf{W}^{h_n,h}_s\|_{\H}^2\d s+C_{\mu,\alpha,\lambda_1,L_{\G},L_{\sigma_2}}\int_0^t\|\mathbf{W}^{h_n,h}_s\|_{\H}^2\d s+C\int_0^t\|h_s^n\|_{\H}^2\|\mathbf{W}_s^{h_n,h}\|_{\H}^2\d s\nonumber\\&\quad+\int_0^t\|\sigma_1(\bar{\X}_s^{h})\Q_1^{1/2}(h_s^n-h_s)\|_{\H}^2\d s,
	\end{align}
for all $t\in[0,T]$. Then, proceeding similarly as in the previous case, we arrive at the required result \eqref{5.30z}.

	For the case $n=r=3$, from \eqref{2.23}, we find 
	\begin{align}\label{6.33}
	\beta\langle\mathcal{C}(\bar{\X}^{h_n})-\mathcal{C}(\bar{\X}^{h}),\bar{\X}^{h_n}-\bar{\X}^{h}\rangle\geq\frac{\beta}{2}\|\bar{\X}^{h_n}(\bar{\X}^{h_n}-\bar{\X}^{h})\|_{\H}^2+\frac{\beta}{2}\|\bar{\X}^{h}(\bar{\X}^{h_n}-\bar{\X}^{h})\|_{\H}^2. 
	\end{align}
	We estimate $|\langle\B(\bar{\X}^{h_n})-\B(\bar{\X}^{h}),\bar{\X}^{h_n}-\bar{\X}^{h}\rangle|$  using H\"older's and Young's inequalities as 
	\begin{align}\label{6.34}
|\langle\B(\bar{\X}^{h_n})-\B(\bar{\X}^{h}),\bar{\X}^{h_n}-\bar{\X}^{h}\rangle|&=	|\langle\B(\bar{\X}^{h_n}-\bar{\X}^{h},\bar{\X}^{h_n}-\bar{\X}^{h}),\bar{\X}^{h}\rangle|\nonumber\\&\leq\|\bar{\X}^{h}(\bar{\X}^{h_n}-\bar{\X}^{h})\|_{\H}\|\bar{\X}^{h_n}-\bar{\X}^{h}\|_{\V} \nonumber\\&\leq\frac{\theta\mu}{2} \|\bar{\X}^{h_n}-\bar{\X}^{h}\|_{\V}^2+\frac{1}{2\theta\mu }\|\bar{\X}^{h}(\bar{\X}^{h_n}-\bar{\X}^{h})\|_{\H}^2.
	\end{align}
	Combining \eqref{6.33} and \eqref{6.34}, we obtain 
	\begin{align}\label{632}
	&\mu\langle\A(\bar{\X}^{h_n}-\bar{\X}^{h}),\bar{\X}^{h_n}-\bar{\X}^{h}\rangle+ \beta\langle\mathcal{C}(\bar{\X}^{h_n})-\mathcal{C}(\bar{\X}^{h}),\bar{\X}^{h_n}-\bar{\X}^{h}\rangle\nonumber\\&\quad+\langle\B(\bar{\X}^{h_n})-\B(\bar{\X}^{h}),\bar{\X}^{h_n}-\bar{\X}^{h}\rangle\nonumber\\&\geq\mu\left(1-\frac{\theta}{2}\right) \|\bar{\X}^{h_n}-\bar{\X}^{h}\|_{\V}^2+ \frac{\beta}{2}\|\bar{\X}^{h_n}(\bar{\X}^{h_n}-\bar{\X}^{h})\|_{\H}^2+\frac{1}{2}\left(\beta-\frac{1}{\theta\mu}\right)\|\bar{\X}^{h}(\bar{\X}^{h_n}-\bar{\X}^{h})\|_{\H}^2\nonumber\\&\geq\mu\left(1-\frac{\theta}{2}\right) \|\bar{\X}^{h_n}-\bar{\X}^{h}\|_{\V}^2+ \frac{1}{2}\left(\beta-\frac{1}{\theta\mu}\right)\|\bar{\X}^{h_n}-\bar{\X}^{h}\|_{\wi\L^4}^4,
	\end{align}
for $\frac{1}{\beta\mu}<\theta<2$.	Thus,  we infer that 
	\begin{align}\label{642}
	&\|\mathbf{W}_t^{h_n,h}\|_{\H}^2+\mu(2-\theta)\int_0^t\|\mathbf{W}_s^{h_n,h}\|_{\V}^2\d s+\left(\beta-\frac{1}{\theta\mu}\right)\int_0^t\|\mathbf{W}_s^{h_n,h}\|_{\wi\L^{4}}^{4}\d s\nonumber\\&\leq C_{\mu,\alpha\lambda_1,L_{\G},L_{\sigma_2}}\int_0^t\|\mathbf{W}^{h_n,h}_s\|_{\H}^2\d s+C\int_0^t\|h_s^n\|_{\H}^2\|\mathbf{W}_s^{h_n,h}\|_{\H}^2\d s+\int_0^t\|\sigma_1(\bar{\X}_s^{h})\Q_1^{1/2}(h_s^n-h_s)\|_{\H}^2\d s. 
	\end{align}
	Hence, for $2\beta\mu >1$, arguing similarly as in the case of $n=2,3$ and $r\in[1,3]$, we finally obtain the required result  \eqref{5.30z}. 
\end{proof}

\subsection{Weak convergence} 
Let us now verify the Hypothesis \ref{hyp1} (i) on weak convergence.  We first establish the existence and uniqueness result of the following stochastic controlled SCBF equations.

\begin{theorem}\label{thm5.10}
	For any $h\in\mathcal{A}_M$, $0<M<+\infty$, under the Assumption \ref{ass3.6}, the stochastic control problem:
	\begin{equation}\label{5.4z}
	\left\{
	\begin{aligned}
	\d \X^{\e,\delta,h}_t&=-[\mu\A \X^{\e,\delta,h}_t+\B(\X^{\e,\delta,h}_t)+\alpha\X^{\e,\delta,h}_t+\beta\mathcal{C}(\X^{\e,\delta,h}_t)-\mathrm{F}(\X^{\e,\delta,h}_t,\Y^{\e,\delta,h}_t)]\d t\\&\quad+\sigma_1(\X^{\e,\delta,h}_t)\Q_1^{1/2}h_t\d t+\sqrt{\e}\sigma_1(\X^{\e,\delta,h}_t)\Q_1^{1/2}\d\W_t,\\
	\d \Y^{\e,\delta,h}_t&=-\frac{1}{\delta}[\mu\A \Y^{\e,\delta,h}_t+\alpha\Y^{\e,\delta,h}_t+\beta\mathcal{C}(\Y_{t}^{\e,\delta})-\mathrm{G}(\X^{\e,\delta,h}_t,\Y^{\e,\delta,h}_t)]\d t\\&\quad+\frac{1}{\sqrt{\delta\e}}\sigma_2(\X^{\e,\delta,h}_t,\Y^{\e,\delta,h}_t)\Q_2^{1/2}h_t\d t+\frac{1}{\sqrt{\delta}}\sigma_2(\X^{\e,\delta,h}_t,\Y^{\e,\delta,h}_t)\Q_2^{1/2}\d\W_t,\\
	\X^{\e,\delta,h}_0&=\x,\ \Y^{\e,\delta,h}_0=\y,
	\end{aligned}
	\right. 
	\end{equation}
	has a \emph{pathwise unique  strong solution} $(\X^{\e,\delta,h},\Y^{\e,\delta,h})\in\mathrm{L}^{2}(\Omega;\mathscr{E})\times\mathrm{L}^2(\Omega;\mathscr{E})$, where $\mathscr{E}=\C([0,T];\H)\cap\mathrm{L}^2(0,T;\V)\cap\mathrm{L}^{r+1}(0,T;\wi\L^{r+1})$ with $\mathscr{F}_t$-adapted paths in $\mathscr{E}\times\mathscr{E}$, $\mathbb{P}$-a.s. Furthermore, for all $(\e,\delta)\in(0,1)$ with $\frac{\delta}{\e}\leq\frac{1}{C_{\mu,\alpha,\lambda_1,L_{\G},M}},$ we have 
	\begin{align}\label{5.5z}
&\E\left[\sup_{t\in[0,T]}\|\X^{\e,\delta,h}_t\|_{\H}^{2}+\mu \int_0^T\|\X^{\e,\delta,h}_t\|_{\V}^{2}\d t+\beta \int_0^T\|\X^{\e,\delta,h}_t\|_{\wi\L^{r+1}}^{r+1}\d t\right] \nonumber\\&\leq C_{\mu,\alpha,\lambda_1,L_{\G},M,T}(1+\|\x\|_{\H}^2+\|\y\|_{\H}^2),
	\end{align}
 and 
	\begin{align}\label{443}
	\E\left[\int_0^T\|\Y^{\e,\delta,h}_t\|_{\H}^{2}\d t\right]\leq C_{\mu,\alpha,\lambda_1,L_{\G},M,T}(1+\|\x\|_{\H}^2+\|\y\|_{\H}^2).
	\end{align}
\end{theorem}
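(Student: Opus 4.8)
\textbf{Proof proposal for Theorem \ref{thm5.10}.}

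The plan is to treat the system \eqref{5.4z} as a coupled slow-fast SPDE with Lipschitz-perturbed monotone structure and to establish existence, uniqueness, and the a priori bounds in three stages. First I would establish existence and uniqueness. The coupled system \eqref{5.4z} has the same structural form as \eqref{3p6} studied in Theorem \ref{exis}, the only modifications being the two additional drift terms $\sigma_1(\X^{\e,\delta,h}_t)\Q_1^{1/2}h_t$ and $\frac{1}{\sqrt{\delta\e}}\sigma_2(\X^{\e,\delta,h}_t,\Y^{\e,\delta,h}_t)\Q_2^{1/2}h_t$. Since $h\in\mathcal{A}_M$ is $\{\mathscr{F}_t\}$-predictable with $\int_0^T\|h_t\|_{\H}^2\d t\leq M$ $\mathbb{P}$-a.s., and since $\sigma_1\Q_1^{1/2}$, $\sigma_2\Q_2^{1/2}$ are Lipschitz by Assumption \ref{ass3.6}(A1) and $\sigma_2\Q_2^{1/2}$ has linear growth in $\x$ uniformly in $\y$ by (A2), these control terms are random time-dependent Lipschitz perturbations of the drift. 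I would therefore rewrite the system in the abstract form $\d\Z^{\e,\delta,h}_t = -[\mu\wi\A\Z^{\e,\delta,h}_t + \wi\F(\Z^{\e,\delta,h}_t) + \wi{\mathrm{H}}(t,\Z^{\e,\delta,h}_t)]\d t + \wi\sigma(\Z^{\e,\delta,h}_t)\Q^{1/2}\d\W_t$ where $\wi{\mathrm{H}}$ collects the control terms, and invoke the monotonicity (Lemmas \ref{thm2.2}--\ref{lem2.8}) and hemicontinuity of $\mu\wi\A+\wi\F$ together with the Lipschitz (locally Lipschitz for $n=2$, $r\in[1,3]$) character of $\wi{\mathrm{H}}$ and $\wi\sigma$; the stochastic Galerkin approximation plus the stochastic Minty--Browder technique, exactly as in Theorem 3.4 of \cite{MTM11} and Theorem 3.7 of \cite{MTM8}, then yields a pathwise unique strong solution with paths in $\mathscr{E}\times\mathscr{E}$, $\mathbb{P}$-a.s. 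For $n=2$, $r\in[1,3]$ I would use the localized version of Minty--Browder as in \cite{ICAM,SSSP}.

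Next I would prove the a priori bound \eqref{5.5z}. Applying the infinite-dimensional It\^o formula to $\|\X^{\e,\delta,h}_t\|_{\H}^2$ gives, using $\langle\B(\X^{\e,\delta,h}_t),\X^{\e,\delta,h}_t\rangle=0$,
\begin{align*}
&\|\X^{\e,\delta,h}_t\|_{\H}^2 + 2\mu\int_0^t\|\X^{\e,\delta,h}_s\|_{\V}^2\d s + 2\alpha\int_0^t\|\X^{\e,\delta,h}_s\|_{\H}^2\d s + 2\beta\int_0^t\|\X^{\e,\delta,h}_s\|_{\wi\L^{r+1}}^{r+1}\d s\\
&= \|\x\|_{\H}^2 + 2\int_0^t(\F(\X^{\e,\delta,h}_s,\Y^{\e,\delta,h}_s),\X^{\e,\delta,h}_s)\d s + 2\int_0^t(\sigma_1(\X^{\e,\delta,h}_s)\Q_1^{1/2}h_s,\X^{\e,\delta,h}_s)\d s\\
&\quad + \e\int_0^t\|\sigma_1(\X^{\e,\delta,h}_s)\Q_1^{1/2}\|_{\mathcal{L}_2}^2\d s + 2\sqrt{\e}\int_0^t(\sigma_1(\X^{\e,\delta,h}_s)\Q_1^{1/2}\d\W_s,\X^{\e,\delta,h}_s).
\end{align*}
The term with $\F$ is handled by (A1) and Young's inequality as $C(1+\|\X^{\e,\delta,h}_s\|_{\H}^2+\|\Y^{\e,\delta,h}_s\|_{\H}^2)$; the control term $(\sigma_1(\X^{\e,\delta,h}_s)\Q_1^{1/2}h_s,\X^{\e,\delta,h}_s)$ is bounded by $\tfrac12\|\X^{\e,\delta,h}_s\|_{\H}^2 + C(1+\|\X^{\e,\delta,h}_s\|_{\H}^2)\|h_s\|_{\H}^2$ using (A1); the $\e$-trace term by $\e C(1+\|\X^{\e,\delta,h}_s\|_{\H}^2)$; and the stochastic integral by the Burkholder--Davis--Gundy inequality after taking $\sup_{t}$ and expectation. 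This produces a term $\E\int_0^T\|\Y^{\e,\delta,h}_s\|_{\H}^2\d s$ that must be controlled by \eqref{443}, and a Gronwall argument in integral form (with the random but $\mathbb{P}$-a.s.\ bounded coefficient $1+\|h_s\|_{\H}^2$, so that $\exp(CM)$ appears) then closes the estimate provided $\e<1$.

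For \eqref{443} I would apply It\^o's formula to $\|\Y^{\e,\delta,h}_t\|_{\H}^2$; dividing the relevant inequality by the factor $\delta$ in front of the fast drift and using Assumption \ref{ass3.6}(A1)--(A3) (so that the good sign of $-\frac{1}{\delta}(\mu\lambda_1+2\alpha-2L_{\G}-2L_{\sigma_2}^2)\|\Y^{\e,\delta,h}_s\|_{\H}^2$ is available), together with the linear-growth bound (A2) on $\sigma_2\Q_2^{1/2}$ which makes the $\tfrac{1}{\delta}$-trace term comparable to $\tfrac{C}{\delta}(1+\|\X^{\e,\delta,h}_s\|_{\H}^2)$, and handling the extra control term $\frac{1}{\sqrt{\delta\e}}\sigma_2(\X^{\e,\delta,h}_s,\Y^{\e,\delta,h}_s)\Q_2^{1/2}h_s$ by Cauchy--Schwarz and Young to absorb part into $\frac{c}{\delta}\|\Y^{\e,\delta,h}_s\|_{\H}^2$ at the cost of $\frac{C}{\e}(1+\|\X^{\e,\delta,h}_s\|_{\H}^2)\|h_s\|_{\H}^2$. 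Taking expectation and using the variation-of-constants representation (with decay rate $\sim\xi/\delta$) yields, after integrating in $t$ over $[0,T]$, a bound of the form $\E\int_0^T\|\Y^{\e,\delta,h}_t\|_{\H}^2\d t \le C\|\y\|_{\H}^2 + C\big(1+\tfrac{1}{\e}\E\int_0^T\|h_t\|_{\H}^2\d t\big)\,\E\!\int_0^T(1+\|\X^{\e,\delta,h}_t\|_{\H}^2)\d t$ plus lower-order terms. The main obstacle is precisely this coupling: the slow bound \eqref{5.5z} needs the fast bound \eqref{443} and vice versa, with the dangerous $\tfrac{1}{\e}$ coming from the control term in the fast equation. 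I would resolve it by running both estimates simultaneously as a Gronwall-type system for the pair $\big(\E\sup_{s\le t}\|\X^{\e,\delta,h}_s\|_{\H}^2,\ \E\int_0^t\|\Y^{\e,\delta,h}_s\|_{\H}^2\d s\big)$ and exploiting the smallness hypothesis $\tfrac{\delta}{\e}\le\tfrac{1}{C_{\mu,\alpha,\lambda_1,L_{\G},M}}$ (equivalently $\tfrac1\e \lesssim \tfrac{1}{\delta}\cdot\tfrac{1}{C}$): the factor $\tfrac1\e$ multiplying $\E\int_0^T\|h_t\|_{\H}^2\d t\le M$ is controlled against the strong dissipation $\tfrac\xi\delta$ of the fast equation, so that the fast integral is bounded by $C(1+\|\x\|_{\H}^2+\|\y\|_{\H}^2) + (\text{small})\cdot\E\int_0^T\|\X^{\e,\delta,h}_t\|_{\H}^2\d t$ with a coefficient strictly less than needed, whence a standard Gronwall inequality on the slow component closes both bounds with constants independent of $(\e,\delta)$. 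This is where I expect the bookkeeping to be most delicate, and I would follow the corresponding estimates in \cite{MTM11,XSRW} for the precise choice of constants.
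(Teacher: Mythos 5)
Your proposal follows essentially the same route as the paper: existence and uniqueness via the monotone/hemicontinuous structure and the stochastic Minty--Browder technique as in Theorem 3.4 of \cite{MTM11}, then It\^o's formula for both components, a variation-of-constants argument for the fast equation whose $\delta^{-1}$-dissipation converts the dangerous $\e^{-1}$ from the control term into a $\delta/\e$ factor, and finally absorption of the resulting $C_{M}(\delta/\e)\,\E\big[\sup_{t}\|\X^{\e,\delta,h}_t\|_{\H}^2\big]$ term under the hypothesis $\delta/\e\leq 1/C_{\mu,\alpha,\lambda_1,L_{\G},M}$ followed by Gronwall. The only cosmetic difference is that the paper handles the control term $(\sigma_1(\X^{\e,\delta,h})\Q_1^{1/2}h,\X^{\e,\delta,h})$ by Cauchy--Schwarz in time against the almost-sure bound $\int_0^T\|h_t\|_{\H}^2\d t\leq M$ rather than by your pathwise Gronwall with the factor $e^{CM}$; both are valid.
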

\begin{proof}
	The existence and uniqueness of pathwise strong solution satisfying the energy equality to the system (\ref{5.4z}) can be obtained similarly as in Theorem 3.4, \cite{MTM11} (see, Theorem 3.7, \cite{MTM8} also), by using the  monotonicty as well as demicontinuous properties of the linear and nonlinear operators and a stochastic generalization of the Minty-Browder technique.

Let us now prove the uniform energy estimates  \eqref{5.5z} and \eqref{443}.	An application of the infinite dimensional It\^o formula  to the process $\|\Y^{\e,\delta,h}_{t}\|_{\H}^2$  yields (cf. \cite{MTM8})
\begin{align}\label{3.9}
\|\Y^{\e,\delta,h}_t\|_{\H}^2&=\|\y\|_{\H}^2-\frac{2\mu}{\delta}\int_0^t\|\Y^{\e,\delta,h}_s\|_{\V}^2\d s-\frac{2\alpha}{\delta}\int_0^t\|\Y^{\e,\delta,h}_s\|_{\H}^2\d s-\frac{2\beta}{\delta}\int_0^t\|\Y^{\e,\delta,h}_s\|_{\wi\L^{r+1}}^{r+1}\d s\nonumber\\&\quad+\frac{2}{\delta}\int_0^t(\G(\X^{\e,\delta,h}_s,\Y^{\e,\delta,h}_s),\Y^{\e,\delta,h}_s)\d s+\frac{2}{\sqrt{\delta\e}}\int_0^t(\sigma_2(\X^{\e,\delta,h}_s,\Y^{\e,\delta,h}_s)\Q_2^{1/2}h_s,\Y^{\e,\delta,h}_s)\d s\nonumber\\&\quad+\frac{1}{\delta}\int_0^t\|\sigma_2(\X^{\e,\delta,h}_s,\Y^{\e,\delta,h}_s)\Q_2^{1/2}\|_{\mathcal{L}_2}^2\d s+\frac{2}{\sqrt{\delta}}\int_0^t(\sigma_2(\X^{\e,\delta,h}_s,\Y^{\e,\delta,h}_s)\Q_2^{1/2}\d\W_s,\Y^{\e,\delta,h}_s), 
\end{align}
for all $t\in[0,T]$, $\mathbb{P}$-a.s. Taking expectation in \eqref{3.9} and using the fact that the final term appearing in \eqref{3.9} is a martingale, we obtain 
\begin{align}
&\E\left[\|\Y^{\e,\delta,h}_t\|_{\H}^{2}\right]\nonumber\\&=\|\y\|_{\H}^{2}-\frac{2\mu}{\delta}\E\left[\int_0^t	\|\Y^{\e,\delta,h}_s\|_{\V}^2\d s\right]-\frac{2\alpha}{\delta}\E\left[\int_0^t	\|\Y^{\e,\delta,h}_s\|_{\H}^2\d s\right]-\frac{2\beta}{\delta}\E\left[\int_0^t	\|\Y^{\e,\delta,h}_s\|_{\wi\L^{r+1}}^{r+1}\d s\right]\nonumber\\&\quad+\frac{2}{\delta}\E\left[\int_0^t(\G(\X^{\e,\delta,h}_s,\Y^{\e,\delta,h}_s),\Y^{\e,\delta,h}_s)\d s\right]+\frac{2}{\sqrt{\delta\e}}\E\left[\int_0^t(\sigma_2(\X^{\e,\delta,h}_s,\Y^{\e,\delta,h}_s)\Q_2^{1/2}h_s,\Y^{\e,\delta,h}_s)\d s\right]\nonumber\\&\quad+\frac{1}{\delta}\E\left[\int_0^t\|\sigma_2(\X^{\e,\delta,h}_s,\Y^{\e,\delta,h}_s)\Q_2^{1/2}\|_{\mathcal{L}_2}^2\d s\right], 
\end{align}
for all $t\in[0,T]$. Thus, it is immediate that 
\begin{align}\label{312}
\frac{\d}{\d t}	\E\left[\|\Y^{\e,\delta,h}_t\|_{\H}^{2}\right]&=-\frac{2\mu}{\delta}\E\left[	\|\Y^{\e,\delta,h}_t\|_{\V}^2\right]-\frac{2\alpha}{\delta}\E\left[	\|\Y^{\e,\delta,h}_t\|_{\H}^2\right]-\frac{2\beta}{\delta}\E\left[	\|\Y^{\e,\delta,h}_t\|_{\wi\L^{r+1}}^{r+1}\right]\nonumber\\&\quad+\frac{2}{\delta}\E\left[(\G(\X^{\e,\delta,h}_t,\Y^{\e,\delta,h}_t),\Y^{\e,\delta,h}_t)\right]+\frac{2}{\sqrt{\delta\e}}\E\left[(\sigma_2(\X^{\e,\delta,h}_t,\Y^{\e,\delta,h}_t)\Q_2^{1/2}h_t,\Y^{\e,\delta,h}_t)\right]\nonumber\\&\quad+\frac{1}{\delta}\E\left[\|\sigma_2(\X^{\e,\delta,h}_t,\Y^{\e,\delta,h}_t)\Q_2^{1/2}\|_{\mathcal{L}_2}^2\right], 
\end{align}
for a.e. $t\in[0,T]$. 	Using the Assumption \ref{ass3.6} (A1), the Cauchy-Schwarz inequality and Young's inequality, we get 
\begin{align}\label{313}
\frac{2}{\delta}(\G(\X^{\e,\delta,h},\Y^{\e,\delta,h}),\Y^{\e,\delta,h})&\leq \frac{2}{\delta}\|\G(\X^{\e,\delta,h},\Y^{\e,\delta,h})\|_{\H}\|\Y^{\e,\delta,h}\|_{\H}\nonumber\\&\leq \frac{2}{\delta} \left(\|\G(\mathbf{0},\mathbf{0})\|_{\H}+C\|\X^{\e,\delta,h}\|_{\H}+L_{\G}\|\Y^{\e,\delta,h}\|_{\H}\right)\|\Y^{\e,\delta,h}\|_{\H}\nonumber\\&\leq\left(\frac{\mu\lambda_1}{2\delta}+\frac{2L_{\G}}{\delta}\right)\|\Y^{\e,\delta,h}\|_{\H}^{2}+\frac{C}{\mu\lambda_1\delta}\|\X^{\e,\delta,h}\|_{\H}^{2}+\frac{C}{\mu\lambda_1\delta}.
\end{align}
Using the Cauchy-Schwarz inequality,  Assumption \ref{ass3.6} (A2) and Young's inequality, we estimate the term  $\frac{2}{\sqrt{\delta\e}}(\sigma_2(\X^{\e,\delta,h},\Y^{\e,\delta,h})h,\Y^{\e,\delta,h})$ as 
\begin{align}
\frac{2}{\sqrt{\delta\e}}(\sigma_2(\X^{\e,\delta,h},\Y^{\e,\delta,h})\Q_2^{1/2}h,\Y^{\e,\delta,h})&\leq \frac{2}{\sqrt{\delta\e}}\|\sigma_2(\X^{\e,\delta,h},\Y^{\e,\delta,h})\Q_2^{1/2}\|_{\mathcal{L}_2}\|h\|_{\H}\|\Y^{\e,\delta,h}\|_{\H}\nonumber\\&\leq \frac{C}{\sqrt{\delta\e}}(1+\|\X^{\e,\delta,h}\|_{\H})\|h\|_{\H}\|\Y^{\e,\delta,h}\|_{\H}\nonumber\\&\leq\frac{\mu\lambda_1}{4\delta}\|\Y^{\e,\delta,h}\|_{\H}^{2} +\frac{C}{\mu\lambda_1\e}\left(1+\|\X^{\e,\delta,h}\|_{\H}^{2}\right)\|h\|_{\H}^2.
\end{align}
Once again, using the Assumption \ref{ass3.6} (A2) and Young's inequality, we obtain 
\begin{align}\label{315}
\frac{1}{\delta}\|\sigma_2(\X^{\e,\delta,h},\Y^{\e,\delta,h})\Q_2^{1/2}\|_{\mathcal{L}_2}^2&\leq 	\frac{C}{\delta}(1+\|\X^{\e,\delta,h}\|_{\H}^2)\leq \frac{\mu\lambda_1}{4\delta}\|\Y^{\e,\delta,h}\|_{\H}^{2}+\frac{C}{\mu\lambda_1\delta}\|\X^{\e,\delta,h}\|_{\H}^{2p}+\frac{C}{\mu\lambda_1\delta}.
\end{align}
Combining \eqref{313}-\eqref{315} and using it in \eqref{312}, we find 
\begin{align}\label{3.16}
\frac{\d}{\d t}	\E\left[\|\Y^{\e,\delta,h}_t\|_{\H}^{2}\right]&=-\frac{1}{\delta}\left(\mu{\lambda_1}+2\alpha-2L_{\G}\right)\E\left[	\|\Y^{\e,\delta,h}_t\|_{\H}^{2}\right]+\frac{C}{\mu\lambda_1\delta}\E\left[\|\X^{\e,\delta,h}_t\|_{\H}^{2}\right]+\frac{C}{\mu\lambda_1\delta}\nonumber\\&\quad+\frac{C}{\mu\lambda_1\e}\E\left[\left(1+\|\X^{\e,\delta,h}_t\|_{\H}^{2}\right)\|h_t\|_{\H}^2\right],
\end{align}
for a.e. $t\in[0,T]$. By the Assumption \ref{ass3.6} (A3), we know that $\mu\lambda_1+2\alpha>2L_{\G}$ and an application of variation of constants formula gives 
\begin{align}\label{3.17}
\E\left[\|\Y^{\e,\delta,h}_t\|_{\H}^{2}\right]&\leq\|\y\|_{\H}^{2}e^{-\frac{\gamma t}{\delta}}+\frac{C}{\mu\lambda_1\delta}\int_0^te^{-\frac{\gamma(t-s)}{\delta}}\left(1+\E\left[\|\X^{\e,\delta,h}_s\|_{\H}^{2}\right]\right)\d s\nonumber\\&\quad+\frac{C}{\mu\lambda_1\e}\int_0^te^{-\frac{\gamma(t-s)}{\delta}}\E\left[\left(1+\|\X^{\e,\delta,h}_s\|_{\H}^{2}\right)\|h_s\|_{\H}^2\right]\d s,
\end{align}
for all $t\in[0,T]$, where $\gamma=\left(\mu{\lambda_1}+2\alpha-2L_{\G}\right)$. Integrating the above inequality from $0$ to $T$, performing a change of order of integration and using Fubini's theorem, we also get 
\begin{align}\label{452}
\E\left[\int_0^T\|\Y^{\e,\delta,h}_t\|_{\H}^{2}\d t\right]&\leq\|\y\|_{\H}^{2}\int_0^Te^{-\frac{\gamma t}{\delta}}\d t+\frac{C}{\mu\lambda_1\delta}\int_0^T\int_0^te^{-\frac{\gamma(t-s)}{\delta}}\left(1+\E\left[\|\X^{\e,\delta,h}_s\|_{\H}^{2}\right]\right)\d s\d t\nonumber\\&\quad+\frac{C}{\mu\lambda_1\e}\E\left[\sup_{t\in[0,T]}\left(1+\|\X^{\e,\delta,h}_t\|_{\H}^{2}\right)\int_0^T\int_0^te^{-\frac{\gamma(t-s)}{\delta}}\|h_s\|_{\H}^2\d s\d t\right]\nonumber\\&\leq\frac{\delta}{\gamma}\|\y\|_{\H}^{2}+\frac{C}{\mu\lambda_1\gamma}\E\left[\int_0^T\left(1+\|\X^{\e,\delta,h}_t\|_{\H}^{2}\right)\d t\right]\nonumber\\&\quad+\frac{C\delta}{\mu\lambda_1\gamma\e}\E\left[\sup_{t\in[0,T]}\left(1+\|\X^{\e,\delta,h}_t\|_{\H}^{2}\right)\int_0^T\|h_t\|_{\H}^2\d t\right]\nonumber\\&\leq C_{\mu,\alpha,\lambda_1,L_{\G},T}(1+\|\y\|_{\H}^2)+C_{\mu,\alpha,\lambda_1,L_{\G},T}\E\left[\int_0^T\|\X^{\e,\delta,h}_t\|_{\H}^{2}\d t\right]\nonumber\\&\quad+C_{\mu,\alpha,\lambda_1,L_{\G},M}\left(\frac{\delta}{\e}\right)\left[T+\E\left(\sup_{t\in[0,T]}\|\X^{\e,\delta,h}_t\|_{\H}^{2}\right)\right],
\end{align}
since $\delta\in(0,1)$.

Let us now obtain the energy  estimates satisfied by the process $\X^{\e,\delta,h}_{t}$. 	Applying the infinite dimensional It\^o formula to the process $\|\X^{\e,\delta,h}_{t}\|_{\H}^2$ (see \cite{MTM8}), we find 
\begin{align}\label{3.19}
\|\X^{\e,\delta,h}_t\|_{\H}^2&=\|\x\|_{\H}^2-2\mu\int_0^t\|\X^{\e,\delta,h}_s\|_{\V}^2\d s-2\alpha\int_0^t\|\X^{\e,\delta,h}_s\|_{\H}^2\d s-2\beta\int_0^t\|\X^{\e,\delta,h}_s\|_{\wi\L^{r+1}}^{r+1}\d s\nonumber\\&\quad+2\int_0^t(\F(\X^{\e,\delta,h}_s,\Y^{\e,\delta,h}_s),\X^{\e,\delta,h}_s)\d s+2\int_0^t(\sigma_1(\X^{\e,\delta,h}_s)\Q_1^{1/2}h_s,\X^{\e,\delta,h}_s)\d s\nonumber\\&\quad+\e\int_0^t\|\sigma_1(\X^{\e,\delta,h}_s)\Q_1^{1/2}\|_{\mathcal{L}_2}^2\d s+2\sqrt{\e}\int_0^t(\sigma_1(\X^{\e,\delta,h}_s)\Q_1^{1/2}\d\W_s,\X^{\e,\delta,h}_s),	
\end{align}
for all $t\in[0,T]$, $\mathbb{P}$-a.s.
Taking supremum over $[0,T]$ and then taking expectation in \eqref{3.19}, we obtain 
\begin{align}\label{320}
&\E\left[\sup_{t\in[0,T]}\|\X^{\e,\delta,h}_t\|_{\H}^{2}+2\mu \int_0^T\|\X^{\e,\delta,h}_t\|_{\V}^{2}\d t+2\alpha \int_0^T\|\X^{\e,\delta,h}_t\|_{\H}^{2}\d t+2\beta \int_0^T\|\X^{\e,\delta,h}_t\|_{\wi\L^{r+1}}^{r+1}\d t\right]\nonumber\\&\leq \|\x\|_{\H}^{2}+2\E\left[\int_0^T|(\F(\X^{\e,\delta,h}_t,\Y^{\e,\delta,h}_t),\X^{\e,\delta,h}_t)|\d t\right]+2\E\left[\int_0^T|(\sigma_1(\X^{\e,\delta,h}_t)\Q_1^{1/2}h_t,\X^{\e,\delta,h}_t)|\d t\right]\nonumber\\&\quad  +\e\E\left[\int_0^T\|\sigma_1(\X^{\e,\delta,h}_t)\Q_1^{1/2}\|_{\mathcal{L}_2}^2\d t\right]+2\sqrt{\e}\E\left[\sup_{t\in[0,T]}\left|\int_0^t(\sigma_1(\X^{\e,\delta,h}_s)\Q_1^{1/2}\d\W_s,\X^{\e,\delta,h}_s)\right|\right]\nonumber\\&\leq \|\x\|_{\H}^{2}+CT+C\E\left[\int_0^T\|\X^{\e,\delta,h}_t\|_{\H}^{2}\d t\right]+C\E\left[\int_0^T\|\Y^{\e,\delta,h}_t\|_{\H}^{2}\d t\right]\nonumber\\&\quad+2\E\left[\int_0^T|(\sigma_1(\X^{\e,\delta,h}_t)\Q_1^{1/2}h_t,\X^{\e,\delta,h}_t)|\d t\right]+2\sqrt{\e}\E\left[\sup_{t\in[0,T]}\left|\int_0^t(\sigma_1(\X^{\e,\delta,h}_s)\Q_1^{1/2}\d\W_s,\X^{\e,\delta,h}_s)\right|\right], 
\end{align}
where we used  calculations similar to \eqref{313}-\eqref{315}. We estimate the penultimate term from the right hand side of the inequality \eqref{320} using the Cauchy-Schwarz inequality, H\"older's and Young's inequalities as 
\begin{align}\label{5.9z}
&2\E\left[\int_0^T|(\sigma_1(\X^{\e,\delta,h}_t)\Q_1^{1/2}h_t,\X^{\e,\delta,h}_t)|\d t\right]\nonumber\\&\leq 2\E\left[\int_0^{T}\|\sigma_1(\X^{\e,\delta,h}_t)\Q_1^{1/2}\|_{\mathcal{L}_2}\|h_t\|_{\H}\|\X^{\e,\delta,h}_t\|_{\H}\d t\right]\nonumber\\&\leq 2\E\left[\sup_{t\in[0,T]}\|\X^{\e,\delta,h}_t\|_{\H}\int_0^{T}\|\sigma_1(\X^{\e,\delta,h}_t)\Q_1^{1/2}\|_{\mathcal{L}_2}\|h_t\|_{\H}\d t\right]\nonumber\\&\leq  \frac{1}{4}\E\left[\sup_{t\in[0,T]}\|\X^{\e,\delta,h}_t\|_{\H}^2\right]+4\E\left(\int_0^{T}\|\sigma_1(\X^{\e,\delta,h}_t)\Q_1^{1/2}\|_{\mathcal{L}_2}\|h_t\|_{\H}\d t\right)^2\nonumber\\&\leq\frac{1}{4}\E\left[\sup_{t\in[0,T]}\|\X^{\e,\delta,h}_t\|_{\H}^2\right]+4 \E\left[\left(\int_0^{T}\|\sigma_1(\X^{\e,\delta,h}_t)\Q_1^{1/2}\|_{\mathcal{L}_2}^2\d t\right)\left(\int_0^{T}\|h_t\|_{\H}^2\d t\right)\right]\nonumber\\&\leq \frac{1}{4}\E\left[\sup_{t\in[0,T]}\|\X^{\e,\delta,h}_t\|_{\H}^2\right]+4M \E\left[\int_0^{T}\|\sigma_1(\X^{\e,\delta,h}_t)\Q_1^{1/2}\|_{\mathcal{L}_2}^2\d t\right]\nonumber\\&\leq \frac{1}{4}\E\left[\sup_{t\in[0,T]}\|\X^{\e,\delta,h}_t\|_{\H}^2\right]+CMT+CM\E\left[\int_0^T\|\X^{\e,\delta,h}_t\|_{\H}^2\d t\right],
\end{align}
where we used the fact that $h\in\mathcal{A}_M$. Using the Burkholder-Davis-Gundy inequality (see Theorem 1, \cite{BD} for the Burkholder-Davis-Gundy inequality for the case $p=1$ and Theorem 1.1, \cite{DLB} for the best constant, \cite{CMMR} for BDG inequality in infinite dimensions), we estimate the final term from the right hand side of the inequality \eqref{320} as 
\begin{align}\label{3.21}
&2\sqrt{\e}\E\left[\sup_{t\in[0,T]}\left|\int_0^t(\sigma_1(\X^{\e,\delta,h}_s)\Q_1^{1/2}\d\W_s,\X^{\e,\delta,h}_s)\right|\right]\nonumber\\&\leq C\sqrt{\e}\E\left[\int_0^T\|\sigma_1(\X^{\e,\delta,h}_t)\Q_1^{1/2}\|_{\mathcal{L}_2}^2\|\X^{\e,\delta,h}_t\|_{\H}^2\d t\right]^{1/2}\nonumber\\&\leq C\sqrt{\e}\E\left[\sup_{t\in[0,T]}\|\X^{\e,\delta,h}_t\|_{\H}^{2}\left(\int_0^T\|\sigma_1(\X^{\e,\delta,h}_t)\Q_1^{1/2}\|_{\mathcal{L}_2}^2\d t\right)^{1/2}\right]\nonumber\\&\leq\frac{1}{4}\E\left[\sup_{t\in[0,T]}\|\X^{\e,\delta,h}_t\|_{\H}^{2}\right]+C\e\E\left[\int_0^T\|\sigma_1(\X^{\e,\delta,h}_t)\Q_1^{1/2}\|_{\mathcal{L}_2}^2\d t\right]\nonumber\\&\leq\frac{1}{4}\E\left[\sup_{t\in[0,T]}\|\X^{\e,\delta,h}_t\|_{\H}^{2}\right]+C\e\E\left[\int_0^T\|\X^{\e,\delta,h}_t\|_{\H}^{2p}\d t\right]+C\e T,
\end{align}
where we used the Assumption \ref{ass3.6} (A1). Using \eqref{3.17}, \eqref{5.9z} and \eqref{3.21} in \eqref{320}, we deduce that 
\begin{align}\label{322}
&\E\left[\sup_{t\in[0,T]}\|\X^{\e,\delta,h}_t\|_{\H}^{2}+4\mu \int_0^T\|\X^{\e,\delta,h}_t\|_{\V}^{2}\d t+4\alpha \int_0^T\|\X^{\e,\delta,h}_t\|_{\V}^{2}\d t+4\beta \int_0^T\|\X^{\e,\delta,h}_t\|_{\wi\L^{r+1}}^{r+1}\d t\right] \nonumber\\&\leq  2\|\x\|_{\H}^{2}+C(1+M+\e)T+C(1+M+\e)\E\left[\int_0^T\|\X^{\e,\delta,h}_t\|_{\H}^{2}\d t\right]+C\E\left[\int_0^T\|\Y^{\e,\delta,h}_t\|_{\H}^{2}\d t\right]\nonumber\\&\leq  2\|\x\|_{\H}^{2}+C_{\mu,\alpha,\lambda_1,L_{\G},T}(1+\|\y\|_{\H}^2)+C_MT+C_M\E\left[\int_0^T\|\X^{\e,\delta,h}_t\|_{\H}^{2}\d t\right]\nonumber\\&\quad+C_{\mu,\alpha,\lambda_1,L_{\G},T}\E\left[\int_0^T\|\X^{\e,\delta,h}_t\|_{\H}^{2}\d t\right]+C_{\mu,\alpha,\lambda_1,L_{\G},M}\left(\frac{\delta}{\e}\right)\left[T+\E\left(\sup_{t\in[0,T]}\|\X^{\e,\delta,h}_t\|_{\H}^{2}\right)\right],
\end{align}
since $\e,\delta\in(0,1)$. By the Assumption \ref{ass3.6} (A4), one can choose $\frac{\delta}{\e}<\frac{1}{C_{\mu,\alpha,\lambda_1,L_{\G},M}}$, so that
\begin{align}\label{3.22}
&\E\left[\sup_{t\in[0,T]}\|\X^{\e,\delta,h}_t\|_{\H}^{2}\right]\leq C_{\mu,\alpha,\lambda_1,L_{\G},M,T}\left\{1+\|\x\|_{\H}^2+\|\y\|_{\H}^2+\E\left[\int_0^T\|\X^{\e,\delta,h}_t\|_{\H}^{2}\d t\right]\right\}.
\end{align}
 An application of Gronwall's inequality in \eqref{3.22} implies 
\begin{align}\label{3.23}
\E\left[\sup_{t\in[0,T]}\|\X^{\e,\delta,h}_t\|_{\H}^{2}\right]\leq C_{\mu,\alpha,\lambda_1,L_{\G},M,T}\left(1+\|\x\|_{\H}^2+\|\y\|_{\H}^2\right). 
\end{align}
Substitution of \eqref{3.23} in \eqref{322} yields the estimate \eqref{5.5z}. Using \eqref{3.23} in \eqref{452}, we finally obtain \eqref{443}. 
\end{proof}

Let us now take $(\X^{\e,\delta,h^{\e}},\Y^{\e,\delta,h^{\e}})$ solves the following stochastic control system:
\begin{equation}\label{442}
\left\{
\begin{aligned}
\d \X^{\e,\delta,h^{\e}}_t&=-[\mu\A \X^{\e,\delta,h^{\e}}_t+\B(\X^{\e,\delta,h^{\e}}_t)+\alpha\X^{\e,\delta,h^{\e}}_t+\beta\mathcal{C}(\X^{\e,\delta,h^{\e}}_t)-\mathrm{F}(\X^{\e,\delta,h^{\e}}_t,\Y^{\e,\delta,h^{\e}}_t)]\d t\\&\quad+\sigma_1(\X^{\e,\delta,h^{\e}}_t)\Q_1^{1/2}h_t^{\e}\d t+\sqrt{\e}\sigma_1(\X^{\e,\delta,h^{\e}}_t)\Q_1^{1/2}\d\W_t,\\
\d \Y^{\e,\delta,h^{\e}}_t&=-\frac{1}{\delta}[\mu\A \Y^{\e,\delta,h^{\e}}_t+\alpha\Y^{\e,\delta,h^{\e}}_t+\beta\mathcal{C}(\Y_{t}^{\e,\delta})-\mathrm{G}(\X^{\e,\delta,h^{\e}}_t,\Y^{\e,\delta,h^{\e}}_t)]\d t\\&\quad+\frac{1}{\sqrt{\delta\e}}\sigma_2(\X^{\e,\delta,h^{\e}}_t,\Y^{\e,\delta,h^{\e}}_t)\Q_2^{1/2}h_t^{\e}\d t+\frac{1}{\sqrt{\delta}}\sigma_2(\X^{\e,\delta,h^{\e}}_t,\Y^{\e,\delta,h^{\e}}_t)\Q_2^{1/2}\d\W_t,\\
\X^{\e,\delta,h^{\e}}_0&=\x,\ \Y^{\e,\delta,h^{\e}}_0=\y.
\end{aligned}
\right. 
\end{equation}
Using Theorem \ref{thm5.10}, we know that the system \eqref{442} has a pathwise unique strong solution $(\X^{\e,\delta,h^{\e}},\Y^{\e,\delta,h^{\e}})$ with paths in $\mathscr{E}\times\mathscr{E},$ $\mathbb{P}\text{-a.s.}$, where  $\mathscr{E}=\C([0,T];\H)\cap\mathrm{L}^2(0,T;\V)\cap\mathrm{L}^{r+1}(0,T;\wi\L^{r+1}).$ Since, 
\begin{align*}
\mathbb{E}\left(\exp\left\{-\frac{1}{\sqrt{\e}}\int_0^T(h^{\e}_t,\d\W_t)-\frac{1}{2\e}\int_0^T\|h^{\e}_t\|_{\H}^2\d t \right\}\right)=1,
\end{align*}
the measure $\widehat{\mathbb{P}}$ defined by 
\begin{align*}
\d\widehat{\mathbb{P}}(\omega)=\exp\left\{-\frac{1}{\sqrt{\e}}\int_0^T(h^{\e}_t,\d\W_t)-\frac{1}{2\e}\int_0^T\|h^{\e}_t\|_{\H}^2\d t\right\}\d{\mathbb{P}}(\omega)
\end{align*}
is a probability measure on $(\Omega,\mathscr{F},\mathbb{P})$. Moreover, $\widehat{\mathbb{P}}(\omega)$ is mutually absolutely continuous with respect to $\mathbb{P}(\omega)$ and by using Girsanov's Theorem (Theorem 10.14, \cite{DZ}, Appendix, \cite{GDFF}),  we have the process 
\begin{align*}
\widehat{\W}_t:=\W_t+\frac{1}{\sqrt{\e}}\int_0^th^{\e}_s\d s, \ t\in[0,T],
\end{align*}
is a cylindrical Wiener process with respect to $\{\mathscr{F}_t\}_{t\geq 0}$ on the probability space $(\Omega,\mathscr{F},\widehat{\mathbb{P}})$. Thus, we know that (\cite{BD,MRBS}) $$(\X^{\e,\delta,h^{\e}}_{\cdot},\Y^{\e,\delta,h^{\e}}_{\cdot})=\left(\mathcal{G}^{\e}\left(\W_{\cdot} +\frac{1}{\sqrt{\e}}\int_0^{\cdot}h^{\e}_s\d s\right),\mathcal{G}^{\delta}\left(\W_{\cdot} +\frac{1}{\sqrt{\e}}\int_0^{\cdot}h^{\e}_s\d s\right)\right)$$ is the unique strong solution of \eqref{2.18a} with $\W_{\cdot}$ replaced by $\widehat{\W}_{\cdot}$, on $(\Omega,\mathscr{F},\{\mathscr{F}_t\}_{t\geq 0},\widehat{\mathbb{P}})$. Moreover, the system \eqref{2.18a} with $\widehat{\W}_{\cdot}$ is same as the system \eqref{442}, and since $\widehat{\mathbb{P}}$ and $\mathbb{P}$ are mutually absolutely continuous, we further find that $(\X^{\e,\delta,h^{\e}}_{\cdot},\Y^{\e,\delta,h^{\e}}_{\cdot})$ is the unique strong solution of \eqref{442} on $(\Omega,\mathscr{F},\{\mathscr{F}_t\}_{t\geq 0},{\mathbb{P}})$.  Thus, the solution of the first equation in \eqref{442} is represented as 
$$\X^{\e,\delta,h^{\e}}_{\cdot}=\mathcal{G}^{\e}\left(\W_{\cdot} +\frac{1}{\sqrt{\e}}\int_0^{\cdot}h^{\e}_s\d s\right).$$

Since our approach is based on the Khasminskii time discretization, we need the following lemma.  Similar result for the stochastic 2D Navier-Stokes equation is obtained in \cite{SLXS}, for the stochastic Burgers equation is established in \cite{XSRW} and for SCBF equations is proved in \cite{MTM11}. Since the system \eqref{442} is a controlled SCBF  equations, we provide a proof here. 

Let us first define a sequence of stopping times $\{\tau_R^{\e}\}$ as 
\begin{align}\label{stop}
\tau_R^{\e}:=\inf_{t\geq 0}\left\{t:\|\X_t^{\delta,\e,h^{\e}}\|_{\H}> R\right\}, 
\end{align}
for any $\e,R>0$.

\begin{lemma}\label{lem3.8}
	For any $\x,\y\in\H$, $T>0$, $\e,\Delta>0$  small enough, there exists a constant $C_{\mu,\alpha,\beta,\lambda_1,L_{\G},M,T,R}>0$ such that
	\begin{align}\label{3.24}
	\E\left[\int_0^{\T}\|\X_{t}^{\e,\delta,h^{\e}}-\X^{\e,\delta,h^{\e}}_{t(\Delta)}\|_{\H}^2\d t \right]\leq C_{\mu,\alpha,\beta,\lambda_1,L_{\G},M,T,R}\Delta^{1/2}(1+\|\x\|_{\H}^2+\|\y\|_{\H}^2). 
	\end{align}
 where $t(\Delta):=\left[\frac{t}{\Delta}\right]\Delta$ and $[s]$ stands for the largest integer which is less than or equal $s$. 
\end{lemma}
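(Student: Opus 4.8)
The plan is to estimate the quantity $\E\left[\int_0^{\T}\|\X_{t}^{\e,\delta,h^{\e}}-\X^{\e,\delta,h^{\e}}_{t(\Delta)}\|_{\H}^2\d t\right]$ by splitting the time interval into the first block $[0,\Delta]$ and the remainder $[\Delta,T\wedge\tau_R^{\e}]$, and on the remainder writing
$$\|\X^{\e,\delta,h^\e}_t-\X^{\e,\delta,h^\e}_{t(\Delta)}\|_{\H}^2\le 2\|\X^{\e,\delta,h^\e}_t-\X^{\e,\delta,h^\e}_{t-\Delta}\|_{\H}^2+2\|\X^{\e,\delta,h^\e}_{t(\Delta)}-\X^{\e,\delta,h^\e}_{t-\Delta}\|_{\H}^2,$$
exactly as in the deterministic Lemma \ref{lem3.13}. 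The first block and the second summand are controlled directly by the uniform energy estimate \eqref{5.5z} of Theorem \ref{thm5.10} (the latter since $t(\Delta)$ and $t-\Delta$ lie within a single mesh interval of length $\Delta$). So the real work is to estimate $\E\big[\int_\Delta^{\T}\|\X^{\e,\delta,h^\e}_t-\X^{\e,\delta,h^\e}_{t-\Delta}\|_{\H}^2\d t\big]$.

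To do this, I would apply the infinite-dimensional It\^o formula to $\|\X^{\e,\delta,h^\e}_r-\X^{\e,\delta,h^\e}_{t-\Delta}\|_{\H}^2$ on $r\in[t-\Delta,t]$, obtaining a decomposition into the analogues of the terms $I_1,\dots,I_6$ of \eqref{4p26} coming from $\mu\A$, $\alpha\,\mathrm{I}$, $\B$, $\beta\mathcal{C}$, $\F$ and the control term $\sigma_1(\cdot)\Q_1^{1/2}h^\e$, plus \emph{two} extra terms not present in the deterministic case: the It\^o correction $\e\int \|\sigma_1(\X^{\e,\delta,h^\e}_s)\Q_1^{1/2}\|_{\mathcal{L}_2}^2\d s$ and the stochastic integral $2\sqrt{\e}\int(\sigma_1(\X^{\e,\delta,h^\e}_s)\Q_1^{1/2}\d\W_s,\cdot)$. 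For the deterministic-type terms, I would repeat the Cauchy--Schwarz / H\"older / interpolation / Fubini estimates of \eqref{4p27}--\eqref{433} verbatim — using $\langle\B(\X^{\e,\delta,h^\e}_s),\X^{\e,\delta,h^\e}_s-\X^{\e,\delta,h^\e}_{t-\Delta}\rangle$ split according to the bound \eqref{212}, Ladyzhenskaya for $n=2,\ r\in[1,3)$ (giving the exponent $\ell=3$), the interpolation \eqref{212} for $r\ge 3$ (giving $\ell=2$), Assumption \ref{ass3.6}(A1) for $\F$ and $\sigma_1$, and the bound $\int_0^T\|h^\e_t\|_{\H}^2\d t\le M$ since $h^\e\in\mathcal{A}_M$ — everywhere replacing sup-bounds by their expectations via Theorem \ref{thm5.10}. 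The stopping time $\tau_R^\e$ ensures $\|\X^{\e,\delta,h^\e}_t\|_{\H}\le R$ up to $\T$, which is what converts the factor $\sup_{t}\|\X^{\e,\delta,h^\e}_t\|_{\H}^{r-3}$ appearing in the $\B$-estimate (for $r>3$) into the constant $R^{r-3}$, and more importantly is needed to pass through the $\beta\mathcal{C}$ term $\int\|\X^{\e,\delta,h^\e}_s\|_{\wi\L^{r+1}}^r\|\cdots\|_{\wi\L^{r+1}}\d s$ using \eqref{energy1}-type control of $\int_0^T\|\X^{\e,\delta,h^\e}_t\|_{\wi\L^{r+1}}^{r+1}\d t$.

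For the two genuinely stochastic terms: the It\^o-correction term is $O(\e\Delta)$ by Assumption \ref{ass3.6}(A1) and \eqref{5.5z}, hence negligible. The stochastic integral term is the one requiring care; I would bound $\E\big[\int_\Delta^T|{\int_{t-\Delta}^t(\sigma_1(\X^{\e,\delta,h^\e}_s)\Q_1^{1/2}\d\W_s,\X^{\e,\delta,h^\e}_s-\X^{\e,\delta,h^\e}_{t-\Delta})}|\d t\big]$ by Fubini and the Burkholder--Davis--Gundy inequality applied to the inner martingale, getting something like $\E\big[\int_\Delta^T\big(\int_{t-\Delta}^t\|\sigma_1(\X^{\e,\delta,h^\e}_s)\Q_1^{1/2}\|_{\mathcal{L}_2}^2\|\X^{\e,\delta,h^\e}_s-\X^{\e,\delta,h^\e}_{t-\Delta}\|_{\H}^2\d s\big)^{1/2}\d t\big]$, then Cauchy--Schwarz in $s$ and in $t$ together with \eqref{5.5z}, which yields a bound of order $\sqrt{\e}\,\Delta^{1/2}$ — this is precisely why the final estimate has the exponent $\Delta^{1/2}$ rather than $\Delta$. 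I anticipate that the main obstacle is bookkeeping: organizing the roughly eight terms so that the $\B$ and $\mathcal{C}$ contributions (which for $n=3,\ r\ge 3$ rely on the interpolation inequality and on the $\tau_R^\e$-truncation) and the stochastic-integral contribution all collapse to the common bound $C_{\mu,\alpha,\beta,\lambda_1,L_{\G},M,T,R}\Delta^{1/2}(1+\|\x\|_{\H}^2+\|\y\|_{\H}^2)$, handling the three parameter regimes ($n=2,\ r\in[1,3)$; $n=2,3,\ r>3$; $n=r=3$ with $2\beta\mu>1$) in parallel exactly as in Lemma \ref{lem3.13}. Collecting all pieces and summing over the two blocks gives \eqref{3.24}.
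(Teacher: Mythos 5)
Your proposal follows the paper's proof essentially verbatim: the same block decomposition with the triangle inequality through $t-\Delta$, the same It\^o-formula expansion into the six deterministic-type terms plus the It\^o correction and the stochastic integral, the same use of the stopping time $\tau_R^{\e}$ in the $\B$- and $\mathcal{C}$-estimates, and the same Burkholder--Davis--Gundy/Cauchy--Schwarz treatment of the stochastic integral as the sole source of the exponent $\Delta^{1/2}$. The one imprecision is your claim that $\E\big[\int_{\Delta}^{\T}\|\X^{\e,\delta,h^{\e}}_{t(\Delta)}-\X^{\e,\delta,h^{\e}}_{t-\Delta}\|_{\H}^2\,\d t\big]$ is controlled \emph{directly} by the energy estimate \eqref{5.5z} because the two times lie in one mesh interval --- that estimate alone gives only a $\Delta$-independent bound, and the paper (consistently with the rest of your own argument) handles this term by repeating the same It\^o increment estimate over the interval of length at most $\Delta$.
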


\begin{proof}
	A straightforward calculation gives 
	\begin{align}\label{3.25}
	&	\E\left[\int_0^{\T}\|\X_{t}^{\e,\delta,h^{\e}}-\X^{\e,\delta,h^{\e}}_{t(\Delta)}\|_{\H}^2\d t\right] \nonumber\\&\leq 	\E\left[\int_0^{\Delta}\|\X_{t}^{\e,\delta,h^{\e}}-\x\|_{\H}^2\chi_{\{t\leq\tau^{\e}_R\}}\d t\right]+	\E\left[\int_{\Delta}^T\|\X_{t}^{\e,\delta,h^{\e}}-\X^{\e,\delta,h^{\e}}_{t(\Delta)}\|_{\H}^2\chi_{\{t\leq\tau^{\e}_R\}}\d t\right]\nonumber\\&\leq C_{R}\left(1+\|\x\|_{\H}^{2}\right)\Delta+	2\E\left[\int_{\Delta}^T\|\X_{t}^{\e,\delta,h^{\e}}-\X^{\e,\delta,h^{\e}}_{t-\Delta}\|_{\H}^2\chi_{\{t\leq\tau^{\e}_R\}}\d t\right]\nonumber\\&\quad+2	\E\left[\int_{\Delta}^T\|\X^{\e,\delta,h^{\e}}_{t(\Delta)}-\X^{\e,\delta,h^{\e}}_{t-\Delta}\|_{\H}^2\chi_{\{t\leq\tau^{\e}_R\}}\d t\right].
	\end{align}
	Let us first estimate the second term from the right hand side of the inequality \eqref{3.25}. Using the infinite dimensional It\^o formula applied to the process $\mathrm{Z}_r=\|\X_{r}^{\e,\delta,h^{\e}}-\X^{\e,\delta,h^{\e}}_{t-\Delta}\|_{\H}^2$ over the interval $[t-\Delta,t]$, we find 
	\begin{align}
&\|\X_{t}^{\e,\delta,h^{\e}}-\X^{\e,\delta,h^{\e}}_{t-\Delta}\|_{\H}^2\nonumber\\&=-2\mu\int_{t-\Delta}^t\langle\A\X_{s}^{\e,\delta,h^{\e}},\X_{s}^{\e,\delta,h^{\e}}-\X^{\e,\delta,h^{\e}}_{t-\Delta}\rangle\d s-2\alpha\int_{t-\Delta}^t(\X_{s}^{\e,\delta,h^{\e}},\X_{s}^{\e,\delta,h^{\e}}-\X^{\e,\delta,h^{\e}}_{t-\Delta})\d s\nonumber\\&\quad-2\int_{t-\Delta}^t\langle\B(\X_{s}^{\e,\delta,h^{\e}}),\X_{s}^{\e,\delta,h^{\e}}-\X^{\e,\delta,h^{\e}}_{t-\Delta}\rangle\d s-2\beta\int_{t-\Delta}^t\langle\mathcal{C}(\X_{s}^{\e,\delta,h^{\e}}),\X_{s}^{\e,\delta,h^{\e}}-\X_{t-\Delta}^{\e,\delta,h^{\e}}\rangle\d s\nonumber\\&\quad+2\int_{t-\Delta}^t(\F(\X_{s}^{\e,\delta,h^{\e}},\Y^{\e,\delta,h^{\e}}_s),\X_{s}^{\e,\delta,h^{\e}}-\X_{t-\Delta}^{\e,\delta,h^{\e}})\d s+2\int_{t-\Delta}^t(\sigma_1(\X^{\e,\delta,h^{\e}}_s)\Q_1^{1/2}h_s^{\e},\X_{s}^{\e,\delta,h^{\e}}
-\X_{t-\Delta}^{\e,\delta,h^{\e}})\d s\nonumber\\&\quad+\e\int_{t-\Delta}^t\|\sigma_1(\X_{s}^{\e,\delta,h^{\e}})\Q_1^{1/2}\|_{\mathcal{L}_2}^2\d s+2\sqrt{\e}\int_{t-\Delta}^t(\sigma_1(\X_{s}^{\e,\delta,h^{\e}})\Q_1^{1/2}\d\W_s,\X_{s}^{\e,\delta,h^{\e}}-\X_{t-\Delta}^{\e,\delta,h^{\e}})\d s\nonumber\\&=:\sum_{k=1}^8I_k(t).
	\end{align} 
	Using an integration by parts, H\"older's inequality,  Fubini's Theorem and \eqref{5.5z}, we estimate $\E\left(\int_{\Delta}^T|I_1(t)|\chi_{\{t\leq\tau^{\e}_R\}}\d t\right)$ as (see \eqref{4p27} also)
	\begin{align}\label{327}
&	\E\left(	\int_{\Delta}^T|I_1(t)|\chi_{\{t\leq\tau^{\e}_R\}}\d t\right)\nonumber\\&\leq 2\mu\left[\E\left(\int_{\Delta}^T\int_{t-\Delta}^t\|\X_{s}^{\e,\delta,h^{\e}}\|_{\V}^2\chi_{\{t\leq\tau^{\e}_R\}}\d s\d t\right)\right]^{1/2} \left[\E\left(\int_{\Delta}^T\int_{t-\Delta}^t\|\X_{s}^{\e,\delta,h^{\e}}-\X_{t-\Delta}^{\e,\delta,h^{\e}}\|_{\V}^2\chi_{\{t\leq\tau^{\e}_R\}}\d s\d t\right)\right]^{1/2}\nonumber\\&\leq 2\mu\left[\Delta\E\left(\int_{0}^T\|\X_{t}^{\e,\delta,h^{\e}}\|_{\V}^2\d t\right)\right]^{1/2}\left[2\Delta\E\left(\int_{0}^T\|\X_{t}^{\e,\delta,h^{\e}}\|_{\V}^2\d t\right)\right]^{1/2}\nonumber\\&\leq C_{\mu,\alpha,\lambda_1,L_{\G},T}\Delta\left(1+\|\x\|_{\H}^2+\|\y\|_{\H}^2\right). 
	\end{align}
		Similarly, we estimate $\E\left(\int_{\Delta}^T|I_2(t)|\chi_{\{t\leq\tau^{\e}_R\}}\d t\right)$ as 
	\begin{align}
	\E\left(\int_{\Delta}^T|I_2(t)|\chi_{\{t\leq\tau^{\e}_R\}}\d t\right)&\leq C\alpha\Delta T\E\left(\sup_{t\in[0,T]}\|\X_{t}^{\e,\delta,h^{\e}}\|_{\H}^2\right)\leq C_{\mu,\alpha,\uplambda_1,L_{\mathrm{G}},T}\Delta\left(1+\|\x\|_{\H}^2+\|\y\|_{\H}^2\right).
	\end{align}
	For $n=2$ and $r\in[1,3)$,	using  H\"older's and Ladyzhenskaya's inequalities,  Fubini's Theorem and \eqref{5.5z}, we estimate $\E\left(\int_{\Delta}^T|I_3(t)|\chi_{\{t\leq\tau^{\e}_R\}}\d t\right)$ as (see \eqref{4p29} also)
	\begin{align}
	&	\E\left(\int_{\Delta}^T|I_3(t)|\chi_{\{t\leq\tau^{\e}_R\}}\d t\right)\nonumber\\&\leq 2\sqrt{2}\left[\E\left(\int_{\Delta}^T\int_{t-\Delta}^t\|\X_{s}^{\e,\delta,h^{\e}}\|_{\H}^2\|\X_{s}^{\e,\delta,h^{\e}}\|_{\V}^2\chi_{\{t\leq\tau^{\e}_R\}}\d s\d t\right)\right]^{1/2}\nonumber\\&\quad\times\left[\E\left(\int_{\Delta}^T\int_{t-\Delta}^t\|\X_{s}^{\e,\delta,h^{\e}}-\X_{t-\Delta}^{\e,\delta,h^{\e}}\|_{\V}^2\chi_{\{t\leq\tau^{\e}_R\}}\d s\d t\right)\right]^{1/2}\nonumber\\&\leq 2\sqrt{2}\left[\Delta\E\left(\int_0^T\|\X_{t}^{\e,\delta,h^{\e}}\|_{\H}^2\|\X_{t}^{\e,\delta,h^{\e}}\|_{\V}^2\chi_{\{t\leq\tau^{\e}_R\}}\d t\right)\right]^{1/2} \left[2\Delta\E\left(\int_{0}^T\|\X_{t}^{\e,\delta,h^{\e}}\|_{\V}^2\d t\right)\right]^{1/2}\nonumber\\&\leq  C_{\mu,\alpha,\lambda_1,L_{\G},T,R}\Delta\left(1+\|\x\|_{\H}^2+\|\y\|_{\H}^2\right). 
	\end{align}
	For $n=2,3$ and $r\geq 3$ (take $2\beta\mu>1$, for $r=3$), we estimate $\E\left(\int_{\Delta}^T|I_3(t)|\chi_{\{t\leq\tau^{\e}_R\}}\d t\right)$  using H\"older's inequality, interpolation inequality and \eqref{5.5z} as (see \eqref{4p30} also)
	\begin{align}
	&	\E\left(\int_{\Delta}^T|I_3(t)|\chi_{\{t\leq\tau^{\e}_R\}}\d t\right) \nonumber\\&\leq 2\left[\E\left(\int_{\Delta}^T\int_{t-\Delta}^t\|\X_{s}^{\e,\delta,h^{\e}}\|_{\H}^{\frac{2(r-3)}{r-1}}\|\X_{s}^{\e,\delta,h^{\e}}\|_{\wi\L^{r+1}}^{\frac{2(r+1)}{r-1}}\chi_{\{t\leq\tau^{\e}_R\}}\d s\d t\right)\right]^{1/2}\nonumber\\&\quad\times\left[\E\left(\int_{\Delta}^T\int_{t-\Delta}^t\|\X_{s}^{\e,\delta,h^{\e}}-\X_{t-\Delta}^{\e,\delta,h^{\e}}\|_{\V}^2\chi_{\{t\leq\tau^{\e}_R\}}\d s\d t\right)\right]^{1/2}\nonumber\\&\leq 2\left[\Delta\E\left(\int_{0}^T\|\X_{t}^{\e,\delta,h^{\e}}\|_{\H}^{\frac{2(r-3)}{r-1}}\|\X_{t}^{\e,\delta,h^{\e}}\|_{\wi\L^{r+1}}^{\frac{2(r+1)}{r-1}}\chi_{\{t\leq\tau^{\e}_R\}}\d t\right)\right]^{1/2}\left[2\Delta\E\left(\int_{0}^T\|\X_{t}^{\e,\delta,h^{\e}}\|_{\V}^2\d t\right)\right]^{1/2}\nonumber\\&\leq 2\Delta T^{\frac{r-3}{2(r-1)}}\left[\E\left(\int_{0}^T\|\X_{t}^{\e,\delta,h^{\e}}\|_{\H}^{r-3}\|\X_{t}^{\e,\delta,h^{\e}}\|_{\wi\L^{r+1}}^{r+1}\chi_{\{t\leq\tau^{\e}_R\}}\d t\right)\right]^{\frac{1}{r-1}}\left[2\E\left(\int_{0}^T\|\X_{t}^{\e,\delta,h^{\e}}\|_{\V}^2\d t\right)\right]^{1/2}\nonumber\\&\leq C_{\mu,\alpha,\lambda_1,L_{\G},T,R}\Delta\left(1+\|\x\|_{\H}^2+\|\y\|_{\H}^2\right)^{\frac{r+1}{2(r-1)}}\nonumber\\&\leq C_{\mu,\alpha,\lambda_1,L_{\G},T,R}\Delta\left(1+\|\x\|_{\H}^2+\|\y\|_{\H}^2\right),
	\end{align}
since $\frac{r+1}{2(r-1)}\leq 1$, for all $r\geq 3$. 	Once again using H\"older's inequality,  Fubini's Theorem and \eqref{5.5z}, we estimate $\E\left(\int_{\Delta}^T|I_4(t)|\chi_{\{t\leq\tau^{\e}_R\}}\d t\right)$ as (see \eqref{4p31} also)
	\begin{align}
&	\E\left(\int_{\Delta}^T|I_4(t)|\chi_{\{t\leq\tau^{\e}_R\}}\d t\right)\nonumber\\&\leq 2\beta\E\left(\int_{\Delta}^T\int_{t-\Delta}^t\|\X_{s}^{\e,\delta,h^{\e}}\|_{\wi\L^{r+1}}^r\|\X_{s}^{\e,\delta,h^{\e}}-\X_{t-\Delta}^{\e,\delta,h^{\e}}\|_{\wi\L^{r+1}}\chi_{\{t\leq\tau^{\e}_R\}}\d s\d t\right)\nonumber\\&\leq 2\beta\left[\Delta\E\left(\int_{0}^T\|\X_{t}^{\e,\delta,h^{\e}}\|_{\wi\L^{r+1}}^{r+1}\d t\right)\right]^{\frac{r}{r+1}}\left[2^{r}\Delta\E\left(\int_0^T\|\X_{t}^{\e,\delta,h^{\e}}\|_{\wi\L^{r+1}}^{r+1}\d t\right)\right]^{\frac{1}{r+1}}\nonumber\\&\leq C_{\mu,\alpha,\beta,\lambda_1,L_{\G},T}\Delta(1+\|\x\|_{\H}^2+\|\y\|_{\H}^2). 
	\end{align}
	We estimate $\E\left(\int_{\Delta}^T|I_5(t)|\chi_{\{t\leq\tau^{\e}_R\}}\d t\right)$ using the Assumption \ref{ass3.6} (A1),   \eqref{5.5z} and \eqref{443} as (see \eqref{4p32} also)
	\begin{align}
	&	\E\left(\int_{\Delta}^T|I_5(t)|\chi_{\{t\leq\tau^{\e}_R\}}\d t\right)
	\nonumber\\&\leq 2\left[\E\left(\int_{\Delta}^T\int_{t-\Delta}^t\|\F(\X_{s}^{\e,\delta,h^{\e}},\Y^{\e,\delta,h^{\e}}_s)\|_{\H}^2\chi_{\{t\leq\tau^{\e}_R\}}\d s\d t\right)\right]^{1/2}\nonumber\\&\quad\times\left[\E\left(\int_{\Delta}^T\int_{t-\Delta}^t\|\X_{s}^{\e,\delta,h^{\e}}-\X_{t-\Delta}^{\e,\delta,h^{\e}}\|_{\H}^2\chi_{\{t\leq\tau^{\e}_R\}}\d s\d t\right)\right]^{1/2}\nonumber\\&\leq C\left[\Delta\E\left(\int_{0}^T(1+\|\X_{t}^{\e,\delta,h^{\e}}\|_{\H}^2+\|\Y^{\e,\delta,h^{\e}}_t\|_{\H}^2)\d t\right)\right]^{1/2}\left[2\Delta\E\left(\int_{0}^T\|\X_{t}^{\e,\delta,h^{\e}}\|_{\H}^2\d t\right)\right]^{1/2}\nonumber\\&\leq C_{\mu,\alpha,\lambda_1,L_{\G},T}\Delta(1+\|\x\|_{\H}^2+\|\y\|_{\H}^2). 
	\end{align}
	The term $\E\left(\int_{\Delta}^T|I_6(t)|\chi_{\{t\leq\tau^{\e}_R\}}\d t\right)$ can be estimated using the Assumption \ref{ass3.6} (A1) and \eqref{5.5z} as (see \eqref{433} also)
	\begin{align}
	&\E\left(\int_{\Delta}^T|I_6(t)|\chi_{\{t\leq\tau^{\e}_R\}}\d t\right)\nonumber\\&\leq2\left[\E\left(\int_{\Delta}^T\int_{t-\Delta}^t\|\sigma_1(\X_{s}^{\e,\delta,h^{\e}})\Q_1^{1/2}\|_{\mathcal{L}_2}^2\|h^{\e}_s\|_{\H}^2\chi_{\{t\leq\tau^{\e}_R\}}\d s\d t\right)\right]^{1/2}\nonumber\\&\quad\times\left[\E\left(\int_{\Delta}^T\int_{t-\Delta}^t\|\X_{s}^{\e,\delta,h^{\e}}-\X_{t-\Delta}^{\e,\delta,h^{\e}}\|_{\H}^2\chi_{\{t\leq\tau^{\e}_R\}}\d s\d t\right)\right]^{1/2}\nonumber\\&\leq C\left[\Delta\E\left(\int_{0}^T(1+\|\X_{t}^{\e,\delta,h^{\e}}\|_{\H}^2)\|h^{\e}_t\|_{\H}^2\chi_{\{t\leq\tau^{\e}_R\}}\d t\right)\right]^{1/2}\left[2\Delta\E\left(\int_{0}^T\|\X_{t}^{\e,\delta,h^{\e}}\|_{\H}^2\d t\right)\right]^{1/2}\nonumber\\&\leq C_{\mu,\alpha,\lambda_1,L_{\G},M,T,R}\Delta(1+\|\x\|_{\H}^2+\|\y\|_{\H}^2). 
	\end{align}
	Once again using the Assumption \ref{ass3.6} (A1) and   \eqref{5.5z}, we estimate $\E\left(\int_{\Delta}^T|I_7(t)|\chi_{\{t\leq\tau^{\e}_R\}}\d t\right)$ as 
	\begin{align}
	&\E\left(\int_{\Delta}^T|I_7(t)|\chi_{\{t\leq\tau^{\e}_R\}}\d t\right)\nonumber\\&\leq C\e\E\left(\int_{\Delta}^T\int_{t-\Delta}^t(1+\|\X_{s}^{\e,\delta,h^{\e}}\|_{\H}^2)\chi_{\{t\leq\tau^{\e}_R\}}\d s\d t\right)\leq C\e T\Delta \E\left[1+\sup_{t\in[0,T]}\|\X_{t}^{\e,\delta,h^{\e}}\|_{\H}^2\right]\nonumber\\&\leq C_{\mu,\alpha,\lambda_1,L_{\G},T}\Delta(1+\|\x\|_{\H}^2+\|\y\|_{\H}^2),
	\end{align}
since $\e\in(0,1)$.	Finally, using the Burkholder-Davis-Gundy inequality, the Assumption \ref{ass3.6} (A1), Fubini's theorem and   \eqref{5.5z},  we estimate $\E\left(\int_{\Delta}^T|I_8(t)|\chi_{\{t\leq\tau^{\e}_R\}}\d t\right)$ as 
	\begin{align}\label{333}
&	\E\left(\int_{\Delta}^T|I_8(t)|\chi_{\{t\leq\tau^{\e}_R\}}\d t\right)\nonumber\\&\leq C\int_{\Delta}^T\E\left[\left(\int_{t-\Delta}^t\|\sigma_1(\X_{s}^{\e,\delta,h^{\e}})\Q_1^{1/2}\|_{\mathcal{L}_2}^2\|\X_{s}^{\e,\delta,h^{\e}}-\X_{t-\Delta}^{\e,\delta,h^{\e}}\|_{\H}^2\chi_{\{t\leq\tau^{\e}_R\}}\d s\right)^{1/2}\right]\d t\nonumber\\&\leq CT^{1/2}\left[\E\left(\int_{\Delta}^T\int_{t-\Delta}^{t}\left(1+\|\X_{s}^{\e,\delta,h^{\e}}\|_{\H}^2\right)\|\X_{s}^{\e,\delta,h^{\e}}-\X_{t-\Delta}^{\e,\delta,h^{\e}}\|_{\H}^2\chi_{\{t\leq\tau^{\e}_R\}}\d s\d t\right)\right]^{1/2}\nonumber\\&\leq C_RT\Delta^{1/2}\left[\E\left(1+\sup_{t\in[0,T]}\|\X_t\|_{\H}^2\right)\right]^{1/2}\nonumber\\&\leq C_{\mu,\alpha,\lambda_1,L_{\G},T,R}\Delta^{1/2}(1+\|\x\|_{\H}^2+\|\y\|_{\H}^2). 
	\end{align}
	Combining \eqref{327}-\eqref{333}, we deduce that 
	\begin{align}\label{334}
	\E\left[\int_{\Delta}^T\|\X_{t}^{\e,\delta,h^{\e}}-\X^{\e,\delta,h^{\e}}_{t-\Delta}\|_{\H}^2\chi_{\{t\leq\tau^{\e}_R\}}\d t\right]\leq  C_{\mu,\alpha,\beta,\lambda_1,L_{\G},M,T,R}\Delta^{1/2}(1+\|\x\|_{\H}^{2}+\|\y\|_{\H}^{2}).
	\end{align}
	 A similar argument leads to 
	\begin{align}\label{335}
	\E\left[\int_{\Delta}^T\|\X_{t(\Delta)}^{\e,\delta,h^{\e}}-\X^{\e,\delta,h^{\e}}_{t-\Delta}\|_{\H}^2\chi_{\{t\leq\tau^{\e}_R\}}\d t\right]\leq   C_{\mu,\alpha,\beta,\lambda_1,L_{\G},M,T,R}\Delta^{1/2}(1+\|\x\|_{\H}^{2}+\|\y\|_{\H}^{2}).
	\end{align}
	Combining \eqref{3.25}, \eqref{334} and \eqref{335}, we obtain the required result \eqref{3.24}. 
\end{proof}

\subsection{Estimates of auxiliary process $\widehat{\Y}^{\e,\delta}_t$} We use the method proposed by Khasminskii, in \cite{RZK} to obtain the estimates for an auxiliary process. We introduce the auxiliary process $\widehat{\Y}^{\e,\delta}_t\in\H$ (see \eqref{3.37} below) and divide the interval $[0,T]$ into subintervals of  size $\Delta$, where $\Delta$ is a fixed positive number, which depends on $\delta$ and it will be chosen later. Let us construct the process $\widehat{\Y}^{\e,\delta}_t$ with the initial value $\widehat{\Y}^{\e,\delta}_0=\Y^{\e,\delta}_0=\y$, and for any $k\in\mathbb{N}$ and $t\in[k\Delta,\min\{(k+1)\Delta,T\}]$ as 
\begin{align}\label{3.37}
\widehat{\Y}^{\e,\delta}_t&=\widehat{\Y}^{\e,\delta}_{k\Delta}-\frac{\mu}{\delta}\int_{k\Delta}^t\A\widehat{\Y}^{\e,\delta}_s\d s-\frac{\alpha}{\delta}\int_{k\Delta}^t\widehat{\Y}^{\e,\delta}_s\d s-\frac{\beta}{\delta}\int_{k\Delta}^t\mathcal{C}(\widehat{\Y}^{\e,\delta}_s)\d s+\frac{1}{\delta}\int_{k\Delta}^t\G(\X^{\e,\delta,h^{\e}}_{k\Delta},\widehat{\Y}^{\e,\delta}_s)\d s\nonumber\\&\quad+\frac{1}{\sqrt{\delta}}\int_{k\Delta}^t\sigma_2(\X^{\e,\delta,h^{\e}}_{k\Delta},\widehat{\Y}^{\e,\delta}_s)\Q_2^{1/2}\d\W_s, \ \mathbb{P}\text{-a.s.}, 
\end{align}
which is equivalent to 
\begin{equation}\label{338}
\left\{
\begin{aligned}
\d\widehat{\Y}^{\e,\delta}_t&=-\frac{1}{\delta}\left[\mu\A\widehat{\Y}^{\e,\delta}_t+\alpha\widehat{\Y}^{\e,\delta}_t+\beta\mathcal{C}(\widehat{\Y}^{\e,\delta}_t)-\G(\X^{\e,\delta,h^{\e}}_{t(\Delta)},\widehat{\Y}^{\e,\delta}_t)\right]\d t\\&\quad+\frac{1}{\sqrt{\delta}}\sigma_2(\X^{\e,\delta,h^{\e}}_{t(\Delta)},\widehat{\Y}^{\e,\delta}_t)\Q_2^{1/2}\d\W_t,\\
\widehat{\Y}^{\e,\delta}_0&=\y. 
\end{aligned}\right. 
\end{equation}
The following energy estimate satisfied by $\widehat{\Y}^{\e,\delta}_t$ can be proved in a similar way as in Theorem \ref{thm5.10} (see Lemma 4.2, \cite{MTM11} also). 
\begin{lemma}\label{lem3.9}
	For any $\x,\y\in\H$, $T>0$ and $\e\in(0,1)$, there exists a constant $C_{\mu,\alpha,\lambda_1,L_{\G},M,T}>0$ such that the strong solution $\Y^{\e,\delta}_{t}$ to the system \eqref{338} satisfies: 
	\begin{align}\label{341}
	\sup_{t\in[0,T]}	\E\left[\|\widehat\Y^{\e,\delta}_t\|_{\H}^{2}\right]\leq C_{\mu,\alpha,\lambda_1,L_{\G},M,T}\left(1+\|\x\|_{\H}^{2}+\|\y\|_{\H}^{2}\right). 
	\end{align}
\end{lemma}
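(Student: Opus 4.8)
The plan is to establish the uniform-in-time second moment bound \eqref{341} for the auxiliary process $\widehat{\Y}^{\e,\delta}_t$ defined by \eqref{338} by mimicking the argument already carried out for the controlled fast component $\Y^{\e,\delta,h^{\e}}_t$ in the proof of Theorem \ref{thm5.10}, noting the key structural simplification that \eqref{338} has no control term in the fast equation (the $h^{\e}$ appears only frozen in the slow variable $\X^{\e,\delta,h^{\e}}_{t(\Delta)}$, which enters through $\G$ and $\sigma_2$). First I would apply the infinite dimensional It\^o formula to the process $\|\widehat{\Y}^{\e,\delta}_t\|_{\H}^2$, exactly as in \eqref{3.9}, obtaining
\begin{align*}
\|\widehat{\Y}^{\e,\delta}_t\|_{\H}^2&=\|\y\|_{\H}^2-\frac{2\mu}{\delta}\int_0^t\|\widehat{\Y}^{\e,\delta}_s\|_{\V}^2\d s-\frac{2\alpha}{\delta}\int_0^t\|\widehat{\Y}^{\e,\delta}_s\|_{\H}^2\d s-\frac{2\beta}{\delta}\int_0^t\|\widehat{\Y}^{\e,\delta}_s\|_{\wi\L^{r+1}}^{r+1}\d s\\&\quad+\frac{2}{\delta}\int_0^t(\G(\X^{\e,\delta,h^{\e}}_{s(\Delta)},\widehat{\Y}^{\e,\delta}_s),\widehat{\Y}^{\e,\delta}_s)\d s+\frac{1}{\delta}\int_0^t\|\sigma_2(\X^{\e,\delta,h^{\e}}_{s(\Delta)},\widehat{\Y}^{\e,\delta}_s)\Q_2^{1/2}\|_{\mathcal{L}_2}^2\d s\\&\quad+\frac{2}{\sqrt{\delta}}\int_0^t(\sigma_2(\X^{\e,\delta,h^{\e}}_{s(\Delta)},\widehat{\Y}^{\e,\delta}_s)\Q_2^{1/2}\d\W_s,\widehat{\Y}^{\e,\delta}_s),
\end{align*}
then take expectation, using that the stochastic integral is a martingale, and differentiate in $t$ to pass to the differential inequality for $\E[\|\widehat{\Y}^{\e,\delta}_t\|_{\H}^2]$.

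Next I would estimate the two deterministic forcing terms using Assumption \ref{ass3.6}. For the $\G$ term, the Cauchy--Schwarz and Young inequalities together with the Lipschitz bound $\|\G(\x,\y)\|_{\H}\leq\|\G(\mathbf{0},\mathbf{0})\|_{\H}+C\|\x\|_{\H}+L_{\G}\|\y\|_{\H}$ give, as in \eqref{313},
$$\frac{2}{\delta}(\G(\X^{\e,\delta,h^{\e}}_{s(\Delta)},\widehat{\Y}^{\e,\delta}_s),\widehat{\Y}^{\e,\delta}_s)\leq\left(\frac{\mu\lambda_1}{2\delta}+\frac{2L_{\G}}{\delta}\right)\|\widehat{\Y}^{\e,\delta}_s\|_{\H}^2+\frac{C}{\mu\lambda_1\delta}\|\X^{\e,\delta,h^{\e}}_{s(\Delta)}\|_{\H}^2+\frac{C}{\mu\lambda_1\delta},$$
and for the noise term Assumption \ref{ass3.6}(A2) yields $\frac{1}{\delta}\|\sigma_2(\X^{\e,\delta,h^{\e}}_{s(\Delta)},\widehat{\Y}^{\e,\delta}_s)\Q_2^{1/2}\|_{\mathcal{L}_2}^2\leq\frac{C}{\delta}(1+\|\X^{\e,\delta,h^{\e}}_{s(\Delta)}\|_{\H}^2)$. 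Combining these and using Poincar\'e's inequality to absorb the $\|\widehat{\Y}^{\e,\delta}_s\|_{\V}^2$ term, I obtain
$$\frac{\d}{\d t}\E\left[\|\widehat{\Y}^{\e,\delta}_t\|_{\H}^2\right]\leq-\frac{\gamma}{\delta}\E\left[\|\widehat{\Y}^{\e,\delta}_t\|_{\H}^2\right]+\frac{C}{\mu\lambda_1\delta}\left(1+\E\left[\|\X^{\e,\delta,h^{\e}}_{t(\Delta)}\|_{\H}^2\right]\right),$$
where $\gamma=\mu\lambda_1+2\alpha-2L_{\G}>0$ by Assumption \ref{ass3.6}(A3). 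The variation of constants formula then gives
$$\E\left[\|\widehat{\Y}^{\e,\delta}_t\|_{\H}^2\right]\leq\|\y\|_{\H}^2 e^{-\gamma t/\delta}+\frac{C}{\mu\lambda_1\delta}\int_0^t e^{-\gamma(t-s)/\delta}\left(1+\E\left[\|\X^{\e,\delta,h^{\e}}_{s(\Delta)}\|_{\H}^2\right]\right)\d s.$$

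Finally, I would bound the integral by pulling $\sup_{s\in[0,T]}\E[\|\X^{\e,\delta,h^{\e}}_{s(\Delta)}\|_{\H}^2]\leq\E[\sup_{t\in[0,T]}\|\X^{\e,\delta,h^{\e}}_t\|_{\H}^2]$ out, which by \eqref{5.5z} of Theorem \ref{thm5.10} is bounded by $C_{\mu,\alpha,\lambda_1,L_{\G},M,T}(1+\|\x\|_{\H}^2+\|\y\|_{\H}^2)$, and using $\frac{1}{\delta}\int_0^t e^{-\gamma(t-s)/\delta}\d s\leq\frac{1}{\gamma}$ uniformly in $t$ and $\delta$, together with $e^{-\gamma t/\delta}\leq 1$, to conclude
$$\sup_{t\in[0,T]}\E\left[\|\widehat{\Y}^{\e,\delta}_t\|_{\H}^2\right]\leq C_{\mu,\alpha,\lambda_1,L_{\G},M,T}\left(1+\|\x\|_{\H}^2+\|\y\|_{\H}^2\right),$$
which is \eqref{341}. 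The only mild subtlety is that $\X^{\e,\delta,h^{\e}}_{t(\Delta)}$ is a piecewise-constant (in the first argument) process, so one must be careful that the estimates used in Theorem \ref{thm5.10} apply to $\X^{\e,\delta,h^{\e}}_{s(\Delta)}$ evaluated at grid points; but since $\{s(\Delta):s\in[0,T]\}\subset[0,T]$, the supremum bound \eqref{5.5z} covers it directly, so there is no real obstacle here --- the main (minor) point of care is simply tracking that the constant $\gamma$ is genuinely positive, which is exactly Assumption \ref{ass3.6}(A3), and that all constants are independent of $\e$ and $\delta$.
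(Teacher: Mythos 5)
Your proposal is correct and follows exactly the route the paper intends: the paper omits the proof of Lemma \ref{lem3.9}, stating only that it "can be proved in a similar way as in Theorem \ref{thm5.10}," and your argument is precisely that adaptation — It\^o's formula for $\|\widehat{\Y}^{\e,\delta}_t\|_{\H}^2$, the estimates \eqref{313}--\eqref{315} with the control term absent, the dissipativity constant $\gamma=\mu\lambda_1+2\alpha-2L_{\G}>0$ from Assumption \ref{ass3.6}(A3), variation of constants, and the uniform bound \eqref{5.5z} to control $\X^{\e,\delta,h^{\e}}_{t(\Delta)}$. Your closing remarks on the grid-point evaluation and the independence of the constants from $\e$ and $\delta$ are exactly the right points of care.
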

Our next aim is to establish an estimate on the difference between the processes $\Y^{\e,\delta,h^{\e}}_t$ and $\widehat\Y^{\e,\delta}_t$. 
\begin{lemma}\label{lem3.10}
	For any $\x,\y\in\H$, $T>0$ and $\e,\delta\in(0,1)$, there exists a constant $C_{\mu,\alpha,\beta,\lambda_1,L_{\G},T}>0$ such that 
	\begin{align}\label{3.42}
	\mathbb{E}\left(\int_0^{T}\|\Y^{\e,\delta,h^{\e}}_t-\widehat\Y^{\e,\delta}_t\|_{\H}^2\d t\right)\leq C_{\mu,\alpha,\beta,\lambda_1,L_{\G},M,T}(1+\|\x\|_{\H}^2+\|\y\|_{\H}^2)\left[\left(\frac{\delta}{\e}\right)+\Delta^{1/2}\right].
	\end{align}
\end{lemma}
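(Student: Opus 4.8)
The plan is to control the difference $\mathbf{W}_t := \Y^{\e,\delta,h^{\e}}_t - \widehat\Y^{\e,\delta}_t$ by setting up the equation it solves and applying the infinite dimensional It\^o formula to $\|\mathbf{W}_t\|_{\H}^2$, then integrating in time. From \eqref{442} and \eqref{338}, $\mathbf{W}_t$ satisfies
\begin{align*}
\d\mathbf{W}_t &= -\frac{1}{\delta}\Big[\mu\A\mathbf{W}_t + \alpha\mathbf{W}_t + \beta(\mathcal{C}(\Y^{\e,\delta,h^{\e}}_t) - \mathcal{C}(\widehat\Y^{\e,\delta}_t))\Big]\d t \\
&\quad + \frac{1}{\delta}\Big[\G(\X^{\e,\delta,h^{\e}}_t,\Y^{\e,\delta,h^{\e}}_t) - \G(\X^{\e,\delta,h^{\e}}_{t(\Delta)},\widehat\Y^{\e,\delta}_t)\Big]\d t \\
&\quad + \frac{1}{\sqrt{\delta\e}}\sigma_2(\X^{\e,\delta,h^{\e}}_t,\Y^{\e,\delta,h^{\e}}_t)\Q_2^{1/2}h_t^{\e}\d t \\
&\quad + \frac{1}{\sqrt{\delta}}\Big[\sigma_2(\X^{\e,\delta,h^{\e}}_t,\Y^{\e,\delta,h^{\e}}_t) - \sigma_2(\X^{\e,\delta,h^{\e}}_{t(\Delta)},\widehat\Y^{\e,\delta}_t)\Big]\Q_2^{1/2}\d\W_t,
\end{align*}
with $\mathbf{W}_0 = \mathbf{0}$. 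Applying It\^o's formula, taking expectation (the stochastic integral is a martingale by the growth Assumption \ref{ass3.6} (A2) together with the bounds in Theorems \ref{thm5.10} and Lemma \ref{lem3.9}), and using $\langle\mathcal{C}(\Y^{\e,\delta,h^{\e}}) - \mathcal{C}(\widehat\Y^{\e,\delta}),\mathbf{W}\rangle \geq 0$ from \eqref{2.23}, I would obtain a differential inequality for $\E[\|\mathbf{W}_t\|_{\H}^2]$.

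The key is to split the $\G$ term as $\G(\X^{\e,\delta,h^{\e}}_t,\Y^{\e,\delta,h^{\e}}_t) - \G(\X^{\e,\delta,h^{\e}}_t,\widehat\Y^{\e,\delta}_t) + \G(\X^{\e,\delta,h^{\e}}_t,\widehat\Y^{\e,\delta}_t) - \G(\X^{\e,\delta,h^{\e}}_{t(\Delta)},\widehat\Y^{\e,\delta}_t)$ and similarly for $\sigma_2$. The first piece contributes $\frac{2L_{\G}}{\delta}\|\mathbf{W}_t\|_{\H}^2$ (and $\frac{L_{\sigma_2}^2}{\delta}\|\mathbf{W}_t\|_{\H}^2$ from the noise via the inequality $\|a+b\|^2 \le (1+\eta)\|a\|^2 + (1+\eta^{-1})\|b\|^2$); combined with the coercive term $-\frac{1}{\delta}(2\mu\lambda_1 + 2\alpha)\|\mathbf{W}_t\|_{\H}^2$ and Assumption \ref{ass3.6} (A3), this gives a net negative coefficient $-\frac{\gamma}{\delta}\|\mathbf{W}_t\|_{\H}^2$ with $\gamma > 0$. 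The second piece is Lipschitz in $\X$: it is bounded using Assumption \ref{ass3.6} (A1) and Young's inequality by $\frac{C}{\delta}\|\X^{\e,\delta,h^{\e}}_t - \X^{\e,\delta,h^{\e}}_{t(\Delta)}\|_{\H}^2$ plus a small multiple of $\frac{1}{\delta}\|\mathbf{W}_t\|_{\H}^2$ absorbed into the coercive term. The control term $\frac{1}{\sqrt{\delta\e}}\sigma_2(\cdot)\Q_2^{1/2}h_t^{\e}$ is estimated by Cauchy--Schwarz and Assumption \ref{ass3.6} (A2) by $\frac{C}{\e}(1+\|\X^{\e,\delta,h^{\e}}_t\|_{\H}^2)\|h_t^{\e}\|_{\H}^2 + \frac{C}{\delta}\|\mathbf{W}_t\|_{\H}^2$ (again absorbing the last term). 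I would then use the variation-of-constants formula on $[0,T]$, integrate in $t$, change the order of integration by Fubini, and use $\int_0^t e^{-\gamma(t-s)/\delta}\d s \le \delta/\gamma$ to land at
\begin{align*}
\E\Big(\int_0^T\|\mathbf{W}_t\|_{\H}^2\d t\Big) &\le \frac{C}{\gamma}\,\E\Big(\int_0^T\|\X^{\e,\delta,h^{\e}}_t - \X^{\e,\delta,h^{\e}}_{t(\Delta)}\|_{\H}^2\d t\Big) \\
&\quad + \frac{C\delta}{\gamma\e}\,\E\Big(\sup_{t\in[0,T]}(1+\|\X^{\e,\delta,h^{\e}}_t\|_{\H}^2)\int_0^T\|h_t^{\e}\|_{\H}^2\d t\Big).
\end{align*}

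The first term on the right is handled by Lemma \ref{lem3.8}, but that lemma carries the stopping-time cutoff $\chi_{\{t \le \tau_R^{\e}\}}$; the main obstacle is removing this localization. The clean route is to note that for the auxiliary estimate we only need the difference estimate stated in the lemma, and the cutoff from Lemma \ref{lem3.8} can be disposed of either by first proving \eqref{3.42} with the $\chi_{\{t\le\tau_R^{\e}\}}$ factor present and letting $R\to\infty$ using $\mathbb{P}(\tau_R^{\e} < T) \to 0$ (a consequence of the uniform bound \eqref{5.5z}), or by observing that the calculation above can be run directly on the interval $[0, T\wedge\tau_R^{\e}]$ and then combined with a uniform integrability argument. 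Once the first term is shown to be $O(\Delta^{1/2})(1+\|\x\|_{\H}^2+\|\y\|_{\H}^2)$ and the second is $O(\delta/\e)(1+\|\x\|_{\H}^2+\|\y\|_{\H}^2)$ by using $h^{\e}\in\mathcal{A}_M$ (so $\int_0^T\|h_t^{\e}\|_{\H}^2\d t \le M$ a.s.) together with \eqref{5.5z}, the bound \eqref{3.42} follows by combining the two. I expect the bookkeeping of the stopping time, and ensuring that all constants are genuinely independent of $\e$ and $\delta$ (using Assumption \ref{ass3.6} (A3)--(A4) and the restriction $\delta/\e \le 1/C_{\mu,\alpha,\lambda_1,L_{\G},M}$ from Theorem \ref{thm5.10}), to be the only delicate points.
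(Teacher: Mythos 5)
Your proposal follows essentially the same route as the paper's proof: the same SDE for the difference $\mathbf{U}^{\e}_t=\Y^{\e,\delta,h^{\e}}_t-\widehat\Y^{\e,\delta}_t$, It\^o's formula for $\|\mathbf{U}^{\e}_t\|_{\H}^2$, the monotonicity \eqref{214} to discard the $\mathcal{C}$-difference, the same splitting of the $\G$ and $\sigma_2$ terms so that Assumption \ref{ass3.6} (A3) yields a net coefficient $-\kappa/\delta$, and then variation of constants plus Fubini to produce the $\delta/\e$ and $\Delta^{1/2}$ contributions via \eqref{5.5z} and Lemma \ref{lem3.8}. The stopping-time localization in Lemma \ref{lem3.8} that you flag is indeed the one loose end; the paper's own proof applies \eqref{3.24} without the cutoff (and silently drops the $R$-dependence of the constant), so your explicit remark on how to remove it is a small improvement rather than a deviation.
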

\begin{proof}
	Let us define $\mathbf{U}^{\e}_t:=\Y^{\e,\delta,h^{\e}}_t-\widehat\Y^{\e,\delta}_t$. Then $\mathbf{U}^{\e}_{t}$ satisfies the following It\^o stochastic differential: 
	\begin{equation}
	\left\{
	\begin{aligned}
	\d\U^{\e}_t&=-\frac{1}{\delta}\left[\mu\A\U^{\e}_t+\alpha\U^{\e}_t+\beta(\mathcal{C}(\Y^{\e,\delta,h^{\e}}_t)-\mathcal{C}(\widehat\Y^{\e,\delta}_t))\right]\d t\\&\quad+\frac{1}{\delta}\left[(\G(\X_{t}^{\e,\delta,h^{\e}},\Y^{\e,\delta,h^{\e}}_t)-\G(\X^{\e,\delta,h^{\e}}_{t(\Delta)},\widehat{\Y}^{\e,\delta}_t))\right]\d t\\&\quad+\frac{1}{\sqrt{\delta\e}}\sigma_2(\X^{\e,\delta,h^{\e}}_t,\Y^{\e,\delta,h^{\e}}_t)\Q_2^{1/2}h_t^{\e}\d t \\&\quad+\frac{1}{\sqrt{\delta}}\left[\sigma_2(\X_{t}^{\e,\delta,h^{\e}},{\Y}^{\e,\delta,h^{\e}}_t)-\sigma_2(\X^{\e,\delta,h^{\e}}_{t(\Delta)},\widehat{\Y}^{\e,\delta}_t)\right]\Q_2^{1/2}\d\W_t,\\
	\U^{\e}_0&=\mathbf{0}. 
	\end{aligned}
	\right. 
	\end{equation}
	An application of the infinite dimensional It\^o formula to the process $\|\U^{\e}_{t}\|_{\H}^2$ yields 
	\begin{align}\label{344}
	\|\U^{\e}_t\|_{\H}^2&=-\frac{2\mu}{\delta}\int_0^t\|\U^{\e}_s\|_{\V}^2\d s-\frac{2\alpha}{\delta}\int_0^t\|\U^{\e}_s\|_{\H}^2\d s-\frac{2\beta}{\delta}\int_0^t\langle\mathcal{C}(\Y_{s}^{\e,\delta,h^{\e}})-\mathcal{C}(\widehat\Y_{s}^{\e,\delta}),\U^{\e}_s\rangle\d s\nonumber\\&\quad+\frac{2}{\delta}\int_0^t(\G(\X_{s}^{\e,\delta,h^{\e}},{\Y}^{\e,\delta,h^{\e}}_s)-\G(\X^{\e,\delta,h^{\e}}_{s(\Delta)},\widehat{\Y}^{\e,\delta}_s),\U^{\e}_s)\d s\nonumber\\&\quad+\frac{2}{\sqrt{\delta\e}}\int_0^t(\sigma_2(\X^{\e,\delta,h^{\e}}_s,\Y^{\e,\delta,h^{\e}}_s)\Q_2^{1/2}h_s^{\e},\U^{\e}_s)\d s\nonumber\\&\quad+\frac{1}{\sqrt{\delta}}\int_0^t([\sigma_2(\X_{s}^{\e,\delta,h^{\e}},{\Y}^{\e,\delta,h^{\e}}_s)-\sigma_2(\X^{\e,\delta,h^{\e}}_{s(\Delta)},\widehat{\Y}^{\e,\delta}_s)]\Q_2^{1/2}\d\W_s,\U^{\e}_s)\nonumber\\&\quad+\frac{1}{\delta }\int_0^t\|[\sigma_2(\X_{s}^{\e,\delta,h^{\e}},{\Y}^{\e}_s)-\sigma_2(\X^{\e,\delta,h^{\e}}_{s(\Delta)},\widehat{\Y}^{\e,\delta}_s)]\Q_2^{1/2}\|_{\mathcal{L}_2}^2\d s, \ \mathbb{P}\text{-a.s.},
	\end{align}
	for all $t\in[0,T]$. Taking expectation in \eqref{344}, we obtain 
	\begin{align}
	\E\left[	\|\U^{\e}_t\|_{\H}^2\right]&=-\frac{2\mu}{\delta}\int_0^t\E\left[\|\U^{\e}_s\|_{\V}^2\right]\d s-\frac{2\alpha}{\delta}\int_0^t\E\left[\|\U^{\e}_s\|_{\H}^2\right]\d s\nonumber\\&\quad-\frac{2\beta}{\delta}\E\left[\int_0^t\langle\mathcal{C}(\Y_{s}^{\e,\delta,h^{\e}})-\mathcal{C}(\widehat\Y_{s}^{\e,\delta}),\U^{\e}_s\rangle\d s\right]\nonumber\\&\quad+\frac{2}{\delta}\int_0^t\E\left[(\G(\X_{s}^{\e,\delta,h^{\e}},{\Y}^{\e,\delta,h^{\e}}_s)-\G(\X^{\e,\delta,h^{\e}}_{s(\Delta)},\widehat{\Y}^{\e,\delta}_s),\U^{\e}_s)\right]\d s\nonumber\\&\quad+\frac{2}{\sqrt{\delta\e}}\int_0^t\E\left[(\sigma_2(\X^{\e,\delta,h^{\e}}_s,\Y^{\e,\delta,h^{\e}}_s)\Q_2^{1/2}h_s^{\e},\U^{\e}_s)\right]\d s\nonumber\\&\quad+\frac{1}{\delta}\int_0^t\E\left[\|[\sigma_2(\X_{s}^{\e,\delta,h^{\e}},{\Y}^{\e,\delta,h^{\e}}_s)-\sigma_2(\X^{\e,\delta,h^{\e}}_{s(\Delta)},\widehat{\Y}^{\e,\delta}_s)]\Q_2^{1/2}\|_{\mathcal{L}_2}^2\right]\d s. 
	\end{align}
	Thus, it is immediate that 
	\begin{align}\label{3.46}
	\frac{\d}{\d t}	\E\left[	\|\U^{\e}_t\|_{\H}^2\right]&=-\frac{2\mu}{\delta}\E\left[\|\U^{\e}_t\|_{\V}^2\right]-\frac{2\alpha}{\delta}\E\left[\|\U^{\e}_t\|_{\H}^2\right]-\frac{2\beta}{\delta}\E\left[\langle\mathcal{C}(\Y^{\e,\delta,h^{\e}}_t)-\mathcal{C}(\widehat\Y^{\e,\delta}_t),\U^{\e}_t\rangle\right]\nonumber\\&\quad+\frac{2}{\delta}\E\left[(\G(\X_{t}^{\e,\delta,h^{\e}},{\Y}^{\e,\delta,h^{\e}}_t)-\G(\X^{\e,\delta,h^{\e}}_{t(\Delta)},\widehat{\Y}^{\e,\delta}_t),\U^{\e}_t)\right]\nonumber\\&\quad+\frac{2}{\sqrt{\delta\e}}\E\left[(\sigma_2(\X^{\e,\delta,h^{\e}}_t,\Y^{\e,\delta,h^{\e}}_t)\Q_2^{1/2}h_t^{\e},\U^{\e}_t)\right]\nonumber\\&\quad+\frac{1}{\delta}\E\left[\|[\sigma_2(\X_{t}^{\e,\delta,h^{\e}},{\Y}^{\e,\delta,h^{\e}}_t)-\sigma_2(\X^{\e,\delta,h^{\e}}_{t(\Delta)},\widehat{\Y}^{\e,\delta}_t)]\Q_2^{1/2}\|_{\mathcal{L}_2}^2\right], 
	\end{align}
	for a.e. $t\in[0,T]$. Applying \eqref{214}, we find 
	\begin{align}\label{3p47}
&-\frac{2\beta}{\e}\langle\mathcal{C}(\Y^{\e,\delta,h^{\e}}_t)-\mathcal{C}(\widehat\Y^{\e,\delta}_t),\U^{\e}_t\rangle\leq-\frac{\beta}{2^{r-2}\e}\|\U^{\e}_t\|_{\wi\L^{r+1}}^{r+1}.
	\end{align}
	Using the Assumption \ref{ass3.6} (A1), we get 
	\begin{align}\label{3.47}
&\frac{2}{\delta}(\G(\X_{t}^{\e,\delta,h^{\e}},\Y^{\e,\delta,h^{\e}}_t)-\G(\X^{\e,\delta,h^{\e}}_{t(\Delta)},\widehat{\Y}^{\e,\delta}_t),\U^{\e}_t)\nonumber\\&\leq \frac{2}{\delta}\|\G(\X_{t}^{\e,\delta,h^{\e}},\Y^{\e,\delta,h^{\e}}_t)-\G(\X^{\e,\delta,h^{\e}}_{t(\Delta)},\widehat{\Y}^{\e,\delta}_t)\|_{\H}\|\U^{\e}_t\|_{\H}\nonumber\\&\leq \frac{C}{\delta}\|\X_{t}^{\e,\delta,h^{\e}}-\X^{\e,\delta,h^{\e}}_{t(\Delta)}\|_{\H}\|\U^{\e}_t\|_{\H}+\frac{2L_{\G}}{\delta}\|\U^{\e}_t\|_{\H}^2\nonumber\\&\leq\left(\frac{\mu\lambda_1}{2\delta}+\frac{2L_{\G}}{\delta}\right)\|\U^{\e}_t\|_{\H}^2+\frac{C}{\mu\lambda_1\delta}\|\X_{t}^{\e,\delta,h^{\e}}-\X^{\e,\delta,h^{\e}}_{t(\Delta)}\|_{\H}^2. 
	\end{align} 
	Making use of the Assumption \ref{ass3.6} (A2), H\"older's and Young's inequalities, we have 
	\begin{align}
&\frac{2}{\sqrt{\delta\e}}(\sigma_2(\X^{\e,\delta,h^{\e}}_t,\Y^{\e,\delta,h^{\e}}_t)\Q_2^{1/2}h_t^{\e},\U^{\e}_t)\nonumber\\&\leq\frac{2}{\sqrt{\delta\e}}\|\sigma_2(\X^{\e,\delta,h^{\e}}_t,\Y^{\e,\delta,h^{\e}}_t)\Q_2^{1/2}\|_{\mathcal{L}_2}\|h_t^{\e}\|_{\H}\|\U^{\e}_t\|_{\H}\nonumber\\&\leq \frac{\mu\lambda_1}{2\delta}\|\U^{\e}_t\|_{\H}^2+\frac{C}{\mu\lambda_1\delta}\|\sigma_2(\X^{\e,\delta,h^{\e}}_t,\Y^{\e,\delta,h^{\e}}_t)\Q_2^{1/2}\|_{\mathcal{L}_2}^2\|h_t^{\e}\|_{\H}^2\nonumber\\&\leq \frac{\mu\lambda_1}{2\delta}\|\U^{\e}_t\|_{\H}^2+\frac{C}{\mu\lambda_1\e}(1+\|\X^{\e,\delta,h^{\e}}_t\|_{\H}^2)\|h_t^{\e}\|_{\H}^2.
	\end{align}
	Similarly, using the Assumption \ref{ass3.6} (A1), we obtain
	\begin{align}\label{3.48}
\frac{1}{\delta}	\|[\sigma_2(\X_{t}^{\e,\delta,h^{\e}},{\Y}^{\e,\delta,h^{\e}}_t)-\sigma_2(\X^{\e,\delta,h^{\e}}_{t(\Delta)},\widehat{\Y}^{\e,\delta}_t)]\Q_2^{1/2}\|_{\mathcal{L}_2}^2&\leq \frac{C}{\delta}\|\X_{t}^{\e,\delta,h^{\e}}-\X^{\e,\delta,h^{\e}}_{t(\Delta)}\|_{\H}^2+\frac{2L_{\sigma_2}^2}{\delta}\|\U^{\e}_t\|_{\H}^2. 
	\end{align}
	Combining \eqref{3p47}-\eqref{3.48} and substituting it in \eqref{3.46}, we deduce that 
	\begin{align}\label{3.49}
		\frac{\d}{\d t}	\E\left[	\|\U^{\e}_t\|_{\H}^2\right]&\leq-\frac{1}{\delta}(\mu\lambda_1+2\alpha-2L_{\G}-2L_{\sigma_2}^2)\E\left[\|\U^{\e}_t\|_{\H}^2\right]+\frac{C}{\mu\lambda_1\e}\E\left[(1+\|\X^{\e,\delta,h^{\e}}_t\|_{\H}^2)\|h_t^{\e}\|_{\H}^2\right]\nonumber\\&\quad+\frac{C}{\delta}\left(1+\frac{1}{\mu\lambda_1}\right)\E\left[\|\X_{t}^{\e,\delta,h^{\e}}-\X^{\e,\delta,h^{\e}}_{t(\Delta)}\|_{\H}^2\right]. 
	\end{align}
	Using the Assumption \ref{ass3.6} (A3) and variation of constants  formula, from \eqref{3.49}, we infer that 
	\begin{align}
	\E\left[\|\U^{\e}_t\|_{\H}^2\right]&\leq \frac{C}{\mu\lambda_1\e}\int_0^te^{-\frac{\kappa}{\delta}(t-s)}\E\left[(1+\|\X^{\e,\delta,h^{\e}}_s\|_{\H}^2)\|h_s^{\e}\|_{\H}^2\right]\d s \nonumber\\&\quad+\frac{C}{\delta}\left(1+\frac{1}{\mu\lambda_1}\right)\int_0^te^{-\frac{\kappa}{\delta}(t-s)}\E\left[\|\X_{s}^{\e,\delta,h^{\e}}-\X^{\e,\delta,h^{\e}}_{s(\Delta)}\|_{\H}^2\right]\d s,
	\end{align}
	where $\kappa=\mu\lambda_1+2\alpha-2L_{\G}-2L_{\sigma_2}^2>0$. Applying Fubini's theorem, for any $T>0$, we have 
	\begin{align}
	\E\left[\int_0^{T}\|\U^{\e}_t\|_{\H}^2\d t\right]&\leq  \frac{C}{\mu\lambda_1\e}\int_0^{T}\int_0^te^{-\frac{\kappa}{\delta}(t-s)}\E\left[(1+\|\X^{\e,\delta,h^{\e}}_s\|_{\H}^2)\|h_s^{\e}\|_{\H}^2\right]\d s\d t\nonumber\\&\quad+ \frac{C}{\delta}\left(1+\frac{1}{\mu\lambda_1}\right)\int_0^{T}\int_0^te^{-\frac{\kappa}{\delta}(t-s)}\E\left[\|\X_{s}^{\e,\delta,h^{\e}}-\X^{\e,\delta,h^{\e}}_{s(\Delta)}\|_{\H}^2\right]\d s\d t\nonumber\\&=\frac{C}{\mu\lambda_1\e}\E\left[\int_0^{T}(1+\|\X^{\e,\delta,h^{\e}}_s\|_{\H}^2)\|h_s^{\e}\|_{\H}^2\int_s^Te^{-\frac{\kappa}{\delta}(t-s)}\d t\d s\right]\nonumber\\&\quad+\frac{C}{\delta}\left(1+\frac{1}{\mu\lambda_1}\right)\E\left[\int_0^T\|\X_{s}^{\e,\delta,h^{\e}}-\X^{\e,\delta,h^{\e}}_{s(\Delta)}\|_{\H}^2\left(\int_s^Te^{-\frac{\kappa}{\delta}(t-s)}\d t\right)\d s\right]\nonumber\\&\leq\frac{C}{\mu\lambda_1\kappa}\left(\frac{\delta}{\e}\right)\E\left[\sup_{t\in[0,T]}(1+\|\X^{\e,\delta,h^{\e}}_t\|_{\H}^2)\int_0^{T}\|h_t^{\e}\|_{\H}^2\d t\right]\nonumber\\&\quad+\frac{C}{\kappa}\left(1+\frac{1}{\mu\lambda_1}\right)\E\left[\int_0^T\|\X_{t}^{\e,\delta,h^{\e}}-\X^{\e,\delta,h^{\e}}_{t(\Delta)}\|_{\H}^2\d t\right]\nonumber\\&\leq C_{\mu,\alpha,\beta,\lambda_1,L_{\G},M,T}(1+\|\x\|_{\H}^2+\|\y\|_{\H}^2)\left[\left(\frac{\delta}{\e}\right)+\Delta^{1/2}\right],
	\end{align}
	using \eqref{5.5z} and Lemma \ref{lem3.8} (see \eqref{3.24}), which completes the proof. 
\end{proof}

\begin{remark}
	It can be easily seen from the stochastic differential equations corresponding to  $\Y^{\e,\delta,h^{\e}}_t$ and $\widehat\Y^{\e,\delta}_t$ (see \eqref{442} and \eqref{338}) that the control term involving $h^{\e}$ in $\Y^{\e,\delta,h^{\e}}_t$  vanishes in $\widehat\Y^{\e,\delta}_t$. From Lemma \ref{lem3.10}, we infer that the additional control term takes no effect as $\e\to 0$ due to the Assumption \ref{ass3.6} (A4). 
\end{remark}

Our next aim is to establish an estimate for the process ${\X}^{\e,\delta,h^{\e}}_{t}-\bar{\X}_{t}^h$. For $n=2$ and $r\in[1,3]$, we need to construct an another stopping time to obtain an estimate for $\|\X_{t}^{\e,\delta,h^{\e}}-\bar\X_t^h\|_{\H}^2$. For fixed $\e\in(0,1)$ and $R>0$, we define 
\begin{align}\label{stop1}
\wi\tau^{\e}_R:=\inf_{t\geq 0}\left\{t:\|\X_{s}^{\e,\delta,h^{\e}}\|_{\H}+\int_0^t\|\X_{s}^{\e,\delta,h^{\e}}\|_{\V}^2\d s> R\right\}. 
\end{align}
It is clear that $\wi\tau_R^{\e}(\omega)\leq\tau_R^{\e}(\omega)$, for all $\omega\in\Omega$.  We see in the next Lemma that such a stopping time is not needed for $r\in(3,\infty)$ for $n=2$ and $r\in[3,\infty)$ for $n=3$ ($2\beta\mu> 1,$ for $r=n=3$). 

\begin{lemma}\label{lem3.14}
	For any $\x,\y\in\H$, $T>0$ and $\e\in(0,1)$, the following estimate holds: 
	\begin{align}\label{389}
		&	\E\left[	\sup_{t\in[0,T\wedge\wi\tau_R^{\e}]}	\|\Z^{\e}_{t}\|_{\H}^2+\mu\int_0^{T\wedge\wi\tau_R^{\e}}\|\Z^{\e}_t\|_{\V}^2\d t+\frac{\beta}{2^{r-2}}\int_0^{T\wedge\wi\tau_R^{\e}}\|\Z^{\e}_t\|_{\wi\L^{r+1}}^{r+1}\d t\right]\nonumber\\&\leq C_{\mu,\alpha,\beta,\lambda_1,L_{\G},L_{\sigma_2},M,T,R}(1+\|\x\|_{\H}^3+\|\y\|_{\H}^3)\left[\e^2+\left(\frac{\delta}{\e}\right)+\delta^{1/8}\right]\nonumber\\&\quad+\E\left[\int_0^{T}\|\sigma_1(\bar\X^{h}_t)\Q_1^{1/2}(h^{\e}_t-h_t)\|_{\H}^2\d t\right], 
	\end{align}	
	for $n=2$ and $r\in[1,3]$, and 
	\begin{align}\label{390}
	&	\E\left[\sup_{t\in[0,T\wedge\tau_R^{\e}]}	\|\Z^{\e}_{t}\|_{\H}^2+\mu\int_0^{T\wedge\tau_R^{\e}}\|\Z^{\e}_t\|_{\V}^2\d t+\frac{\beta}{2^{r-2}}\int_0^{T\wedge\tau_R^{\e}}\|\Z^{\e}_t\|_{\wi\L^{r+1}}^{r+1}\d t\right]\nonumber\\&\leq C_{\mu,\alpha,\beta,\lambda_1,L_{\G},L_{\sigma_2},M,T,R}(1+\|\x\|_{\H}^2+\|\y\|_{\H}^2)\left[\e^2+\left(\frac{\delta}{\e}\right)+\delta^{1/8}\right]\nonumber\\&\quad+\E\left[\int_0^{T}\|\sigma_1(\bar\X^{h}_t)\Q_1^{1/2}(h^{\e}_t-h_t)\|_{\H}^2\d t\right], 
	\end{align}	
	for $n=2$, $r\in(3,\infty)$ and $n=3$, $r\in[3,\infty)$ ($2\beta\mu> 1,$ for $r=3$). Here, $\tau_R^{\e}$ and $\wi\tau_R^{\e}$ are stopping times defined in \eqref{stop} and \eqref{stop1}, respectively. 
\end{lemma}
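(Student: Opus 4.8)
The plan is to study $\Z^\e_t:=\X^{\e,\delta,h^\e}_t-\bar\X^h_t$, where $\X^{\e,\delta,h^\e}$ solves \eqref{442} (given by Theorem \ref{thm5.10}) and $\bar\X^h$ solves the deterministic controlled averaged equation \eqref{5.4y}. Subtracting the two equations and applying the infinite dimensional It\^o formula to $\|\Z^\e_t\|_\H^2$ produces, besides the dissipative terms $-2\mu\int_0^t\|\Z^\e_s\|_\V^2\d s-2\alpha\int_0^t\|\Z^\e_s\|_\H^2\d s$, the nonlinear contribution $-2\int_0^t\langle\B(\X^{\e,\delta,h^\e}_s)-\B(\bar\X^h_s),\Z^\e_s\rangle\d s-2\beta\int_0^t\langle\mathcal{C}(\X^{\e,\delta,h^\e}_s)-\mathcal{C}(\bar\X^h_s),\Z^\e_s\rangle\d s$, the averaging term $2\int_0^t(\F(\X^{\e,\delta,h^\e}_s,\Y^{\e,\delta,h^\e}_s)-\bar\F(\bar\X^h_s),\Z^\e_s)\d s$, the two control terms $2\int_0^t((\sigma_1(\X^{\e,\delta,h^\e}_s)-\sigma_1(\bar\X^h_s))\Q_1^{1/2}h^\e_s,\Z^\e_s)\d s+2\int_0^t(\sigma_1(\bar\X^h_s)\Q_1^{1/2}(h^\e_s-h_s),\Z^\e_s)\d s$, and the noise contributions $\e\int_0^t\|\sigma_1(\X^{\e,\delta,h^\e}_s)\Q_1^{1/2}\|_{\mathcal{L}_2}^2\d s+2\sqrt\e\int_0^t(\sigma_1(\X^{\e,\delta,h^\e}_s)\Q_1^{1/2}\d\W_s,\Z^\e_s)$. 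The two regimes are handled exactly as in Theorem \ref{compact}: for $n=2,\ r\in[1,3]$ one works on $[0,T\wedge\wi\tau_R^\e]$ with $\wi\tau_R^\e$ as in \eqref{stop1} and estimates $\langle\B(\X^{\e,\delta,h^\e})-\B(\bar\X^h),\Z^\e\rangle=-\langle\B(\Z^\e,\Z^\e),\bar\X^h\rangle$ by Ladyzhenskaya's inequality, the resulting Gronwall coefficient $\tfrac2\mu\|\bar\X^h\|_\V^2$ being deterministically bounded via \eqref{5.5y}; for $r\in(3,\infty)$ and $r=n=3$ with $2\beta\mu>1$ one combines the $\B$ and $\mathcal{C}$ differences as in \eqref{622}/\eqref{632} to obtain $\tfrac{\beta}{2^r}\|\Z^\e\|_{\wi\L^{r+1}}^{r+1}-\tfrac{\wi\eta}2\|\Z^\e\|_\H^2-\tfrac\mu2\|\Z^\e\|_\V^2$ with $\wi\eta$ as in \eqref{623}, the stopping time $\tau_R^\e$ of \eqref{stop} being used only for moment control of $\X^{\e,\delta,h^\e}$. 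The first control term is $\le C\|h^\e_s\|_\H\|\Z^\e_s\|_\H^2$, a Gronwall term since $h^\e\in\mathcal{S}_M$; the second is $\le\|\Z^\e_s\|_\H^2+\|\sigma_1(\bar\X^h_s)\Q_1^{1/2}(h^\e_s-h_s)\|_\H^2$, whose integral is exactly the retained term in \eqref{389}--\eqref{390}; the It\^o correction and the stochastic integral are absorbed by the Burkholder--Davis--Gundy and Young inequalities as in \eqref{3.21}, moving $\tfrac14\E[\sup_t\|\Z^\e_t\|_\H^2]$ to the left and leaving a remainder of the order recorded in \eqref{389}--\eqref{390} (finite moments guaranteed by \eqref{5.5z}).

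The heart of the argument is the averaging term. The plan is to split its integrand, with $s(\Delta)=[s/\Delta]\Delta$, as
\begin{align*}
\F(\X^{\e,\delta,h^\e}_s,\Y^{\e,\delta,h^\e}_s)-\bar\F(\bar\X^h_s)
&=\bigl[\F(\X^{\e,\delta,h^\e}_s,\Y^{\e,\delta,h^\e}_s)-\F(\X^{\e,\delta,h^\e}_s,\widehat\Y^{\e,\delta}_s)\bigr]
+\bigl[\F(\X^{\e,\delta,h^\e}_s,\widehat\Y^{\e,\delta}_s)-\F(\X^{\e,\delta,h^\e}_{s(\Delta)},\widehat\Y^{\e,\delta}_s)\bigr]\\
&\quad+\bigl[\F(\X^{\e,\delta,h^\e}_{s(\Delta)},\widehat\Y^{\e,\delta}_s)-\bar\F(\X^{\e,\delta,h^\e}_{s(\Delta)})\bigr]
+\bigl[\bar\F(\X^{\e,\delta,h^\e}_{s(\Delta)})-\bar\F(\X^{\e,\delta,h^\e}_s)\bigr]
+\bigl[\bar\F(\X^{\e,\delta,h^\e}_s)-\bar\F(\bar\X^h_s)\bigr].
\end{align*}
By Assumption \ref{ass3.6}(A1) the first bracket is $\le C\|\Y^{\e,\delta,h^\e}_s-\widehat\Y^{\e,\delta}_s\|_\H$ and, after Young's inequality, contributes a Gronwall term plus $C\,\E\int_0^T\|\Y^{\e,\delta,h^\e}_s-\widehat\Y^{\e,\delta}_s\|_\H^2\d s$, which is of order $\tfrac\delta\e+\Delta^{1/2}$ by Lemma \ref{lem3.10} (see \eqref{3.42}); the second bracket is $\le C\|\X^{\e,\delta,h^\e}_s-\X^{\e,\delta,h^\e}_{s(\Delta)}\|_\H$ and, by Lemma \ref{lem3.8} (see \eqref{3.24}), contributes order $\Delta^{1/2}$; the fourth and fifth brackets use the Lipschitz estimate \eqref{3p93} for $\bar\F$ together with Lemma \ref{lem3.8}, giving order $\Delta^{1/2}$ plus the Gronwall term $C\int_0^t\|\Z^\e_s\|_\H^2\d s$.

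The delicate piece is the third bracket, the genuine averaging correction, which does not vanish pointwise. The plan is first to replace $\Z^\e_s$ by $\Z^\e_{s(\Delta)}$ at the cost of $2\int_0^t\|\F(\X^{\e,\delta,h^\e}_{s(\Delta)},\widehat\Y^{\e,\delta}_s)-\bar\F(\X^{\e,\delta,h^\e}_{s(\Delta)})\|_\H\|\Z^\e_s-\Z^\e_{s(\Delta)}\|_\H\d s$; since $\|\F-\bar\F\|_\H\le C(1+\|\X^{\e,\delta,h^\e}_{s(\Delta)}\|_\H+\|\widehat\Y^{\e,\delta}_s\|_\H)$ is bounded in $L^2(\Omega\times(0,T))$ by \eqref{5.5z} and Lemma \ref{lem3.9} (see \eqref{341}), while $\|\Z^\e_s-\Z^\e_{s(\Delta)}\|_\H$ is controlled by Lemmas \ref{lem3.8} and \ref{lem3.13}, this part is of order $\Delta^{1/4}$. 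For the remaining block sum $\sum_k\int_{k\Delta}^{(k+1)\Delta}(\F(\X^{\e,\delta,h^\e}_{k\Delta},\widehat\Y^{\e,\delta}_s)-\bar\F(\X^{\e,\delta,h^\e}_{k\Delta}),\Z^\e_{k\Delta})\d s$ one conditions on $\mathscr{F}_{k\Delta}$ and uses that, on $[k\Delta,(k+1)\Delta]$, the process $\widehat\Y^{\e,\delta}$ defined by \eqref{338} is, conditionally on $\mathscr{F}_{k\Delta}$ and after the time change $s\mapsto(s-k\Delta)/\delta$, a frozen fast solution \eqref{3.74} with slow input $\x=\X^{\e,\delta,h^\e}_{k\Delta}$ and initial datum $\widehat\Y^{\e,\delta}_{k\Delta}$; the exponential ergodicity \eqref{393} of Proposition \ref{prop3.12} then yields $\bigl\|\E[\F(\x,\widehat\Y^{\e,\delta}_s)\mid\mathscr{F}_{k\Delta}]-\bar\F(\x)\bigr\|_\H\le C(1+\|\x\|_\H+\|\widehat\Y^{\e,\delta}_{k\Delta}\|_\H)e^{-\zeta(s-k\Delta)/(2\delta)}$, and $\sum_k\int_{k\Delta}^{(k+1)\Delta}e^{-\zeta(s-k\Delta)/(2\delta)}\d s\le\tfrac{2\delta}{\zeta}\tfrac{T}{\Delta}$, so (after Cauchy--Schwarz and \eqref{5.5z}, \eqref{341}) this contribution is of order $\tfrac\delta\Delta$. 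Choosing $\Delta=\sqrt\delta$ balances $\Delta^{1/4}=\delta^{1/8}$, $\Delta^{1/2}=\delta^{1/4}$ and $\tfrac\delta\Delta=\delta^{1/2}$, so that every averaging contribution is $\le C\delta^{1/8}$ (or smaller) plus a multiple of $\int_0^t\|\Z^\e_s\|_\H^2\d s$.

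Finally, collecting all bounds, taking the supremum over $t\in[0,T\wedge\wi\tau_R^\e]$ (resp. $[0,T\wedge\tau_R^\e]$), absorbing $\tfrac14\E[\sup_t\|\Z^\e_t\|_\H^2]$ on the left, and observing that the Gronwall coefficient $C+\tfrac2\mu\|\bar\X^h_s\|_\V^2+C\|h^\e_s\|_\H^2$ (resp. $C+\wi\eta+C\|h^\e_s\|_\H^2$) is integrable on $[0,T]$ with an $\omega$-independent bound (via \eqref{5.5y} and $h^\e\in\mathcal{S}_M$), Gronwall's inequality gives \eqref{389} and \eqref{390}, the exponents $\ell=3$ for $n=2,\ r\in[1,3)$ and $\ell=2$ for $r\ge3$ coming from the corresponding prefactors in Lemmas \ref{lem3.8}, \ref{lem3.10} and \ref{lem3.13}. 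The main obstacle is the third bracket: making the conditional time-change and ergodicity argument rigorous in the infinite dimensional stochastic setting (i.e. justifying that $\widehat\Y^{\e,\delta}$ restricted to a block is, conditionally on $\mathscr{F}_{k\Delta}$, a frozen fast solution) and then tuning $\Delta=\Delta(\delta)$ so that the slow-discretization error $\Delta^{1/4}$ and the fast-mixing error $\delta/\Delta$ combine into the stated rate $\delta^{1/8}$.
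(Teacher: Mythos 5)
Your overall strategy coincides with the paper's: the same process $\Z^{\e}_t=\X^{\e,\delta,h^{\e}}_t-\bar\X^h_t$, the same It\^o energy identity, the same case split in $r$ with the combined $\B$/$\mathcal{C}$ monotonicity estimates, the same telescoping of the averaging term (your five brackets regroup into the paper's four), the same replacement of $\Z^{\e}_s$ by $\Z^{\e}_{s(\Delta)}$ at cost $\Delta^{1/4}$, the same Khasminskii block sum with the time-change identification of $\widehat\Y^{\e,\delta}$ with the frozen equation, and the same final choice $\Delta=\sqrt{\delta}$. Your variant of putting $\bar\X^h$ rather than $\X^{\e,\delta,h^{\e}}$ in the second slot of the trilinear term for $r\in[1,3]$ is a legitimate (and slightly cleaner) alternative, since the resulting Gronwall coefficient is then deterministic via \eqref{5.5y}.

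There is, however, one step that does not work as written: the quantitative bound on the block sum $I_1(t)=\sum_k\int_{k\Delta}^{(k+1)\Delta}(\F(\X^{\e,\delta,h^{\e}}_{k\Delta},\widehat\Y^{\e,\delta}_s)-\bar\F(\X^{\e,\delta,h^{\e}}_{k\Delta}),\Z^{\e}_{k\Delta})\d s$. To control $\E\big[\sup_{t}|I_1(t)|\big]$ you must put an absolute value on each block, and then conditioning on $\mathscr{F}_{k\Delta}$ does not let you use the mixing estimate $\|\E[\F(\x,\widehat\Y^{\e,\delta}_s)\mid\mathscr{F}_{k\Delta}]-\bar\F(\x)\|_{\H}\leq Ce^{-\zeta(s-k\Delta)/(2\delta)}$ directly, because $\E[|X|\mid\mathscr{F}_{k\Delta}]$ is not controlled by $\|\E[X\mid\mathscr{F}_{k\Delta}]\|$; the pointwise size of $\F-\bar\F$ does not decay, only its conditional mean does. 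The correct route (the one the paper takes in \eqref{3107}--\eqref{3115}) is to apply Cauchy--Schwarz in $\omega$ to separate $\big(\E\|\Z^{\e}_{k\Delta}\|_{\H}^2\big)^{1/2}$ from $\big(\E\|\int_{k\Delta}^{(k+1)\Delta}(\F-\bar\F)\d s\|_{\H}^2\big)^{1/2}$ and to expand the latter second moment as the double integral of the correlation function $\Phi_k(s,r)$, into which the conditional-mean decay enters via the Markov property to give $\Phi_k(s,r)\leq C(1+\|\x\|_{\H}^2+\|\y\|_{\H}^2)e^{-(s-r)\zeta/2}$. This yields a contribution of order $(\delta/\Delta)^{1/2}$, not the $\delta/\Delta$ you claim. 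The final statement is unaffected, since with $\Delta=\sqrt{\delta}$ one gets $(\delta/\Delta)^{1/2}=\delta^{1/4}\leq\delta^{1/8}$ and the rate is still dominated by the $\Delta^{1/4}=\delta^{1/8}$ term, but the derivation of the mixing contribution needs to be replaced by the second-moment argument.
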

\begin{proof}
	Let us denote $\Z^{\e}_{t}:=\X_{t}^{\e,\delta,h^{\e}}-\bar{\X}_{t}^h$, where $(\X_{t}^{\e,\delta,h^{\e}},\Y_{t}^{\e,\delta,h^{\e}})$ is the unique strong solution of the system \eqref{5.4z} and $\bar{\X}_{t}^h$ is the unique weak solution of the system \eqref{5.4y}. Then $\Z^{\e}_{t}$ satisfies the following  It\^o stochastic differential: 
		\begin{equation}\label{3.91}
	\left\{
	\begin{aligned}
	\d\Z^{\e}_{t}&=-[\mu\A\Z^{\e}_{t}+(\B(\X_{t}^{\e,\delta,h^{\e}})-\B(\bar\X^{h}_t))+\beta(\mathcal{C}(\X_{t}^{\e,\delta,h^{\e}})-\mathcal{C}(\bar\X^{h}_t))]\d t\\&\quad+[\F(\X_{t}^{\e,\delta,h^{\e}},\Y_{t}^{\e,\delta,h^{\e}})-\bar \F(\bar\X^{h}_{t})]\d t+[\sigma_1(\X^{\e,\delta,h^{\e}}_t)\Q_1^{1/2}h^{\e}_t-\sigma_1(\bar\X^{h}_t)\Q_1^{1/2}h_t]\d t\\&\quad+\sqrt{\e}\sigma_1(\X_{t}^{\e,\delta,h^{\e}})\Q_1^{1/2}\d\W_t,\\
	\Z^{\e}_{0}&=\mathbf{0}. 
	\end{aligned}\right. 
	\end{equation}
	\vskip 2 mm
\noindent	\textbf{Case 1: $n=2$ and $r\in[1,3]$.} We first consider the case $n=2$ and $r\in[1,3]$. An application of the infinite dimensional It\^o formula to the process $\|\Z^{\e}_{t}\|_{\H}^2$ yields 
	\begin{align}\label{3.92}
	\|\Z^{\e}_{t}\|_{\H}^2&=-2 \mu\int_0^t\|\Z^{\e}_s\|_{\V}^2\d s-2\alpha\int_0^t\|\Z^{\e}_s\|_{\H}^2\d s-2\int_0^t\langle (\B(\X_{s}^{\e,\delta,h^{\e}})-\B(\bar{\X}^{h}_s)),\Z^{\e}_s\rangle\d s\nonumber\\&\quad-2\beta\int_0^t\langle\mathcal{C}(\X_{s}^{\e,\delta,h^{\e}})-\mathcal{C}(\bar{\X}^{h}_s),\Z^{\e}_s\rangle\d s+2\int_0^t(\F(\X_{s}^{\e,\delta,h^{\e}},\Y_{s}^{\e,\delta,h^{\e}})-\bar\F(\bar{\X}^{h}_{s}),\Z^{\e}_s)\d s\nonumber\\&\quad+2\int_0^t([\sigma_1(\X^{\e,\delta,h^{\e}}_s)\Q_1^{1/2}h^{\e}_s-\sigma_1(\bar\X^{h}_s)\Q_1^{1/2}h_s],\Z^{\e}_s)\d s+\e\int_0^t\|\sigma_1(\X_{s}^{\e,\delta,h^{\e}})\Q_1^{1/2}\|_{\mathcal{L}_2}^2\d s\nonumber\\&\quad+2\sqrt{\e}\int_0^t(\sigma_1(\X_{s}^{\e,\delta,h^{\e}})\Q_1^{1/2}\d\W_s,\Z^{\e}_s)\nonumber\\&=- 2 \mu\int_0^t\|\Z^{\e}_s\|_{\V}^2\d s- 2 \alpha\int_0^t\|\Z^{\e}_s\|_{\H}^2\d s-2\int_0^t\langle (\B(\X_{s}^{\e,\delta,h^{\e}})-\B(\bar{\X}^{h}_s)),\Z^{\e}_s\rangle\d s\nonumber\\&\quad-2\beta\int_0^t\langle\mathcal{C}(\X_{s}^{\e,\delta,h^{\e}})-\mathcal{C}(\bar{\X}^{h}_s),\Z^{\e}_s\rangle\d s+2\int_0^t(\bar \F(\X_{s}^{\e,\delta,h^{\e}})-\bar \F(\bar{\X}^{h}_{s}),\Z^{\e}_s)\d s\nonumber\\&\quad+2\int_0^t(\F(\X_{s}^{\e,\delta,h^{\e}},\Y_{s}^{\e,\delta,h^{\e}})-\bar \F(\X_{s}^{\e,\delta,h^{\e}})-\F(\X_{s(\Delta)}^{\e,\delta,h^{\e}},\widehat\Y_{s}^{\e,\delta})+\bar \F(\X_{s(\Delta)}^{\e,\delta,h^{\e}}),\Z^{\e}_s)\d s\nonumber\\&\quad+2\int_0^t(\F(\X_{s(\Delta)}^{\e,\delta,h^{\e}},\widehat\Y_{s}^{\e,\delta})-\bar \F(\X_{s(\Delta)}^{\e,\delta,h^{\e}}),\Z^{\e}_s-\Z^{\e}_{s(\Delta)})\d s\nonumber\\&\quad+2\int_0^t(\F(\X_{s(\Delta)}^{\e,\delta,h^{\e}},\widehat\Y_{s}^{\e,\delta})-\bar \F(\X_{s(\Delta)}^{\e,\delta,h^{\e}}),\Z^{\e}_{s(\Delta)})\d s \nonumber\\&\quad+2\int_0^t([\sigma_1(\X^{\e,\delta,h^{\e}}_s)\Q_1^{1/2}h^{\e}_s-\sigma_1(\bar\X^{h}_s)\Q_1^{1/2}h_s],\Z^{\e}_s)\d s\nonumber\\&\quad+\e\int_0^t\|\sigma_1(\X_{s}^{\e,\delta,h^{\e}})\Q_1^{1/2}\|_{\mathcal{L}_2}^2\d s+2\sqrt{\e}\int_0^t(\sigma_1(\X_{s}^{\e,\delta,h^{\e}})\Q_1^{1/2}\d\W_s,\Z^{\e}_s),\  \mathbb{P}\text{-a.s.},
	\end{align}
	for all $t\in[0,T]$. Using H\"older's, Ladyzhenskaya's and Young's inequalities, we estimate $2|\langle(\B(\X_{s}^{\e,\delta,h^{\e}})-\B(\bar{\X}^{h}_s)),\Z^{\e}_s\rangle|$ as 
	\begin{align}\label{3.93}
	2|	\langle(\B(\X_{s}^{\e,\delta,h^{\e}})-\B(\bar{\X}^{h}_s)),\Z^{\e}_s\rangle|&= 2|\langle\B(\Z^{\e}_{s},\X_{s}^{\e,\delta,h^{\e}}),\Z^{\e}_{s}\rangle |\leq 2\|\X_{s}^{\e,\delta,h^{\e}}\|_{\V}\|\Z^{\e}_{s}\|_{\wi\L^4}^2\nonumber\\&\leq 2\sqrt{2}\|\X_{s}^{\e,\delta,h^{\e}}\|_{\V}\|\Z^{\e}_{s}\|_{\H}\|\Z^{\e}_{s}\|_{\V}\nonumber\\&\leq\mu\|\Z^{\e}_{s}\|_{\V}^2+\frac{2}{\mu}\|\X_{s}^{\e,\delta,h^{\e}}\|_{\V}^2\|\Z^{\e}_s\|_{\H}^2. 
	\end{align}
	Using \eqref{2.23} and \eqref{a215}, we know that 
	\begin{align}
	-2\beta\langle\mathcal{C}(\X_{s}^{\e,\delta,h^{\e}})-\mathcal{C}(\bar{\X}^{h}_s),\Z^{\e}_s\rangle\leq-\frac{\beta}{2^{r-2}}\|\Z^{\e}_s\|_{\wi\L^{r+1}}^{r+1},
	\end{align}
	for $r\in[1,\infty)$. Using \eqref{3p93}, H\"older's and Young's inequalities, we estimate $2(\bar \F(\X_{s}^{\e,\delta,h^{\e}})-\bar \F(\bar{\X}^{h}_{s}),\Z^{\e}_s)$ as 
	\begin{align}
	2(\bar \F(\X_{s}^{\e,\delta,h^{\e}})-\bar \F(\bar{\X}^{h}_{s}),\Z^{\e}_s)&\leq 2\|\bar \F(\X_{s}^{\e,\delta,h^{\e}})-\bar \F(\bar{\X}^{h}_{s})\|_{\H}\|\Z^{\e}_s\|_{\H}\leq C_{\mu,\alpha,\lambda_1,L_{\G},L_{\sigma_2}}\|\Z^{\e}_{s}\|_{\H}^2. 
	\end{align}
	Similarly, we estimate $2(\F(\X_{s}^{\e,\delta,h^{\e}},\Y_{s}^{\e,\delta,h^{\e}})-\bar \F(\X_{s}^{\e,\delta,h^{\e}})-\F(\X_{s(\Delta)}^{\e,\delta,h^{\e}},\widehat\Y_{s}^{\e,\delta})+\bar \F(\X_{s(\Delta)}^{\e,\delta,h^{\e}}),\Z^{\e}_s)$ as 
	\begin{align}
	&2(\F(\X_{s}^{\e,\delta,h^{\e}},\Y_{s}^{\e,\delta,h^{\e}})-\bar \F(\X_{s}^{\e,\delta,h^{\e}})-\F(\X_{s(\Delta)}^{\e,\delta,h^{\e}},\widehat\Y_{s}^{\e,\delta})+\bar \F(\X_{s(\Delta)}^{\e,\delta,h^{\e}}),\Z^{\e}_s)\nonumber\\&\leq 2\left(\|\F(\X_{s}^{\e,\delta,h^{\e}},\Y_{s}^{\e,\delta,h^{\e}})-\F(\X_{s(\Delta)}^{\e,\delta,h^{\e}},\widehat\Y_{s}^{\e,\delta})\|_{\H}+\|\bar \F(\X_{s(\Delta)}^{\e,\delta,h^{\e}})-\bar \F(\X_{s}^{\e,\delta,h^{\e}})\|_{\H}\right)\|\Z^{\e}_s\|_{\H}\nonumber\\&\leq\|\Z^{\e}_s\|_{\H}^2+C_{\mu,\alpha,\lambda_1,L_{\G},L_{\sigma_2}}(\|\X_{s}^{\e,\delta,h^{\e}}-\X_{s(\Delta)}^{\e,\delta,h^{\e}}\|_{\H}^2+\|\Y_{s}^{\e,\delta,h^{\e}}-\widehat\Y_{s}^{\e,\delta}\|_{\H}^2).
	\end{align}
	Using the Assumption \ref{ass3.6} (A1), H\"older's and Young's inequalities, we estimate the term  $2\int_0^t(\F(\X_{s(\Delta)}^{\e,\delta,h^{\e}},\widehat\Y_{s}^{\e,\delta})-\bar \F(\X_{s(\Delta)}^{\e,\delta,h^{\e}}),\Z^{\e}_s-\Z^{\e}_{s(\Delta)})\d s$ as 
	\begin{align}\label{3.97}
	&2\int_0^t(\F(\X_{s(\Delta)}^{\e,\delta,h^{\e}},\widehat\Y_{s}^{\e,\delta})-\bar \F(\X_{s(\Delta)}^{\e,\delta,h^{\e}}),\Z^{\e}_s-\Z^{\e}_{s(\Delta)})\d s\nonumber\\&\leq 2\int_0^t \|\F(\X_{s(\Delta)}^{\e,\delta,h^{\e}},\widehat\Y_{s}^{\e,\delta})-\bar \F(\X_{s(\Delta)}^{\e,\delta,h^{\e}})\|_{\H}\|\Z^{\e}_s-\Z^{\e}_{s(\Delta)}\|_{\H}\d s\nonumber\\&\leq 4\left(\int_0^t(\|\F(\X_{s(\Delta)}^{\e,\delta,h^{\e}},\widehat\Y_{s}^{\e,\delta})\|_{\H}^2+\|\bar \F(\X_{s(\Delta)}^{\e,\delta,h^{\e}})\|_{\H}^2)\d s\right)^{1/2}\nonumber\\&\quad\times\left(\int_0^t(\|\X^{\e,\delta,h^{\e}}_{s}-\X^{\e,\delta,h^{\e}}_{s(\Delta)}\|_{\H}^2+\|\bar{\X}^{h}_s-\bar{\X}^{h}_{s(\Delta)}\|_{\H}^2)\d s\right)^{1/2}\nonumber\\&\leq C\left(\int_0^t(1+\|\X_{s(\Delta)}^{\e,\delta,h^{\e}}\|_{\H}^2+\|\widehat\Y_{s}^{\e,\delta}\|_{\H}^2)\d s\right)^{1/2}\left(\int_0^t(\|\X^{\e,\delta,h^{\e}}_{s}-\X^{\e,\delta,h^{\e}}_{s(\Delta)}\|_{\H}^2+\|\bar{\X}^{h}_s-\bar{\X}^{h}_{s(\Delta)}\|_{\H}^2)\d s\right)^{1/2}.
	\end{align}
	We estimate the term $2\int_0^t([\sigma_1(\X^{\e,\delta,h^{\e}}_s)\Q_1^{1/2}h^{\e}_s-\sigma_1(\bar\X^{h}_s)\Q_1^{1/2}h_s],\Z^{\e}_s)\d s$  from the right hand side of the inequality \eqref{3.92} using the Cauchy-Schwarz inequality  and Young's inequality, and the Assumption \ref{ass3.6} (A1) as 
	\begin{align}\label{456}
	&2\int_0^t([\sigma_1(\X^{\e,\delta,h^{\e}}_s)\Q_1^{1/2}h^{\e}_s-\sigma_1(\bar\X^{h}_s)\Q_1^{1/2}h_s],\Z^{\e}_s)\d s \nonumber\\& \leq 2\int_0^{t}|([\sigma_1(\X^{\e,\delta,h^{\e}}_s)-\sigma_1(\bar\X^{h}_s)]\Q_1^{1/2}h^{\e}_s,\Z^{\e}_s)|\d s   +2\int_0^{t}|(\sigma_1(\bar\X^{h}_s)\Q_1^{1/2}(h^{\e}_s-h_s),\Z^{\e}_s)|\d s \nonumber\\&\leq 2\int_0^{t}\|[\sigma_1(\X^{\e,\delta,h^{\e}}_s)-\sigma_1(\bar\X^{h}_s)]\Q_1^{1/2}\|_{\mathcal{L}_2}\|h^{\e}_s\|_{\H}\|\Z^{\e}_s\|_{\H}\d s \nonumber\\&\quad+2 \int_0^{t}\|\sigma_1(\bar\X^{h}_s)\Q_1^{1/2}(h^{\e}_s-h_s)\|_{\H}\|\Z^{\e}_s\|_{\H}\d s \nonumber\\&\leq C\int_0^{t}\left(1+\|h^{\e}_s\|_{\H}^2\right)\|\Z^{\e}_s\|_{\H}^2\d s +\int_0^{t}\|\Z^{\e}_s\|_{\H}^2\d s+\int_0^{t}\|\sigma_1(\bar\X^{h}_s)\Q_1^{1/2}(h^{\e}_s-h_s)\|_{\H}^2\d s.
	\end{align}
	Making use of the Assumption \ref{ass3.6} (A1),  it can be easily seen that 
	\begin{align}\label{457}
	\int_0^{t}\|\sigma_1(\X_{s}^{\e,\delta,h^{\e}})\Q_1^{1/2}\|_{\mathcal{L}_2}^2\d s&\leq 2\int_0^{t} \|[\sigma_1(\X_{s}^{\e,\delta,h^{\e}})-\sigma_1(\bar\X_{s}^h)]\Q_1^{1/2}\|_{\mathcal{L}_2}^2\d s+2 \int_0^{t} \|\sigma_1(\bar\X_{s}^h)\Q_1^{1/2}\|_{\mathcal{L}_2}^2\d s \nonumber\\&\leq \frac{C}{\e}\int_0^{t}\|\Z^{\e}_s\|_{\H}^2\d s+C\e\int_0^{t}\left(1+\|\bar\X_{s}^h\|_{\H}^2\right)\d s.
	\end{align}
	Combining \eqref{3.93}-\eqref{457}, and then substituting it in \eqref{3.92}, we obtain 
	\begin{align}\label{3.99} 
	&	\|\Z^{\e}_{t}\|_{\H}^2+\mu\int_0^t\|\Z^{\e}_s\|_{\V}^2\d s+\frac{\beta}{2^{r-2}}\int_0^t\|\Z^{\e}_s\|_{\wi\L^{r+1}}^{r+1}\d s\nonumber\\&\leq \frac{2}{\mu}\int_0^t\|\X_{s}^{\e,\delta,h^{\e}}\|_{\V}^2\|\Z^{\e}_s\|_{\H}^2\d s+C_{\mu,\alpha,\lambda_1,L_{\G},L_{\sigma_2}}\int_0^t\|\Z^{\e}_s\|_{\H}^2\d s\nonumber\\&\quad+C_{\mu,\alpha,\lambda_1,L_{\G},L_{\sigma_2}}\int_0^t\|\X_{s}^{\e,\delta,h^{\e}}-\X_{s(\Delta)}^{\e,\delta,h^{\e}}\|_{\H}^2\d s+C_{\mu,\alpha,\lambda_1,L_{\G},L_{\sigma_2}}\int_0^t\|\Y_{s}^{\e,\delta,h^{\e}}-\widehat\Y_{s}^{\e,\delta}\|_{\H}^2\d s\nonumber\\&\quad+C\int_0^{t}\|h^{\e}_s\|_{\H}^2\|\Z^{\e}_s\|_{\H}^2\d s +\int_0^{t}\|\sigma_1(\bar\X^{h}_s)\Q_1^{1/2}(h^{\e}_s-h_s)\|_{\H}^2\d s+C\e^2\int_0^{t}\left(1+\|\bar\X_{s}^h\|_{\H}^2\right)\d s\nonumber\\&\quad+C\left(\int_0^t(1+\|\X_{s(\Delta)}^{\e,\delta,h^{\e}}\|_{\H}^2+\|\widehat\Y_{s}^{\e,\delta})\|_{\H}^2)\d s\right)^{1/2}\left(\int_0^t(\|\X^{\e,\delta,h^{\e}}_{s}-\X^{\e,\delta,h^{\e}}_{s(\Delta)}\|_{\H}^2+\|\bar{\X}^{h}_s-\bar{\X}^{h}_{s(\Delta)}\|_{\H}^2)\d s\right)^{1/2} \nonumber\\&\quad +2\int_0^t(\F(\X_{s(\Delta)}^{\e,\delta,h^{\e}},\widehat\Y_{s}^{\e,\delta})-\bar \F(\X_{s(\Delta)}^{\e,\delta,h^{\e}}),\Z^{\e}_{s(\Delta)})\d s\nonumber\\&\quad+2\sqrt{\e}\int_0^t([\sigma_1(\X_{s}^{\e,\delta,h^{\e}})-\sigma_1(\bar{\X}^{h}_s)]\Q_1^{1/2}\d\W_s,\Z^{\e}_s), \ \mathbb{P}\text{-a.s.,}
	\end{align}
	for all $t\in[0,T]$. An application of the Gronwall inequality in \eqref{3.99} gives 
	\begin{align}\label{3100} 
&	\sup_{t\in[0,T\wedge\wi\tau_R^{\e}]}	\|\Z^{\e}_{t}\|_{\H}^2+\mu\int_0^{T\wedge\wi\tau_R^{\e}}\|\Z^{\e}_t\|_{\V}^2\d t+2\alpha\int_0^{T\wedge\wi\tau_R^{\e}}\|\Z^{\e}_t\|_{\H}^2\d t+\frac{\beta}{2^{r-2}}\int_0^{T\wedge\wi\tau_R^{\e}}\|\Z^{\e}_t\|_{\wi\L^{r+1}}^{r+1}\d t\nonumber\\&\leq C_{\mu,\alpha,\lambda_1,L_{\G},L_{\sigma_2},T}\Bigg\{\int_0^{T\wedge\wi\tau_R^{\e}}\|\X_{s}^{\e,\delta,h^{\e}}-\X_{s(\Delta)}^{\e,\delta,h^{\e}}\|_{\H}^2\d s+\int_0^{T\wedge\wi\tau_R^{\e}}\|\Y_{s}^{\e,\delta,h^{\e}}-\widehat\Y_{s}^{\e,\delta}\|_{\H}^2\d s\nonumber\\&\quad +\int_0^{T\wedge\wi\tau_R^{\e}}\|\sigma_1(\bar\X^{h}_t)\Q_1^{1/2}(h^{\e}_t-h_t)\|_{\H}^2\d t+\e^2\int_0^{T\wedge\wi\tau_R^{\e}}\left(1+\|\bar\X_{t}^h\|_{\H}^2\right)\d t\nonumber\\&\quad+\left(\int_0^{T\wedge\wi\tau_R^{\e}}(1+\|\X_{s(\Delta)}^{\e,\delta,h^{\e}}\|_{\H}^2+\|\widehat\Y_{s}^{\e,\delta}\|_{\H}^2)\d s\right)^{1/2}\nonumber\\&\qquad\times\left(\int_0^{T\wedge\wi\tau_R^{\e}}(\|\X^{\e,\delta,h^{\e}}_{s}-\X^{\e,\delta,h^{\e}}_{s(\Delta)}\|_{\H}^2+\|\bar{\X}^{h}_s-\bar{\X}^{h}_{s(\Delta)}\|_{\H}^2)\d s\right)^{1/2}\nonumber\\&\quad+	\sup_{t\in[0,T\wedge\wi\tau_R^{\e}]}\left|\int_0^t(\F(\X_{s(\Delta)}^{\e,\delta,h^{\e}},\widehat\Y_{s}^{\e,\delta})-\bar \F(\X_{s(\Delta)}^{\e,\delta,h^{\e}}),\Z^{\e}_{s(\Delta)})\d s\right|\nonumber\\&\quad+\sqrt{\e}	\sup_{t\in[0,T\wedge\wi\tau_R^{\e}]}\left|\int_0^t([\sigma_1(\X_{s}^{\e,\delta,h^{\e}})-\sigma_1(\bar{\X}^{h}_s)]\Q_1^{1/2}\d\W_s,\Z^{\e}_s)\right|\Bigg\}\nonumber\\&\quad\times \exp\left(C\int_0^{T\wedge\wi\tau_R^{\e}}\|h^{\e}_t\|_{\H}^2\d t\right)\exp\left(\frac{2}{\mu}\int_0^{T\wedge\wi\tau_R^{\e}}\|\X_{t}^{\e,\delta,h^{\e}}\|_{\V}^2\d t\right)\nonumber\\&\leq C_{\mu,\alpha,\lambda_1,L_{\G},L_{\sigma_2},M,T,R}\bigg\{\int_0^{T\wedge\wi\tau_R^{\e}}\|\X_{s}^{\e,\delta,h^{\e}}-\X_{s(\Delta)}^{\e,\delta,h^{\e}}\|_{\H}^2\d s+\int_0^{T}\|\Y_{s}^{\e,\delta,h^{\e}}-\widehat\Y_{s}^{\e,\delta}\|_{\H}^2\d s\nonumber\\&\quad +\int_0^{T}\|\sigma_1(\bar\X^{h}_t)\Q_1^{1/2}(h^{\e}_t-h_t)\|_{\H}^2\d t+\e^2\int_0^{T}\left(1+\|\bar\X_{t}^h\|_{\H}^2\right)\d t\nonumber\\&\quad+\left(\int_0^{T}(1+\|\X_{s(\Delta)}^{\e,\delta,h^{\e}}\|_{\H}^2+\|\widehat\Y_{s}^{\e,\delta}\|_{\H}^2)\d s\right)^{1/2}\nonumber\\&\qquad\times\left(\int_0^{T\wedge\wi\tau_R^{\e}}\|\X^{\e,\delta,h^{\e}}_{s}-\X^{\e,\delta,h^{\e}}_{s(\Delta)}\|_{\H}^2\d s+\int_0^T\|\bar{\X}^{h}_s-\bar{\X}^{h}_{s(\Delta)}\|_{\H}^2\d s\right)^{1/2}\nonumber\\&\quad+	\sup_{t\in[0,T\wedge\wi\tau_R^{\e}]}\left|\int_0^t(\F(\X_{s(\Delta)}^{\e,\delta,h^{\e}},\widehat\Y_{s}^{\e,\delta})-\bar \F(\X_{s(\Delta)}^{\e,\delta,h^{\e}}),\Z^{\e}_{s(\Delta)})\d s\right|\nonumber\\&\quad+\sqrt{\e}	\sup_{t\in[0,T\wedge\wi\tau_R^{\e}]}\left|\int_0^t([\sigma_1(\X_{s}^{\e,\delta,h^{\e}})-\sigma_1(\bar{\X}^{h}_s)]\Q_1^{1/2}\d\W_s,\Z^{\e}_s)\right|\bigg\}, \ \mathbb{P}\text{-a.s.},
	\end{align}
since $h^{\e}\in\mathcal{A}_M$, 	where we used the definition of stopping time given in \eqref{stop} also. Taking expectation on both sides of \eqref{3100} and then using Theorem \ref{thm5.9}, Lemmas \ref{lem3.13}-\ref{lem3.8} (see \eqref{5.5y}, \eqref{3.87}, \eqref{3.24} and \eqref{3.42}), we obtain 
	\begin{align}\label{3101}
&	\E\left[	\sup_{t\in[0,T\wedge\wi\tau_R^{\e}]}	\|\Z^{\e}_{t}\|_{\H}^2+\mu\int_0^{T\wedge\wi\tau_R^{\e}}\|\Z^{\e}_t\|_{\V}^2\d t+\frac{\beta}{2^{r-2}}\int_0^{T\wedge\wi\tau_R^{\e}}\|\Z^{\e}_t\|_{\wi\L^{r+1}}^{r+1}\d t\right]\nonumber\\&\leq C_{\mu,\alpha,\beta,\lambda_1,L_{\G},L_{\sigma_2},M,T,R}\Bigg\{\left(1+\|\x\|_{\H}^3+\|\y\|_{\H}^3\right)\left[\left(\frac{\delta}{\e}\right)+\Delta^{1/4}\right]\nonumber\\&\quad+\E\left[\int_0^{T}\|\sigma_1(\bar\X^{h}_t)\Q_1^{1/2}(h^{\e}_t-h_t)\|_{\H}^2\d t\right]+\e^2T\left(1+\sup_{t\in[0,T]}\|\bar\X_{t}^h\|_{\H}^2\right)\nonumber\\&\quad+\E\left[\sup_{t\in[0,T\wedge\wi\tau_R^{\e}]}\left|\int_0^t(\F(\X_{s(\Delta)}^{\e,\delta,h^{\e}},\widehat\Y_{s}^{\e,\delta})-\bar \F(\X_{s(\Delta)}^{\e,\delta,h^{\e}}),\Z^{\e}_{s(\Delta)})\d s\right|\right]\nonumber\\&\quad+\E\left[\sup_{t\in[0,T\wedge\wi\tau_R^{\e}]}\left|\int_0^t([\sigma_1(\X_{s}^{\e,\delta,h^{\e}})-\sigma_1(\bar{\X}^{h}_s)]\Q_1^{1/2}\d\W_s,\Z^{\e}_s)\right|\right]\Bigg\}. 
	\end{align}
	Using the Burkholder-Davis-Gundy inequality and Assumption \ref{ass3.6} (A1), we estimate the final term from the right hand side of the inequality \eqref{3101} as 
	\begin{align}\label{3102}
&C_{\mu,\alpha,\beta,\lambda_1,L_{\G},L_{\sigma_2},M,T,R}\E\left[\sup_{t\in[0,T\wedge\wi\tau_R^{\e}]}\left|\int_0^t([\sigma_1(\X_{s}^{\e,\delta,h^{\e}})-\sigma_1(\bar{\X}^{h}_s)]\Q_1^{1/2}\d\W_s,\Z^{\e}_s)\right|\right]\nonumber\\&\leq C_{\mu,\alpha,\beta,\lambda_1,L_{\G},L_{\sigma_2},M,T,R} \E\left[\int_0^{T\wedge\wi\tau_R^{\e}}\|[\sigma_1(\X_{s}^{\e,\delta,h^{\e}})-\sigma_1(\bar{\X}^{h}_s)]\Q_1^{1/2}\|_{\mathcal{L}_2}^2\|\Z^{\e}_s\|_{\H}^2\d s\right]^{1/2}\nonumber\\&\leq C_{\mu,\alpha,\beta,\lambda_1,L_{\G},L_{\sigma_2},M,T,R}\E\left[\sup_{s\in[0,T\wedge\wi\tau_R^{\e}]}\|\Z^{\e}_s\|_{\H}\left(\int_0^{T\wedge\wi\tau_R^{\e}}\|[\sigma_1(\X_{s}^{\e,\delta,h^{\e}})-\sigma_1(\bar{\X}^{h}_s)]\Q_1^{1/2}\|_{\mathcal{L}_2}^2\d s\right)^{1/2}\right]\nonumber\\&\leq\frac{1}{2}\E\left[\sup_{t\in[0,T\wedge\wi\tau_R^{\e}]}\|\Z^{\e}_t\|_{\H}^2\right]+C_{\mu,\alpha,\beta,\lambda_1,L_{\G},L_{\sigma_2},M,T,R}\E\left[\int_0^{T\wedge\wi\tau_R^{\e}}\|\Z^{\e}_t\|_{\H}^2\d t\right]. 
	\end{align}
	Using \eqref{3102} in \eqref{3101}, we get 
	\begin{align}\label{3103} 
	&	\E\left[	\sup_{t\in[0,T\wedge\wi\tau_R^{\e}]}	\|\Z^{\e}_{t}\|_{\H}^2+\mu\int_0^{T\wedge\wi\tau_R^{\e}}\|\Z^{\e}_t\|_{\V}^2\d t+\frac{\beta}{2^{r-2}}\int_0^{T\wedge\wi\tau_R^{\e}}\|\Z^{\e}_t\|_{\wi\L^{r+1}}^{r+1}\d t\right]\nonumber\\&\leq C_{\mu,\alpha,\beta,\lambda_1,L_{\G},L_{\sigma_2},M,T,R}\left(1+\|\x\|_{\H}^3+\|\y\|_{\H}^3\right)\left[\left(\frac{\delta}{\e}\right)+\Delta^{1/4}\right]\nonumber\\&\quad+C_{\mu,\alpha,\beta,\lambda_1,L_{\G},L_{\sigma_2},M,T,R}\E\left[\int_0^{T\wedge\wi\tau_R^{\e}}\|\Z^{\e}_s\|_{\H}^2\d s\right]\nonumber\\&\quad+\E\left[\int_0^{T}\|\sigma_1(\bar\X^{h}_t)\Q_1^{1/2}(h^{\e}_t-h_t)\|_{\H}^2\d t\right]+\e^2T\left(1+\sup_{t\in[0,T]}\|\bar\X_{t}^h\|_{\H}^2\right)\nonumber\\&\quad+C_{\mu,\alpha,\beta,\lambda_1,L_{\G},L_{\sigma_2},M,T,R}\E\left[\sup_{t\in[0,T\wedge\wi\tau_R^{\e}]}\left|\int_0^t(\F(\X_{s(\Delta)}^{\e,\delta,h^{\e}},\widehat\Y_{s}^{\e,\delta})-\bar \F(\X_{s(\Delta)}^{\e,\delta,h^{\e}}),\Z^{\e}_{s(\Delta)})\d s\right|\right].
	\end{align}
	An application of Gronwall's inequality in \eqref{3103} yields 
	\begin{align}\label{3104}
	&	\E\left[	\sup_{t\in[0,T\wedge\wi\tau_R^{\e}]}	\|\Z^{\e}_{t}\|_{\H}^2+\mu\int_0^{T\wedge\wi\tau_R^{\e}}\|\Z^{\e}_t\|_{\V}^2\d t+\frac{\beta}{2^{r-2}}\int_0^{T\wedge\wi\tau_R^{\e}}\|\Z^{\e}_t\|_{\wi\L^{r+1}}^{r+1}\d t\right]\nonumber\\&\leq C_{\mu,\alpha,\beta,\lambda_1,L_{\G},L_{\sigma_2},M,T,R}\left(1+\|\x\|_{\H}^3+\|\y\|_{\H}^3\right)\left[\left(\frac{\delta}{\e}\right)+\Delta^{1/4}\right]\nonumber\\&\quad+\E\left[\int_0^{T}\|\sigma_1(\bar\X^{h}_t)\Q_1^{1/2}(h^{\e}_t-h_t)\|_{\H}^2\d t\right]+\e^2T\left(1+\sup_{t\in[0,T]}\|\bar\X_{t}^h\|_{\H}^2\right)\nonumber\\&\quad+C_{\mu,\alpha,\beta,\lambda_1,L_{\G},L_{\sigma_2},M,T,R}\E\left[\sup_{t\in[0,T]}|I(t)|\right], 
	\end{align}
	where $I(t)=\int_0^t(\F(\X_{s(\Delta)}^{\e,\delta,h^{\e}},\widehat\Y_{s}^{\e,\delta})-\bar \F(\X_{s(\Delta)}^{\e,\delta,h^{\e}}),\Z^{\e}_{s(\Delta)})\d s$. 
	Let us now estimate the final term from the right hand side of the inequality \eqref{3104}.  We follow similar arguments given in Lemma 3.8, \cite{SLXS} and Lemma 4.6, \cite{MTM11} to obtain the required result. For the sake of completeness, we provide a proof here. Note that 
	\begin{align}\label{3105}
	|I(t)|&= \left|\sum_{k=0}^{[t/\Delta]-1}\int_{k\Delta}^{(k+1)\Delta}(\F(\X_{k\Delta}^{\e,\delta,h^{\e}},\widehat\Y_{s}^{\e,\delta})-\bar \F(\X_{k\Delta}^{\e,\delta,h^{\e}}),\X^{\e,\delta,h^{\e}}_{k\Delta}-\bar{\X}^{h}_{s(\Delta)})\d s\right.\nonumber\\&\quad+\left.\int_{t(\Delta)}^t(\F(\X_{s(\Delta)}^{\e,\delta,h^{\e}},\widehat\Y_{s}^{\e,\delta})-\bar \F(\X_{s(\Delta)}^{\e,\delta,h^{\e}}),\X^{\e,\delta,h^{\e}}_{s(\Delta)}-\bar{\X}^{h}_{s(\Delta)})\d s\right|=:|I_1(t)+I_2(t)|. 
	\end{align}
	Using the Assumption \ref{ass3.6} (A1), we estimate $\E\left[\sup\limits_{t\in[0,T]}|I_2(t)|\right]$ as 
	\begin{align}\label{4129}
&	\E\left[\sup\limits_{t\in[0,T]}|I_2(t)|\right]\nonumber\\&\leq\E\left[\sup\limits_{t\in[0,T]}\int_{t(\Delta)}^t\|\F(\X_{s(\Delta)}^{\e,\delta,h^{\e}},\widehat\Y_{s}^{\e,\delta})-\bar \F(\X_{s(\Delta)}^{\e,\delta,h^{\e}})\|_{\H}\|\X^{\e,\delta,h^{\e}}_{s(\Delta)}-\bar{\X}^{h}_{s(\Delta)}\|_{\H}\d s\right]\nonumber\\&\leq C\left[\E\left(\sup_{t\in[0,T]}\|\X_{t}^{\e,\delta,h^{\e}}-\bar{\X}_t^{h}\|_{\H}^2\right)\right]^{1/2}\left[\E\left(\sup_{t\in[0,T]}\left|\int_{t(\Delta)}^t\left(1+\|\X^{\e,\delta,h^{\e}}_{s(\Delta)}\|_{\H}+\|\widehat\Y_{s}^{\e,\delta}\|_{\H}\right)\d s\right|^2\right)\right]^{1/2}\nonumber\\&\leq C\Delta^{1/2} \left[\E\left(\sup_{t\in[0,T]}(\|\X_{t}^{\e,\delta,h^{\e}}\|_{\H}^2+\|\bar{\X}_t^{h}\|_{\H}^2)\right)\right]^{1/2}\left[\E\left(\int_0^T\left(1+\|\X^{\e,\delta,h^{\e}}_{s(\Delta)}\|_{\H}^2+\|\widehat\Y_{s}^{\e,\delta}\|_{\H}^2\right)\d s\right)\right]^{1/2}\nonumber\\&\leq C_{\mu,\alpha,\lambda_1,L_{\G},L_{\sigma_2},M,T}(1+\|\x\|_{\H}^2+\|\y\|_{\H}^2)\Delta^{1/2}, 
	\end{align} 
where we used \eqref{5.5y} and \eqref{5.5z}. 	Next, we estimate the term $\E\left[\sup\limits_{t\in[0,T]}|I_1(t)|\right]$ as 
	\begin{align}\label{3107}
&	\E\left[\sup\limits_{t\in[0,T]}|I_1(t)|\right]\nonumber\\&\leq \E\left[\sum_{k=0}^{[T/\Delta]-1}\left|\int_{k\Delta}^{(k+1)\Delta}(\F(\X_{k\Delta}^{\e,\delta,h^{\e}},\widehat\Y_{s}^{\e,\delta})-\bar \F(\X_{k\Delta}^{\e,\delta,h^{\e}}),\X^{\e,\delta,h^{\e}}_{k\Delta}-\bar{\X}^{h}_{k\Delta})\d s\right|\right]\nonumber\\&\leq\left[\frac{T}{\Delta}\right]\max_{0\leq k\leq [T/\Delta]-1}\E\left[\left|\int_{k\Delta}^{(k+1)\Delta}(\F(\X_{k\Delta}^{\e,\delta,h^{\e}},\widehat\Y_{s}^{\e,\delta})-\bar \F(\X_{k\Delta}^{\e,\delta,h^{\e}}),\X^{\e,\delta,h^{\e}}_{k\Delta}-\bar{\X}^{h}_{k\Delta})\d s\right|\right]\nonumber\\&\leq\frac{C_T}{\Delta}\max_{0\leq k\leq [T/\Delta]-1}\left[\E\left(\|\X^{\e,\delta,h^{\e}}_{k\Delta}-\bar{\X}^{h}_{k\Delta}\|_{\H}^2\right)\right]^{1/2}\left[\E\left\|\int_{k\Delta}^{(k+1)\Delta}\F(\X_{k\Delta}^{\e,\delta,h^{\e}},\widehat\Y_{s}^{\e,\delta})-\bar \F(\X_{k\Delta}^{\e,\delta,h^{\e}})\d s\right\|_{\H}^2\right]^{1/2}\nonumber\\&\leq \frac{C_T\delta}{\Delta}\max_{0\leq k\leq [T/\Delta]-1}\left[\E\left(\|\X^{\e,\delta,h^{\e}}_{k\Delta}-\bar{\X}^{h}_{k\Delta}\|_{\H}^2\right)\right]^{1/2}\left[\E\left\|\int_{0}^{\frac{\Delta}{\delta}}\F(\X_{k\Delta}^{\e,\delta,h^{\e}},\widehat\Y_{s\delta+k\Delta}^{\e,\delta})-\bar \F(\X_{k\Delta}^{\e,\delta,h^{\e}})\d s\right\|_{\H}^2\right]^{1/2}\nonumber\\&\leq\frac{C_{\mu,\alpha,\lambda_1,L_{\G},M,T}(1+\|\x\|_{\H}+\|\y\|_{\H})\delta}{\Delta}\max_{0\leq k\leq [T/\Delta]-1}\left[\int_0^{\frac{\Delta}{\delta}}\int_r^{\frac{\Delta}{\delta}}\Phi_k(s,r)\d s\d r\right]^{1/2},
	\end{align}
	where for any $0\leq r\leq s\leq \frac{\Delta}{\delta}$, 
	\begin{align}\label{3108}
	\Phi_k(s,r):=\E\left[(\F(\X_{k\Delta}^{\e,\delta,h^{\e}},\widehat\Y_{s\delta+k\Delta}^{\e,\delta})-\bar \F(\X_{k\Delta}^{\e,\delta,h^{\e}}),\F(\X_{k\Delta}^{\e,\delta,h^{\e}},\widehat\Y_{r\delta+k\Delta}^{\e,\delta})-\bar \F(\X_{k\Delta}^{\e,\delta,h^{\e}}))\right].
	\end{align}
	Now, for any $\e>0$, $s>0$ and $\mathscr{F}_s$-measurable $\H$-valued random variables $\X$ and $\Y$, let $\{\Y_t^{\e,s,\X,\Y}\}_{t\geq s}$ be the unique strong solution of the following It\^o stochastic differential equation: 
	\begin{equation}
	\left\{
	\begin{aligned}
	\d\wi\Y_t&=-\frac{1}{\delta}[\mu\A\wi\Y_t+\alpha\wi\Y_t+\beta\mathcal{C}(\wi\Y_t)-\G(\X,\wi\Y_t)]\d t+\frac{1}{\sqrt{\delta}}\sigma_2(\X,\wi\Y_t)\Q_2^{1/2}\d\W_t,\\
	\wi\Y_s&=\Y. 
	\end{aligned}
	\right.
	\end{equation}
	Then, from the construction of the  process $\widehat\Y_{t}^{\e,\delta}$ (see \eqref{3.37}), for any $t\in[k\Delta,(k+1)\Delta]$ with $k\in\mathbb{N}$, we have 
	\begin{align}
\widehat\Y_{t}^{\e,\delta}=\widetilde\Y_t^{\e,k\Delta,\X_{k\Delta}^{\e,\delta,h^{\e}},\widehat\Y_{k\Delta}^{\e,\delta}}, \ \mathbb{P}\text{-a.s.}
	\end{align}
	Using this fact in \eqref{3108}, we infer that 
	\begin{align}
	\Phi_k(s,r)&=\E\left[(\F(\X_{k\Delta}^{\e,\delta,h^{\e}},\widetilde\Y_{s\delta+k\Delta}^{\e,k\Delta,\X_{k\Delta}^{\e,\delta,h^{\e}},\widehat\Y_{k\Delta}^{\e,\delta}})-\bar \F(\X_{k\Delta}^{\e,\delta,h^{\e}}),\F(\X_{k\Delta}^{\e,\delta,h^{\e}},\widetilde\Y_{r\delta+k\Delta}^{\e,k\Delta,\X_{k\Delta}^{\e,\delta,h^{\e}},\widehat\Y_{k\Delta}^{\e,\delta}})-\bar \F(\X_{k\Delta}^{\e,\delta,h^{\e}}))\right]\nonumber\\&=\int_{\Omega}\E\left[(\F(\X_{k\Delta}^{\e,\delta,h^{\e}},\widetilde\Y_{s\delta+k\Delta}^{\e,k\Delta,\X_{k\Delta}^{\e,\delta,h^{\e}},\widehat\Y_{k\Delta}^{\e,\delta}})-\bar \F(\X_{k\Delta}^{\e,\delta,h^{\e}}),\right.\nonumber\\&\qquad\qquad\left.\F(\X_{k\Delta}^{\e,\delta,h^{\e}},\widetilde\Y_{r\delta+k\Delta}^{\e,k\Delta,\X_{k\Delta}^{\e,\delta,h^{\e}},\widehat\Y_{k\Delta}^{\e,\delta}})-\bar \F(\X_{k\Delta}^{\e,\delta,h^{\e}}))\Big|{\mathscr{F}_{k\Delta}}\right]\d\mathbb{P}(\omega)\nonumber\\&=\int_{\Omega}\E\left[(\F(\X_{k\Delta}^{\e,\delta,h^{\e}}(\omega),\widetilde\Y_{s\delta+k\Delta}^{\e,k\Delta,\X_{k\Delta}^{\e,\delta,h^{\e}}(\omega),\widehat\Y_{k\Delta}^{\e,\delta}(\omega)})-\bar \F(\X_{k\Delta}^{\e,\delta,h^{\e}}(\omega)),\right.\nonumber\\&\qquad\qquad\left.\F(\X_{k\Delta}^{\e,\delta,h^{\e}}(\omega),\widetilde\Y_{r\delta+k\Delta}^{\e,k\Delta,\X_{k\Delta}^{\e,\delta,h^{\e}}(\omega),\widehat\Y_{k\Delta}^{\e,\delta}(\omega)})-\bar \F(\X_{k\Delta}^{\e,\delta,h^{\e}}(\omega)))\right]\mathbb{P}(\d\omega),
	\end{align}
	where in the final step we used the fact the processes $\X_{k\Delta}^{\e,\delta,h^{\e}}$ and $\widehat{\Y}_{k\Delta}^{\e,\delta}$ are $\mathscr{F}_{k\Delta}$-measurable, whereas the process $\{\widetilde\Y^{\e,k\Delta,\x,\y}_{s\delta+k\Delta}\}_{s\geq 0}$ is independent of $\mathscr{F}_{k\Delta}$, for any fixed $(\x,\y)\in\H\times\H$. Moreover, from the definition of the process  $\widetilde\Y^{\e,k\Delta,\x,\y}_{s\delta+k\Delta}$, we obtain 
	\begin{align}\label{3112}
	\widetilde\Y^{\e,k\Delta,\x,\y}_{s\delta+k\Delta}&= \y-\frac{\mu}{\delta}\int_{k\Delta}^{s\delta+k\Delta}\A\widetilde{\Y}^{\e,k\Delta,\x,\y}_r\d r-\frac{\alpha}{\delta}\int_{k\Delta}^{s\delta+k\Delta}\widetilde{\Y}^{\e,k\Delta,\x,\y}_r\d r-\frac{\beta}{\delta}\int_{k\Delta}^{s\delta+k\Delta}\mathcal{C}(\widetilde{\Y}^{\e,k\Delta,\x,\y}_r)\d r\nonumber\\&\quad+\frac{1}{\delta}\int_{k\Delta}^{s\delta+k\Delta}\G(\x,\widetilde{\Y}^{\e,k\Delta,\x,\y}_r)\d r+\frac{1}{\sqrt{\delta}}\int_{k\Delta}^{s\delta+k\Delta}\sigma_2(\x,\widetilde{\Y}^{\e,k\Delta,\x,\y}_r)\Q_2^{1/2}\d\W_r\nonumber\\&=\y-\frac{\mu}{\delta}\int_{0}^{s\delta}\A\widetilde{\Y}^{\e,k\Delta,\x,\y}_{r+k\Delta}\d r-\frac{\alpha}{\delta}\int_{0}^{s\delta}\widetilde{\Y}^{\e,k\Delta,\x,\y}_{r+k\Delta}\d r-\frac{\beta}{\delta}\int_{0}^{s\delta}\mathcal{C}(\widetilde{\Y}^{\e,k\Delta,\x,\y}_{r+k\Delta})\d r\nonumber\\&\quad+\frac{1}{\delta}\int_{0}^{s\delta}\G(\x,\widetilde{\Y}^{\e,k\Delta,\x,\y}_{r+k\Delta})\d r+\frac{1}{\sqrt{\delta}}\int_{0}^{s\delta}\sigma_2(\x,\widetilde{\Y}^{\e,k\Delta,\x,\y}_{r+k\Delta})\Q_2^{1/2}\d\W^{k\Delta}_r\nonumber\\&=\y-\mu\int_{0}^{s}\A\widetilde{\Y}^{\e,k\Delta,\x,\y}_{r\delta+k\Delta}\d r-\alpha\int_{0}^{s}\widetilde{\Y}^{\e,k\Delta,\x,\y}_{r\delta+k\Delta}\d r-\beta\int_{0}^{s}\mathcal{C}(\widetilde{\Y}^{\e,k\Delta,\x,\y}_{r\delta+k\Delta})\d r\nonumber\\&\quad+\int_{0}^{s}\G(\x,\widetilde{\Y}^{\e,k\Delta,\x,\y}_{r\delta+k\Delta})\d r+\int_{0}^{s}\sigma_2(\x,\widetilde{\Y}^{\e,k\Delta,\x,\y}_{r\delta+k\Delta})\Q_2^{1/2}\d\widehat\W^{k\Delta}_r,
	\end{align}
	where $\{\W_r^{k\Delta}:=\W_{r+k\Delta}-\W_{k\Delta}\}_{r\geq 0}$ is the shift version of $\W_r$ and $\{\widehat\W^{k\Delta}_r:=\frac{1}{\sqrt{\delta}}\W_{r\delta}^{k\Delta}\}_{r\geq 0}$. Using the uniqueness of the strong solutions of \eqref{3.74} and \eqref{3112}, we infer that 
	\begin{align}
\mathscr{L}\left(\{\widetilde\Y^{\e,k\Delta,\x,\y}_{s\delta+k\Delta}\}_{0\leq s\leq\frac{\Delta}{\delta}}\right)=\mathscr{L}\left(\{\Y^{\x,\y}_{s}\}_{0\leq s\leq\frac{\Delta}{\delta}}\right),
	\end{align}
	where $\mathscr{L}(\cdot)$ denotes the law of the distribution. Using Markov's property, Proposition \ref{prop3.12}, the estimates \eqref{5.5z} and \eqref{341}, we estimate $\Phi_{k}(\cdot,\cdot)$ as 
		\begin{align}\label{3114}
	\Phi_k(s,r)&=\int_{\Omega}\wi\E\left[(\F(\X_{k\Delta}^{\e,\delta,h^{\e}}(\omega),\Y_{s}^{\X_{k\Delta}^{\e,\delta,h^{\e}}(\omega),\widehat\Y_{k\Delta}^{\e,\delta}(\omega)})-\bar \F(\X_{k\Delta}^{\e,\delta,h^{\e}}(\omega)),\right.\nonumber\\&\qquad\left.\F(\X_{k\Delta}^{\e,\delta,h^{\e}}(\omega),\Y_{r}^{\X_{k\Delta}^{\e,\delta,h^{\e}}(\omega),\widehat\Y_{k\Delta}^{\e,\delta}(\omega)})-\bar \F(\X_{k\Delta}^{\e,\delta,h^{\e}}(\omega)))\right]\mathbb{P}(\d\omega)\nonumber\\&=\int_{\Omega}\int_{\wi{\Omega}}\Big(\wi\E\Big[\F(\X_{k\Delta}^{\e,\delta,h^{\e}}(\omega),\Y_{s-r}^{\X_{k\Delta}^{\e,\delta,h^{\e}}(\omega),\z})-\bar \F(\X_{k\Delta}^{\e,\delta,h^{\e}}(\omega))\Big],\nonumber\\&\qquad \F(\X_{k\Delta}^{\e,\delta,h^{\e}}(\omega),\z)-\bar \F(\X_{k\Delta}^{\e,\delta,h^{\e}}(\omega))\Big)\Big|_{\z=\Y_{r}^{\X_{k\Delta}^{\e,\delta,h^{\e}}(\omega),\widehat\Y_{k\Delta}^{\e,\delta}(\omega)}(\wi\omega)}\wi{\mathbb{P}}(\d\wi{\omega})\mathbb{P}(\d\omega)\nonumber\\&\leq \int_{\Omega}\int_{\wi{\Omega}}\left\|\wi\E\Big[\F(\X_{k\Delta}^{\e,\delta,h^{\e}}(\omega),\Y_{s-r}^{\X_{k\Delta}^{\e,\delta,h^{\e}}(\omega),\z})-\bar \F(\X_{k\Delta}^{\e,\delta,h^{\e}}(\omega))\Big]\right\|_{\H}\nonumber\\&\qquad\times \|\F(\X_{k\Delta}^{\e,\delta,h^{\e}}(\omega),\z)-\bar \F(\X_{k\Delta}^{\e,\delta,h^{\e}}(\omega))\|_{\H}\Big|_{\z=\Y_{r}^{\X_{k\Delta}^{\e,\delta,h^{\e}}(\omega),\widehat\Y_{k\Delta}^{\e,\delta}(\omega)}(\wi\omega)}\wi{\mathbb{P}}(\d\wi{\omega})\mathbb{P}(\d\omega)\nonumber\\&\leq C_{\mu,\alpha,\lambda_1,L_{\G}}\int_{\Omega}\int_{\wi{\Omega}}\left[1+\|\X_{k\Delta}^{\e,\delta,h^{\e}}(\omega)\|_{\H}+\|\Y_{r}^{\X_{k\Delta}^{\e,\delta,h^{\e}}(\omega),\widehat\Y_{k\Delta}^{\e,\delta}(\omega)}(\wi\omega)\|_{\H}\right]e^{-\frac{(s-r)\zeta}{2}}\nonumber\\&\qquad \times\left[1+\|\X_{k\Delta}^{\e,\delta,h^{\e}}(\omega)\|_{\H}+\|\Y_{r}^{\X_{k\Delta}^{\e,\delta,h^{\e}}(\omega),\widehat\Y_{k\Delta}^{\e,\delta}(\omega)}(\wi\omega)\|_{\H}\right]\wi{\mathbb{P}}(\d\wi{\omega})\mathbb{P}(\d\omega)\nonumber\\&\leq C_{\mu,\alpha,\lambda_1,L_{\G}}\int_{\Omega}\left(1+\|\X_{k\Delta}^{\e,\delta,h^{\e}}(\omega)\|_{\H}^2+\|\widehat\Y_{k\Delta}^{\e,\delta}(\omega)\|_{\H}^2\right)\mathbb{P}(\d\omega)e^{-\frac{(s-r)\zeta}{2}}\nonumber\\&\leq C_{\mu,\alpha,\lambda_1,L_{\G},M,T}(1+\|\x\|_{\H}^2+\|\y\|_{\H}^2)e^{-\frac{(s-r)\zeta}{2}}. 
	\end{align}
	Using \eqref{3114} in \eqref{3107}, we obtain 
	\begin{align}\label{3115}
		\E\left[\sup\limits_{t\in[0,T]}|I_1(t)|\right]&\leq\frac{C_{\mu,\alpha,\lambda_1,L_{\G},M,T}(1+\|\x\|_{\H}^2+\|\y\|_{\H}^2)\delta}{\Delta}\left[\int_0^{\frac{\Delta}{\delta}}\int_r^{\frac{\Delta}{\delta}}e^{-\frac{(s-r)\zeta}{2}}\d s\d r\right]^{1/2}\nonumber\\&={C_{\mu,\alpha,\lambda_1,L_{\G},M,T}(1+\|\x\|_{\H}^2+\|\y\|_{\H}^2)}\frac{\delta}{\Delta}\left(\frac{2}{\zeta}\right)^{1/2}\left[\frac{\Delta}{\delta}+\frac{2}{\zeta}\left(e^{-\frac{\Delta\zeta}{2\delta}}-1\right)\right]^{1/2}\nonumber\\&\leq C_{\mu,\alpha,\lambda_1,L_{\G},L_{\sigma_2},M,T}(1+\|\x\|_{\H}^2+\|\y\|_{\H}^2)\left[\left(\frac{\delta}{\Delta}\right)^{1/2}+\frac{\delta}{\Delta}\right]. 
	\end{align}
	Combining the estimates \eqref{4129} and \eqref{3115}, we get 
		\begin{align}\label{3116}
	\E\left[\sup\limits_{t\in[0,T\wedge\wi\tau_R^{\e}]}|I(t)|\right]&\leq C_{\mu,\alpha,\lambda_1,L_{\G},L_{\sigma_2},M,T}(1+\|\x\|_{\H}^2+\|\y\|_{\H}^2)\left[\left(\frac{\delta}{\Delta}\right)^{1/2}+\frac{\delta}{\Delta}+\Delta^{1/2}\right]. 
	\end{align}
	Using \eqref{3116} in \eqref{3104}, we finally find 
	\begin{align}\label{3z25}
	&	\E\left[	\sup_{t\in[0,T\wedge\wi\tau_R^{\e}]}	\|\Z^{\e}_{t}\|_{\H}^2+\mu\int_0^{T\wedge\wi\tau_R^{\e}}\|\Z^{\e}_t\|_{\V}^2\d t+\frac{\beta}{2^{r-2}}\int_0^{T\wedge\wi\tau_R^{\e}}\|\Z^{\e}_t\|_{\wi\L^{r+1}}^{r+1}\d t\right]\nonumber\\&\leq C_{\mu,\alpha,\beta,\lambda_1,L_{\G},L_{\sigma_2},M,T,R}(1+\|\x\|_{\H}^3+\|\y\|_{\H}^3)\left[\left(\frac{\delta}{\e}\right)+\Delta^{1/4}\right]\nonumber\\&\quad+\E\left[\int_0^{T}\|\sigma_1(\bar\X^{h}_t)\Q_1^{1/2}(h^{\e}_t-h_t)\|_{\H}^2\d t\right]+\e^2T\left(1+\sup_{t\in[0,T]}\|\bar\X_{t}^h\|_{\H}^2\right)\nonumber\\&\quad+C_{\mu,\alpha,\beta,\lambda_1,L_{\G},L_{\sigma_2},T}(1+\|\x\|_{\H}^2+\|\y\|_{\H}^2)\left[\left(\frac{\delta}{\Delta}\right)^{1/2}+\frac{\delta}{\Delta}+\Delta^{1/2}\right]\nonumber\\&\leq C_{R,\mu,\alpha,\beta,\lambda_1,L_{\G},L_{\sigma_2},T}(1+\|\x\|_{\H}^3+\|\y\|_{\H}^3)\left[\e^2+\left(\frac{\delta}{\e}\right)+\left(\frac{\delta}{\Delta}\right)^{1/2}+\Delta^{1/4}\right]\nonumber\\&\quad+\E\left[\int_0^{T}\|\sigma_1(\bar\X^{h}_t)\Q_1^{1/2}(h^{\e}_t-h_t)\|_{\H}^2\d t\right], 
	\end{align}
	and by choosing $\Delta=\delta^{1/2}$, we obtain   \eqref{389}.

		\vskip 2 mm
	\noindent	\textbf{Case 2: $n=2,3$ and $r\in(3,\infty)$.} 	Let us now discuss the case $n=3$ and $r\in(3,\infty)$. We just need to estimate the term $2|\langle(\B(\X^{\e,\delta,h^{\e}})-\B(\bar{\X}^{h})),\Z^{\e}\rangle|$ only to get the required result. 	From the estimate \eqref{2.23}, we easily have 
	\begin{align}\label{2z27}
	-2	\beta	\langle\mathcal{C}(\X^{\e,\delta,h^{\e}})-\mathcal{C}(\bar{\X}^{h}),\Z^{\e}\rangle \leq- \beta\||\X^{\e,\delta,h^{\e}}|^{\frac{r-1}{2}}\Z^{\e}\|_{\H}^2- \beta\||\bar{\X}^{h}|^{\frac{r-1}{2}}\Z^{\e}\|_{\H}^2. 
	\end{align}
	Using H\"older's and Young's inequalities, we estimate the term  $2|\langle(\B(\X^{\e,\delta,h^{\e}})-\B(\bar{\X}^{h})),\Z^{\e}\rangle|=2|\langle\B(\Z^{\e},\X^{\e,\delta,h^{\e}}),\Z^{\e}\rangle |$ as  
	\begin{align}\label{2z28}
	2|\langle\B(\Z^{\e},\X^{\e,\delta,h^{\e}}),\Z^{\e}\rangle |&\leq 2\|\Z^{\e}\|_{\V}\|\X^{\e,\delta,h^{\e}}\Z^{\e}\|_{\H}\leq\mu\|\Z^{\e}\|_{\V}^2+\frac{1}{\mu }\|\X^{\e,\delta,h^{\e}}\Z^{\e}\|_{\H}^2.
	\end{align}
	We take the term $\|\X^{\e,\delta,h^{\e}}\Z^{\e}\|_{\H}^2$ from \eqref{2z28} and perform a similar calculation in \eqref{6.30} to deduce
	\begin{align}\label{2z29}
	\|\X^{\e,\delta,h^{\e}}\Z^{\e}\|_{\H}^2\leq\frac{\beta\mu}{2}\left(\int_{\mathcal{O}}|\X^{\e,\delta,h^{\e}}(x)|^{r-1}|\Z^{\e}(x)|^2\d x\right)+\frac{r-3}{r-1}\left(\frac{4}{\beta\mu (r-1)}\right)^{\frac{2}{r-3}}\left(\int_{\mathcal{O}}|\Z^{\e}(x)|^2\d x\right),
	\end{align}
	for $r>3$. Combining \eqref{2z27}, \eqref{2z28}  and \eqref{2z29}, we obtain 
	\begin{align}\label{2z30}
	&-2	\beta	\langle\mathcal{C}(\X^{\e,\delta,h^{\e}})-\mathcal{C}(\bar{\X}^{h}),\Z^{\e}\rangle-2\langle\B(\Z^{\e},\X^{\e,\delta,h^{\e}}),\Z^{\e}\rangle\nonumber\\&\leq -\frac{\beta}{2}\||\X^{\e,\delta,h^{\e}}|^{\frac{r-1}{2}}\Z^{\e}\|_{\H}^2- \frac{\beta}{2}\||\bar{\X}^{h}|^{\frac{r-1}{2}}\Z^{\e}\|_{\H}^2+ \frac{r-3}{\mu(r-1)}\left(\frac{4}{\beta\mu (r-1)}\right)^{\frac{2}{r-3}}\|\Z^{\e}\|_{\H}^2.
	\end{align}
	Thus a calculation similar to the estimate \eqref{3.99} yields 
	\begin{align}
&	\|\Z^{\e}_{t}\|_{\H}^2+\mu\int_0^t\|\Z^{\e}_s\|_{\V}^2\d s+\frac{\beta}{2^{r-1}}\int_0^t\|\Z^{\e}_s\|_{\wi\L^{r+1}}^{r+1}\d s\nonumber\\&\leq \left[\frac{r-3}{\mu(r-1)}\left(\frac{4}{\beta\mu (r-1)}\right)^{\frac{4}{r-3}}+C_{\mu,\alpha,\lambda_1,L_{\G},L_{\sigma_2}}\right]\int_0^t\|\Z^{\e}_s\|_{\H}^2\d s\nonumber\\&\quad+C_{\mu,\alpha,\lambda_1,L_{\G},L_{\sigma_2}}\int_0^t\|\X_{s}^{\e,\delta,h^{\e}}-\X_{s(\Delta)}^{\e,\delta,h^{\e}}\|_{\H}^2\d s+C_{\mu,\alpha,\lambda_1,L_{\G},L_{\sigma_2}}\int_0^t\|\Y_{s}^{\e,\delta,h^{\e}}-\widehat\Y_{s}^{\e,\delta}\|_{\H}^2\d s\nonumber\\&\quad+C\int_0^{t}\|h^{\e}_s\|_{\H}^2\|\Z^{\e}_s\|_{\H}^2\d s +\int_0^{t}\|\sigma_1(\bar\X^{h}_s)\Q_1^{1/2}(h^{\e}_s-h_s)\|_{\H}^2\d s+C\e^2\int_0^{t}\left(1+\|\bar\X_{s}^h\|_{\H}^2\right)\d s\nonumber\\&\quad+C\left(\int_0^t(1+\|\X_{s(\Delta)}^{\e,\delta,h^{\e}}\|_{\H}^2+\|\widehat\Y_{s}^{\e,\delta})\|_{\H}^2)\d s\right)^{1/2}\left(\int_0^t(\|\X^{\e,\delta,h^{\e}}_{s}-\X^{\e,\delta,h^{\e}}_{s(\Delta)}\|_{\H}^2+\|\bar{\X}^{h}_s-\bar{\X}^{h}_{s(\Delta)}\|_{\H}^2)\d s\right)^{1/2} \nonumber\\&\quad +2\int_0^t(\F(\X_{s(\Delta)}^{\e,\delta,h^{\e}},\widehat\Y_{s}^{\e,\delta})-\bar \F(\X_{s(\Delta)}^{\e,\delta,h^{\e}}),\Z^{\e}_{s(\Delta)})\d s\nonumber\\&\quad+2\sqrt{\e}\int_0^t([\sigma_1(\X_{s}^{\e,\delta,h^{\e}})-\sigma_1(\bar{\X}^{h}_s)]\Q_1^{1/2}\d\W_s,\Z^{\e}_s), 
	\end{align}
for all $t\in[0,T]$, $\mathbb{P}$-a.s. Applying Gronwall's inequality and then using a calculation similar to \eqref{3z25} yields	 the estimate \eqref{390} for the case $n=2,3$ and $r\in(3,\infty)$. 
	
	\vskip 2 mm
\noindent	\textbf{Case 3: $n=r=3$ and $2\beta\mu>1$.} 	
	For $n=r=3$, 	from \eqref{2.23}, we have 
	\begin{align}\label{2z31}
	-2	\beta	\langle\mathcal{C}(\X^{\e,\delta,h^{\e}})-\mathcal{C}(\bar{\X}^{h}),\Z^{\e}\rangle \leq- \beta\|\X^{\e,\delta,h^{\e}}\Z^{\e}\|_{\H}^2- \beta\|\bar{\X}^{h}\Z^{\e}\|_{\H}^2, 
	\end{align}
	and a calculation similar to \eqref{2z28} gives 
	\begin{align}\label{3z72}
	2|\langle\B(\Z^{\e},\X^{\e,\delta,h^{\e}}),\Z^{\e}\rangle |&\leq 2\|\Z^{\e}\|_{\V}\|\X^{\e,\delta,h^{\e}}\Z^{\e}\|_{\H}\leq \theta\mu\|\Z^{\e}\|_{\V}^2+\frac{1}{\theta\mu }\|\X^{\e,\delta,h^{\e}}\Z^{\e}\|_{\H}^2.
	\end{align}
	Combining \eqref{2z31} and \eqref{3z72}, we obtain 
	\begin{align}\label{3125}
	&-2\mu\langle\A\Z^{\e},\Z^{\e}\rangle-2	\beta	\langle\mathcal{C}(\X^{\e,\delta,h^{\e}})-\mathcal{C}(\bar{\X}^{h}),\Z^{\e}\rangle-2\langle\B(\Z^{\e},\X^{\e,\delta,h^{\e}}),\Z^{\e}\rangle\nonumber\\&\leq (2-\theta)\mu\|\Z^{\e}\|_{\V}^2 -\left(\beta-\frac{1}{\theta\mu }\right)\|\X^{\e,\delta,h^{\e}}\Z^{\e}\|_{\H}^2- \beta\|\bar{\X}^{h}\Z^{\e}\|_{\H}^2,
	\end{align}
for $\frac{1}{\beta\mu}<\theta<2$	and the hence estimate \eqref{390} follows for  $2\beta\mu> 1$. 

Note that by making use of the estimates \eqref{2z30} and \eqref{3125}, we don't need the stopping time defined in \eqref{stop1} to get the required estimate \eqref{390}. 
\end{proof}
Let us now establish the weak convergence result. The well known Skorokhod's representation theorem (see \cite{AVS}) states that if $\mu_n, n=1,2,\ldots,$ and $\mu_0$ are  probability measures on complete separable metric space (Polish space)  such that  $\mu_n\xrightarrow{w}\mu$, as $ n \to \infty$, then there exist a probability space $(\widetilde{\Omega},\widetilde{\mathscr{F}},\widetilde{\mathbb{P}})$ and a sequence of measurable random elements $\mathrm{X}_n$ such that $\mathrm{X}_n\to\mathrm{X},$ $\widetilde{\mathbb{P}}$-a.s., and $\mathrm{X}_n$ has the distribution function $\mu_n$, $n = 0, 1, 2, \ldots $  ($\mathrm{X}_n\sim\mu_n$), that is, the law of $\mathrm{X}_n$ is $\mu_n$. We use Skorokhod's representation theorem in the next theorem. 
 \begin{theorem}[Weak convergence]\label{weak}
	Let $\big\{h^{\e} : \e > 0\big\}\subset \mathcal{A}_M$ converges in distribution to $h$ with respect to the weak topology on $\mathrm{L}^2(0,T;\H)$. Then $ \mathcal{G}^{\e}\left(\W_{\cdot} +\frac{1}{\sqrt{\e}}\int_0^{\cdot}h^{\e}_s\d s\right)$ converges in distribution to $\mathcal{G}^0\left(\int_0^{\cdot}h_s\d s\right)$ in $\mathscr{E}$, as $\e\to0$.
\end{theorem}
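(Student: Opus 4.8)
The plan is to verify Hypothesis \ref{hyp1}(i), i.e.\ the weak convergence $\mathcal{G}^{\e}(\W_{\cdot}+\frac{1}{\sqrt{\e}}\int_0^{\cdot}h^{\e}_s\,\d s)\To\mathcal{G}^0(\int_0^{\cdot}h_s\,\d s)$ in $\mathscr{E}=\C([0,T];\H)\cap\mathrm{L}^2(0,T;\V)\cap\mathrm{L}^{r+1}(0,T;\wi\L^{r+1})$. By the identification established above, $\mathcal{G}^{\e}(\W_{\cdot}+\frac{1}{\sqrt{\e}}\int_0^{\cdot}h^{\e}_s\,\d s)=\X^{\e,\delta,h^{\e}}$, the slow component of the controlled system \eqref{442}, and $\mathcal{G}^0(\int_0^{\cdot}h_s\,\d s)=\bar\X^h$, the weak solution of the skeleton equation \eqref{5.4y}. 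So the statement reduces to: if $h^{\e}\To h$ in distribution on $(\mathcal{S}_M,\text{weak})$, then $\X^{\e,\delta,h^{\e}}\To\bar\X^{h}$ in distribution on $\mathscr{E}$. First I would invoke the Skorokhod representation theorem to pass to a new probability space $(\wi\Omega,\wi{\mathscr{F}},\wi{\mathbb{P}})$ on which $h^{\e}\to h$ almost surely in the weak topology of $\mathrm{L}^2(0,T;\H)$ (the metric $\wi d$ makes $\mathcal{S}_M$ Polish); everything below is carried out on this space, keeping the same notation.

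The core of the argument is the decomposition $\X^{\e,\delta,h^{\e}}_t-\bar\X^{h}_t=\Z^{\e}_t$, for which Lemma \ref{lem3.14} gives, on the stopped interval $[0,T\wedge\wi\tau_R^{\e}]$ (or $[0,T\wedge\tau_R^{\e}]$ for $r>3$),
\begin{align*}
&\E\Big[\sup_{t\in[0,T\wedge\wi\tau_R^{\e}]}\|\Z^{\e}_t\|_{\H}^2+\mu\int_0^{T\wedge\wi\tau_R^{\e}}\|\Z^{\e}_t\|_{\V}^2\,\d t+\tfrac{\beta}{2^{r-2}}\int_0^{T\wedge\wi\tau_R^{\e}}\|\Z^{\e}_t\|_{\wi\L^{r+1}}^{r+1}\,\d t\Big]\\
&\leq C_{R,\dots}\big(1+\|\x\|_{\H}^{\ell}+\|\y\|_{\H}^{\ell}\big)\big[\e^2+(\delta/\e)+\delta^{1/8}\big]+\E\Big[\int_0^{T}\|\sigma_1(\bar\X^{h}_t)\Q_1^{1/2}(h^{\e}_t-h_t)\|_{\H}^2\,\d t\Big].
\end{align*}
The first bracket tends to $0$ as $\e\to0$ by Assumption \ref{ass3.6}(A4) ($\delta\to0$, $\delta/\e\to0$). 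For the second term, since $\sigma_1(\bar\X^{h}_{\cdot})\Q_1^{1/2}$ is a fixed $\mathcal{L}_2(\H;\H)$-valued (hence, pointwise in $t$, compact-operator-valued) process and $h^{\e}\to h$ weakly in $\mathrm{L}^2(0,T;\H)$ $\wi{\mathbb{P}}$-a.s., the map $g\mapsto\sigma_1(\bar\X^{h}_{\cdot})\Q_1^{1/2}g$ sends this weakly convergent sequence to a strongly convergent one in $\mathrm{L}^2(0,T;\H)$; this is exactly the compactness argument already used in the proof of Theorem \ref{compact} (see the passage around \eqref{426}). Combined with the uniform bound \eqref{5.5y} on $\bar\X^h$ and dominated convergence, $\E\big[\int_0^T\|\sigma_1(\bar\X^h_t)\Q_1^{1/2}(h^{\e}_t-h_t)\|_{\H}^2\,\d t\big]\to0$.

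It remains to remove the stopping time. Here I would use the standard localization device: write $\mathbb{P}(\|\X^{\e,\delta,h^{\e}}-\bar\X^h\|_{\mathscr{E}}>\rho)\leq\mathbb{P}(\wi\tau_R^{\e}<T)+\mathbb{P}(\|\X^{\e,\delta,h^{\e}}-\bar\X^h\|_{\mathscr{E}}>\rho,\ \wi\tau_R^{\e}\geq T)$; the second term is controlled by the stopped estimate above via Chebyshev and tends to $0$ as $\e\to0$ for each fixed $R$, while $\mathbb{P}(\wi\tau_R^{\e}<T)\leq\frac1R\,\E\big[\sup_{t\in[0,T]}\|\X^{\e,\delta,h^{\e}}_t\|_{\H}^2+\int_0^T\|\X^{\e,\delta,h^{\e}}_t\|_{\V}^2\,\d t\big]\big/R$ — wait, more precisely $\mathbb{P}(\wi\tau_R^{\e}<T)\to0$ uniformly in small $\e$ as $R\to\infty$ because of the uniform energy estimate \eqref{5.5z} (which holds provided $\delta/\e$ is small enough, guaranteed for small $\e$ by (A4)). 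Sending $\e\to0$ first and then $R\to\infty$ yields $\X^{\e,\delta,h^{\e}}\to\bar\X^h$ in $\wi{\mathbb{P}}$-probability in $\mathscr{E}$, hence in distribution on the original space. For $r\in(3,\infty)$ and $r=n=3$ with $2\beta\mu>1$ the same proof works using \eqref{390} and the simpler stopping time $\tau_R^{\e}$ (indeed, as Lemma \ref{lem3.14} remarks, the stopping time is not even needed there). The main obstacle is bookkeeping the dependence of all constants on $R$, $M$, $T$ and the initial data so that the order of limits ($\e\to0$ then $R\to\infty$) is legitimate; the genuinely analytic content — the estimate on $\Z^{\e}$ and the Khasminskii averaging of the frozen process — has already been done in Lemmas \ref{lem3.8}--\ref{lem3.14}, so this last step is essentially soft.
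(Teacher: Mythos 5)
Your proposal is correct and follows essentially the same route as the paper: Skorokhod representation to get almost sure weak convergence of $h^{\e}$, the decomposition $\Z^{\e}=\X^{\e,\delta,h^{\e}}-\bar\X^{h}$ controlled by Lemma \ref{lem3.14}, the compact-operator plus dominated-convergence argument for the $\sigma_1(\bar\X^h)\Q_1^{1/2}(h^{\e}-h)$ term, and removal of the stopping time by splitting on $\{T\leq\wi\tau_R^{\e}\}$ versus its complement (the paper applies Markov's inequality to the error functional and then splits the expectation, which is equivalent to your splitting of the probability), with the limits taken in the order $\e\to 0$ then $R\to\infty$. The self-correction in your bound for $\mathbb{P}(\wi\tau_R^{\e}<T)$ lands on the right statement, which is exactly the paper's use of the uniform energy estimate \eqref{5.5z}.
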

\begin{proof}
	Let $\{h_{\e}\}$ converges to $h$ in distribution as random elements taking values in $\mathcal{S}_M,$ where $\mathcal{S}_M$ is equipped with the weak topology. 
	Since $\mathcal{A}_M$ is  Polish (see section \ref{sec4.2} and \cite{BD2}) and $\big\{h^{\e} : \e > 0\big\}\subset \mathcal{A}_M$ converges in distribution to $h$ with respect to the weak topology on $\mathrm{L}^2(0,T;\H)$, the Skorokhod representation theorem can be used  to construct a probability space $(\widetilde{\Omega},\widetilde{\mathscr{F}},(\widetilde{\mathscr{F}}_t)_{0\leq t\leq T},\widetilde{\mathbb{P}})$  and processes $(\wi h^{\e},\wi h, \wi\W^{\e})$ such that the distribution of $(\wi h^{\e}, \wi\W^{\e})$
	is same as that of $(h^{\e}, \W^{\e})$, and $\wi h^{\e}\to \wi h$,  $\widetilde{\mathbb{P}}$-a.s., in the weak topology of $\mathcal{S}_M$. Thus
	$\int_0^{t} \wi h^{\e}(s)\d s\to\int_0^{t} \wi h(s)\d s$ weakly in $\H$, $\widetilde{\mathbb{P}}$-a.s., for all $t\in[0,T]$. In the following sequel, without loss of 
	generality, we write $(\Omega,\mathscr{F},\mathbb{P})$ as the probability space and $(h^{\e},h,\W)$ as processes, though strictly speaking, one should write $(\widetilde{\Omega},\widetilde{\mathscr{F}},\widetilde{\mathbb{P}})$ and $(\wi h^{\e}, \wi h, \widetilde{\W}^{\e})$, respectively for probability space and processes.	
	
	Let us define $\Z^{\e}_t:=\X_{t}^{\e,\delta,h^{\e}}-\bar{\X}_{t}^h$, where $\Z^{\e}_t$ satisfies the stochastic differential given in \eqref{3.91}. We first prove the Theorem for $n=2$ and $r\in[1,3]$. Let $\wi\tau_R^{\e}$ be the stopping time defined in \eqref{stop1}. Then for any $\eta>0$, by using Markov's inequality, we have 
	\begin{align}\label{3130}
	&\mathbb{P}\left\{\left(\sup_{t\in[0,T]}\|\Z^{\e}_t\|_{\H}^2+\int_0^T\|\Z^{\e}_t\|_{\V}^2\d t+\int_0^T\|\Z^{\e}_t\|_{\wi\L^{r+1}}^{r+1}\d t\right)^{1/2}>\eta\right\}\nonumber\\&\leq\frac{1}{\eta}\E\left[\left(\sup_{t\in[0,T]}\|\Z^{\e}_t\|_{\H}^2+\int_0^T\|\Z^{\e}_t\|_{\V}^2\d t+\int_0^T\|\Z^{\e}_t\|_{\wi\L^{r+1}}^{r+1}\d t\right)^{1/2}\right]\nonumber\\&=\frac{1}{\eta} \E\left[\left(\sup_{t\in[0,T]}\|\Z^{\e}_t\|_{\H}^2+\int_0^T\|\Z^{\e}_t\|_{\V}^2\d t+\int_0^T\|\Z^{\e}_t\|_{\wi\L^{r+1}}^{r+1}\d t\right)^{1/2}\chi_{\{T\leq\wi\tau_R^{\e}\}}\right]\nonumber\\&\quad+\frac{1}{\eta}\E\left[\left(\sup_{t\in[0,T]}\|\Z^{\e}_t\|_{\H}^2+\int_0^T\|\Z^{\e}_t\|_{\V}^2\d t+\int_0^T\|\Z^{\e}_t\|_{\wi\L^{r+1}}^{r+1}\d t\right)^{1/2}\chi_{\{T>\wi\tau_R^{\e}\}}\right],
	\end{align}
where  $\chi_{t}$ is the indicator function.	From Lemma \ref{lem3.14} (see \eqref{389}),  we obtain 
	\begin{align}\label{3131}
& \E\left[\left(\sup_{t\in[0,T]}\|\Z^{\e}_t\|_{\H}^2+\int_0^T\|\Z^{\e}_t\|_{\V}^2\d t+\int_0^T\|\Z^{\e}_t\|_{\wi\L^{r+1}}^{r+1}\d t\right)^{1/2}\chi_{\{T\leq\wi\tau_R^{\e}\}}\right]\nonumber\\&\leq \E\left[\left(\sup_{t\in[0,T]}\|\Z^{\e}_t\|_{\H}^2+\int_0^T\|\Z^{\e}_t\|_{\V}^2\d t+\int_0^T\|\Z^{\e}_t\|_{\wi\L^{r+1}}^{r+1}\d t\right)\chi_{\{T\leq\wi\tau_R^{\e}\}}\right]^{1/2}\nonumber\\&\leq \bigg\{C_{\mu,\alpha,\beta,\lambda_1,L_{\G},L_{\sigma_2},M,T,R}(1+\|\x\|_{\H}^3+\|\y\|_{\H}^3)\left[\e^2+\left(\frac{\delta}{\e}\right)+\delta^{1/8}\right]\nonumber\\&\quad+\E\left[\int_0^{T}\|\sigma_1(\bar\X^{h}_t)\Q_1^{1/2}(h^{\e}_t-h_t)\|_{\H}^2\d t\right]\bigg\}^{1/2}. 
	\end{align}	
Using H\"older's and Markov’s inequalities,  \eqref{5.5y} and \eqref{5.5z}, we estimate the second term from the right hand side of the inequality \eqref{3130} as
\begin{align}\label{3132}
&\E\left[\left(\sup_{t\in[0,T]}\|\Z^{\e}_t\|_{\H}^2+\int_0^T\|\Z^{\e}_t\|_{\V}^2\d t+\int_0^T\|\Z^{\e}_t\|_{\wi\L^{r+1}}^{r+1}\d t\right)^{1/2}\chi_{\{T>\wi\tau_R^{\e}\}}\right]\nonumber\\&\leq \left[\E\left(\sup_{t\in[0,T]}\|\Z^{\e}_t\|_{\H}^2+\int_0^T\|\Z^{\e}_t\|_{\V}^2\d t+\int_0^T\|\Z^{\e}_t\|_{\wi\L^{r+1}}^{r+1}\d t\right)\right]^{1/2}[\mathbb{P}(T>\wi\tau_R^{\e})]^{1/2}\nonumber\\&\leq\frac{C_{\mu,\alpha,\lambda_1,L_{\G},M,T}(1+\|\x\|_{\H}^2+\|\y\|_{\H}^2)^{1/2}}{\sqrt{R}}\left[\E\left(\int_0^T\|\X_{s}^{\e,\delta,h^{\e}}\|_{\V}^2\d s\right)\right]^{1/2}\nonumber\\&\leq \frac{C_{\mu,\alpha,\lambda_1,L_{\G},M,T}(1+\|\x\|_{\H}^2+\|\y\|_{\H}^2)}{\sqrt{R}}. 
\end{align} 
Combining \eqref{3131}-\eqref{3132} and substitute it in \eqref{3130}, we find 
\begin{align}\label{3133}
&\mathbb{P}\left\{\left(\sup_{t\in[0,T]}\|\Z^{\e}_t\|_{\H}^2+\int_0^T\|\Z^{\e}_t\|_{\V}^2\d t+\int_0^T\|\Z^{\e}_t\|_{\wi\L^{r+1}}^{r+1}\d t\right)^{1/2}>\eta\right\}\nonumber\\&\leq \frac{1}{\eta}\bigg\{C_{\mu,\alpha,\beta,\lambda_1,L_{\G},L_{\sigma_2},M,T,R}(1+\|\x\|_{\H}^3+\|\y\|_{\H}^3)\left[\e^2+\left(\frac{\delta}{\e}\right)+\delta^{1/8}\right]\nonumber\\&\quad+\E\left[\int_0^{T}\|\sigma_1(\bar\X^{h}_t)\Q_1^{1/2}(h^{\e}_t-h_t)\|_{\H}^2\d t\right]\bigg\}^{1/2}+\frac{C_{\mu,\alpha,\lambda_1,L_{\G},M,T}(1+\|\x\|_{\H}^2+\|\y\|_{\H}^2)}{\eta\sqrt{R}}.
\end{align}
Once again, we use the fact that compact operators maps weakly convergent sequences into strongly convergent sequences. Since $\sigma_1(\cdot)$ is compact and $\big\{h^{\e} : \e > 0\big\}\subset \mathcal{A}_M$ converges in distribution to $h$ with respect to the weak topology on $\mathrm{L}^2(0,T;\H)$, we get  
\begin{align*}
\int_0^{T}\|\sigma_1(\bar\X^{h}_t)\Q_1^{1/2}(h^{\e}_t-h_t)\|_{\H}^2\d t\to 0, \ \text{ as }\ \e\to 0,\ \mathbb{P}\text{-a.s.}
\end{align*}
Furthermore, we have 
\begin{align*}
\E\left[\int_0^{T}\|\sigma_1(\bar\X^{h}_t)\Q_1^{1/2}(h^{\e}_t-h_t)\|_{\H}^2\d t\right]&\leq\E\left[\int_0^T\|\sigma_1(\bar\X^{h}_t)\Q_1^{1/2}\|_{\mathcal{L}_2}^2\|h^{\e}_t-h_t\|_{\H}^2\d t\right]\nonumber\\&\leq C\sup_{t\in[0,T]}(1+\|\bar\X^{h}_t\|_{\H}^2)\E\left[\int_0^T\|h_t^{\e}\|_{\H}^2+\int_0^T\|h_t\|_{\H}^2\right]\nonumber\\&\leq C_{\mu,\alpha,\lambda_1,L_{\G},L_{\sigma_2},M,T}\left(1+\|\x\|_{\H}^2\right), 
\end{align*}
where we used \eqref{5.5y} and the fact that $h^{\e} \in\mathcal{A}_M$ and $h\in\mathcal{S}_M$. Thus, an application of the dominated convergence theorem gives  
\begin{align}\label{461}
\E\left[\int_0^{T}\|\sigma_1(\bar\X^{h}_t)\Q_1^{1/2}(h^{\e}_t-h_t)\|_{\H}^2\d t\right]\to 0, \ \text{ as }\ \e\to 0. 
\end{align}
Letting $\e\to 0$ and then $R\to\infty$ in \eqref{3133}, we obtain
	\begin{align}\label{3134}
&\lim_{\e\to 0}\mathbb{P}\left\{\left(\sup_{t\in[0,T]}\|\Z^{\e}_t\|_{\H}^2+\int_0^T\|\Z^{\e}_t\|_{\V}^2\d t+\int_0^T\|\Z^{\e}_t\|_{\wi\L^{r+1}}^{r+1}\d t\right)^{1/2}>\eta\right\},
\end{align}
for all $\eta>0$,  which completes the proof for the case $n=2$ and $r\in[1,3]$. 

For $n=2$, $r\in(3,\infty)$ and $n=3$, $r\in[3,\infty)$ ($2\beta\mu> 1,$ for $r=3$), one has to use the stopping time $\tau^{\e}_R$ defined in \eqref{stop} for the estimate in \eqref{3130} and then use the estimate \eqref{390}  to obtain the convergence given in \eqref{3134}.  
\end{proof}

\begin{remark}\label{rem4.22}
	Note that LDP for $\{\X^{\e,\delta}\}$ proved in Theorem \ref{thm4.14} holds true for the case $n=2$ and $r\in[1,3]$ with $\alpha=\beta=0$. Therefore, the results obtained in this work is valid for 2D Navier-Stokes equations also. In this case, the state space becomes $\mathscr{E}=\C([0,T];\H)\cap\mathrm{L}^2(0,T;\V)$ (see Remark \ref{rem3.4}). 
\end{remark}

 \medskip\noindent
{\bf Acknowledgments:} M. T. Mohan would  like to thank the Department of Science and Technology (DST), India for Innovation in Science Pursuit for Inspired Research (INSPIRE) Faculty Award (IFA17-MA110).


\begin{thebibliography}{99}
	
\bibitem{SNA}	S.N. Antontsev and H.B. de Oliveira, The Navier–Stokes problem modified by an absorption term, \emph{Applicable Analysis}, {\bf 89}(12),  2010, 1805--1825. 
	
	
	
	
	
	
	
	

	
	\bibitem{VB} V. Barbu, {\it Analysis and control of nonlinear infinite dimensional	systems}, Academic Press, Boston, 1993.
	
	
	
	
	

	\bibitem{RBJE} R. Bertram and J.E. Rubin, \emph{Multi-timescale systems and fast-slow analysis}, Math. Biosci. 287 (2017) 105--121.
	
	
	\bibitem{HBAM}		H. Bessaih and A. Millet,	On stochastic modified 3D Navier–Stokes equations with anisotropic viscosity, \emph{Journal of Mathematical Analysis and Applications}, 462 (2018), 915--956. 
	
	
	\bibitem{NNYA}	N.N. Bogoliubov, Y.A. Mitropolsky, Asymptotic Methods in the Theory of Non-linear Oscillations, \emph{Gordon	and Breach Science Publishers}, New York (1961).
	
	



\bibitem{CEB} C.E. Br\'ehier, Strong and weak orders in averaging for SPDEs, \emph{Stochastic Process. Appl.}, {\bf  122} (2012), 2553--2593.

\bibitem{CEB1} C.E. Br\'ehier, Orders of convergence in the averaging principle for SPDEs: the case of a stochastically forced slow component, \emph{Stochastic Process. Appl.} {\bf  130}(6) (2020), 3325--3368. 

\bibitem{ZBGD}  Z. Brze\'zniak and Gaurav Dhariwal, Stochastic tamed Navier-Stokes equations on $\mathbb{R}^3$: the existence and the uniqueness of solutions and the existence of an invariant measure,  https://arxiv.org/pdf/1904.13295.pdf. 

	\bibitem{BD1}  A. Budhiraja  and P. Dupuis, 
A variational representation for positive functionals of infinite
dimensional Brownian motion, {\em Probab. and Math. Stat.}, {\bf 20}
(2000) 39--61.

\bibitem{BD2}  A. Budhiraja, P. Dupuis,  V. Maroulas, Large deviations for infinite dimensional stochastic dynamical
systems, \emph{Ann. Probab.} {\bf 36} (2008), 1390--1420.

	
	

	\bibitem{DLB}	D. L. Burkholder,	The best constant in the Davis inequality for the expectation of the martingale square function, \emph{Transactions of the	American Mathematical Society} {\bf 354} (1), 91--105.
	
	
	
	
	
\bibitem{SC} 	S. Cerrai, A Khasminskii type averaging principle for stochastic reaction-diffusion equations, \emph{Ann. Appl.	Probab.}, {\bf 19} (2009), 899--948.

\bibitem{SC1}  S. Cerrai, Averaging principle for systems of reaction-diffusion equations with polynomial nonlinearities
perturbed by multiplicative noise, \emph{SIAM J. Math. Anal.} {\bf  43} (2011) 2482--2518.

\bibitem{SC2} S. Cerrai, M. Freidlin, Averaging principle for stochastic reaction-diffusion equations, \emph{Probab.Theory Related Fields}, {\bf 144} (2009), 137--177.

\bibitem{SC3} S. Cerrai, A. Lunardi, Averaging principle for nonautonomous slow-fast systems of stochastic reaction-
diffusion equations: the almost periodic case, \emph{SIAM J. Math. Anal.} 49 (2017) 2843-2884.
	

	

	\bibitem{chow}  P.-L. Chow, {\it Stochastic partial differential equations}, Chapman $\&$ Hall/CRC, New York, 2007.
	
	
		
		
	
	
	
	
	\bibitem {ICAM} I. Chueshov and A. Millet,	Stochastic 2D hydrodynamical type systems: Well posedness and Large Deviations, \emph{Applied Mathematics and  Optimization}, {\bf 61} (2010), 379--420.
	
	
	
	
	
	
	
	
	\bibitem{DaZ}
	\newblock G. Da Prato and J. Zabczyk,
	\newblock \emph{Stochastic Equations in Infinite Dimensions},
	\newblock Cambridge University Press, 1992.
	
		\bibitem{GDJZ}
	\newblock G. Da Prato and J. Zabczyk,
	\newblock \emph{Ergodicity for Infinite Dimensional Systems},
	\newblock London Mathematical Society Lecture Notes, {\bf 229}, Cambridge
	University Press, 1996.
	
	\bibitem{GDFF} G. Da Prato, F. Flandoli, E. Priola and M. R\"ockner, Strong uniqueness for stochastic evolution equation in Hilbert spaces perturbed  by a bounded measurable drift, \emph{The Annals of Probability}, 
	{\bf 41}(5) (2013), 3306--3344
	
		\bibitem {BD}	B. Davis, On the integrability of the martingale square function, \emph{Israel Journal of Mathematics} {\bf 8}(2) (1970), 187--190.
		
		
	
	
		\bibitem{ADe}
	\newblock A. Debussche,
	\newblock Ergodicity results for the stochastic Navier-Stokes equations: An introduction,
	\newblock Topics in Mathematical Fluid Mechanics, Volume {\bf 2073} of the series Lecture Notes in Mathematics, Springer,	23--108, 2013.
	
		\bibitem{DZ} A. Dembo,  and  O. Zeitouni, 
	{\em Large Deviations Techniques and Applications}, Springer-Verlag,
	New York, 2000.
	
\bibitem{ZDXS}	Z. Dong, X. Sun, H. Xiao, J. Zhai, Averaging principle for one dimensional stochastic Burgers equation,	\emph{J. Differential Equations}, {\bf 265} (2018), 4749--4797.

\bibitem{PDKS} P. Dupuis and K. Spiliopoulos, Large deviations for multiscale diffusion via weak convergence methods, \emph{Stochastic Process. Appl.}, {\bf 122}(4) (2012), 1947--1987.
	
\bibitem{WEBE}	W. E, B. Engquist, Multiscale modeling and computations, \emph{Notice of AMS}, {\bf 50} (2003) 1062-1070.
	
	
	
	
	
	
	
	
	
	
	
	
	
	
	\bibitem{CLF} 	C. L. Fefferman, K. W. Hajduk and J. C. Robinson,	\emph{Simultaneous approximation in Lebesgue and Sobolev norms via eigenspaces}, https://arxiv.org/abs/1904.03337.

	\bibitem{FFBM}  F. Flandoli and B.  Maslowski, Ergodicity of the 2-D Navier-Stokes equation under random perturbations, \emph{Communications in Mathematical Physics}, {\bf 172} (1995), 119--141. 
	
	\bibitem{FW} M. I.  Freidlin, and  A. D. Wentzell, 
	{\em Random Pertubations of Dynamical Systems}, Springer-Verlag, New
	York, 1984.
	
\bibitem{FW1} 	M. I. Freidlin and A. D. Wentzell, Some recent results on averaging principle. Topics in stochastic analysis and nonparametric estimation, 1--19, IMA Vol. Math. Appl., 145, Springer, New York, 2008.
	
\bibitem{HFJD}	H. Fu and J. Duan,  An averaging principle for two-scale stochastic partial differential equations, \emph{Stoch. Dyn.},	{\bf 11}(2011), 353--367. 


	
		\bibitem{HFJL}	H. Fu and J. Liu, Strong convergence in stochastic averaging principle for two time-scales stochastic partial	differential equations, \emph{J. Math. Anal. Appl.} 384 (2011) 70-86.
	
	\bibitem{HFLW}	 H. Fu, L. Wan and J. Liu, Strong convergence in averaging principle for stochastic hyperbolic-parabolic	equations with two time-scales, \emph{Stochastic Process. Appl.}, {\bf  125} (2015), 3255--3279.
	
		\bibitem{HFLW2}	H. Fu,  L. Wan, Y. Wang and J. Liu, Strong convergence rate in averaging principle for stochastic FitzHugh-Nagumo system with two time-scales, \emph{J. Math. Anal. Appl.}, {\bf 416}(2) (2014), 609--628. 
	

	
	\bibitem{HFLW1}	H. Fu, L. Wan, Y. Wang and J. Liu, Strong convergence rate in averaging principle for stochastic FitzHugh-Nagumo system with two time-scales, \emph{J. Math. Anal. Appl.}, {\bf  416}, (2014) 609--628.
	
		\bibitem{GGP} G. P. Galdi,  An introduction to the Navier–Stokes initial-boundary value problem. pp. 11-70 in \emph{Fundamental directions in mathematical fluid mechanics}, Adv. Math. Fluid Mech. Birkha\"user, Basel 2000.
		
		\bibitem{HGHL} H. Gao, and H. Liu,	Well-posedness and invariant measures for a class of stochastic 3D Navier-Stokes equations with damping driven by jump noise, \emph{Journal of Differential Equations}, {\bf 267} (2019), 5938--5975. 
	




\bibitem{MGo}  M. Gourcy, A large deviation principle for 2D stochastic Navier–Stokes equation, \emph{Stochastic Processes and their Applications}, 	{\bf 117}(7) (2007), 904--927. 
	
	
	
		
	
	
	
	

	\bibitem{MHJC} M. Hairer, J.C. Mattingly, Ergodicity of the 2D Navier-Stokes equations with degenerate stochastic
forcing, \emph{Annals of Mathematics}, {\bf 164} (2006), 993--1032.



	\bibitem{KWH}	K. W. Hajduk and J. C. Robinson, Energy equality for the 3D critical convective Brinkman-Forchheimer equations, \emph{Journal of Differential Equations}, {\bf 263} (2017), 7141--7161.
	

\bibitem{EHVK} E. Harvey, V. Kirk, M. Wechselberger and J. Sneyd, \emph{Multiple timescales, mixed mode oscillations and canards in models of intracellular calcium dynamics}, \emph{J. Nonlinear Sci.} {\bf  21} (2011) 639--683.


\bibitem{WHMS}  W. Hu,  M.  Salins and K. Spiliopoulos, Large deviations and averaging for systems of slow-fast stochastic reaction-diffusion equations, \emph{Stoch PDE: Anal. Comp.}, {\bf 7}(4) (2019),  808--874. 
	
	
	

	

	
	
	
	
	
	
	
	
	\bibitem{KT2}  V. K. Kalantarov and S. Zelik, Smooth attractors for the Brinkman-Forchheimer equations with fast growing nonlinearities, \emph{Commun. Pure Appl. Anal.}, {\bf 11}	(2012) 2037--2054.
	
		\bibitem{KX} G.  Kallianpur, and  J. Xiong, 	{\em Stochastic Differential Equations in Infinite Dimensional		Spaces}, Institute of Math. Stat, 1996.
	

\bibitem{RZK}  R.Z. Khasminskii, On the principle of averaging the It\^o's stochastic differential equations, \emph{Kybernetica} {\bf 4} (1968), 260--279.

\bibitem{HJK} H. J. Kushner , Large deviations for two-time-scale diffusions with delays, \emph{Appl. Math. Optim.}, {\bf 62}(3) (2010), 295--322.

\bibitem{RKLP}  R. Kumar and L. Popovic, Large deviations for multi-scale jump-diffusion processes, \emph{Stoch. Process. Appl.}, {\bf 127} (2017) 1297--1320.
	
	
	\bibitem{OAL}	O. A. Ladyzhenskaya, \emph{The Mathematical Theory of Viscous Incompressible Flow}, Gordon and Breach, New York, 1969.
	

\bibitem{RLi} R. Liptser, Large deviations for two scaled diffusions, \emph{Probab.Theory Related Fields}, {\bf 106} (1996), 71--104. 



	\bibitem{SLXS} S. Li, X. Sun, Y.  Xie and Y. Zhao, Average principle for two dimensional stochastic Navier-Stokes equations, https://arxiv.org/pdf/1810.02282.pdf. 
	
\bibitem{TJL}	T.J. Li and F. Lin, Large deviations for two-scale chemical kinetic processes, \emph{Commun. Math. Sci.}, {\bf 15}(1) (2017), 123--163.
	
	
	
		\bibitem{LHGH1}	H. Liu and H. Gao, Stochastic 3D Navier–Stokes equations with nonlinear damping: martingale solution, strong solution and small time LDP, Chapter 2 in \emph{Interdisciplinary Mathematical SciencesStochastic PDEs and Modelling of Multiscale Complex System}, 9--36, 2019.
		
			
	
	\bibitem{WLMRX} W. Liu, M.  R\"ockner, X. Sun, Y. Xie	Strong averaging principle for slow-fast stochastic partial differential equations with locally monotone coefficients, https://arxiv.org/abs/1907.03260. 
	
	
\bibitem{WLMR} W. Liu and M. R\"ockner,	Local and global well-posedness of SPDE with generalized coercivity conditions, \emph{Journal of Differential Equations}, {\bf 254} (2013), 725--755. 

\bibitem{WL}  W. Liu, Well-posedness of stochastic partial differential equations with Lyapunov condition, \emph{Journal of Differential Equations}, {\bf 255} (2013), 572--592. 


	

	
	

	\bibitem{CMMR} C. Marinelli and M. R\"ockner,	On the maximal inequalities of Burkholder, Davis and Gundy, \emph{Expositiones Mathematicae}, 	{\bf 34}(1)(2016), 1--26.
	
	\bibitem{EAMEL}	E.A. Mastny, E.L. Haseltine and J.B. Rawlings, Two classes of quasi-steady-state model reductions for	stochastic kinetics, \emph{J. Chem. Phys}, {\bf 127} (2007) 094106.



	
	
	\bibitem{MMMC} 	M. Mikikian, M. Cavarroc, L. Couedel, Y. Tessier, L. Boufendi, Mixed-mode oscillations in complex-
	plasma instabilities, \emph{Phys. Rev. Lett.} {\bf 100} (2008) 225005.
	
		
	
	
	
	
	\bibitem{MTM7} M. T. Mohan, On the convective Brinkman-Forchheimer equations, \emph{Submitted}. 
	
	\bibitem{MTM8} M. T. Mohan, Stochastic convective Brinkman-Forchheimer equations, \emph{Submitted}.
	
\bibitem{MTM10} M. T. Mohan, 	Wentzell-Freidlin large deviation principle for the stochastic convective Brinkman-Forchheimer equations, \emph{Submitted}.

\bibitem{MTM11} M. T. Mohan, Averaging principle for the stochastic convective Brinkman-Forchheimer equations, \emph{Submitted}.
	
	
		\bibitem{MTM6} M.T. Mohan, Well posedness, large deviations and ergodicity of the stochastic 2D Oldroyd model of order one, \emph{Stochastic Processes and their Applications}, {\bf 130}(8) (2020), 4513--4562. 

\bibitem{LPo} L. Popovic,	Large deviations of Markov chains with multiple time-scales, \emph{Stochastic Processes and their Applications},	{\bf 129}(9) (2019), 3319--3359.
	
	
	\bibitem{AAP} A. A. Puhalskii , On large deviations of coupled diffusions with time scale separation, \emph{Ann. Probab.}, {\bf 44}(4)	(2016), 3111--3186.
	
	
\bibitem{JCR3}	J. C. Robinson and W. Sadowski,	A local smoothness criterion for solutions of the 3D Navier-Stokes equations, \emph{Rendiconti del Seminario Matematico della Universit\'a di Padova} {\bf 131} (2014), 159--178.
	
		
		

		
		

	\bibitem{RWW} M. R\"ockner, F.-Y. Wang and L. Wu,	Large deviations for stochastic generalized porous media equations, \emph{Stochastic Processes and their Applications},
{\bf  116}(12) (2006), 1677--1689. 


	\bibitem{MRBS} 	M. R\"ockner, B. Schmuland and X. Zhang, Yamada-Watanabe theorem for stochastic evolution equations in infinite dimensions, \emph{Condensed Matter Physics}, {\bf 11}(2) (2008), 247--259.
		
			\bibitem{MRXZ1}	M. R\"ockner and X. Zhang, Stochastic tamed 3D Navier-Stokes equation: existence, uniqueness and ergodicity, \emph{Probability Theory and Related Fields}, {\bf 145} (2009) 211--267.
			
		\bibitem{MRTZ1}	M. R\"ockner, T. Zhang and X. Zhang,	Large deviations for stochastic tamed 3D Navier-Stokes equations, \emph{Applied Mathematics and Optimization}, {\bf 61} (2010), 267--285. 
		
			\bibitem{MRTZ}	 M. R\"ockner and T. Zhang, Stochastic 3D tamed Navier-Stokes equations: Existence, uniqueness and small time large deviations principles, J\emph{ournal of Differential Equations}, {\bf 252} (2012), 716--744.
			
				\bibitem{AVS} A. V. Skorokhod, Limit theorems for stochastic processes, \emph{Theory of Probability $\&$ Its Applications}, {\bf 1}(3), (1956), 261--290.
				
			\bibitem{KSp}	K. Spiliopoulos, Large deviations and importance sampling for systems of slow-fast motion, \emph{Appl. Math.	Optim.}, {\bf 67} (2013), 123--161. 
	
	
	
	\bibitem{SSSP} S. S. Sritharan and P. Sundar,	{Large deviations for the two-dimensional Navier-Stokes	equations with multiplicative noise}, \emph{Stochastic Processes and their		Applications}, {\bf 116} (2006), 1636--1659.
	
	\bibitem{XSRW} X. Sun, R. Wang, L. Xu and X. Yang, Large deviations for two-time-scale stochastic Burgers equation, \emph{Submitted}, https://arxiv.org/pdf/1811.00290.pdf. 
	
	
	\bibitem{Te} R. Temam,  \emph{Navier-Stokes Equations, Theory and Numerical Analysis}, North-Holland, Amsterdam, 1984.
	

	
	
	
	\bibitem{Te1} R. Temam, 	\emph{Navier-Stokes Equations and Nonlinear Functional Analysis}, Second Edition, CBMS-NSF Regional Conference Series in Applied Mathematics, 1995.
	
	

	\bibitem{Va} S. R. S. Varadhan, 
{\em Large deviations and Applications}, {\bf 46}, CBMS-NSF Series
in Applied Mathematics, SIAM, Philadelphia, 1984.



\bibitem{AYV1}   A.Yu. Veretennikov, On large deviations for SDEs with small diffusion and averaging, \emph{Stochastic Process. Appl.}, {\bf 89}(1) (2000), 69--79.

	\bibitem{VF}   M. I. Visik, and A.V. Fursikov, \emph{Mathematical Problems of Statistical Hydromechanics}, Kluwer, Dordrecht, 1980.

\bibitem{WWAJ} W. Wang and A.J. Roberts, Average and deviation for slow-fast stochastic partial differential equations, \emph{J. Differential Equations}, {\bf 253} (2012), 1265--1286.


\bibitem{WWAJ1} W. Wang, A.J. Roberts and J. Duan, Large deviations and approximations for slow-fast stochastic reaction-diffusion equations, \emph{J. Differential Equations}, {\bf 253} (2012), 3501--3522.

	\bibitem{FWTT}  F. Wu, T. Tian, J.B. Rawlings, G. Yin, Approximate method for stochastic chemical kinetics with two-
time scales by chemical Langevin equations, \emph{J. Chem. Phys}, {\bf 144} (2016) 174112.


\bibitem{JXJL} J. Xu, J. Liu, Y. Miao, Strong averaging principle for two-time-scale SDEs with non-Lipschitz coefficients, \emph{J. Math. Anal. Appl.}, {\bf 468} (2018), 116--140.


	

	
	
	
	
	
	
	
	

	
\end{thebibliography}
\end{document}